\documentclass[11pt, a4paper]{amsart}

\usepackage{xcolor}
\definecolor{MyBlue}{cmyk}{1,0.13,0,0.63}
\definecolor{MyGreen}{cmyk}{0.91,0,0.88,0.52}
\newcommand{\mylinkcolor}{MyBlue}
\newcommand{\mycitecolor}{MyGreen}
\newcommand{\myurlcolor}{black}

\usepackage{hyperref}
\hypersetup{%
  bookmarksnumbered=true,bookmarksopen=false,%
  plainpages=false,
  linktocpage=true,%
  colorlinks=true,breaklinks=true,%
  linkcolor=\mylinkcolor,citecolor=\mycitecolor,urlcolor=\myurlcolor,%
  pdfpagelayout=OneColumn,%
  pageanchor=true,%
}

\usepackage{graphicx}
\usepackage{amsmath, amsthm,  amsfonts, amscd}
\usepackage{amssymb}
\usepackage{url}
\usepackage{fullpage}
\usepackage{mathtools}
\usepackage[all]{xy}
\usepackage{slashed}
\usepackage{multirow}

\newtheorem{thm}{Theorem}[section]
\newtheorem*{thm*}{Theorem}
\newtheorem{cor}[thm]{Corollary}
\newtheorem{lemma}[thm]{Lemma}
\newtheorem{prop}[thm]{Proposition}

\theoremstyle{definition}
\newtheorem{defn}[thm]{Definition}

\theoremstyle{remark}
\newtheorem{remark}[thm]{Remark}

\newtheorem{example}[thm]{Example}
\newtheorem{remarks}[thm]{Remarks}


\newcommand{\End}{\ensuremath{\mathrm{End}}}

\newcommand{\R}{\ensuremath{\mathbb{R}}}
\newcommand{\N}{\ensuremath{\mathbb{N}}}
\newcommand{\Z}{\ensuremath{\mathbb{Z}}}

\newcommand{\C}{\ensuremath{\mathbb{C}}}
\newcommand{\T}{\ensuremath{\mathbb{T}}}

\def\calT{\mathcal{T}}
\def\calC{\mathcal{C}}
\def\calL{\mathcal{L}}
\def\calO{\mathcal{O}}
\def\calK{\mathcal{K}}
\def\calB{\mathcal{B}}
\def\calH{\mathcal{H}}

\def\calF{\mathcal{F}}
\def\calA{\mathcal{A}}
\def\calN{\mathcal{N}}

\def\calM{\mathcal{M}}

\def\calQ{\mathcal{Q}}
\def\calV{\mathcal{V}}
\def\calW{\mathcal{W}}
\def\calU{\mathcal{U}}

\def\calG{\mathcal{G}}

\def\bP{\mathbf{P}}

\newcommand{\ol}{\overline}
\DeclareMathOperator{\Aut}{Aut}

\theoremstyle{definition}

\DeclareMathOperator{\Dom}{Dom}

\DeclareMathOperator{\Index}{Index}

\DeclareMathOperator{\Ker}{Ker}

\DeclareMathOperator{\Ran}{Ran}
\DeclareMathOperator{\Tr}{Tr}
\DeclareMathOperator*{\res}{res}


\newcommand{\rst}[1]{\ensuremath{{\mathbin\upharpoonright}%
\raise-.5ex\hbox{$#1$}}}

\makeatletter

\newcommand{\Rmnum}[1]{\expandafter\@sl217--242owromancap\romannumeral #1@}
\makeatother





\author{C. Bourne}
\address{WPI-Advanced Institute for Materials Research (WPI-AIMR), Tohoku University,
2-1-1 Katahira, Aoba-ku, Sendai, 980-8577, Japan \emph{and} 
iTHEMS Research Group, RIKEN, 2-1 Hirosawa, Wako, Saitama 351-0198, Japan}
\email{chris.bourne@tohoku.ac.jp}

\author{B. Mesland}
\address{Max Planck Institut f\"{u}r Mathematik, Vivatsgasse 7 D-53111, Bonn, Germany}
\email{brammesland@gmail.com}

\date{\today}

\begin{document}

\begin{abstract}
We examine the noncommutative index theory associated to the dynamics of a Delone set 
and the corresponding transversal groupoid. Our main motivation comes from the application to 
topological phases of aperiodic lattices and materials, and applies to invariants from tilings as well. 
Our discussion concerns semifinite index pairings, factorisation properties of 
Kasparov modules and the construction of unbounded Fredholm modules for lattices with  
finite local complexity.
\end{abstract}

\title{Index theory and topological phases of aperiodic lattices}
\maketitle

\tableofcontents

\section*{Introduction}

Models of systems in solid state physics that do not make reference to a Bloch decomposition 
or Fourier transform are essential if one wishes to understand topological phases of 
disordered or aperiodic systems. A description of disordered media using crossed 
product $C^*$-algebras has
successfully adapted many important properties of periodic topological insulators 
to the disordered setting, see~\cite{Bellissard94, PSBbook, HMT} for example.

Recently, newer proposed models 
of topological materials and meta-materials have emerged whose underlying lattice has a quasicrystalline~\cite{quasiphase}
or amorphous configuration~\cite{GyroInsulator}. 
In the case of amorphous lattice configurations, because there is no canonical 
labelling of the lattice points, the Hamiltonians of interest can not 
be described by a crossed product $C^*$-algebra. Hence the techniques and 
results on the bulk-boundary correspondence in~\cite{BKR1} can not be applied to 
recent results on edge states and transport in 
topological amorphous (meta-)materials~\cite{GyroInsulator}. Such amorphous 
systems are instead modelled by the transversal  (\'{e}tale) groupoid $\calG$ associated to a Delone set developed 
in~\cite{Bel86,Kellendonk95,Kellendonk97,BHZ00}. One of the key 
results of the paper is the extension of the $K$-theoretic framework for topological phases 
to such algebras and aperiodic media.

Quasicrystalline materials often display finite local complexity, meaning that up to translation the lattice is determined by 
a finite number of patterns or polyhedra (cf. Definition \ref{def:FLC_et_al}).
If the lattice has finite local complexity, the aperiodic 
but ordered pattern configurations can be described using tiling spaces. By~\cite[Theorem 2]{SW03}, 
the tiling space of such  lattices  is homeomorphic to the $d$-fold 
suspension of a $\Z^d$-subshift space, though not necessarily topologically conjugate. 
This result implies that, adding some extra sites to quasicrystalline  lattice configurations if necessary, there 
is a $\Z^d$-labelling of points and the system can be described by a 
discrete crossed product $C(Z)\rtimes \Z^d$.

The advantage of the transversal groupoid approach is that it does not require finite local complexity nor a $\Z^d$-labelling. 
In particular, the framework can accommodate non-periodic $\R^d$-actions. Thus,  
the modifications needed to obtain  a $\Z^d$-labelling (which may not be physically reasonable) can be avoided. 
 Furthermore, the transversal groupoid covers  
a broader range of examples which are not covered by the results in~\cite{SW03} such as the pinwheel tiling.

 Given a Hamiltonian on an aperiodic system, computational techniques are 
currently in development to determine its spectrum~\cite{BBD18}. 
If the Hamiltonian contains a spectral gap, we can associate a topological phase 
to the system by modelling its dynamics via the transversal groupoid $\calG$. 
In particular, topological indices and $K$-theoretic properties of such Hamiltonians 
are determined using the groupoid $C^*$-algebra.

In previous work, this groupoid description was used to describe bulk topological 
phases~\cite{BP17}.
In this paper, we show that a (gapped) 
Hamiltonian acting on a Delone set $\calL \subset \R^d$
is enough to define strong and weak topological phases as well as show the bulk-edge correspondence of Hamiltonians acting 
on the lattice $\ell^2(\calL)$. Furthermore, if the unit space $\Omega_0$ of the transversal groupoid 
$\calG$ has an invariant measure, then Chern number formulas can also be defined for complex 
topological phases.

Because of the generality of Delone sets, they are able to model materials that go beyond what is 
normally considered when discussing topological phases of matter. These include quasicrystal 
structures but also other materials such as glasses and some liquids, see~\cite{BellissardDelone} 
for example. This tells us, at least from a mathematical perspective, that our constructions 
and results are potentially applicable to a broader range of materials and meta-materials in addition to 
the applications to (possibly disordered) crystals.

In the present paper, our central object of study is an unbounded $KK$-cycle for the transversal groupoid $C^*$-algebra 
which gives rise to a class in $KK^d(C^*_r(\calG,\sigma),C(\Omega_0))$ (real or complex) with $d$ 
the dimension of the underlying space, $\sigma$ a magnetic twisting and $\Omega_0$ 
the transversal space. When this 
$KK$-cycle is coupled with the $K$-theoretic phase of a gapped free-fermionic Hamiltonian 
(which gives a class in $K_n(C^*_r(\calG,\sigma))$), 
the corresponding index pairing gives analytic indices 
that encode the strong topological phase. When the transversal $\Omega_0$ has an invariant 
probability measure, we can construct a semifinite spectral triple and measure this (disorder-averaged) 
pairing using the semifinite local index formula (considered for ergodic measures in~\cite{BP17}). 

The factorisation properties of the unbounded $KK$-cycle also allow us to express the index pairing 
as a pairing over a closed subgroupoid $\Upsilon$ that encodes the dynamics of the transversal 
in $(d-1)$-dimensions and models an edge system. Namely, we can link these systems explicitly 
via a short exact sequence
$$
  0 \to C^*_r(\Upsilon,\sigma)\otimes \mathbb{K} \to \calT \to C^*_r(\calG,\sigma) \to 0
$$
with $\calT$ modelling a half-space system. When the lattices we consider have a canonical 
$\Z^d$-labelling, then this short exact sequence is the usual Toeplitz extension of a crossed product 
considered in~\cite{PSBbook}. Our result, analogous to the crossed-product case, \cite{PSBbook, BKR1}, 
is that our $d$-dimensional pairing with $C^*_r(\calG,\sigma)$ (as an element in the $K$-theory of 
the configuration space or a numerical phase label of this pairing) is equal to or in the same $K$-theory 
class  as the $(d-1)$-dimensional pairing with $C^*_r(\Upsilon,\sigma)$ up to a possible sign.

For aperiodic lattices with finite local complexity, the transversal $\Omega_0$ is totally disconnected. 
In this case a general construction due to Pearson and 
Bellissard ~\cite{PearsonBellissard} gives a family of spectral triples on $C(\Omega_0)$.
Coupling the unbounded $KK$-cycle for $(C^*_r(\calG),C(\Omega_0))$ to such a spectral triple for $C(\Omega_0)$ 
using the unbounded Kasparov product, gives us $K$-homology representatives for $C^{*}_{r}(\mathcal{G})$. The 
construction of the product operator employs the techniques developed in~\cite{MR}, but the commutators with
$C^{*}_{r}(\mathcal{G})$ turn out to be unbounded. Nonetheless, using arguments similar to~\cite{GM15} and recent results in~\cite{MesLesch},
we are able to show that the operator represents the Kasparov product of the given classes via the bounded transform.
The analytic difficulties with the commutators can be directly attributed to the disorder, that is, the nonperiodicity of the Delone set.

Let us remark that the unbounded Fredholm module constructed from quasicrystalline 
lattice configurations allows us to consider new topological phases that can not be defined 
in periodic systems or disordered systems with a contractible disorder space of configurations. 
Indeed, the totally disconnected structure of the transversal $\Omega_0$ is a crucial ingredient 
in defining these new phases.

Some of our results show parallels with those of Kubota and of  
Ewert--Meyer, 
who study topological phases associated to Delone sets and the corresponding 
Roe algeba~\cite{EM18,Kubota15b}. 
Briefly, the Roe algebra, by its universal nature, provides a means to compare topological 
phases from  different 
lattice configurations (see~\cite[Lemma 2.19]{Kubota15b}). Conversely, the transversal 
groupoid algebra is used to determine the 
topological phase of Hamiltonians associated to a fixed lattice configuration. 
Because the groupoid algebra is separable (while the Roe algebra is not), 
it is more susceptible to the use of $KK$-theoretic machinery, which is a central theme of this paper.
In particular, it is generally easier to both 
define and compute the pairings with $KK$-cycles or cyclic cocycles 
that characterise the numerical phase labels; see~\cite[Section 3]{BP17} for 
numerical simulations.

Lastly, the groupoid of a transversal is typically used to study the dynamics of aperiodic tilings 
and related dynamical systems. We have not emphasised the application to tilings 
in this manuscript, though our 
constructions and results may have broader interest.

\subsection*{Outline}
Because our paper draws from aspects of dynamical systems, operator algebras, 
Kasparov theory and physics, we aim to give a systematic and largely self-contained 
exposition of our results.

We first give a brief overview of the mathematical tools we require in Section \ref{sec:KK_Gpoid_prelim}, which 
include Kasparov theory, semifinite index theory and $C^*$-algebras of \'{e}tale 
groupoids twisted by a $2$-cocycle. 
We consider $C^*$-modules constructed from \'{e}tale groupoids and review how 
groupoid equivalences can be naturally expressed in terms of 
$C^*$-modules. In particular, we consider groupoids with a 
normalised  $2$-cocycle, where groupoid equivalence 
for compatible twists  gives rise to a 
Morita equivalence of the twisted groupoid $C^*$-algebras. 
We also provide a higher dimensional extension of the result in~\cite{MeslandGpoid}, 
where if one has a continuous $1$-cocycle $c:\calG\to\R^n$ that is exact 
in the sense of~\cite{MeslandGpoid}, 
then this cocycle gives rise to a Dirac-like operator and unbounded Kasparov 
module over the twisted $C^{*}$-algebra of $\mathcal{G}$ relative to that of a closed subgroupoid $\calH=\Ker(c)$. 
We further provide a condition on $\R$-valued cocycles that 
guarantees injectivity of the Busby invariant directly. This condition is satisfied in all 
examples considered in the paper.

In Section \ref{sec:Transversal}, we review the construction of the transversal groupoid 
following~\cite{Bel86,Kellendonk95,BHZ00,KellendonkPutnam} 
 and show how it fits into our general $KK$-theoretic framework. 
 In the case of dimension 1, we give an alternative description of the groupoid $C^*$-algebra and 
 unbounded $KK$-cycle using Cuntz--Pimsner algebras and results from~\cite{RRSgpoid, RRS, GMR}.

In Section \ref{sec:factorisation} we show how the unbounded $KK$-cycle we build 
factorises into the product of an `edge' $KK$-cycle modelling a system of codimension $1$ with a 
linking $KK$-cycle that relates the two systems. This can also be extended to higher 
codimension and is related to weak topological insulators.

We then consider spectral triple constructions in Sections \ref{sec:bulk_spectral_triples} and 
\ref{sec:PB_product}. We construct spectral triples using 
the evaluation map of the transversal, an invariant measure (which gives a semifinite 
spectral triple) and the product with a Pearson--Bellissard spectral triple. The latter construction yields an unbounded Fredholm module with
mildly unbounded commutators as in \cite{GM15}, so that the bounded transform represents the Kasparov product.

Lastly, we apply our results to topological phases in Section \ref{sec:Applications}, where the physical 
invariants of interest 
naturally arise as index pairings of classes in $K_n(C^*_r(\calG,\sigma))$ with 
our unbounded $KK$-cycles (or spectral triples). Here we prove Chern number formulas for 
complex phases, analytic strong and weak indices for systems with anti-linear symmetries and the bulk-boundary 
correspondence. Much like the crossed product setting, our bulk 
indices are also well-defined for a much larger algebra that can be constructed using 
noncommutative $L^p$-spaces. A connection  of these extended indices 
to regions of dynamical or spectral localisation remains an open problem.

\subsubsection*{Acknowledgements}
We thank Jean Bellissard, Magnus Goffeng, Johannes Kellendonk, Aidan Sims and Makoto Yamashita for helpful discussions. We thank the anonymous referees for their careful reading of the manuscript and valuable feedback.

CB was supported by a postdoctoral fellowship for overseas researchers from The Japan Society for the 
Promotion of Science (No. P16728) and both authors were 
supported by a KAKENHI Grant-in-Aid for JSPS fellows (No. 16F16728). This work is also supported by World Premier International Research Center Initiative (WPI), MEXT, Japan.
BM gratefully
acknowledges support from the Hausdorff Center for Mathematics and the Max Planck Institute for Mathematics in Bonn, Germany, as well as Tohoku University, Sendai, Japan for its hospitality. 
 
Part of this work was carried out during the Lorentz Center program \emph{KK-theory, Gauge Theory and 
Topological Phases} held in Leiden, Netherlands in March 2017. We also thank the Leibniz Universit\"{a}t Hannover, 
Germany, the Radboud University Nijmegen, Netherlands and 
the Erwin Schr\"{o}dinger Institute, University of Vienna, Austria  for hospitality.

\section{Preliminaries on groupoids and Kasparov theory} \label{sec:KK_Gpoid_prelim}

\subsection{Kasparov modules and semifinite spectral triples}
In this section we establish basic results and notation that we will need 
for this paper. Because we are motivated by topological phases whose 
relation to real $K$-theory is now well-established~\cite{FM13,GSB16,Kellendonk15}, we will work in both 
real and complex vector spaces and algebras.

Given a real or complex right-$B$ $C^*$-module $E_B$, we denote the 
right action by $e\cdot b$ and the $B$-valued inner product $(\cdot\mid\cdot)_B$. 
The set of adjointable endomorphisms on $E_B$ with respect to this inner 
product is denoted $\End^*(E_B)$.  
The rank-$1$ operators $\Theta_{e,f}$, $e,f\in E_B$, are defined such that
$$
   \Theta_{e,f}(g) = e\cdot (f\mid g)_B, \qquad e,f,g\in E_B.
$$
The norm-closure of the algebraic span of the set of such rank-$1$ operators are the 
compact operators on $E_B$ and we denote this set by $\mathbb{K}(E_B)$.
We will often work with $\Z_2$-graded algebras and 
spaces and denote by $\hat\otimes$ the graded tensor product (see~\cite[Section 2]{Kasparov80}
and \cite[Section 14]{Blackadar}). A densely defined closed symmetric operator $T:\Dom T\to E_{B}$ 
is \emph{self-adjoint and regular} if the operators $T\pm i:\Dom D\to E_{B}$ have dense range. 
See~\cite[Chapter 9-10]{Lance} for the basic theory 
of unbounded operators on $C^*$-modules.

\begin{defn}
Let $A$ and $B$ be $\Z_2$-graded real (resp. complex) $C^*$-algebras.
A real (resp. complex) unbounded Kasparov module $(\calA, {}_\pi{E}_B, D)$ 
(also called an unbounded $KK$-cycle) for $(A,B)$ consists of
\begin{enumerate}
\item a
$\Z_2$-graded real (resp. complex) $C^*$-module ${E}_B$, 
\item a graded $*$-homomorphism $\pi:A \to \End^*(E_B)$, 
\item an
unbounded self-adjoint, regular and odd operator $D$ and a dense $*$-subalgebra $\calA\subset A$ such that for all $a\in \calA\subset A$, 
\begin{align*}
  & [D,\pi(a)]_\pm \,\in\, \End^*(E_B)\;,   &&\pi(a)(1+D^2)^{-1}\,\in\, \mathbb{K}(E_B)\;.
\end{align*}
\end{enumerate}
For complex algebras and spaces, one can also remove the gradings, in 
which case the Kasparov module is called odd (otherwise even).
\end{defn}
We will often omit the representation $\pi$ when the left-action is unambiguous.
Unbounded Kasparov modules represent classes in the $KK$-group 
$KK(A,B)$ or $KKO(A,B)$~\cite{BJ83}. 
We note that an unbounded $A$-$\C$ or $A$-$\R$ Kasparov module is 
precisely the definition of a complex or real spectral triple.

Another noncommutative extension of index theory and closely related to unbounded 
Kasparov theory are semifinite spectral 
triples~\cite{CPRS2,CPRS3}.
Let $\tau$ be a fixed faithful, normal, semifinite trace on a von Neumann algebra 
$\calN$. 
We denote by $\calK_\calN$ the $\tau$-compact
operators in $\calN$, that is, the norm closed ideal generated by the 
projections $P\in\calN$ with $\tau(P)<\infty$. 
\begin{defn}
Let $\calN\subset \calB(\calH)$ be a graded semifinite von Neumann algebra with trace $\tau$.
A semifinite spectral triple $(\calA,\calH,D)$ is given by a $\Z_2$-graded 
Hilbert space $\calH$, 
a graded $\ast$-algebra $\calA\subset\calN$ with $C^*$-closure $A$ and 
a graded representation on 
$\calH$, together with a densely defined odd 
unbounded self-adjoint operator $D$ affiliated to $\calN$ such that
\begin{enumerate}
  \item $[D,a]_\pm$ is well-defined on $\Dom(D)$ and extends to a bounded operator on $\calH$ for 
  all $a\in\calA$,
  \item $a(1+D^2)^{-1}\in\calK_\calN$ for all $a\in A$.
\end{enumerate}
\end{defn}

For $\calN=\calB(\calH)$ and $\tau = \Tr$, one recovers the 
usual definition of a spectral triple. 
A semifinite spectral triple relative to $(\calN,\tau)$ with $\calA$ unital is called $p$-summable 
if $(1+D^2)^{-s/2}$ is $\tau$-trace class 
for all $s>p$. We also call a semifinite spectral triple $QC^\infty$ if 
$a,[D,a] \in \Dom(\delta^k)$ for all $k\in \N$ with $\delta(T) = [|D|,T]$ being 
the partial derivation. 

Semifinite spectral triples can be paired with $K$-theory classes in 
$K_\ast(\calA)$ via a semifinite Fredholm index~\cite{BCPRSW}. 
An operator 
$T\in \calN$ that is invertible modulo $\calK_\calN$ has a semifinite Fredholm 
index
$$
   \Index_\tau(T) = \tau(P_{\Ker(T)}) - \tau(P_{\Ker(T^*)}).
$$
If the semifinite spectral triples are $p$-summable and $QC^\infty$, 
the complex index pairing can be computed using 
the resolvent cocycle and the semifinite local index formula~\cite{CPRS2,CPRS3}. By writing the 
index pairing as a pairing with cyclic cohomology, the 
topological invariants of interest can more easily be connected 
to physics~\cite{PSBbook}. See~\cite{FK,BCPRSW,CPRS3} for further details on 
semifinite index theory and~\cite{Prodanbook} for results concerning 
numerical implementation.

Suppose $(\calA,E_B,D)$ is an unbounded Kasparov module for a separable $C^{*}$-algebra $A$ and the right-hand algebra 
$B$ has a faithful, semifinite and norm lower semicontinuous
trace $\tau_B$. We work with faithful traces as 
we can always pass to a quotient algebra $B/\Ker(\tau_B)$ if necessary. 
Assuming such a trace, one can often construct a semifinite spectral 
triple via a dual trace construction~\cite{LN04}. 
 We follow this approach in 
Section \ref{sec:semifinite_construction}. 
By constructing a semifinite spectral triple from a Kasparov module, we 
obtain a $KK$-theoretic interpretation of the semifinite index pairing, 
which can be expressed via the map
\begin{equation} \label{eq:semifinite_K_pairing}
  K_\ast(A) \times KK^\ast (A, B) \to K_0(B) \xrightarrow{(\tau_B)_\ast} \R,
\end{equation}
with $(\calA,E_B,D)$ representing the class in $KK^\ast (A, B)$. 
Equation \eqref{eq:semifinite_K_pairing} allows us to more explicitly characterise 
the range of the semifinite index pairing (which is in general $\R$-valued). 
The local index formula then gives us a computable expression for the 
$KK$-theoretic composition in Equation \eqref{eq:semifinite_K_pairing}.

\subsection{\'{E}tale groupoids, twisted algebras and $C^*$-modules} \label{sec:Gpoidmodules_general}

We start with some basic definitions for convenience. Our standard reference for groupoid $C^{*}$-algebras is \cite{Renault80}.
\begin{defn}
A groupoid is a set $\calG$ with a subset $\calG^{(2)} \subset \calG\times\calG$, 
a multiplication map $\calG^{(2)}\to\calG$, $(\gamma,\xi)\mapsto \gamma\xi$ 
and an inverse $\calG \to \calG$ $\gamma\mapsto \gamma^{-1}$ such that
\begin{enumerate}
  \item $(\gamma^{-1})^{-1}=\gamma$ for all $\gamma\in\calG$, 
  \item if $(\gamma,\xi), (\xi,\eta)\in \calG^{(2)}$, then $(\gamma\xi,\eta), (\gamma,\xi\eta)\in\calG^{(2)}$, 
  \item $(\gamma,\gamma^{-1})\in\calG^{(2)}$ for all $\gamma\in\calG$, 
  \item for all $(\gamma,\xi)\in\calG^{(2)}$, $(\gamma\xi)\xi^{-1}=\gamma$ and $\gamma^{-1}(\gamma\xi) = \xi$.
\end{enumerate}
\end{defn}
Given a groupoid we denote by $\calG^{(0)}= \{\gamma \gamma^{-1}\,:\, \gamma\in\calG\}$ the space of 
units and define the source and range maps $r,s:\calG \to\calG^{(0)}$ by the equations
\begin{align*}
  r(\gamma) = \gamma\gamma^{-1},  &&s(\gamma) = \gamma^{-1}\gamma
\end{align*}
for all $\gamma\in\calG$. The source and range maps allow us to characterise
$$
   \calG^{(2)} = \big\{ (\gamma,\xi) \in \calG\times\calG \,:\, s(\gamma) = r(\xi) \big\}.
$$

We furthermore assume that $\calG$ has a locally compact 
topology such that $\calG^{(0)} \subset \calG$ is Hausdorff in the relative topology and multiplication, 
inversion, source and range maps all continuous. In this work we restrict ourselves to groupoids 
that are both Hausdorff and \'{e}tale.
\begin{defn}
A topological groupoid $\calG$ is called \'{e}tale if the range map $r:\calG\to\calG$ is a local homeomorphism.
\end{defn}

\begin{defn}
Let $\calG$ be a locally compact and Hausdorff groupoid. A continuous 
map $\sigma:\calG^{(2)}\to \T\simeq U(1)$ is a $2$-cocycle if 
$$
  \sigma(\gamma,\xi) \sigma(\gamma\xi,\eta) 
    = \sigma(\gamma, \xi\eta) \sigma(\xi,\eta)
$$
for any $(\gamma,\xi),(\xi,\eta)\in\calG^{(2)}$, 
and
$$
  \sigma(\gamma, s(\gamma)) = 1 = \sigma(r(\gamma),\gamma)
$$
for all $\gamma\in\calG$. We will call a groupoid $2$-cocyle normalised 
if $\sigma(\gamma,\gamma^{-1})=1$ for all $\gamma\in\calG$.
\end{defn}

\begin{remark}
We can also define $O(1) \simeq \Z_2$-valued groupoid $2$-cocycles 
whose cocycle relation is the same as the $U(1)$ case. 
Generally speaking, if we are working in the category of complex spaces 
and algebras, we will use  $U(1)$-valued $2$-cocycles. If we are in the 
real category, we work with $O(1)$-valued $2$-cocycles.

Because the algebraic 
structure is the same in either setting, we will abuse notation slightly and 
denote by $\sigma$ a generic groupoid $2$-cocycle, where the 
range of this $2$-cocycle will be clear from the context.
\end{remark}

Given an \'{e}tale groupoid $\calG$ and $2$-cocycle $\sigma$, we define $C_c(\calG,\sigma)$ 
to be the $*$-algebra of compactly supported functions 
on $\calG$ with twisted convolution and involution 
\begin{align*}
   &(f_1\ast f_2)(\gamma) = \sum_{\gamma=\xi\eta}f_1(\xi)f_2(\eta) \sigma(\xi,\eta), 
   &&f^*(\gamma) = \sigma(\gamma,\gamma^{-1}) \ol{f(\gamma^{-1})}.
\end{align*}
The $2$-cocycle condition ensures that $C_c(\calG,\sigma)$ is an associative 
$\ast$-algebra. 
In the present paper, we restrict ourselves to considering 
{normalised} cocycles, which covers all examples of interest to us. 
Our definition of the groupoid $2$-cocyle and twisted convolution algebra 
comes from Renault~\cite{Renault80}. For a broader version of twisted groupoid algebra, 
see~\cite{KumjianDiagonal}.

\subsection{The $C^*$-module of a groupoid and the reduced twisted $C^{*}$-algebra}
\label{sec:twisted_gpoid_alg}

Take an \'{e}tale groupoid $\mathcal{G}$ with a normalised $2$-cocycle $\sigma$. 
The space $C_{c}(\mathcal{G},\sigma)$ is a right module over $C_{0}(\calG^{(0)})$ via $(f\cdot g)(\xi)=f(\xi)g(s(\xi))$. 
Since $\calG^{(0)}\subset\mathcal{G}$ is closed, we can consider the restriction map 
$\rho:C_c(\mathcal{G})\to C_0(\calG^{(0)})$. This defines a $C_0(\calG^{(0)})$ valued inner product on the right module 
$C_{c}(\mathcal{G},\sigma)$ via
\begin{align*}
 ( f_1\mid f_2 )_{C_0(\calG^{(0)})}(x):&=\rho(f_1^{*}*f_2)(x)\\
 &=\sum_{\xi \in s^{-1}(x)}\overline{f_1(\xi^{-1})}f_2(\xi^{-1}) \sigma(\xi,\xi^{-1})
     =\sum_{\xi\in r^{-1}(x)}\overline{f_1(\xi)}f_2(\xi)
\end{align*}
as $\sigma$ is normalised.
Denote by $E_{C_0(\mathcal{G}^{(0)})}$ the $C^{*}$-module completion of $C_{c}(\mathcal{G})$ in this inner product. There is an action of the $*$-algebra $C_c(\calG,\sigma)$ on the 
$C^*$-module $E_{C_0(\calG^{(0)})}$ by bounded adjointable endomorphisms, extending the action of  $C_c(\calG,\sigma)$ on itself by left-multiplication. 
\begin{defn}[cf.~\cite{KS02}]
The reduced groupoid $C^*$-algebra $C^*_r(\calG,\sigma)$ is the completion of 
$C_c(\calG,\sigma)$ in the norm inherited from the embedding 
$C_c(\calG,\sigma) \hookrightarrow \End^*(E_{C_0(\mathcal{G}^{(0)})})$.
\end{defn}

\begin{defn} \label{def:s_cover}
Let $\mathcal{G}$ be an \'{e}tale groupoid over $\calG^{(0)}$. An \emph{$s$-cover of $\mathcal{G}$} is a locally finite
countable open cover $\mathcal{V}:=\{V_{i}\}_{i\in\mathbb{N}}$ consisiting of pre-compact sets, 
such that $s:{V}_{i}\to \calG^{(0)}$ is a homeomorphism onto its image.
\end{defn}

\begin{lemma} 
Let $\mathcal{V}:=\{V_{i}\}_{i\in\mathbb{N}}$ be an $s$-cover of $\mathcal{G}$ and $\chi_{i}:V_{i}\to\mathbb{R}$ 
a partition of unity subordinate to $\mathcal{V}$, that is, $\sum_{i}\chi_{i}(\eta)^{2}=1$ for all $\eta\in\mathcal{G}$. 
Write $u_{m}:=\sum_{i\leq m} \Theta_{\chi_i,\chi_i} $. Then for all $f\in C_{c}(\mathcal{G},\sigma)$ there exists $N\in\mathbb{N}$ such that for all $n\geq N$
\[
f(\eta)=u_{n}f(\eta)=\sum_{i\leq n}\chi_{i} * \rho(\chi_{i}^{*}*f)(\eta).
\]
In particular $u_{m}f$ converges to $f$ in the norm of $E_{C_0(\mathcal{G}^{(0)})}$.
\end{lemma}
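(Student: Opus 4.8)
The plan is to work entirely at the level of compactly supported functions and only invoke density/norm-closure at the very end. Fix $f \in C_c(\calG,\sigma)$ and let $K = \mathrm{supp}(f)$, a compact subset of $\calG$. First I would observe that since $\calV = \{V_i\}$ is a locally finite open cover by pre-compact sets of (an open neighbourhood of) $K$, compactness of $K$ forces all but finitely many $V_i$ to miss $K$ entirely; choose $N$ so that $V_i \cap K = \emptyset$ for all $i > N$. Then for $i > N$ the function $\chi_i$ vanishes on $K$, hence $\chi_i^* * f$ and $\Theta_{\chi_i,\chi_i} f = \chi_i * \rho(\chi_i^* * f)$ vanish identically, so $u_n f = u_N f$ for all $n \geq N$. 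The content is therefore to show $u_N f = f$, equivalently $\sum_{i \leq N} \Theta_{\chi_i,\chi_i} f = f$ as a function on $\calG$.

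The second step is the pointwise computation. Fix $\eta \in \calG$ with source $x = s(\eta)$ and range $y = r(\eta)$. Unwinding the definitions, $\rho(\chi_i^* * f)$ is supported on $\calG^{(0)}$ and equals $\sum_{\zeta \in s^{-1}(\cdot)} \overline{\chi_i(\zeta^{-1})} f(\zeta^{-1})\sigma(\zeta,\zeta^{-1})$; since $s: V_i \to \calG^{(0)}$ is injective, for each unit $z$ there is \emph{at most one} $\zeta \in V_i$ with $s(\zeta) = z$, so using that $\chi_i$ is supported in $V_i$ and $\sigma$ is normalised, this sum collapses to a single term. Then $(\chi_i * \rho(\chi_i^* * f))(\eta) = \sum_{\eta = \xi\omega} \chi_i(\xi)\,\rho(\chi_i^* * f)(\omega)\,\sigma(\xi,\omega)$; the inner unit $\omega = s(\xi)$ must be $s(\eta)$ composed after $\xi$, and because $s$ is injective on $V_i$ the only $\xi \in V_i \cap r^{-1}(y)$ contributing is the unique element $\xi_i \in V_i$ with $s(\xi_i) = y$ (when it exists), and the only $\zeta \in V_i$ appearing in the inner sum is that same $\xi_i$. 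Carefully tracking the two $\sigma$-factors against the cocycle identity and the normalisation $\sigma(\gamma,s(\gamma)) = 1 = \sigma(r(\gamma),\gamma)$, $\sigma(\gamma,\gamma^{-1}) = 1$, the product telescopes and one is left with $\chi_i(\xi_i)^2\, f(\eta)$ whenever such $\xi_i$ exists, and $0$ otherwise. Since $\xi_i$ (if it exists) satisfies $r(\xi_i) = y = r(\eta)$, summing over $i \leq N$ gives $\sum_i \chi_i(\xi_i)^2 f(\eta)$ where the sum runs over those $i$ with $\calG \cap V_i$ meeting $r^{-1}(y)$; but $\sum_{\zeta \in r^{-1}(y)} \sum_i \chi_i(\zeta)^2 = \sum_i \chi_i(\xi_i)^2$ and the hypothesis $\sum_i \chi_i(\zeta)^2 = 1$ at the point $\zeta = \eta \in r^{-1}(y)$ shows this weight equals $1$. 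Hence $u_N f(\eta) = f(\eta)$ for every $\eta$.

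The final step is the norm convergence in $E_{C_0(\calG^{(0)})}$: since $u_n f = f$ for all $n \geq N$, the sequence $u_n f$ is eventually constant equal to $f$, so it trivially converges to $f$ in the $C^*$-module norm. This also reproves en passant that the $u_m$ form an approximate unit-type net adapted to the $s$-cover.

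\textbf{Main obstacle.} The bookkeeping with the cocycle $\sigma$ in the pointwise computation is the only delicate point: one must verify that the three occurrences of $\sigma$ (one from each convolution, one from the involution defining $\rho \circ {}^*$) combine via the cocycle identity and the normalisation conditions to give exactly $1$, with no residual phase. The injectivity of $s$ on each $V_i$ — which is precisely what makes all the convolution sums collapse to single terms — is what makes this manageable; once the sums are single terms, the $\sigma$-identities are a short finite check rather than an infinite manipulation. Everything else (the finiteness of $N$ from compactness, the eventual-constancy giving convergence) is immediate.
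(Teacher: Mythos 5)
Your overall strategy matches the paper's: use compact support plus local finiteness to truncate to a finite $N$, then unwind the two convolutions pointwise, collapse the resulting sums via the $s$-cover property, and finish with the partition-of-unity identity $\sum_i \chi_i(\eta)^2 = 1$. The truncation step and the final norm-convergence step are fine. However, the central pointwise computation contains a bookkeeping error that, taken literally, would break the argument.

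The surviving element in both collapsed sums is $\eta$ itself, and the constraint that drives the collapse is on the \emph{source}, not the range. Concretely: the outer sum $(\chi_i * g)(\eta) = \sum_{\eta = \xi\omega}\chi_i(\xi)g(\omega)\sigma(\xi,\omega)$ with $g$ supported on $\calG^{(0)}$ collapses automatically to $\xi = \eta$, $\omega = s(\eta)$ — no appeal to the $s$-cover is needed there, and this $\xi$ has $s(\xi) = s(\eta)$, not $r(\eta)$. The inner sum, written in the form $\rho(\chi_i^* * f)(s(\eta)) = \sum_{\beta\in s^{-1}(s(\eta))}\chi_i(\beta)f(\beta)$, collapses because $\chi_i(\beta)\neq 0$ forces $\beta \in V_i$, and together with $\chi_i(\eta)\neq 0$ (forcing $\eta\in V_i$) the injectivity of $s|_{V_i}$ gives $\beta = \eta$. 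You instead introduce ``the unique $\xi_i\in V_i$ with $s(\xi_i)=y=r(\eta)$''; that element is not $\eta$ in general (unless $\eta$ is a unit), so $\chi_i(\xi_i)^2$ is not $\chi_i(\eta)^2$, and the subsequent display $\sum_{\zeta\in r^{-1}(y)}\sum_i \chi_i(\zeta)^2 = \sum_i \chi_i(\xi_i)^2$ is not a valid identity (the left side is $|r^{-1}(y)|$, generically infinite). Your final sentence, evaluating $\sum_i\chi_i(\cdot)^2 = 1$ at the point $\eta$, is the right thing to do, but it contradicts the earlier identification of the surviving element. A closely related slip appears in your treatment of the inner sum: your formula for $\rho(\chi_i^* * f)$ has $\chi_i(\zeta^{-1})$, so the support constraint is on $\zeta^{-1}$; applying the $s$-cover injectivity to ``$\zeta\in V_i$ with $s(\zeta)=z$'' does not address that constraint, since what is actually restricted is $r(\zeta^{-1})$. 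Once the inner sum is rewritten with the bare $\chi_i(\beta)$ over $\beta\in s^{-1}(s(\eta))$, the $s$-injectivity applies correctly and the surviving $\beta$ is visibly $\eta$. Fixing the identification $\xi_i = \eta$ throughout turns your argument into the paper's proof.
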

\begin{proof}The above, together with the fact that we have an $s$-cover gives
\begin{align*}
\sum_{i} \big(\chi_{i} \cdot \rho(\chi_{i}^{*}*f) \big)(\eta)&=\sum_{i}\chi_{i}(\eta) \rho(\chi_{i}^{*}*f)(s(\eta))
  =\sum_{i}\sum_{\xi\in r^{-1}(s(\eta))}\chi_{i}(\eta)\chi_{i}^{*}(\xi)f(\xi^{-1}) \sigma(\xi,\xi^{-1})  \\
&=\sum_{i}\sum_{\xi\in r^{-1}(s(\eta))}\chi_{i}(\eta)\chi_{i}(\xi^{-1})f(\xi^{-1})=\sum_{i}\sum_{\xi\in s^{-1}(s(\eta))}\chi_{i}(\eta)\chi_{i}(\xi)f(\xi)\\
&=\sum_{\{i:\eta\in V_{i}\}}\chi_{i}^{2}(\eta)f(\eta)=f(\eta).
\end{align*}
Since $f$ has compact support, there exists $N=N_f$ such that $\chi_{n}|_{\textnormal{supp} f}=0$  for all $n\geq N$. 
Thus the sum above is uniformly finite and hence convergent in the $\rho$-norm.
\end{proof}
Note that the above result implies that $u_n$ is a sequence of local units for $C_{c}(\mathcal{G}^{(2)})\subset\mathbb{K}(E_{C_0(\calG^{(0)})})$. 

\begin{lemma} We have 
$\sup_{n}\left\|u_{n}\right\|_{\End^{*}(E_{C_0(\calG^{(0)})})}\leq 1.$
\end{lemma}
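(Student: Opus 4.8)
We need to bound the norm of $u_m = \sum_{i \le m} \Theta_{\chi_i, \chi_i}$ in $\End^*(E_{C_0(\calG^{(0)})})$ by $1$. Since $u_m$ is a finite sum of positive operators $\Theta_{\chi_i,\chi_i} \ge 0$, each $u_m$ is positive, so $\|u_m\| = \sup\{ (f \mid u_m f)_{C_0(\calG^{(0)})}(x) : \ldots\}$-type estimate reduces to showing $0 \le u_m \le 1$ as adjointable operators, i.e. $(f \mid u_m f) \le (f \mid f)$ in $C_0(\calG^{(0)})$ for all $f \in C_c(\calG)$ (which is dense). So the strategy is a pointwise estimate: fix $x \in \calG^{(0)}$ and show $(f \mid u_m f)_{C_0(\calG^{(0)})}(x) \le (f \mid f)_{C_0(\calG^{(0)})}(x)$.

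First I would compute $(f \mid u_m f)(x)$ explicitly. Using $u_m f = \sum_{i \le m} \chi_i \cdot \rho(\chi_i^* * f)$ from the previous lemma and the inner product formula $(g \mid h)(x) = \sum_{\xi \in r^{-1}(x)} \overline{g(\xi)} h(\xi)$, one gets
\[
(f \mid u_m f)(x) = \sum_{i \le m} \sum_{\xi \in r^{-1}(x)} \overline{f(\xi)} \, \chi_i(\xi) \, \rho(\chi_i^* * f)(s(\xi)).
\]
Expanding $\rho(\chi_i^* * f)(s(\xi)) = \sum_{\zeta \in s^{-1}(s(\xi))} \chi_i(\zeta) f(\zeta)$ (using that $\sigma$ is normalised and $\chi_i$ real-valued, as in the proof of the preceding lemma), and reindexing the sum over $\zeta$ with the same source as $\xi$, the expression becomes a sum over pairs $(\xi, \zeta)$ with $s(\xi) = s(\zeta)$, of $\overline{f(\xi)}\chi_i(\xi)\chi_i(\zeta)f(\zeta)$. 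The key structural point is that since $\calV$ is an $s$-cover, each $V_i$ meets any fibre $s^{-1}(y)$ in at most one point, so for fixed $i$ the double sum over $(\xi,\zeta)$ with common source actually collapses and the whole thing reorganises as $\sum_{y} \big| \sum_{i \le m} \sum_{\xi : s(\xi) = y} \chi_i(\xi) f(\xi) \big|^2$-type quantities — more precisely one recognises $u_m$ acting as a genuine orthogonal-projection-like sum because the $\Theta_{\chi_i,\chi_i}$ have mutually `orthogonal ranges' along fibres, thanks to the $s$-cover property.

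The cleanest route, which I would actually use, avoids the messy reindexing: show directly that $\sum_{i \le m} \Theta_{\chi_i,\chi_i} \le \sum_{i \in \N} \Theta_{\chi_i,\chi_i} = 1$ by establishing that the infinite sum converges strictly to the identity. Indeed, for $f \in C_c(\calG)$ the preceding lemma gives $u_n f = f$ for $n$ large, so $u_n \to 1$ strongly on the dense submodule; combined with $u_n \le u_{n+1}$ (each added term is positive) and self-adjointness, the monotone sequence $(u_n)$ of positive operators increases to $1$, whence $\|u_n\| \le 1$. The one technical gap to fill is justifying that an increasing sequence of positive adjointable operators that converges strongly to a bounded operator is eventually norm-bounded by the limit: this follows since for each $n$ and each unit vector $f$ in the submodule, $(f \mid u_n f) \le (f \mid u_{n+1} f) \le \cdots \to (f\mid f) \le 1$, so $(f\mid u_n f) \le 1$ for all $n$, and density plus continuity of the inner product extends this to all of $E$, giving $u_n \le 1$.

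\textbf{Main obstacle.} The only real subtlety is the last step — transferring the pointwise/fibrewise convergence $u_n f = f$ (valid only on compactly supported $f$, and only for $n$ past a threshold depending on $f$) to a uniform norm bound $\|u_n\| \le 1$ independent of both $n$ and $f$. The monotonicity $u_n \le u_{n+1}$ is what saves us: without it, eventual equality on each $f$ separately would not bound the operator norms. So the proof hinges on checking positivity of each $\Theta_{\chi_i,\chi_i}$ (immediate) together with the monotone-limit argument, rather than on any delicate estimate.
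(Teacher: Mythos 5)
Your proposal is correct, but it takes a genuinely different route from the paper's proof. The paper argues by direct pointwise estimate: using the collapse derived in the preceding lemma's proof (which exploits that each $V_i$ meets an $s$-fibre in at most one point), it identifies $u_n f(\xi) = \bigl(\sum_{i\leq n}\chi_i(\xi)^2\bigr)f(\xi)$, and then $\sum_{i\leq n}\chi_i^2 \leq \sum_i\chi_i^2 = 1$ gives $|u_n f(\xi)| \leq |f(\xi)|$ and hence $(u_n f\mid u_n f)(x)\leq(f\mid f)(x)$ fibrewise. Your argument instead avoids the explicit formula for $u_n f$: you observe that each $\Theta_{\chi_i,\chi_i}$ is a positive adjointable operator, so the $u_n$ form a nondecreasing sequence of positive operators, and then invoke only the \emph{statement} of the preceding lemma (eventual equality $u_n f = f$ for $f\in C_c(\calG,\sigma)$) to get $(f\mid u_n f) \leq (f\mid u_{N_f} f) = (f\mid f)$ for all $n$, hence $u_n\leq 1$ after extending by density and closedness of the positive cone. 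Your approach is cleaner structurally — it requires no re-derivation of the $s$-cover collapse and would work verbatim for any frame in a $C^*$-module with local units — whereas the paper's approach is more self-contained and quantitative, making the partition-of-unity mechanism visible. One minor wrinkle in your write-up: the phrase ``unit vector $f$'' and the inequality ``$(f\mid f)\leq 1$'' are slightly off register in the $C^*$-module context; what you actually use, and what is correct, is the $C_0(\calG^{(0)})$-valued inequality $(f\mid u_n f) \leq (f\mid f)$ for all $f$ in the dense submodule, then continuity of $f\mapsto (f\mid f) - (f\mid u_n f)$ and closedness of the positive cone of $C_0(\calG^{(0)})$ to extend it and conclude $0\leq u_n\leq 1$.
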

\begin{proof} We compute the operator  norm of the $u_{n}$ directly. Let $f\in C_{c}(\mathcal{G},\sigma)$: 
\begin{align*}
 (u_n f \mid  u_n f)_{C(\calG^{(0)})} (x) &=\sum_{\xi\in r^{-1}(x)}|u_{n}f(\xi)|^{2}\\
&=\sum_{\xi\in r^{-1}(x)}\left|\sum_{i\leq n} \chi_{i}(\xi)^{2}f(\xi)\right|^{2}\leq\sum_{\xi\in r^{-1}(x)}\left(\sum_{i\leq n} \chi_{i}(\xi)^{2}|f(\xi)|\right)^{2}\\
&\leq \sum_{\xi\in r^{-1}(x)}\left|f(\xi)\right |^{2}= (f\mid f)_{C(\calG^{(0)})} (x)
\end{align*}
Thus it follows that
\[
\|u_{n}f\|^{2}_{C_{0}(\calG^{(0)})}=\sup_{x\in \calG^{(0)}} (u_n f \mid u_n f)_{C(\calG^{(0)})}(x)
     \leq \sup_{x\in \calG^{(0)}} (f\mid f)_{C(\calG^{(0)})} (x)   =\|f\|^{2}_{C_{0}(\calG^{(0)})}, 
\]
and we find that $\sup\|u_{n}\|_{\End^{*}(E_{C_0(\mathcal{G}^{(0)})})}\leq 1$ as claimed.
\end{proof}
\begin{prop} \label{prop:s-cover_frame}
The sequence $u_{n}$ forms an approximate unit for $\mathbb{K}(E_{C_0(\mathcal{G}^{(0)})})$. 
In other words, the ordered set of elements $\chi_{i}\in E_{C_0(\mathcal{G}^{(0)})}$ forms a frame for $E_{C_0(\mathcal{G}^{(0)})}$.
\end{prop}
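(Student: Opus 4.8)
The plan is to prove the equivalent statement that $(u_n)$ is a two-sided approximate unit for $\mathbb{K}(E_{C_0(\calG^{(0)})})$. Since $u_n=\sum_{i\le n}\Theta_{\chi_i,\chi_i}$ is an increasing sequence of positive adjointable operators, this is precisely the assertion that the partial sums $\sum_{i\le n}\Theta_{\chi_i,\chi_i}$ converge strictly to $1\in\End^*(E_{C_0(\calG^{(0)})})$, which is the defining property of a frame (one records, as a by-product, that the frame is in fact normalised tight, since $\sum_i (x\mid\chi_i)(\chi_i\mid x)=\lim_n (x\mid u_n x)=(x\mid x)$).

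First I would upgrade the convergence $u_n e\to e$ from the dense submodule $C_c(\calG,\sigma)\subset E_{C_0(\calG^{(0)})}$ to all of $E_{C_0(\calG^{(0)})}$. The first of the two preceding lemmas gives, for $f\in C_c(\calG,\sigma)$, that $u_nf=f$ for all $n$ sufficiently large, hence $u_nf\to f$ in the module norm; combining this with the uniform bound $\sup_n\|u_n\|_{\End^*(E_{C_0(\calG^{(0)})})}\le 1$ from the second lemma, a standard $\varepsilon/3$ estimate yields $u_ne\to e$ for every $e\in E_{C_0(\calG^{(0)})}$.

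Next I would pass to compact operators. For $e,f\in E_{C_0(\calG^{(0)})}$ one has $u_n\Theta_{e,f}=\Theta_{u_ne,f}$, so $\|u_n\Theta_{e,f}-\Theta_{e,f}\|\le\|u_ne-e\|\,\|f\|\to 0$. Since finite linear combinations of the rank-one operators $\Theta_{e,f}$ are dense in $\mathbb{K}(E_{C_0(\calG^{(0)})})$, a second $\varepsilon/3$ argument using $\|u_n\|\le1$ gives $u_nk\to k$ for every $k\in\mathbb{K}(E_{C_0(\calG^{(0)})})$. As each $u_n$ is self-adjoint, taking adjoints yields $ku_n\to k$ as well, so $(u_n)$ is a two-sided approximate unit for $\mathbb{K}(E_{C_0(\calG^{(0)})})$, and hence $(\chi_i)$ is a frame.

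There is essentially no serious obstacle here: the genuine content has already been isolated in the two preceding lemmas (eventual stabilisation $u_nf=f$ on $C_c(\calG,\sigma)$, and the uniform bound $\|u_n\|\le1$), and what remains are two routine density/$\varepsilon/3$ approximations together with the observation that self-adjointness of $u_n$ promotes a one-sided approximate unit to a two-sided one. The only subtlety worth flagging is that the first lemma gives equality $u_nf=f$ only for $f\in C_c(\calG,\sigma)$ and not for general $e\in E_{C_0(\calG^{(0)})}$, which is exactly the reason the uniform bound on $\|u_n\|$ is needed in the first approximation step.
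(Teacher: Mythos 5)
Your proposal is correct and follows essentially the same route as the paper: uniform boundedness of $\|u_n\|$ plus eventual stabilisation $u_nf=f$ on the dense subspace $C_c(\calG,\sigma)$ give strong convergence $u_n\to 1$ on all of $E_{C_0(\calG^{(0)})}$, and this is equivalent to $(u_n)$ being an approximate unit for $\mathbb{K}(E_{C_0(\calG^{(0)})})$. The paper simply cites this equivalence as known, whereas you unpack it via rank-one operators and a second density argument — a reasonable expansion, but not a different argument.
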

\begin{proof} The sequence $u_{n}$ is uniformly bounded in operator norm and converges strongly to 1 on a dense subset. 
This implies that it converges strongly to 1 on all of $E_{C_0(\mathcal{G}^{(0)})}$, which is equivalent to being an approximate unit for 
$\mathbb{K}(E_{C_0(\mathcal{G}^{(0)})})$.
\end{proof}

\subsection{Morita equivalence of twisted groupoid $C^{*}$-algebras} \label{sec:twisted_gpoid_equiv}

In this section we work with an arbitrary \'{e}tale groupoid $\calG$ with closed 
subgroupoid $\calH$ that admits a Haar system and a normalised $2$-cocycle $\sigma:\calG^{(2)}\to \T$ or $\{\pm 1\}$. 
The map $\sigma$ restricts to a $2$-cocycle on the subgroupoid $\calH$. 
Denote by $$\rho_\calH: C_c(\calG,\sigma)\to C_c(\calH,\sigma),$$ the restriction map. This map 
is a generalised conditional expectation by~\cite[Proposition 2.9]{Renault80}.
It gives rise to a $C_c(\calH,\sigma)$-valued inner product, where 
$$
(f_1\mid f_2)_{C_c(\calH,\sigma)}(\eta) = \rho_\calH( f_1^{*}\ast f_2)(\eta) 
  = \sum_{\xi \in r_\calG^{-1}(r_\calH(\eta))} f_1^*(\eta^{-1} \xi) f_2(\xi^{-1}) \sigma(\eta^{-1}\xi,\xi^{-1}).
$$ 
This map is compatible with 
the right-action,
$$
  (f \cdot h)(\gamma) = \sum_{\eta \in r_{\calH}^{-1}(s_\calG(\gamma))} 
       f(\gamma\eta) h(\eta^{-1}) \sigma(\gamma\eta, \eta^{-1}), \qquad f\in C_c(\calG,\sigma), \,\, h\in C_c(\calH,\sigma).
$$

We again take the completion of $C_c(\calG,\sigma)$ in the 
$C^*_r(\calH,\sigma)$-valued inner-product to obtain a right $C^*$-module $E_{C^*_r(\calH,\sigma)}$. The left-action of $C_{c}(\calG,\sigma)$ on itself makes
$E_{C^*_r(\calH,\sigma)}$ into a \newline $(C^{*}_{r}(\mathcal{G},\sigma),C^{*}_{r}(\mathcal{H},\sigma))$-bimodule by \cite[Theorem 1.4]{SOU}. 
These bimodules often support a natural operator making them into $KK$-cycles, as we will discuss in Section \ref{sec:bulk_Kasmod}.

At present we wish to describe the compact operators on $E_{C^{*}_{r}(\mathcal{H},\sigma)}$. To this end we first define
$$
   \calG/\calH \cong  
   \big\{ [\xi] \,:\, \xi\in\calG, \, [\gamma]=[\xi] \iff \text{ there exists } \eta\in\calH \text{ with } \gamma\eta = \xi \big\}.
$$
We can define a new groupoid by considering a left-action of $\calG$ on this quotient space. Namely, 
we take
$$
   \calG \ltimes \calG/\calH := \big\{ (\xi,[\gamma]) \in \calG \times \calG/\calH \,:\, s_\calG(\xi) = r_\calG(\gamma) \big\},
$$
where we have $(\calG \ltimes \calG/\calH)^{(0)}= \calG/\calH$ and 
\begin{align*}
   &r(\xi,[\gamma]) = [\xi\gamma],    &&s(\xi,[\gamma]) = [\gamma],  \\
   &(\xi,[\gamma])^{-1} = (\xi^{-1},[\xi\gamma]),  
   &&(\xi,[\gamma])\circ(\eta,[\eta^{-1}\gamma]) = (\xi\eta,[\eta^{-1}\gamma]).
\end{align*}
Furthermore, we can again use the $2$-cocycle $\sigma$ on $\calG$ to define a $2$-cocycle on 
$\calG\ltimes \calG/\calH$,
$$
  \sigma((\xi,[\gamma]), (\eta,[\eta^{-1}\gamma])) = \sigma(\xi,\eta).
$$
The groupoid $\calG$ naturally implements an 
equivalence between $\calG\ltimes \calG/\calH$ and $\calH$ in the sense of \cite{MRW}. Namely $\calG$ is a 
free and proper left $(\calG\ltimes \calG/\calH)$-space and a free and 
proper right $\calH$-space via the groupoid actions,
\begin{align*}
   &(\xi,[\gamma])\cdot \eta = \xi\eta,\quad s(\xi)=r(\gamma)=r(\eta),  
   &&\gamma\cdot \eta = \gamma\eta,\quad s(\gamma)=r(\eta).
\end{align*}

 In particular $C_{c}(\mathcal{G})$ 
can be completed into Morita equivalence bimodules for both the 
full and reduced $C^*$-algebras of $\calH$ and $\calG \ltimes \calG/\calH$~\cite{MRW,SimsWilliams}. 
In case the $2$-cocycles on $\calH$ and $\calG \ltimes \calG/\calH$ are compatible (e.g. if 
both are inherited from a fixed $2$-cocycle on $\calG$), then 
the full twisted groupoid $C^*$-algebras are Morita equivalent by~\cite[Theorem 9.1]{Daenzer09}. 
Morita equivalence 
was extended to the reduced $C^*$-algebras of Fell bundles in~\cite{MT11,  MoutuouThesis, SimsWilliamsFell}, 
which includes twisted reduced groupoid $C^*$-algebras (see~\cite[Proposition 6.2]{MT11}).
We briefly review this construction for the special case in which we are working.

We define a left-action of $C_c(\calG\ltimes \calG/\calH,\sigma)$ on $C_c(\calG,\sigma)$ 
(seen as a right $C_c(\calH,\sigma)$-module) by the formula
$$
  (\pi(g)f)(\gamma) = \sum_{\xi \in r^{-1}(r(\gamma))} g(\xi,[\xi^{-1} \gamma]) f(\xi^{-1}\gamma) 
     \sigma(\xi, \xi^{-1}\gamma), \quad g\in C_c(\calG\ltimes\calG/\calH,\sigma), \,\, f\in C_c(\calG,\sigma).
$$
As we argue below, this action extends to an isomorphism
$$ C^*_r(\calG \ltimes \calG/\calH, \sigma) \xrightarrow{\simeq} \mathbb{K}(E_{C^*_r(\calH,\sigma)}),$$
to obtain the following result.

\begin{prop}[\cite{MT11}, Theorem 5.5, \cite{SimsWilliams}, Theorem 4.1,  \cite{SimsWilliamsFell}, Theorem 14] \label{prop:twisted_morita_reduced}
The $C^*$-module $E_{C^*_r(\calH,\sigma)}$ is a Morita equivalence bimodule 
between the $C^{*}$-algebras $C^*_r(\calH,\sigma)$ and $C^*_r(\calG\ltimes \calG/\calH,\sigma)$.
\end{prop}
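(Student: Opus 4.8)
**Proof plan for Proposition (Morita equivalence of twisted reduced groupoid $C^*$-algebras).**

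The plan is to verify directly that $C_c(\calG,\sigma)$, completed in the $C^*_r(\calH,\sigma)$-valued inner product to $E_{C^*_r(\calH,\sigma)}$, is a full right Hilbert module and that the left action $\pi$ of $C_c(\calG\ltimes\calG/\calH,\sigma)$ defined by the displayed formula extends to an isomorphism onto $\mathbb{K}(E_{C^*_r(\calH,\sigma)})$; once this is in hand, the bimodule is automatically an imprimitivity bimodule and the proposition follows. First I would record that fullness of the inner product is essentially the statement that $\rho_\calH(C_c(\calG,\sigma)^*\ast C_c(\calG,\sigma))$ spans a dense subspace of $C_c(\calH,\sigma)$, which one checks using an $s$-cover of $\calG$ as in Definition \ref{def:s_cover} and Proposition \ref{prop:s-cover_frame}: the frame elements $\chi_i$ restricted appropriately produce, via $(\chi_i\mid h\cdot(\text{translate}))$, an arbitrary element of $C_c(\calH,\sigma)$ up to a uniformly finite sum.

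The substantive work is the identification $C^*_r(\calG\ltimes\calG/\calH,\sigma)\xrightarrow{\simeq}\mathbb{K}(E_{C^*_r(\calH,\sigma)})$. I would proceed in three steps. (i) \emph{Algebraic compatibility.} Using the groupoid structure maps and $2$-cocycle on $\calG\ltimes\calG/\calH$ given in the excerpt, verify by direct (if tedious) computation that $\pi$ is a $*$-homomorphism for the twisted convolution on $C_c(\calG\ltimes\calG/\calH,\sigma)$, that it is compatible with the right $C_c(\calH,\sigma)$-action, and that the rank-one operators are recovered as $\Theta_{f_1,f_2}=\pi(\langle f_1,f_2\rangle_{\calG\ltimes\calG/\calH})$ where $\langle f_1,f_2\rangle_{\calG\ltimes\calG/\calH}(\xi,[\gamma])=\sum_{\eta\in r^{-1}(r(\gamma))} f_1(\xi\eta)\,\overline{f_2(\eta)}\,\sigma(\xi\eta,\eta^{-1})\sigma(\cdots)$ (with the cocycle factors arranged so that $f^*\ast f$ in the large groupoid matches the kernel of $\Theta_{f,f}$); the cocycle identity $\sigma(\gamma,\xi)\sigma(\gamma\xi,\eta)=\sigma(\gamma,\xi\eta)\sigma(\xi,\eta)$ and normalisation are exactly what make these identities close up. This shows $\pi(C_c(\calG\ltimes\calG/\calH,\sigma))$ sits densely inside $\mathbb{K}(E_{C^*_r(\calH,\sigma)})$ and that $\pi$ is bounded for the $\End^*$-norm, hence extends to the reduced completion. (ii) \emph{Injectivity / isometry.} To see that $\pi$ descends to an isometry on $C^*_r(\calG\ltimes\calG/\calH,\sigma)$ rather than merely a quotient, I would compare the regular representation of $\calG\ltimes\calG/\calH$ on $\bigoplus_{[\gamma]}\ell^2$ of the orbit with the action on $E_{C^*_r(\calH,\sigma)}$ localised at points of $\calG^{(0)}$ via the fibrewise states $\mathrm{ev}_x\circ\rho_\calH$: the induced representations of $C^*_r(\calG\ltimes\calG/\calH,\sigma)$ exhaust enough of the reduced norm, so $\|\pi(g)\|\ge\|g\|_{C^*_r}$. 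Combined with boundedness from (i) this gives the isometric isomorphism onto $\mathbb{K}(E_{C^*_r(\calH,\sigma)})$. (iii) Assemble: $E_{C^*_r(\calH,\sigma)}$ is then a full Hilbert $C^*_r(\calH,\sigma)$-module whose compacts are $C^*_r(\calG\ltimes\calG/\calH,\sigma)$ acting faithfully, i.e.\ an imprimitivity bimodule, which is the claim.

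Alternatively — and this is the route I would actually take to keep the exposition short — I would invoke the cited equivalence theorems for Fell bundles over equivalent groupoids (\cite{MT11}, \cite{SimsWilliams}, \cite{SimsWilliamsFell}): the excerpt already observes that $\calG$ implements an equivalence between $\calG\ltimes\calG/\calH$ and $\calH$ in the sense of \cite{MRW}, and that the $2$-cocycles on both legs are restrictions of $\sigma$, hence compatible; the reduced Fell-bundle equivalence theorem then yields the Morita equivalence of reduced $C^*$-algebras directly, and one only needs to check that the abstract equivalence bimodule produced there is the concrete completion $E_{C^*_r(\calH,\sigma)}$ with the inner product and actions written above — which is a matter of matching the defining formulas. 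The main obstacle in either approach is bookkeeping: getting the $2$-cocycle factors in the left action, the inner product, and the rank-one operators to line up so that $\Theta_{f_1,f_2}=\pi(\langle f_1,f_2\rangle)$ holds on the nose, since a normalised cocycle still contributes nontrivial phases under the inversion $(\xi,[\gamma])^{-1}=(\xi^{-1},[\xi\gamma])$ and these must cancel correctly against $\sigma(\xi,\xi^{-1})=1$.
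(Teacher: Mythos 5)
Your proposal is correct in outline, but both of your routes differ from the one the paper actually takes. The paper's proof introduces the \emph{linking groupoid}
\[
L = (\calG\ltimes \calG/\calH)\sqcup\calG\sqcup\calG^{\mathrm{op}}\sqcup\calH,
\]
equips it with the glued cocycle $\hat\sigma$ agreeing with $\sigma$ on each piece, and then invokes the argument of Sims--Williams for the (untwisted) reduced equivalence theorem, citing Renault (Ch.\ II, Lemma 2.5) and Moutuou for the observation that the twisted convolution algebra machinery goes through unchanged. The virtue of this route is that the delicate norm comparison you isolate in your step (ii) --- showing the dense $*$-homomorphism $\pi:C_c(\calG\ltimes\calG/\calH,\sigma)\to\End^*(E)$ is bounded below by the reduced norm, not merely above --- is handled once and for all by exhibiting both $C^*_r(\calH,\sigma)$ and $C^*_r(\calG\ltimes\calG/\calH,\sigma)$ as complementary full corners of $C^*_r(L,\hat\sigma)$, so the Morita equivalence drops out of corner/projection formalism without a hands-on verification that the left action lands isomorphically onto $\mathbb{K}(E)$.

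Your \emph{primary} route (direct verification: fullness via the $s$-cover frame, then building the identification with $\mathbb{K}(E)$ from the rank-one formula $\Theta_{f_1,f_2}=\pi({}_{\calG\ltimes\calG/\calH}\langle f_1,f_2\rangle)$) is valid but front-loads exactly the work the linking groupoid is designed to avoid; in particular the injectivity/isometry step (ii) as you sketch it needs more care than ``compare with fibrewise states,'' since the unit spaces $\calG/\calH$ and $\calH^{(0)}=\calG^{(0)}$ are different, and the localisation of $E_{C^*_r(\calH,\sigma)}$ at $x\in\calG^{(0)}$ is a direct integral of regular representations of $\calG\ltimes\calG/\calH$ rather than a single one, so one must argue that these exhaust the reduced norm. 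Your \emph{alternative} route (invoke the reduced Fell-bundle equivalence theorems of Moutuou--Tu and Sims--Williams) is the one the paper explicitly sets aside with the remark that ``the more general Fell bundle setting requires more machinery''; it is logically sufficient, but the paper prefers the lighter linking-groupoid adaptation since only the constant $U(1)$- (or $O(1)$-) valued twist is needed here. In short: the result is robust and all three approaches work, but the paper's is the minimal modification of Sims--Williams, yours (primary) is the most elementary but also most laborious, and your fallback (Fell bundles) is the heaviest hammer.
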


This statement is derived from the proof in~\cite{SimsWilliams}  with fairly minor 
alterations. The more general Fell bundle setting requires more machinery, see~\cite{MT11, SimsWilliamsFell}. 
We define the linking groupoid as the topological disjoint union, 
$$
  L = (\calG\ltimes \calG/\calH) \sqcup \calG \sqcup \calG^\mathrm{op} \sqcup \calH
$$
where $\calG^\mathrm{op}$ is the opposite groupoid 
$\calG^\mathrm{op} = \{\ol{\gamma}\,:\, \gamma\in\calG\}$, which we can equip with a 
$2$-cocycle $ \sigma^\mathrm{op}(\ol{\gamma_1}, \ol{\gamma_2}) = \sigma(\gamma_2,\gamma_1)$. 
As the name suggests, $L$ is a groupoid with unit space $\calG/\calH \sqcup \calH$ and 
source and range maps inherited from the groupoid structure on its parts~\cite[Lemma 2.1]{SimsWilliams}. 
We can consider the twisted convolution algebra of $L$, with respect to the cocycle $\hat{\sigma}:L\to \mathbb{T}$ 
which coincides with the given cocycles on each of the components of the disjoint union.

The algebraic machinery used in~\cite{MuhlyWilliams, SimsWilliams} also works in the twisted case (see 
~\cite[Chapter II, Lemma 2.5]{Renault80} or~\cite[Chapter 5]{MoutuouThesis}) and the 
argument in~\cite{SimsWilliams} follows through to obtain the result.

\subsection{Exact cocycles and unbounded $KK$-cycles} \label{sec:bulk_Kasmod}

We now discuss the construction of $KK$-cycles from the data of a continuous 1-cocycle $c:\mathcal{G}\to \mathbb{R}^{n}$ which is \emph{exact}
in the sense of \cite[Definition 3.3]{MeslandGpoid}. We will assume $\mathcal{G}$ is \'{e}tale, so that in this higher dimensional setting exactness entails that $\Ker(c)$ admits a Haar system  and the map
\[r\times c:\mathcal{G}\to\mathcal{G}^{(0)}\times\mathbb{R}^{n},\quad \xi\mapsto (r(\xi),c(\xi)),\]
is a quotient map onto its image.

Given $\calH = \Ker(c)$ a closed subgroupoid of $\calG$, 
we will construct a $KK$-cycle from $c$ supported on the module $E_{C^{*}_{r}(\mathcal{H},\sigma)}$ 
constructed in the previous section. We use the representation of $C_c(\calG,\sigma)$ on 
$E_{C^{*}_{r}(\calH,\sigma)}$ by left-multiplication, 
$\pi(f_1)f_2 = f_1\ast f_2$ for $f_2\in C_c(\calG,\sigma)\subset E_{C^{*}_{r}(\mathcal{H},\sigma)}$. 
Again by \cite[Theorem 1.4]{SOU} this action extends to a representation of $C^*_r(\calG,\sigma)$.

The components of the exact cocycle $c:\calG\to \R^n$ give $n$ real cocycles 
$c_{k}(\xi):=(\pi_{k}\circ c)(\xi)$ by composition with the $k$-th coordinate projection 
$$\pi_{k}:\mathbb{R}^{n}\to \mathbb{R}, \quad x=(x_{1}\cdots , x_{n})\mapsto x_{k}.$$ 
Following~\cite{Renault80},  
 $C^{*}_{r}(\mathcal{G},\sigma)$ has $n$ mutually commuting one-parameter groups of 
automorphisms 
$\{u_t^{(k)}\}_{k=1}^n$, which on $C_c(\calG, \sigma)$ are given by
$$
   (u_t^{(k)} f)(\xi) =  e^{it c_k(\xi)} f(\xi),\qquad t\in\R
$$ 
with $c_k=\pi_{k}\circ c$ as above.
The generators of these automorphisms are derivations
$\{\partial_j\}_{j=1}^n$ on $C_c(\calG,\sigma)$, 
where $(\partial_j f)(\xi) = c_j(\xi) f(\xi)$ (pointwise multiplication). 
We denote by $D_{c_j}$ the extension of these derivations to an unbounded operator 
on $E_{C^*_r(\calH,\sigma)}$.

We use this differential structure to define an unbounded operator that plays the r\^{o}le of 
an elliptic differential operator. 
Our construction mimics the construction of the elements $\alpha$ and $\beta$ 
in~\cite[Section 5]{Kasparov80} and, as such, uses the exterior algebra $\bigwedge^*\R^n$. 
We briefly establish our Clifford algebra notation, where 
$Cl_{r,s}$ is the (real) $\Z_2$-graded $C^*$-algebra generated by the mutually 
anti-commuting 
odd elements $\{\gamma^j\}_{j=1}^r$, $\{\rho^k\}_{k=1}^s$ such that
$$
  (\gamma^j)^2=1, \qquad (\gamma^j)^* = \gamma^j, \qquad (\rho^k)^2 = -1, \qquad (\rho^k)^*=-\rho^k.
$$
The exterior algebra $\bigwedge^*\R^n$ has
representations of $Cl_{0,n}$ and $Cl_{n,0}$
with generators
\begin{align*}
  &\rho^j(\omega) = e_j\wedge \omega - \iota(e_j)\omega, 
  &\gamma^j(\omega) = e_j \wedge \omega + \iota(e_j)\omega,
\end{align*}
where $\{e_j\}_{j=1}^n$ is the standard basis of $\R^n$ 
and $\iota(\nu)\omega$ is the contraction of $\omega$ along $\nu$. 
One readily checks that $\rho^j$ and $\gamma^j$ mutually anti-commute 
and generate representations of $Cl_{0,n}$ and $Cl_{n,0}$ 
respectively. An analogous construction holds in the complex case 
where $\End_{\C}(\bigwedge^*\C^n) \cong \C l_{n}\hat\otimes \C l_{n}$, 
where the two representations graded-commute.

\begin{prop} \label{prop:bulk_kasmod} 
Let $\mathcal{G}$ be an \'{e}tale groupoid and $c:\mathcal{G}\to \mathbb{R}^{n}$ an exact cocycle with kernel $\mathcal{H}$.
The triple 
$$
 {}_{n}\lambda_{\calH}^{c} = \bigg( C_{c}(\calG,\sigma) \hat\otimes Cl_{0,n}, \, E_{C^{*}_{r}(\mathcal{H},\sigma)}\hat\otimes 
    \bigwedge\nolimits^{\! *} \R^n, \, D_{c}= \sum_{j=1}^n D_{c_j} \hat\otimes \gamma^j \bigg)
$$
is an unbounded real Kasparov module for $(C^{*}_{r}(\mathcal{G},\sigma),C^{*}_{r}(\mathcal{H},\sigma))$. 
If we use complex algebras and $\bigwedge^*\C^n$, 
the Kasparov module is complex.
\end{prop}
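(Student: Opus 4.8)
The plan is to verify the three defining conditions of an unbounded Kasparov module: that $D_c$ is self-adjoint and regular, that $[D_c,\pi(f)]_\pm$ is adjointable for $f\in C_c(\calG,\sigma)$, and that $\pi(f)(1+D_c^2)^{-1}$ is compact. Throughout, I would work on the dense submodule $C_c(\calG,\sigma)\hat\otimes\bigwedge^*\R^n$ and exploit the $s$-cover/frame machinery of Proposition~\ref{prop:s-cover_frame}, together with the one-parameter automorphism groups $\{u_t^{(k)}\}$ whose generators are the $\partial_j$. First I would record that each $D_{c_j}$ is the generator (up to the Clifford factor) of the strongly continuous one-parameter unitary group obtained by completing $u_t^{(j)}$ to the module $E_{C^*_r(\calH,\sigma)}$: since $c_j$ is a real-valued cocycle, the action $(u_t^{(j)}f)(\xi)=e^{itc_j(\xi)}f(\xi)$ preserves both the $C^*_r(\calH,\sigma)$-valued inner product and the right action, so it extends to a unitary group, and $D_{c_j}$ is its (self-adjoint, regular) infinitesimal generator by Stone's theorem for $C^*$-modules. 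The $n$ groups mutually commute because the $c_j$ are just coordinates of a single cocycle, so the $D_{c_j}$ strongly commute; hence $D_c=\sum_j D_{c_j}\hat\otimes\gamma^j$ is self-adjoint and regular and $D_c^2=\sum_j D_{c_j}^2\hat\otimes 1$ since the $\gamma^j$ anticommute and square to $1$.

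Next I would compute the commutator. On $C_c(\calG,\sigma)$ one has, for $f_1,f_2\in C_c(\calG,\sigma)$,
\[
  \big(D_{c_j}(f_1\ast f_2)\big)(\xi)=c_j(\xi)\sum_{\xi=\alpha\beta}f_1(\alpha)f_2(\beta)\sigma(\alpha,\beta),
\]
and using the cocycle identity $c_j(\xi)=c_j(\alpha)+c_j(\beta)$ for $\xi=\alpha\beta$ one splits this into $(\partial_j f_1)\ast f_2 + f_1\ast(\partial_j f_2)$, so that $[D_{c_j},\pi(f_1)]=\pi(\partial_j f_1)$ as operators on $C_c(\calG,\sigma)$. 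Since $\partial_j f_1\in C_c(\calG,\sigma)$, this commutator extends to an adjointable (indeed bounded) operator, and the graded commutator $[D_c,\pi(f)\hat\otimes 1]_\pm = \sum_j\pi(\partial_j f)\hat\otimes\gamma^j$ is therefore in $\End^*(E_{C^*_r(\calH,\sigma)}\hat\otimes\bigwedge^*\R^n)$, establishing condition (2).

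For compactness, the key observation is that $D_c^2 = \sum_j D_{c_j}^2\hat\otimes 1$ acts, after the identification $r\times c:\calG\to\calG^{(0)}\times\R^n$, as multiplication by $|c(\xi)|^2$; exactness of $c$ (which makes $r\times c$ a quotient map onto its image and $\calH=\Ker(c)$ have a Haar system) is exactly what is needed to control this. I would show that for $f\in C_c(\calG,\sigma)$ the operator $\pi(f)(1+D_c^2)^{-1}$ can be approximated in norm by finite-rank operators built from the frame $\{\chi_i\}$: writing $\pi(f)(1+D_c^2)^{-1} = \lim_n u_n\pi(f)(1+D_c^2)^{-1}$ using Proposition~\ref{prop:s-cover_frame}, and noting that on each $V_i$ the function $\chi_i$ has precompact support so $|c|$ is bounded there, one gets that $\Theta_{\chi_i,\chi_i}\pi(f)(1+D_c^2)^{-1}$ is a finite sum of rank-one operators of the form $\Theta_{\chi_i, g}$ with $g\in C_c(\calG,\sigma)$ — because $\rho(\chi_i^*\ast\pi(f)(1+D_c^2)^{-1}(\cdot))$ lands back in a compactly supported function after resolvent smoothing. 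Taking the limit over $n$ and then over $f$ in an approximate unit shows $\pi(f)(1+D_c^2)^{-1}\in\mathbb{K}(E_{C^*_r(\calH,\sigma)}\hat\otimes\bigwedge^*\R^n)$, giving condition (3). The real structure is automatic: the generators $\gamma^j$ generate $Cl_{n,0}$ on $\bigwedge^*\R^n$, the $D_{c_j}$ are real operators, and $C_c(\calG,\sigma)\hat\otimes Cl_{0,n}$ acts on the right commuting with everything, so the module is a genuine real (resp. complex, in the $\bigwedge^*\C^n$ case) Kasparov module.

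The main obstacle I expect is the compactness condition (3): one must argue carefully that the resolvent $(1+D_c^2)^{-1}$ does not destroy compact support too badly — it does not give a compactly supported function, but it does yield a rapidly decaying one, and the honest argument requires pairing the rapid decay in the "$c$-direction" (from the resolvent) against the local finiteness of the $s$-cover and precompactness of the $V_i$ (so that on each frame element the fibres $r^{-1}(x)\cap V_i$ are controlled). This is where exactness of the cocycle — the quotient-map property of $r\times c$ — is really used, since it is what lets one treat $D_c^2$ as a genuine multiplication operator by $|c|^2$ in the relevant coordinates. Everything else is a routine, if somewhat tedious, verification following the template of~\cite[Section 5]{Kasparov80} and~\cite{MeslandGpoid}.
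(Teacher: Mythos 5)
Your commutator computation reproduces the paper's argument exactly, and your identification of $D_c^2$ as $c^2\hat\otimes 1$ is the structural heart of both proofs. For self-adjointness and regularity you take a longer route than the paper: you invoke Stone's theorem for the one-parameter unitary groups $u_t^{(j)}$ on $E_{C^*_r(\calH,\sigma)}$ and then argue that the $D_{c_j}$ strongly commute; the paper instead observes directly that $D_c$ is pointwise multiplication by the self-adjoint Clifford-valued function $\xi\mapsto\sum_j c_j(\xi)\gamma^j$, so $(D_c\pm i)$ are pointwise invertible (phrased as ``$1+D_c^2$ has dense range''). Your approach is valid, but be aware that sums of graded-commuting self-adjoint regular operators are not regular for free in general, and the shortest justification is still the multiplication-operator picture.

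The genuine gap is in the compactness condition. The paper cites \cite[Theorem 3.9]{MeslandGpoid} for the compactness of $(1+D_c^2)^{-1}$, and that citation is precisely where exactness of $c$ (the quotient-map property of $r\times c$) is consumed. Your attempted direct replacement does not close: you write $\pi(f)(1+D_c^2)^{-1}=\lim_n u_n\pi(f)(1+D_c^2)^{-1}$ and claim finite-rank approximation, but Proposition~\ref{prop:s-cover_frame} only gives that $u_n$ is an approximate unit for the compacts, i.e.\ $u_n\to 1$ strictly. Strict convergence gives \emph{norm} convergence of $u_nA$ to $A$ only when $A$ is already compact, which is the thing to be proved, so the step as written is circular. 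The honest estimate is to show $\|(1-u_n)(1+c^2)^{-1}\|_{\End^*(E)}\to 0$, which requires controlling how the function $(1+c^2)^{-1}$ interacts with the $s$-cover and the fibres of $r\times c$; this is nontrivial and is the actual content of the cited theorem. You acknowledge this is ``the main obstacle,'' but you do not fill it. You should either cite \cite[Theorem 3.9]{MeslandGpoid} as the paper does, or carry out this estimate explicitly. One small clarification on your closing remark: the left $Cl_{0,n}$-action in ${}_n\lambda_{\calH}^c$ is via the generators $\rho^j$, not the $\gamma^j$ (which generate $Cl_{n,0}$); what makes the triple a Kasparov module with that left algebra is that each $\rho^j$ graded-commutes with every $\gamma^k$, hence with $D_c$.
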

\begin{proof}
The essential self-adjointness and regularity of $D$ follow since the subset 
$$C_{c}(\mathcal{G},\sigma)\hat\otimes 
    \bigwedge\nolimits^{\! *} \R^n \subset E_{C^{*}_{r}(\mathcal{H},\sigma)}\hat\otimes 
    \bigwedge\nolimits^{\! *} \R^n,$$
    is a core for $D$ and $D^{2}=c^{2}\hat\otimes 1_{\bigwedge^*\R^d}$ with 
$(c^2 f)(\xi) = (c_1(\xi)^2+\cdots +c_n(\xi)^2)f(\xi).$ 
Therefore $1+D^{2}$ has dense range. We note that in particular
    $$(1+D^2)^{-1} = (1+c^2)^{-1}\hat\otimes 1_{\bigwedge^*\R^n}.$$ 
    Using exactness of $c$, the same argument 
as~\cite[Theorem 3.9]{MeslandGpoid} can now be applied to show that $(1+D^2)^{-1}$ is compact in $E_{C^{*}_{r}(\mathcal{H})}$.
 For $f\in C_{c}(\mathcal{G},\sigma)$, a simple computation using the regular representation 
 gives that 
$$
  [D_c, \pi(f)] = \sum_{j=1}^n [D_{c_j},\pi(f)]\hat\otimes \gamma^j = \sum_{j=1}^n \pi(\partial_j f) \hat\otimes \gamma^j
$$
which is adjointable as $C_{c}(\mathcal{G},\sigma)$ is invariant under the derivations $\{\partial_j\}_{j=1}^n$. 
\end{proof}

We remark that there is additional structure on the $KK$-cycles constructed in Proposition 
\ref{prop:bulk_kasmod}. Namely, using the action of $\mathrm{Spin}_{n,0}$ or $\mathrm{Spin}_{0,n}$ 
on $\bigwedge^* \R^n$ defined in~\cite[{\S}2.18]{Kasparov80} and using the 
notation from~\cite[{\S}5]{Kasparov80}, the unbounded $KK$-cycle ${}_{n}\lambda_{\calH}^{c}$ 
determines a class in the equivariant Kasparov group $KKO^{\R^n}_{\mathrm{Spin}_n}(C^*_r(\calG,\sigma),C^{*}_{r}(\mathcal{H},\sigma))$ 
or $KK^{\C^n}_{\mathrm{Spin}_n}(C^*_r(\calG,\sigma),C^{*}_{r}(\calH,\sigma))$. We can then restrict 
the $C^*$-module $E_{C^{*}_{r}(\mathcal{H},\sigma)}\hat\otimes \bigwedge^*\R^n$ to the irreducible 
spinor representation space. By~\cite[{\S}5, Lemma 1]{Kasparov80}, this restriction gives 
 an isomorphism of $KK$-groups.

For concreteness, we write out the unbounded representatives of the spinor Kasparov modules explicitly.
Denote by $S_n^\C$ and $S_n$ the (trivial) complex and real spinor 
bundles of $\R^n$ and let $K_n=\R$, $\C$ or $\mathbb{H}$ be the maximal commuting subalgebra for the 
irreducible (real) representation of $Cl_{n,0}$ on $S_n$ (see~\cite[Chapter I, {\S}5]{Spingeo}).
\begin{prop} \label{prop:spin_KasMod}
Let $\mathcal{G}$ be an \'{e}tale groupoid and $c:\mathcal{G}\to \mathbb{R}^{n}$ an exact cocycle and $\calH:=\Ker(c)$. 
Then the triple 
$$
  {}_{n}\lambda_{\calH}^{S_\C} = \bigg( C_{c}(\mathcal{G},\sigma), \, E_{C^{*}_{r}(\mathcal{H},\sigma)} \hat\otimes S_n^\C, \, 
   \sum_{j=1}^n D_{c_j} \hat\otimes \gamma^j \bigg) 
$$
is a complex Kasparov module of parity $n \,\,\mathrm{mod}\, 2$. 

Let $S_n$ be the real spinor bundle of $\R^n$. If $n\not\equiv 1\,\mathrm{mod}\,4$, then 
$$
  {}_{n}\lambda_{\calH}^{S} = \bigg( C_{c}(\mathcal{G},\sigma), \, \big(E \hat\otimes S_n \big)_{C^{*}_{r}(\mathcal{H},\sigma)\hat\otimes K_n}, \, 
   \sum_{j=1}^n D_{c_j} \hat\otimes \gamma^j \bigg), \qquad 
   K_n = \begin{cases}  \R, & n=0,2 \,\,\mathrm{mod}\, 8, \\ \C, & n= 3\,\,\mathrm{mod}\,4, \\  \mathbb{H}, & n=4,6\,\,\mathrm{mod}\,8 \end{cases}
$$
is a real graded unbounded Kasparov module.  If $n\equiv 1\,\mathrm{mod}\,4$, then
$$
 {}_{n}\lambda_{\calH}^{S} = \bigg( C_{c}(\mathcal{G},\sigma) \hat\otimes Cl_{0,1},\, \begin{pmatrix} E_{C^{*}_r(\calH,\sigma)} \otimes S_{n} \\  E_{C^{*}_r(\calH,\sigma)} \otimes S_{n} 
  \end{pmatrix}_{ K_n}, \, \Big(\sum_{j=1}^n D_{c_j} \otimes \gamma^j \Big) \hat\otimes \sigma_1  \bigg), 
    \,\,\, K_n = \begin{cases} \R, & n=1\,\mathrm{mod}\,8, \\ \mathbb{H}, & n=5\,\mathrm{mod}\, 8 \end{cases}
$$
is an unbounded Kasparov module, where the left-action of $Cl_{0,1}$ is generated by $\begin{pmatrix} 0 & -1 \\ 1 & 0 \end{pmatrix}$.
\end{prop}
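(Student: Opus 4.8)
The plan is to deduce Proposition \ref{prop:spin_KasMod} from Proposition \ref{prop:bulk_kasmod} by restricting the $C^*$-module $E_{C^*_r(\calH,\sigma)}\hat\otimes\bigwedge^*\R^n$ (respectively its complexification) to an irreducible representation space for one of the two commuting Clifford algebras, exactly as indicated in the remark following Proposition \ref{prop:bulk_kasmod}. Concretely, I would first recall that in Proposition \ref{prop:bulk_kasmod} the operator $D_c=\sum_j D_{c_j}\hat\otimes\gamma^j$ uses only the $\gamma^j$-representation of $Cl_{n,0}$, while the commuting $\rho^k$-representation of $Cl_{0,n}$ supplies the right-$Cl_{0,n}$-module structure making ${}_n\lambda_\calH^c$ a $(C^*_r(\calG,\sigma),C^*_r(\calH,\sigma)\hat\otimes Cl_{0,n})$-cycle. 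The point of passing to spinors is to absorb this auxiliary Clifford algebra: by \cite[\S5, Lemma 1]{Kasparov80} (the periodicity/regrading lemma) the class is unchanged, and an explicit representative is obtained by tensoring $E_{C^*_r(\calH,\sigma)}$ with the spinor bundle $S_n^\C$ or $S_n$ on which $Cl_{n,0}$ acts irreducibly, with $D_c$ acting via the corresponding $\gamma^j$.

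The second step is bookkeeping of the gradings and of the division ring $K_n$ governing the real representation theory of $Cl_{n,0}$, following \cite[Chapter I, \S5]{Spingeo}. In the complex case the only subtlety is parity: $\C l_n$ is $\Z_2$-graded-simple for $n$ even, giving an even cycle, and for $n$ odd one works with the ungraded spinor space, giving an odd cycle — hence parity $n \bmod 2$. In the real case one tracks when $Cl_{n,0}$ admits a graded irreducible representation over which the $D_{c_j}\hat\otimes\gamma^j$ are odd and self-adjoint: for $n\not\equiv1\bmod 4$ the maximal commuting subalgebra $K_n\in\{\R,\C,\HH\}$ sits on the right and the module is $(E\hat\otimes S_n)_{C^*_r(\calH,\sigma)\hat\otimes K_n}$ with the stated case distinction $K_n=\R$ for $n\equiv0,2\bmod 8$, $\C$ for $n\equiv3\bmod4$, $\HH$ for $n\equiv4,6\bmod 8$. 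For $n\equiv1\bmod 4$ there is no graded irreducible representation, so one passes to the ungraded spinor space, doubles it, and reinstates an odd grading by tensoring with $\sigma_1=\left(\begin{smallmatrix}0&1\\1&0\end{smallmatrix}\right)$ and introducing the left $Cl_{0,1}$-action generated by $\left(\begin{smallmatrix}0&-1\\1&0\end{smallmatrix}\right)$; the case $K_n=\R$ for $n\equiv1\bmod8$, $\HH$ for $n\equiv5\bmod8$ again follows from \cite[Chapter I, \S5]{Spingeo}.

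Finally I would verify that the three Kasparov-module axioms survive the restriction, which is essentially automatic: self-adjointness and regularity hold because $C_c(\calG,\sigma)\hat\otimes S_n$ remains a core and $D_c^2=c^2\hat\otimes 1_{S_n}$ is still a nonnegative operator with $(1+D_c^2)^{-1}=(1+c^2)^{-1}\hat\otimes 1_{S_n}$; compactness of $(1+D_c^2)^{-1}$ on the restricted module follows from compactness on $E_{C^*_r(\calH,\sigma)}$ established in Proposition \ref{prop:bulk_kasmod} together with finite-dimensionality of $S_n$; and $[D_c,\pi(f)]=\sum_j\pi(\partial_j f)\hat\otimes\gamma^j$ is adjointable by invariance of $C_c(\calG,\sigma)$ under the $\partial_j$, the $Cl_{0,1}$-factor in the $n\equiv1\bmod4$ case commuting with everything. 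I expect the only genuinely delicate point to be the last one: matching the grading and real-structure conventions so that the left-action, the odd self-adjoint operator, and the right $K_n$-module structure are simultaneously consistent in each residue class modulo $8$. This is a standard but fiddly Clifford-algebra computation; once the isomorphism $\End(\bigwedge^*\R^n)$-picture of Proposition \ref{prop:bulk_kasmod} is in hand, it reduces to quoting the representation-theoretic table in \cite[Chapter I, \S5]{Spingeo} and \cite[\S5, Lemma 1]{Kasparov80}.
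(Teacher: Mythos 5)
Your proof is correct and follows the route the paper itself indicates in the remark immediately preceding Proposition \ref{prop:spin_KasMod}: restrict the oriented Kasparov module of Proposition \ref{prop:bulk_kasmod} to the irreducible spinor module, invoke \cite[\S 5, Lemma 1]{Kasparov80} for the isomorphism of $KK$-groups, and read off $K_n$ together with the graded/ungraded case distinction from the real Clifford representation tables in \cite[Chapter I, \S 5]{Spingeo}. The paper states the proposition without a written proof, so your verification of the Kasparov-module axioms on the restricted module (self-adjointness and regularity via the core, compact resolvent, adjointable commutators) is precisely what was omitted and is sound; the only small slip is that Proposition \ref{prop:bulk_kasmod} places $Cl_{0,n}$ as a factor of the \emph{left} algebra $C_c(\calG,\sigma)\hat\otimes Cl_{0,n}$ acting via $\rho$, while you describe $\rho$ as supplying a right-$Cl_{0,n}$-module structure --- these are $KK$-equivalent pictures, but a right-module formulation would carry $Cl_{n,0}$, not $Cl_{0,n}$.
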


The spinor Kasparov modules have the advantage that the left algebra is no longer 
graded, which is useful if we wish to apply the local index formula (for complex semifinite 
spectral triples constructed from ${}_{n}\lambda_{\calH}^{S_\C}$). We will predominantly work 
with the `oriented' Kasparov module ${}_{n}\lambda_{\calH}^{c}$ and 
class $[{}_{n}\lambda_{\calH}^{c}]\in KK^n(C^*_r(\calG,\sigma),C^{*}_{r}(\calH,\sigma))$ (real or complex)  
as the representations are more 
tractable and we can work in the real or complex category interchangeably. Though 
we emphasise that at the level of $K$-groups (and up to a possible normalisation), there is 
no loss of information working with either the spin or oriented $KK$-cycles.

\subsection{The extension class of an $\mathbb{R}$-valued cocycle} \label{subsec:exact_extension}
Consider the Kasparov module from Proposition \ref{prop:bulk_kasmod}. In case $n=1$ we obtain an 
ungraded Kasparov module to which we can associate an extension of $C^{*}$-algebras with 
positive semi-splitting.

In this section we fix $\varepsilon>0$ and a continuous non-decreasing 
function $\chi_{+}:\mathbb{R}\to\mathbb{R}$ satisfying
\[\chi_{+}(x):=\left\{\begin{matrix}  0  &\textnormal{if } x\leq -\varepsilon \\ 1  & \textnormal{if } x\geq 0. \end{matrix}\right.\]
\begin{lemma}  \label{lem:pi_c_almost_proj}
The operator 
\[\Pi_{c}:C_{c}(\mathcal{G},\sigma)\to C_{c}(\mathcal{G},\sigma), \quad \Pi_{c}f(\eta):=\chi_{+}(c(\eta))f(\eta),\] 
extends to a self-adjoint operator $\Pi_{c}\in \End^{*}_{C^{*}_{r}(\mathcal{H},\sigma)}(E)$ with $\|\Pi_{c}\|\leq 1$. 
For all $f\in C_{c}(\mathcal{G},\sigma)$ it holds that $\pi(f)(\Pi_{c}^{2}-\Pi_{c})\in\mathbb{K}(E)$.
\end{lemma}
\begin{proof}
The operator $D_{c}$ is self-adjoint and regular in $E_{C^{*}_{r}(\mathcal{H},\sigma)}$ and $\Pi_{c}:=\chi_{+}(D)$ as defined by the continuous functional calculus. It follows  that $\Pi_{c}$ is a selfadjoint operator on $E_{C^{*}_{r}(\mathcal{H},\sigma)}$. 

The action of $\Pi_{c}^{2}-\Pi_{c}$ is implemented by the function 
$K_{c}\in C_{c}(\mathcal{G}\ltimes \mathcal{G}/\mathcal{H},\sigma)$
\[K_{c}(\xi,[\eta])=(\chi_{c}(c(\eta))^{2}-\chi_{c}(c(\eta)))f(\xi),\]
and thus defines a compact operator.
\end{proof}

The same argument as the proof of Lemma \ref{lem:pi_c_almost_proj} 
shows that the (ungraded) Kasparov module $( C^*_r(\calG,\sigma), E_{C^*_r(\calH,\sigma)}, 2 \Pi_c -1)$ 
represents the same class in $KKO^1(C^*_r(\calG,\sigma),C^*_r(\mathcal{H},\sigma))$ (or complex) as  the bounded 
representative of 
$\big(C_c(\calG, \sigma)\hat\otimes Cl_{0,1}, E_{C^*_r(\mathcal{H},\sigma)}\hat\otimes \bigwedge^*\R, D_{c}\big)$.

In order to construct an extension of $C^*_r(\calG,\sigma)$ by 
 $C^{*}_{r}(\mathcal{G}\ltimes \mathcal{G}/\mathcal{H})$, which is Morita equivalent to $C^*_r(\calH,\sigma)$, 
we need to consider the Busby invariant. To this end, we first note the following 
result on \'{e}tale groupoids.

\begin{prop}[\cite{Renault80}, Chapter II, Proposition 4.2 and \cite{SimsNotes}, Proposition 3.3.3] \label{prop:j_map}
Let $\calG$ be an \'{e}tale groupoid with a fixed $2$-cocycle $\sigma$. 
The identity map $C_{c}(\calG,\sigma)\to C_{c}(\calG,\sigma)$ extends to a 
continous injection $j:C^{*}_{r}(\calG,\sigma)\to C_{0}(\calG)$. For $a\in C^{*}_{r}(\mathcal{G})$ the map $j$ is given by
\begin{equation}
\label{jSims}
j_{\calG}(a)(\eta):=\big( \pi(a)\delta_{s(\eta)} \mid  \delta_{\eta}\big)_{C_{0}(\calG^{(0)})} (r(\eta)),
\end{equation}
where $\delta_{\eta}\in C_{c}(\calG,\sigma)$ is any function for which 
$$r:\textnormal{supp } (\delta_{\eta})\to r(\textnormal{supp }(\delta_{\eta})),\quad s:\textnormal{supp }(\delta_{\eta})\to s(\textnormal{supp }(\delta_{\eta})),$$
are homemorphisms and $\delta_{\eta}(\eta)=1$ on a neighborhood of $\eta$. 
\end{prop}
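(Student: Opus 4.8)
The plan is to exhibit $j$ as the unique continuous extension of the \emph{identity} map on the dense subalgebra $C_{c}(\calG,\sigma)\subset C_{0}(\calG)$, once we know that the reduced norm dominates the supremum norm on $C_{c}(\calG,\sigma)$; formula \eqref{jSims} is then forced by continuity, and injectivity comes from the joint faithfulness of the fibred regular representations.

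First I would fix $\eta\in\calG$ and, using that $\calG$ is \'{e}tale and Hausdorff, choose precompact open local bisections $W_{\eta}\ni\eta$ and $W_{0}\ni s(\eta)$ on which both $r$ and $s$ restrict to homeomorphisms, together with $\delta_{\eta},\delta_{s(\eta)}\in C_{c}(\calG,\sigma)$ supported in $W_{\eta}$, $W_{0}$ respectively, with values in $[0,1]$ and equal to $1$ near $\eta$, $s(\eta)$. Since each $r$-fibre (and each $s$-fibre) meets a local bisection in at most one point, $(\delta_{\eta}\mid\delta_{\eta})_{C_{0}(\calG^{(0)})}(x)\le 1$ for all $x$, so $\|\delta_{\eta}\|_{E}\le 1$, and likewise $\|\delta_{s(\eta)}\|_{E}\le 1$. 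The key computation is then to evaluate the right-hand side of \eqref{jSims} for $a=f\in C_{c}(\calG,\sigma)$: expanding the twisted convolution $f\ast\delta_{s(\eta)}$ and the inner product, the relevant sums collapse to a single term because the supports are bisections, and using the normalisation $\sigma(\eta,s(\eta))=1=\sigma(\eta^{-1},\eta)$ one obtains $\big(\pi(f)\delta_{s(\eta)}\mid\delta_{\eta}\big)_{C_{0}(\calG^{(0)})}(r(\eta))=f(\eta)$. Therefore, by Cauchy--Schwarz for Hilbert modules and since $\pi$ is the defining representation of $C^{*}_{r}(\calG,\sigma)$ on $E$,
\[
 |f(\eta)| \;=\; \big|\big(\pi(f)\delta_{s(\eta)}\mid\delta_{\eta}\big)_{C_{0}(\calG^{(0)})}(r(\eta))\big|
 \;\le\; \|\pi(f)\delta_{s(\eta)}\|_{E}\,\|\delta_{\eta}\|_{E}
 \;\le\; \|f\|_{C^{*}_{r}(\calG,\sigma)},
\]
so $\|f\|_{\infty}\le\|f\|_{C^{*}_{r}(\calG,\sigma)}$ for all $f\in C_{c}(\calG,\sigma)$. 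Consequently the identity map extends to a norm-decreasing linear map $j\colon C^{*}_{r}(\calG,\sigma)\to C_{0}(\calG)$; the image lies in $C_{0}(\calG)$ because $j(a)$ is a uniform limit of compactly supported continuous functions. For general $a$, approximating by $f_{n}\in C_{c}(\calG,\sigma)$ and using that $b\mapsto\big(\pi(b)\delta_{s(\eta)}\mid\delta_{\eta}\big)_{C_{0}(\calG^{(0)})}(r(\eta))$ is norm-continuous and agrees with $j(b)(\eta)$ on $C_{c}(\calG,\sigma)$ gives \eqref{jSims} for all $a$, and independence of the choice of $\delta_{\eta},\delta_{s(\eta)}$ is then automatic.

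For injectivity I would pass to the localisations of the $C^{*}$-module $E_{C_{0}(\calG^{(0)})}$ at the units $x\in\calG^{(0)}$. Each such localisation is a Hilbert space $E_{x}\cong\ell^{2}$ of a fibre of $s$ (resp.\ $r$), carrying the regular representation $\pi_{x}$ of $C^{*}_{r}(\calG,\sigma)$, and since the coefficient algebra $C_{0}(\calG^{(0)})$ is commutative one has $\|b\|_{C^{*}_{r}(\calG,\sigma)}=\sup_{x\in\calG^{(0)}}\|\pi_{x}(b)\|$; in particular the family $\{\pi_{x}\}_{x}$ is jointly faithful. Unwinding \eqref{jSims}, $j(a)(\eta)$ is exactly a matrix coefficient of $\pi_{x}(a)$ between the point masses corresponding to $\delta_{s(\eta)}$ and $\delta_{\eta}$ in the fibre over $x=r(\eta)$. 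Hence $j(a)=0$ forces all such matrix coefficients, for every unit $x$ and every pair of point masses in $E_{x}$, to vanish, so $\pi_{x}(a)=0$ for all $x$ and therefore $a=0$. I expect the main obstacle to be precisely this last identification --- matching the concrete formula \eqref{jSims} with the matrix coefficients of the fibred regular representations and invoking their joint faithfulness --- together with keeping the twist bookkeeping in the convolution computation of the second paragraph under control; once that dictionary is set up, everything else is soft (the \'{e}tale/bisection structure and the normalisation of $\sigma$ are exactly what make the sums collapse and the estimates contractive).
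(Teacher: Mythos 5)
Note first that the paper does not prove this proposition; it is stated with citations to Renault and Sims, so there is no in-paper argument to compare yours against. What you have written is the standard proof, and its first three moves are correct and complete: the Cauchy--Schwarz estimate $|f(\eta)|\le\|\pi(f)\delta_{s(\eta)}\|_E\,\|\delta_\eta\|_E\le\|f\|_{C^*_r}$ (using that bisection bumps with values in $[0,1]$ have $C^*$-module norm $\le 1$), the resulting norm-decreasing extension with image in $C_0(\calG)$, and the derivation of formula \eqref{jSims} for arbitrary $a$ by norm-continuity of $b\mapsto(\pi(b)\delta_{s(\eta)}\mid\delta_\eta)(\cdot)$ together with the direct collapse computation on $C_c(\calG,\sigma)$. (There is a conjugate/$r$-vs-$s$ bookkeeping mismatch between the paper's displayed inner product and formula \eqref{jSims} as written — taken literally they give $\overline{f(\eta)}$ rather than $f(\eta)$ — but this is a convention issue independent of your argument.)

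The gap is in the injectivity step. You correctly localise $E_{C_0(\calG^{(0)})}$ at units and use that $\|b\|_{C^*_r}=\sup_x\|\pi_x(b)\|$, so joint faithfulness of the fibred regular representations is available. But when you then say that $j(a)(\eta)$ is a matrix coefficient ``between the point masses corresponding to $\delta_{s(\eta)}$ and $\delta_\eta$'' and that $j(a)=0$ therefore forces \emph{all} matrix coefficients $\langle\pi_x(a)\varepsilon_\zeta,\varepsilon_\xi\rangle$ to vanish ``for every pair of point masses,'' you overstate what \eqref{jSims} gives you. One of the two slots in \eqref{jSims} is always the bump at the \emph{unit} $s(\eta)$; as $\eta$ ranges, you only directly read off the coefficients $\langle\pi_x(a)\varepsilon_x,\varepsilon_\eta\rangle$, i.e.\ the column of $\pi_x(a)$ indexed by the unit vector. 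Knowing $\pi_x(a)\varepsilon_x=0$ for all $x$ does not by itself give $\pi_x(a)=0$ (cyclicity of $\varepsilon_x$ is not the relevant property here). You need one more observation to close the argument: either the right-translation intertwiner $R_\zeta$ between fibres, which shows $\pi_x(a)\varepsilon_\zeta=(\text{twist})\cdot R_\zeta\,\pi_{r(\zeta)}(a)\varepsilon_{r(\zeta)}$ so that vanishing of the unit columns propagates to all columns; or, equivalently, the identity
\[
\langle\pi_x(a)\varepsilon_\zeta,\varepsilon_\xi\rangle \;=\; j(a)(\xi\zeta^{-1})\,\sigma(\xi\zeta^{-1},\zeta),
\]
which holds for $a\in C_c(\calG,\sigma)$ by the same bisection collapse and extends to all of $C^*_r(\calG,\sigma)$ by norm-continuity (exactly the mechanism you already invoked in paragraph two). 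Once that is in place, $j(a)=0$ does kill every matrix coefficient of every $\pi_x(a)$ and injectivity follows. So the outline is right, but this intermediate step is the actual content of the injectivity proof and needs to be stated, not asserted.
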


\begin{defn} Let $c:\mathcal{G}\to \mathbb{R}$ be a continuous cocycle. We say that $c$ is $r$-\emph{unbounded} if for all $x\in\mathcal{G}^{(0)}$ 
and all $M>0$ there exists $\eta\in r^{-1}(x)$ for which $c(\eta)>M$.
\end{defn}
Recall that the \emph{Calkin algebra} of a $C^{*}$-module $E$ over a $C^{*}$-algebra $B$ is the quotient 
$\mathcal{Q}(E_B):=\End^{*}_{B}(E)/\mathbb{K}(E_B)$. We denote by $q:\End^{*}_{B}(E)\to \mathcal{Q}(E_B)$ the quotient map. Lastly, we denote by $\mathcal{M}(B)$ the multiplier algebra of $B$.
\begin{prop} \label{prop:injective_busby}
Let $c:\mathcal{G}\to \mathbb{R}$ be an exact cocycle on a Hausdorff \'{e}tale groupoid 
$\mathcal{G}$, $\mathcal{H}=\Ker(c)$ and $\sigma$ a $2$-cocycle on $\mathcal{G}$. If $c$ is $r$-unbounded, then the $*$-homomorphism
$$\varphi: C^*_r(\calG,\sigma) \to \calQ(E_{C^*_r(\mathcal{H},\sigma)}),\quad \varphi(a) = q(\Pi_c a \Pi_c) $$ 
is injective.
\end{prop}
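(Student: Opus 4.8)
The plan is to show that $\varphi$ has trivial kernel by exploiting the injection $j=j_\calG: C^*_r(\calG,\sigma)\to C_0(\calG)$ from Proposition \ref{prop:j_map}. Suppose $a\in C^*_r(\calG,\sigma)$ with $\varphi(a)=q(\Pi_c a\Pi_c)=0$, i.e.\ $\Pi_c a\Pi_c\in\mathbb{K}(E_{C^*_r(\mathcal{H},\sigma)})$. It suffices to prove $j(a)(\eta)=0$ for every $\eta\in\calG$, since $j$ is injective. First I would record the explicit action of $\Pi_c=\chi_+(D_c)$: since $D_c$ acts on $C_c(\calG,\sigma)$ by pointwise multiplication by $c$, the functional calculus gives $(\Pi_c f)(\eta)=\chi_+(c(\eta))f(\eta)$, and more generally $\Pi_c$ acts diagonally through any function of $c$. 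Hence on $C_c$-level, $\Pi_c a\Pi_c$ is obtained from $a$ by multiplying the convolution kernel appropriately; the key point is that for $\eta$ with $c(\eta)$ large, $\chi_+$ evaluates to $1$ on the relevant pieces.

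\textbf{Localising via $r$-unboundedness.} Fix $\eta_0\in\calG$ and set $x=r(\eta_0)$. Using that $c$ is $r$-unbounded, for any threshold $M$ there is $\zeta\in r^{-1}(x)$ with $c(\zeta)>M$. The strategy is to conjugate: consider an element $\delta_\zeta\in C_c(\calG,\sigma)$ as in Proposition \ref{prop:j_map} supported near $\zeta$, and translate the computation of $j(a)$ near $\eta_0$ into a computation of a matrix coefficient of $\Pi_c a\Pi_c$ between sections supported on points whose $c$-values have been shifted into the region where $\chi_+\equiv 1$. Concretely, because $c$ is a cocycle, $c(\zeta\xi)=c(\zeta)+c(\xi)$, so composing with $\delta_\zeta$ moves the support of test functions to points with large $c$-value; there $\Pi_c$ acts as the identity. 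Thus the matrix coefficient $\big(\Pi_c a\Pi_c\,\delta_{s(\eta)}\mid\delta_\eta\big)$ for a suitable translate $\eta$ of $\eta_0$ equals the matrix coefficient $\big(a\,\delta_{s(\eta)}\mid\delta_\eta\big)$, which by \eqref{jSims} recovers $j(a)(\eta_0)$ up to the cocycle phase $\sigma$, which has modulus $1$.

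\textbf{Extracting a contradiction from compactness.} If $j(a)(\eta_0)\neq 0$, then by the previous step we produce, for arbitrarily large $M$, points $\eta_M$ with $|c(r(\eta_M))|\geq M$ (or $c$ large on the relevant fibre) at which the matrix coefficients of $\Pi_c a\Pi_c$ have modulus bounded below by $|j(a)(\eta_0)|>0$. Since $\Pi_c a\Pi_c$ is compact on $E_{C^*_r(\mathcal{H},\sigma)}$ and the frame elements $\chi_i$ from the $s$-cover (Proposition \ref{prop:s-cover_frame}) allow us to express compactness as decay of matrix coefficients along the frame, the non-decaying family of coefficients forces a contradiction with $\Pi_c a\Pi_c\in\mathbb{K}(E)$: a compact operator cannot have matrix coefficients of fibre-coefficient functions staying uniformly bounded below along an infinite orthogonal-like family escaping to infinity in $\calG$. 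Hence $j(a)(\eta_0)=0$ for all $\eta_0$, so $a=0$ and $\varphi$ is injective.

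\textbf{Main obstacle.} The delicate step is the middle one: making precise that conjugating $a$ by $\Pi_c$ and then by a translation $\delta_\zeta$ recovers the full matrix coefficient $j(a)(\eta_0)$ rather than a modified one. One must check that the shifted test sections $\delta_\zeta * \delta_{s(\eta_0)}$ remain legitimate frame-type vectors with controlled support, that the $2$-cocycle $\sigma$ contributes only a unimodular factor (which is automatic as $\sigma$ is $\T$- or $\{\pm1\}$-valued), and that the $C^*_r(\mathcal{H},\sigma)$-valued inner product behaves correctly under the translation — i.e.\ that we are genuinely reading off $j_\calG(a)$ and not some averaged quantity. Equivalently, one must verify that for $\eta$ in the fibre over a point where $c$ is large, $\Pi_c$ acts as the identity on $\delta_\eta$ up to compacts, which follows from $\Pi_c=\chi_+(D_c)$ and $\chi_+(t)=1$ for $t\geq 0$, together with Lemma \ref{lem:pi_c_almost_proj}. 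Once this identification is in place, the compactness contradiction is routine.
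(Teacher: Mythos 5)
Your overall strategy (use the injection $j$ from Proposition \ref{prop:j_map}, use $r$-unboundedness of $c$ to push into the region where $\chi_+$ evaluates to $1$, and contradict compactness of $\Pi_c a\Pi_c$ through non-decaying matrix coefficients) is the same as the paper's. However, the execution has a genuine gap in the central ``localising'' step.

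The problem is with the claim that, for a suitable translate $\eta$ of $\eta_0$, the matrix coefficient $\big(\Pi_c a\Pi_c\,\delta_{s(\eta)}\mid\delta_\eta\big)$ recovers $j(a)(\eta_0)$. Once $c(\eta)$ is large enough that $\Pi_c$ acts like the identity on the relevant $\delta$-vectors, what this matrix coefficient actually recovers is $j(a)(\eta)$, \emph{not} $j(a)(\eta_0)$; these are values of $j(a)$ at distinct points of $\calG$, and there is no invariance that equates them. This is not a minor technicality: if one tries to read off $j(a)(\eta_0)$ by translating $\eta_0$ along the $r$-fibre, the value of $j(a)$ changes with the translation, and the argument collapses. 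A related issue is that you compute these matrix coefficients inside $E_{C^*_r(\calH,\sigma)}$, whose inner product is $C^*_r(\calH,\sigma)$-valued, so it is not immediate how to get scalars matching the formula \eqref{jSims}.

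What makes the argument actually work, and what the paper does, is to pass to the bigger groupoid $\calG\ltimes\calG/\calH$. One identifies $\mathbb{K}(E_{C^*_r(\calH,\sigma)})$ with $C^*_r(\calG\ltimes\calG/\calH,\sigma)$, so that $\End^*(E)=\calM(C^*_r(\calG\ltimes\calG/\calH,\sigma))$ acts on the module $F_{\calG/\calH}$ over the \emph{unit space} $\calG/\calH$, and then applies the map $j=j_{\calG\ltimes\calG/\calH}$ to $\Pi_c a\Pi_c$ to get a bounded continuous function on $\calG\ltimes\calG/\calH$ in \emph{two} variables $(\xi,[\eta])$. For $f\in C_c(\calG,\sigma)$ one computes $j(\Pi_c f\Pi_c)(\xi,[\eta])=\chi_+(c(\xi\eta))\chi_+(c(\eta))f(\xi)$; here the $\xi$-dependence is decoupled from the $[\eta]$-dependence. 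So one keeps $\xi$ \emph{fixed} (a point where $|j(a)(\xi)|$ is large), and varies the auxiliary coordinate $[\eta]$ with $c(\eta)$ large, so that both $\chi_+$ factors are $1$. Combined with a $C_c$-approximation $f\approx a$ and the pointwise bound $|j(T)(\xi,[\eta])|\leq\|T\|$, one gets a \emph{noncompact} set of $(\xi,[\eta])$ on which $|j(\Pi_c a\Pi_c)|>\delta$, hence $j(\Pi_c a\Pi_c)\notin C_0(\calG\ltimes\calG/\calH)$, hence $\Pi_c a\Pi_c\notin\mathbb{K}(E)$. Note also that the final step ``compactness implies decay of matrix coefficients along the $s$-cover frame'' is not rigorous as stated: the frame elements are not orthogonal, and compactness on a Hilbert $C^*$-module does not naively entail the decay you invoke. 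The clean replacement is precisely the $j_{\calG\ltimes\calG/\calH}$-into-$C_0$ characterisation, which you would need to introduce to close these gaps.
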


\begin{proof}
Using Proposition \ref{prop:j_map}, we can view elements of 
$C^{*}_{r}(\calG\ltimes \calG/\calH,\sigma)$ as $C_0$-functions on $\calG\ltimes \calG/\calH$. 
Let $E=E_{C^{*}_{r}(\mathcal{H},\sigma)}$ and $F=F_{\mathcal{G}/\mathcal{H}}$ the 
$C^*$-module over the unit space of $\calG\ltimes \mathcal{G}/\mathcal{H}$. Then
$$
\End^{*}_{C^{*}_{r}(\mathcal{H},\sigma)}(E)= \calM \big(\mathbb{K}(E_{C^*_r(\calH,\sigma)}) \big)
= \calM(C^{*}_{r}(\mathcal{G}\ltimes \mathcal{G}/\mathcal{H}, \sigma)).
$$ 
Since the representation of $C^{*}_{r}(\mathcal{G}\ltimes \mathcal{G}/\mathcal{H},\sigma)$ on $F$ is essential, we see that 
$\End^{*}_{C^{*}_{r}(\mathcal{H},\sigma)}(E)$ acts on $F$. 
Thus if $j=j_{\mathcal{G}\ltimes\mathcal{G}/\mathcal{H}}$ and 
$T\in\End^{*}_{C^{*}_{r}(\mathcal{H},\sigma)}(E)$ then the formula \eqref{jSims} 
defines a continuous function $j(T):\mathcal{G}\ltimes\mathcal{G}/\mathcal{H}\to \mathbb{C}$. 
The functions $\delta_{(\xi,[\eta])}$ can be chosen so that $\|\delta_{(\xi,[\eta])}\|_{F}\leq 1$, 
so we obtain the pointwise estimate
\begin{align*}
|j(T)(\xi,[\eta])| &=\big| ( \pi(T)\delta_{[\eta]} \mid \delta_{(\xi,[\eta])} )_{C_0(\calG/\calH)} ([\xi\eta]) \big|
\leq \big\| ( \pi(T)\delta_{[\eta]} \mid \delta_{(\xi,[\eta])} ) \big\|_{C_{0}(\mathcal{G}/\mathcal{H})}\\
&\leq \|T\|_{\End^{*}_{C^{*}_{r}(\mathcal{H},\sigma)}(E)}  \, \|\delta_{[\eta]}\|_{F}\, \|\delta_{(\xi,[\eta])}\|_{F}\, \leq \|T\|.
\end{align*}
In particular, if $T_{n}\to T$ in norm in $\End^{*}_{C^{*}_{r}(\mathcal{H},\sigma)}(E)$ 
then $j(T_{n})\to j(T)$ pointwise on $\mathcal{G}\ltimes\mathcal{G}/\mathcal{H}$.

Suppose that $a\neq 0\in C^{*}_{r}(\mathcal{G},\sigma)$ and choose $\xi\in \calG$ with $|j(a)(\xi)|\geq 3\delta> 0$. 
Choose $f\in C_{c}(\mathcal{G},\sigma)$ 
with $\|f-a\|_{C^{*}_{r}(\mathcal{G},\sigma)}<\delta$ and $|f(\xi)|\geq 2\delta $. Then for every $(\xi,[\eta])\in \mathcal{G}\ltimes \mathcal{G}/\mathcal{H}$ it holds that
\begin{align*}
|j(\Pi_{c}(f-a)\Pi_{c})(\xi,[\eta])|\leq \|\Pi_{c}(f-a)\Pi_{c}\|\leq \|f-a\|<\delta.
\end{align*}
For $f\in C_{c}(\mathcal{G})$  it holds that 
$$j(\Pi_{c}f\Pi_{c})(\xi,[\eta])=\chi_{+}(c(\xi\eta))\chi_{+}(c(\eta))f(\xi).$$ 
Thus for all $[\eta]=(r(\eta),c(\eta))$ satisfying $c(\eta)\geq\max\{0, -c(\xi)\}$ we estimate
\begin{align*}
\big| j(\Pi_{c}a\Pi_{c})(\xi,[\eta]) \big|&\geq \big| j(\Pi_{c}f\Pi_{c})(\xi,\eta)\big |- \big|j(\Pi_{c}(f-a)\Pi_{c})(\xi,[\eta]) \big|\\
&=|f(\xi)|-|(j(\Pi_{c}(f-a)\Pi_{c})(\xi,[\eta])|\\
&\geq |f(\xi)|-\|\Pi_{c}(f-a)\Pi_{c}\|\\
&\geq |f(\xi)|-\|f-a\|>\delta.
\end{align*}
Since $c$ is exact there is a homeomorphism
\[\mathcal{G}/\mathcal{H}\to \{(r(\xi), c(\xi)): \xi\in \mathcal{G}\}\subset \mathcal{G}^{(0)}\times \mathbb{R},\]
where the latter set carries the relative topology. Since $c$ is $r$-unbounded, for fixed $\xi$ there is a noncompact set of pairs $(\xi,[\eta])\in\mathcal{G}\ltimes\mathcal{G}/\mathcal{H}$ with 
$|j(\Pi_{c}a\Pi_{c})(\xi,[\eta])|>\delta$.
Therefore $j(\Pi_c a\Pi_c)\notin C_{0}(\mathcal{G}\ltimes\mathcal{G}/\mathcal{H})$ and
$$\Pi_{c}a\Pi_c\notin C^{*}_{r}(\mathcal{G}\ltimes\mathcal{G}/\mathcal{H},\sigma)=\mathbb{K}_{C^*_r(\calH,\sigma)}(E).$$
This is equivalent to the statement that the map 
$$
\varphi: C^*_r(\calG,\sigma) \to \calQ(E_{C^*_r(\mathcal{H},\sigma)}),\quad \varphi(a) = q(\Pi_c a \Pi_c),
$$ 
is injective.
\end{proof}

Using the
isomorphism $\mathbb{K}(E_{C^*_r(\calH,\sigma)}) \cong C^*_r(\calG\ltimes \calG/\calH, \sigma)$ 
and the injectivity of $\varphi$, 
we construct 
the generalised Toeplitz extension 
\begin{equation*} 
   0 \to C^*_r(\calG \ltimes \calG/ \calH, \sigma) \to 
     C^*( \Pi_c  C^*_r(\calG,\sigma) \Pi_c, C^*_r(\calG \ltimes \calG/ \calH, \sigma) ) \to C^*_r(\calG,\sigma) \to 0
\end{equation*}
with completely positive semi-splitting $a\mapsto \Pi_c a \Pi_c$ and Busby invariant $\varphi$.
The algebra $$\calT = C^*( \Pi_c  C^*_r(\calG,\sigma) \Pi_c, C^*_r(\calG \ltimes \calG/ \calH, \sigma) )$$ is represented on 
$\Pi_c E_{C^{*}(\calH,\sigma)}$.
\section{Delone sets and the transversal groupoid} \label{sec:Transversal}

We briefly summarise the construction of a groupoid of an aperiodic hull. Results 
and further details 
can be found in~\cite{AndersonPutnam, BHZ00, Kellendonk95, Kellendonk97, KellendonkPutnam, BBG06}. 
We most closely follow the perspective of~\cite{BHZ00, BBG06} and construct a dynamical 
system and transversal groupoid from the topology of point measures in $\R^d$.

\begin{defn}
Let $\calL \subset\R^d$ be discrete and infinite and fix $0<r<R$.
\begin{enumerate}
  \item $\calL$ is $r$-uniformly discrete if 
    $|B(x;r)\cap \calL| \leq 1$ for all $x\in\R^d$. 
  \item $\calL$ is $R$-relatively dense if 
   $|B(x;R)\cap \calL| \geq 1$ for all $x\in\R^d$.
\end{enumerate}
An $r$-uniformly discrete and $R$-relatively dense set $\calL$ is called an 
$(r,R)$-Delone set.
\end{defn}

We will occasionally want extra structure on our Delone set.
\begin{defn} \label{def:FLC_et_al}
Let $\calL\subset\R^d$ be discrete and infinite.
\begin{enumerate}
  \item A patch of radius $R > 0$ of $\calL$ is a subset of $\R^d$ 
  of the form $(\calL - x) \cap B(0; R)$, for some $x \in\calL$. 
  If for all $R > 0$ the set of its patches of radius $R$ is finite, then
   $\calL$ has finite local complexity. 
  \item We call $\calL$ repetitive if given any finite subset $p\subset \calL$ and $\varepsilon >0$, 
  there is an $R>0$ such that in any ball $B(x;R)$ there is a subset $p' \subset \calL \cap B(x;R)$ 
  that is a translation of $p$ within the distance $\varepsilon$; that is, there is an 
  $a\in \R^d$ such that the Hausdorff distance between $p'$ and $p+a$ is less that $\varepsilon$. 
  \item We call $\calL$ aperiodic if there is no $x \neq 0 \in \R^d$ such that 
  $\calL - x = \calL$.
\end{enumerate}
\end{defn}

There is an equivalence between discrete sets and point measures in $\R^d$. 
Let $\calM(\R^d)$ denote the space of measures on $\R^d$ and consider
\begin{align*}
  QD(\R^d) &= \{ \nu \in\calM(\R^d)\,:\, \forall x\in \R^d, \,\nu 
    \text{ is pure point and } \nu(\{x\}) \in \N \}, \\
  UD_r(\R^d) &= \{\nu\in QD(\R^d)\,:\, \forall x\in \R^d, \,\nu(B(x;r))\leq 1\}.
\end{align*}
For $\nu\in QD(\R^d)$, $\calL^{(\nu)}=\mathrm{supp}(\nu)$ is discrete. Similarly for 
a discrete set $\calL$ we can define a measure 
$\delta_\calL = \sum_{x\in\calL} \delta_x \in QD(\R^d)$, 
where $\delta_x$ is the point measure. We can also relate measures and 
Delone sets.

\begin{prop}
Let $\nu\in UD_r(\R^d)$ be a measure such that for all $x\in\R^d$,
$ \nu(\ol{B(x;R)}) \geq 1$.
Then $\calL^{(\nu)}$ is an $(r,R)$-Delone set.
\end{prop}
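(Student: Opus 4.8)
The plan is to unwind the definitions and check the two Delone conditions separately from the two measure-theoretic hypotheses on $\nu$. Recall that $\calL^{(\nu)} = \operatorname{supp}(\nu)$ and, since $\nu \in UD_r(\R^d) \subset QD(\R^d)$, the measure $\nu$ is pure point with $\nu(\{x\}) \in \N$ for every $x$; hence $\operatorname{supp}(\nu) = \{x \in \R^d : \nu(\{x\}) \geq 1\}$ is a genuine discrete set of atoms and $\nu = \sum_{x \in \calL^{(\nu)}} \nu(\{x\})\,\delta_x$ with all coefficients positive integers.

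First I would prove $r$-uniform discreteness. Fix $x \in \R^d$; I must show $|B(x;r) \cap \calL^{(\nu)}| \leq 1$. Suppose instead there were two distinct atoms $y, z \in B(x;r)$. The radius-$r$ ball condition from $UD_r(\R^d)$ says $\nu(B(x;r)) \leq 1$, but $\nu(B(x;r)) \geq \nu(\{y\}) + \nu(\{z\}) \geq 1 + 1 = 2$ since the atoms contribute disjointly and each has mass at least $1$ — a contradiction. A small point to be careful about here is whether $B(x;r)$ denotes the open or closed ball and whether the points lie in its interior or on its boundary; in either reading the disjoint-atom estimate still forces $\nu(B(x;r)) \geq 2$, so the argument is robust, but I would state the convention explicitly to match the definition of $UD_r$.

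Next I would prove $R$-relative density, i.e. $|B(x;R) \cap \calL^{(\nu)}| \geq 1$ for all $x \in \R^d$. The hypothesis gives $\nu(\overline{B(x;R)}) \geq 1 > 0$, so the (pure point) measure $\nu$ must have at least one atom in $\overline{B(x;R)}$; that atom is by definition a point of $\calL^{(\nu)}$. This only yields a point in the \emph{closed} ball $\overline{B(x;R)}$, so strictly one gets that $\calL^{(\nu)}$ is $(r, R')$-relatively dense for any $R' > R$, or one simply records that relative density holds with the closed ball. I would phrase the conclusion to match whichever ball convention the paper's Delone definition uses; if the definition of $R$-relative density is stated with the open ball $B(x;R)$, I would note that the hypothesis should be read with a marginally larger radius, or alternatively that the two notions differ only up to an arbitrarily small enlargement and so define the same class of Delone sets.

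Combining the two parts, $\calL^{(\nu)}$ is $r$-uniformly discrete and $R$-relatively dense, hence an $(r,R)$-Delone set by definition, completing the proof. I do not expect any genuine obstacle here: the statement is essentially a bookkeeping translation between the point-measure picture and the set picture. The only thing requiring any attention is the open-versus-closed ball discrepancy in the relative-density half, which is cosmetic and handled by the remarks above; everything else is immediate from the disjointness of distinct atoms and the integrality of their masses.
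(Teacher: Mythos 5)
The paper states this proposition without proof, treating it as an elementary observation, so there is no argument in the text to compare against. Your proof is correct and is the natural one: $r$-uniform discreteness follows by combining the $UD_r$ bound $\nu(B(x;r))\leq 1$ with the fact that each atom carries integer mass $\geq 1$, so two distinct atoms in a ball of radius $r$ would force $\nu(B(x;r))\geq 2$; and $R$-relative density follows because a pure point measure assigning positive mass to $\overline{B(x;R)}$ must have an atom there. Your observation that the hypothesis uses the closed ball while Definition~\ref{def:FLC_et_al} is phrased with $B(x;R)$ (unspecified, conventionally open) is accurate and worth recording; as you note, the gap is cosmetic and disappears upon enlarging $R$ by an arbitrarily small amount, which does not affect any subsequent use of the Delone property in the paper.
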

As $\calM(\R^d)$ is a subspace of $C_c(\R^d)^{*},$ it can be given the weak $\ast$-topology.

\begin{prop}[\cite{BHZ00}, Theorem 1.5] \label{prop:disc_meas_compact}
The set $UD_r(\R^d)$ is a compact subspace of $\calM(\R^d)$.
\end{prop}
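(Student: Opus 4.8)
The plan is to show $UD_r(\R^d)$ is closed in $\calM(\R^d)$ with the weak-$*$ topology and then invoke (a version of) Banach--Alaoglu to get compactness, since by definition $\calM(\R^d)\subset C_c(\R^d)^*$. The two conditions defining $UD_r$ — being pure point with integer masses, and satisfying $\nu(B(x;r))\le 1$ for all $x$ — each need to be shown to survive weak-$*$ limits.

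First I would establish a uniform local bound: if $\nu\in UD_r(\R^d)$ then for any compact $K\subset\R^d$, $\nu(K)$ is bounded by a constant $C_K$ depending only on $K$ and $r$ (cover $K$ by finitely many balls of radius $r$). This gives equicontinuity/boundedness of the relevant functionals and shows the set sits inside a weak-$*$ compact subset of $C_c(\R^d)^*$: for each $\varphi\in C_c(\R^d)$ with $\mathrm{supp}\,\varphi\subset K$ and $\|\varphi\|_\infty\le 1$, we have $|\nu(\varphi)|\le C_K$, so $\overline{UD_r(\R^d)}$ lies in a product of compact intervals, hence is weak-$*$ compact by Tychonoff/Banach--Alaoglu. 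It then remains to prove $UD_r(\R^d)$ is weak-$*$ closed.

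The key step is closedness. Suppose $\nu_n\to\nu$ weak-$*$. The condition $\nu(B(x;r))\le 1$ is the delicate one because $B(x;r)$ is open and weak-$*$ convergence only gives $\liminf \nu_n(U)\ge\nu(U)$ for open $U$ — the wrong direction — so I would instead work with slightly smaller test functions: for any $r'<r$ and any bump function $\varphi$ with $\mathbf{1}_{B(x;r')}\le\varphi\le\mathbf{1}_{B(x;r)}$, one has $\nu(\varphi)=\lim\nu_n(\varphi)\le\limsup\nu_n(B(x;r))\le 1$, and letting $\varphi\nearrow\mathbf{1}_{B(x;r)}$ (equivalently $r'\nearrow r$) gives $\nu(B(x;r))\le 1$ by monotone convergence. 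For the pure-point/integer-mass condition, the uniform discreteness $\nu_n(B(x;r))\le 1$ forces each $\nu_n=\sum_{y\in\calL^{(\nu_n)}}m_y\delta_y$ with $m_y\in\N$ and points $r$-separated; the uniform bound on $\nu_n(K)$ means on any compact set only finitely many atoms appear with a definite separation, and one extracts (via a diagonal argument over an exhaustion by balls) that the atoms and masses converge, so the limit $\nu$ is again a sum of integer point masses with $r$-separated support, i.e.\ $\nu\in UD_r(\R^d)$. (Alternatively one cites~\cite{BHZ00} directly; the statement is attributed there.)

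The main obstacle I expect is the pure-point part: weak-$*$ limits of discrete measures need not be discrete in general, so one genuinely must exploit the uniform separation $r$ to prevent atoms from colliding or a continuous part from forming. The cleanest route is probably to argue that on a ball $B(0;\rho)$, each $\nu_n$ restricted there is a sum of at most $C_\rho$ atoms at $r$-separated points with masses bounded by some $M_\rho$; the space of such configurations is compact (finitely many points in a compact set with a closed separation constraint, masses in a finite set after noting weak-$*$ convergence pins down masses to integers), so a subsequential limit is of the same form, and uniqueness of the weak-$*$ limit upgrades this to the full sequence. Passing $\rho\to\infty$ by a diagonal argument then identifies $\nu$ as an element of $UD_r(\R^d)$, completing the proof that $UD_r(\R^d)$ is closed and hence compact.
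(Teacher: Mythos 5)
The paper offers no proof of this proposition: it is cited verbatim from~\cite{BHZ00}, Theorem~1.5, so there is no internal argument to compare yours against. Your reconstruction is essentially the standard one and is correct in substance. Two small points are worth flagging. First, the condition $\nu(B(x;r))\le 1$ for all $x$ forces distinct atoms to be at distance $\ge 2r$, not $r$ (take $x$ to be the midpoint), and it also forces every mass $m_y$ to equal $1$, since $m_y\le\nu(B(y;r))\le 1$; this only simplifies your compactness-of-configurations step. Second, your remark that the Portmanteau inequality $\liminf_n\nu_n(U)\ge\nu(U)$ for open $U$ goes ``the wrong direction'' is a misreading: rewritten as $\nu(U)\le\liminf_n\nu_n(U)\le 1$ it gives the bound $\nu(B(x;r))\le 1$ immediately, so your bump-function and monotone convergence detour, while valid, is not needed. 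The genuinely delicate part — that a vague limit of $UD_r$-measures cannot develop a diffuse part or colliding atoms — you handle correctly by exploiting the closed separation constraint together with the uniform local bound $\nu_n(K)\le C_K$ on each compact $K$ and a diagonal argument over an exhaustion; one should just be careful, when restricting to a ball $B(0;\rho)$, to test against functions supported strictly inside so as not to count boundary atoms twice, but this is a routine precaution. Overall the proof is sound and is, to the best of my recollection, the same strategy used in~\cite{BHZ00}.
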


\begin{prop}[cf.~\cite{Lagragias03}, Section 3, \cite{FHK}, Chapter 1]
The set of $(r,R)$-Delone sets is a compact and metrizable space. Let $d_H$ denote the Hausdorff distance between sets. A
neighborhood base at $\omega\in\Omega_{\mathcal{L}}$ is given by the sets
$$
   U_{\epsilon,M}(\omega) = \big\{ \eta \in \mathrm{Del}_{(r,R)}\,:\, 
      d_H\big( \calL^{(\omega)} \cap B(0;M), \, \calL^{(\eta)} \cap B(0;M) \big) < \epsilon \big\}
$$ 
with $M,\varepsilon>0$.
\end{prop}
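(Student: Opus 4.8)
The plan is to realise the space $\mathrm{Del}_{(r,R)}$ of $(r,R)$-Delone sets as a closed subspace of $UD_r(\R^d)$ and then to read off both assertions from the relative vague (weak-$*$) topology, using Propositions \ref{prop:disc_meas_compact} and the preceding one.

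\emph{Compactness and metrizability.} The bijection $\calL\mapsto\delta_\calL$ identifies $\mathrm{Del}_{(r,R)}$ with
$$
  X := \big\{\nu\in UD_r(\R^d)\,:\, \nu(\ol{B(x;R)})\geq 1 \ \text{ for all } x\in\R^d\big\};
$$
here one uses that every $\nu\in UD_r(\R^d)$ has all atoms of mass $0$ or $1$ and that $\calL^{(\nu)}=\mathrm{supp}(\nu)$, so the two conditions defining $X$ are exactly $r$-uniform discreteness and $R$-relative density. I would check that $X$ is vaguely closed: if $\nu_n\to\nu$ vaguely with $\nu_n\in X$, then for a fixed $x$ I pick $y_n\in\calL^{(\nu_n)}\cap\ol{B(x;R)}$, pass to a subsequence with $y_n\to y\in\ol{B(x;R)}$, and test against $\phi\in C_c(\R^d)$ with $\phi\equiv c>0$ on $B(y;r/8)$ and $\mathrm{supp}\,\phi\subset B(y;r/4)$; $r$-uniform discreteness forces $\nu_n(\phi)\geq c$ for large $n$ (the only possible atom of $\nu_n$ in $\mathrm{supp}\,\phi$ being $y_n$, of mass $1$), so $\nu(\phi)\geq c$ and $y\in\mathrm{supp}(\nu)$, giving $\nu\in X$. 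Compactness of $X$ then follows from Proposition \ref{prop:disc_meas_compact}, and metrizability follows since the vague topology on $UD_r(\R^d)$ is generated by countably many of the evaluations $\nu\mapsto\nu(\phi)$, $\phi\in C_c(\R^d)$ (separability of $C_c(\R^d)$), hence is metrizable on the compact set $X$.

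\emph{The neighbourhood base.} It remains to show the sets $U_{\epsilon,M}(\omega)$ form a neighbourhood base at $\omega$ for the vague topology on $X$, equivalently that $\delta_{\calL^{(\eta_k)}}\to\delta_{\calL^{(\omega)}}$ vaguely if and only if $d_H\big(\calL^{(\eta_k)}\cap B(0;M),\calL^{(\omega)}\cap B(0;M)\big)\to 0$ for every $M$ in a cofinal family. For ``local Hausdorff $\Rightarrow$ vague'', a basic vague neighbourhood is cut out by finitely many $\varphi_i\in C_c(\R^d)$; I choose $M$ large with $\bigcup_i\mathrm{supp}\,\varphi_i\subset B(0;M-1)$ and such that $\calL^{(\omega)}$ meets neither sphere $\partial B(0;M)$ nor $\partial B(0;M-1)$ — by finiteness of $\calL^{(\omega)}$ in bounded sets these $M$ are cofinal and come with a radius-gap of some width $\rho>0$ around each. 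Then for $\eta$ with $d_H(\cdots)<\epsilon<\min(r/2,\rho)$, the nearest-point map is an $\epsilon$-small bijection $\calL^{(\omega)}\cap B(0;M)\to\calL^{(\eta)}\cap B(0;M)$ that restricts, using the gaps, to a bijection of the finitely many points in $B(0;M-1)$; reparametrising $\eta(\varphi_i)$ along it bounds $|\omega(\varphi_i)-\eta(\varphi_i)|$ by $|\calL^{(\omega)}\cap B(0;M-1)|$ times the modulus of continuity of $\varphi_i$ at scale $\epsilon$, which is below any prescribed tolerance for $\epsilon$ small. For ``vague $\Rightarrow$ local Hausdorff'', I fix such a good $M$, enumerate $\calL^{(\omega)}\cap B(0;M)=\{y_1,\dots,y_N\}$, take bumps $\phi_j$ equal to $1$ near $y_j$ with $\mathrm{supp}\,\phi_j\subset B(y_j;\epsilon/2)$ (pairwise disjoint, each meeting at most one point of any Delone set as $\epsilon/2<r/2$) and a cutoff $\psi\equiv 1$ on a slightly larger ball; vague closeness to $\delta_{\calL^{(\omega)}}$ then produces exactly one point $z_j\in\calL^{(\eta)}$ within $\epsilon/2$ of each $y_j$, and comparing $\eta(\psi)$ with $\omega(\psi)=N$ rules out any further point of $\calL^{(\eta)}$ in $B(0;M)$, so $d_H(\cdots)<\epsilon$.

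\emph{Main obstacle.} The delicate point throughout is the boundary effect at $\partial B(0;M)$: a point of $\calL^{(\omega)}$ lying on, or just inside, this sphere can be pushed across it by an arbitrarily small vague perturbation, so for such exceptional radii $U_{\epsilon,M}(\omega)$ is not literally a neighbourhood of $\omega$. I resolve this by noting that for each $\omega$ only countably many radii are exceptional, so the admissible $M$ remain cofinal and the corresponding $U_{\epsilon,M}(\omega)$ already form a neighbourhood base. The recurring technical engine behind all the point-matching estimates is precisely $r$-uniform discreteness: it makes $\calL^{(\omega)}$ finite in every ball, makes the nearest-point correspondence bijective for $\epsilon<r/2$, and ensures that a test bump of support-diameter $<r$ detects at most one point of any configuration.
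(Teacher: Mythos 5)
The paper delegates the proof to \cite{Lagragias03,FHK} and provides none of its own; your argument works instead inside the measure-theoretic framework that Section~\ref{sec:Transversal} sets up (Delone sets as point measures in $UD_r(\R^d)$ with the vague topology), which is the natural choice and matches the spirit of the cited sources. The overall structure --- closedness of $X$ in $UD_r(\R^d)$, compactness and metrizability from Proposition~\ref{prop:disc_meas_compact} and separability of $C_c(\R^d)$, and then the two-directional comparison between vague neighbourhoods and the sets $U_{\epsilon,M}(\omega)$ via bump functions --- is sound, and your identification of the boundary effect at $\partial B(0;M)$ is a real subtlety: when $\calL^{(\omega)}$ meets that sphere, $U_{\epsilon,M}(\omega)$ genuinely fails to be a neighbourhood of $\omega$, so the statement must be read with the restriction to nonexceptional $M$, and your observation that these are co-countable hence cofinal is exactly the right fix.

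One step is incomplete as written. In the closedness argument, the conclusion $\nu(\phi)\geq c$ with $\mathrm{supp}\,\phi\subset B(y;r/4)$ only produces an atom of $\nu$ \emph{somewhere} in $B(y;r/4)$, not at $y$ itself; since $y$ may lie on $\partial B(x;R)$, this does not yet give $\nu(\ol{B(x;R)})\geq 1$. You need either to run the test with bumps of shrinking support $B(y;\delta)$ for $\delta\downarrow 0$ (uniform discreteness then forces the unique nearby atom to be $y$), or, more economically, to invoke directly the upper semicontinuity of $\nu\mapsto\nu(K)$ for compact $K$ under vague convergence, which shows that $\{\nu: \nu(\ol{B(x;R)})\geq 1\}$ is vaguely closed for each fixed $x$ without any case analysis. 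Either patch is routine and the rest of the proof goes through as you describe.
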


The translation action on $\R^d$ gives an action on $C_c(\R^d)$ and thus 
an action on $UD_r(\R^d)$, where
$$
  (T_a \nu)(f) = \nu( T_{-a}f),  \qquad (T_{-a}f)(x) = f(x-a), \,\, f\in C_c(\R^d).
$$
As expected, the $\R^d$-action on $UD_r(\R^d)$ induces an $\R^d$-action on the discrete lattices $\mathcal{L}^{(\nu)}$ by translation,
$T_a(\calL^{(\nu)}) = \calL^{(\nu)} + a$.

\begin{defn}[cf.~\cite{Bel86}, Section 2, \cite{BHZ00}, Definition 1.7]
Let $\calL$ be a uniformly discrete subset of $\R^d$. The \emph{continuous hull of} $\calL$ is the 
dynamical system $(\Omega_{\calL}, \R^d, T)$, where $\Omega_\calL$ is the closure 
of the orbit of $\nu \in UD_r(\R^d)$ such that $\mathrm{supp}(\nu) = \calL$.
\end{defn}

We note that $\Omega_\calL$ is compact by Proposition \ref{prop:disc_meas_compact}. 
The translation action on $UD_r(\R^d)$ gives the family of homeomorphisms 
$\{T_a\}_{a\in\R^d}$ on $\Omega_\calL$. Thus, starting from a Delone set $\calL$, we 
may associate to it a continuous topological dynamical system $(\Omega_\calL,T,\R^d)$. 
This dynamical system is minimal if and only if the lattice $\calL$ is repetitive~\cite[Theorem 2.13]{BHZ00}.

\begin{example}
Let $\calL$ be a periodic and cocompact group $G$, then it is 
immediate that $\Omega_\calL \cong \R^d/G$. 
This is the classical picture with no aperiodicity or disorder on our lattice. 
We can use Rieffel induction on the $C^*$-dynamical system to simplify the 
crossed product algebra
$$
 C(\Omega_\calL) \rtimes \R^d \cong C(\R^d/G) \rtimes \R^d \cong C^*(G) \otimes \mathbb{K},
$$
which then implies that, for $\calL=\Z^d$, $K_\ast(C(\Omega_\calL) \rtimes \R^d) \cong K^{-\ast}(\T^d)$. 
Considering applications to topological phases, we see that for periodic lattices 
the dynamics of the hull reproduces the $K$-theoretic phases of the Bloch bundle 
over the Brillouin torus. 
\end{example}

There is a loose equivalence between Delone sets and tilings of $\R^d$, where much of 
the terminology we use was originally 
formulated~\cite{Kellendonk95, FHK, AndersonPutnam, KellendonkPutnam}.

\begin{defn}
A tile of $\R^d$ is a compact subset of $\R^d$ that is homeomorphic to the closed unit ball. 
A tiling of $\R^d$ is a covering of $\R^d$ by a family of tiles whose interiors are 
pairwise disjoint.
\end{defn}

Given a tiling $\calT$ with a uniform minimum and maximum bound on the radius 
of each tile, we can choose a point from the interior of every tile to obtain a Delone set 
$\calL_\calT$. There is also an explicit passage from Delone sets to tilings via the Voronoi tiling.

\begin{defn}
Let $\calL$ be an  $(r,R)$-Delone set in $\R^d$. The Voronoi tile around a point $x\in\calL$ 
is the set
$$
  V_x = \big\{ y\in \R^d\,:\,  \|y-x\| \leq \|y-x'\| \text{ for all } x'\in\calL \big\}.
$$
The Voronoi tiling $\calV$ associated to $\calL$ is the family $\{V_x\}_{x\in\calL}$.
\end{defn}

\begin{remark}[A note on topologies]
Given a Delone set, one may instead consider the corresponding Voronoi tiling. If each 
tile in the Voronoi tiling comes from a finite collection of prototiles, there is a 
canonical tiling space with tiling metric (cf.~\cite[Chapter 1]{SadunBook}).  
The topology of the tiling space is strictly finer than 
the topology coming from the weak-$\ast$ topology on the space of Delone sets. However, if 
the Delone set is repetitive and has finite local complexity, then the topologies are 
equivalent, see~\cite{KellendonkPutnam} and~\cite[Section 2]{BBG06}. 

We will mostly work under the assumption that $\calL$ is $(r,R)$-Delone only. 
Therefore if one wishes to apply our work to tilings, one should also assume 
that $\calL$ is repetitive and has finite local complexity.
\end{remark}

\subsection{The transversal groupoid}
The notion of an abstract transversal in a groupoid allows one to replace a topological groupoid by 
a smaller subgroupoid, up to Morita equivalence. 
\begin{defn}
\label{transversal}
A topological groupoid $\mathcal{F}$ admits 
an \emph{abstract transversal} if there is a closed subset $X\subset\mathcal{F}^{(0)}$ such that 
\begin{enumerate}
\item
$X$ meets 
every orbit of the $\mathcal{F}$-action on $\mathcal{F}^{(0)}$;
\item for the relative topologies on $X$ and 
\[\mathcal{F}_{X}:=\{\xi\in\mathcal{F}: r(\xi)\in X\}\subset \calF,\]
the restrictions $r:\mathcal{F}_{X}\to X$ and $s:\mathcal{F}_{X}\to \mathcal{F}^{(0)}$ are open maps for the relative topologies on $\calF_{X}$ and $X$.
\end{enumerate}
\end{defn}
The set $\mathcal{G}:=\mathcal{F}_{X}\cap\mathcal{F}_{X}^{-1}$ is a closed subgroupoid and 
$\mathcal{F}_{X}$ is a groupoid equivalence between $\mathcal{F}$ and $\mathcal{G}$ 
(with its relative topology), see \cite[Example 2.7]{MRW}. Abstract transversals were 
studied more generally in~\cite[Section 3]{PutnamSpielberg}. We will describe an abstract transversal 
$\mathcal{G}\subset\Omega_{\mathcal{L}}\rtimes\mathbb{R}^{d}$ which is Hausdorff and \'{e}tale in the relative topology.

\begin{defn}
The transversal of a lattice $\calL$ is given by the set
$$
   \Omega_0 = \{ \omega \in \Omega_\calL\,:\, 0 \in \calL^{(\omega)} \},
$$
\end{defn}
We see that $\Omega_0$ is a closed subset of $\Omega_\calL$ and so is 
compact by Proposition \ref{prop:disc_meas_compact}.

\begin{prop}[\cite{BHZ00}, Proposition 2.3, \cite{BBG06}, Proposition 2.24] \label{prop:tranversal_properties}
Let $\mathcal{L}$ be a Delone set. 
\begin{enumerate}
  \item If $\calL$ has finite local complexity, then $\Omega_0$ is 
  totally disconnected.
  \item If $\calL$ is repetitive, aperiodic and of finite 
  local complexity, then $\Omega_0$ is a Cantor set (totally disconnected 
  with no isolated points).
\end{enumerate}
\end{prop}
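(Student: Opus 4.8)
The plan is to prove each statement about $\Omega_0$ directly from the definitions of finite local complexity, repetitivity and aperiodicity, working with the topology on $\Omega_\calL$ described by the neighbourhood base $U_{\epsilon,M}(\omega)$ and its restriction to $\Omega_0$. Throughout I identify a point $\omega\in\Omega_0$ with the Delone set $\calL^{(\omega)}$ containing $0$.

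For part (1), I would show that under finite local complexity the sets $U_{\epsilon,M}(\omega)\cap\Omega_0$ are \emph{clopen} for suitable parameters, which immediately gives total disconnectedness. The key observation is that for a lattice with finite local complexity there is, for each $M>0$, a uniform $\epsilon_M>0$ (coming from the minimal Hausdorff distance between distinct patches of radius $M$, of which there are only finitely many) such that if two Delone sets in $\Omega_0$ agree on $B(0;M)$ up to distance less than $\epsilon_M$, then their patches of radius $M$ around $0$ coincide exactly. Hence, with $\epsilon<\epsilon_M$, the relation ``$\calL^{(\eta)}\cap B(0;M) = \calL^{(\omega)}\cap B(0;M)$'' is an equivalence relation whose classes are precisely the sets $U_{\epsilon,M}(\omega)\cap\Omega_0$; being one class in a partition into finitely-described pieces, each such set is both open and closed. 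First I would make precise that the patches of radius $M$ appearing in points of $\Omega_0$ form a finite set (this follows from finite local complexity of $\calL$ together with the fact that every $\calL^{(\omega)}$ has the same finite collection of patches as $\calL$), then extract $\epsilon_M$, then verify the clopen claim. Since the $U_{\epsilon,M}(\omega)$ form a neighbourhood base, every point has a neighbourhood base of clopen sets, so $\Omega_0$ is totally disconnected.

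For part (2), it remains to show $\Omega_0$ has no isolated points, given that $\calL$ is additionally repetitive and aperiodic. Suppose $\omega\in\Omega_0$ were isolated; then by part (1) some clopen $U_{\epsilon,M}(\omega)\cap\Omega_0 = \{\omega\}$, meaning $\omega$ is the \emph{unique} element of $\Omega_0$ whose radius-$M$ patch around $0$ equals that of $\omega$. The strategy is to use repetitivity to manufacture a second, distinct point of $\Omega_0$ with the same radius-$M$ patch at $0$, contradicting isolation. Concretely: by repetitivity applied to the patch $p = \calL^{(\omega)}\cap B(0;M)$ and a small tolerance, there are infinitely many translates $p+a_k$ of $p$ occurring within $\calL^{(\omega)}$ with $\|a_k\|\to\infty$; translating $\calL^{(\omega)}$ by $-a_k$ gives Delone sets $\eta_k$ containing $0$ with the same radius-$M$ patch at $0$, hence $\eta_k\in\Omega_0$ and $\eta_k = \omega$ for all large $k$; but $\eta_k=\omega$ forces $\calL^{(\omega)} = \calL^{(\omega)} - a_k$, and taking $k$ with $a_k\ne 0$ contradicts aperiodicity of $\calL$ (using that aperiodicity passes to every element of the hull). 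I would need to be slightly careful with the $\varepsilon$-tolerance in the definition of repetitivity versus exact patch equality; the cleanest fix is to feed repetitivity a tolerance smaller than $\epsilon_M/2$ and use the finite-local-complexity rigidity from part (1) to upgrade approximate patch equality to exact patch equality, so that $\eta_k$ lands exactly in the singleton clopen set.

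The main obstacle I anticipate is the careful bookkeeping in translating between the three notions of ``closeness'' — the weak-$*$/Hausdorff metric on $\Omega_\calL$, the combinatorial notion of patches coinciding, and the $\varepsilon$-approximate translations in the definition of repetitivity — and in particular establishing the uniform $\epsilon_M$ for part (1). Once that rigidity lemma is in hand (patches of radius $M$ in $\Omega_0$ either agree exactly or are Hausdorff-distance $\geq\epsilon_M$ apart), both parts follow quickly: part (1) is ``clopen neighbourhood base'' and part (2) is ``repetitivity produces a collision, aperiodicity forbids it.''
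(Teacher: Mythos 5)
Your approach is the standard one and is essentially the route the paper itself gestures at in the discussion preceding Proposition \ref{dtop}, where it is noted that under finite local complexity each pattern set $P_\mu$ is finite and the sets $U_{p,\mu}$ are clopen and generate the topology on $\Omega_0$; you correctly isolate the key rigidity lemma that under FLC two radius-$M$ patterns either coincide exactly or are Hausdorff-distance at least $\epsilon_M$ apart, which upgrades approximate agreement to exact agreement and makes the $U_{p,\mu}$ clopen.

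Two small imprecisions are worth tightening. First, in part (1) you assert that ``every $\calL^{(\omega)}$ has the same finite collection of patches as $\calL$,'' but under FLC alone this is false in general: what is true is that the patches of any $\calL^{(\omega)}\in\Omega_\calL$ form a \emph{subset} of the closure (under FLC rigidity, a subset) of the patches of $\calL$, which is all you need to conclude that $P_\mu$ is finite. Equality of pattern sets across the hull requires repetitivity, which you only have in part (2). Second, in part (2) the assertion that ``aperiodicity passes to every element of the hull'' is not an automatic consequence of aperiodicity of $\calL$; it uses minimality of $(\Omega_\calL,T,\R^d)$, which you should explicitly extract from the repetitivity hypothesis (the paper cites \cite[Theorem 2.13]{BHZ00} for this equivalence). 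Concretely: if some $\omega\in\Omega_\calL$ had a nonzero period $a$, then by minimality $\calL$ is a limit of translates of $\omega$, and since $T_a$ is continuous and fixes each such translate, $T_a\calL=\calL$, contradicting aperiodicity of $\calL$. With these two points filled in, the argument is complete and correct.
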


The passage from the continuous hull $\Omega_\calL$ to the transversal $\Omega_0$ discretises 
the $\R^d$-action at the cost that we no longer have a group action, but only a groupoid structure.

\begin{prop}[\cite{Bel86}, Section 3, \cite{Kellendonk97}, Lemma 2] \label{prop:etale}
Given a Delone set $\calL$ with transversal $\Omega_0$, define the set 
$$
  \calG := \big\{ (\omega, x)\in \Omega_0 \times\R^d \, :\, T_{-x}\omega \in \Omega_0 \big\}=\big\{ (\omega, x)\in \Omega_0 \times\R^d \, :\, x\in \mathcal{L}^{(\omega)}  \big\}.
$$
Then $\calG$ is a Hausdorff \'{e}tale groupoid with maps 
\begin{align} \label{eq:derivation_properties}
   &(\omega,x)^{-1}= (T_{-x}\omega,-x),  &&(\omega,x)\cdot (T_{-x}\omega,y) = (\omega, x+y), 
     &s(\omega,x) = T_{-x}\omega,   &&r(\omega,x) = \omega
\end{align}
and unit space $\calG^{(0)}=\Omega_0$.
\end{prop}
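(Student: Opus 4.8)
The plan is to verify each groupoid axiom and the two topological properties (Hausdorff and étale) directly from the defining formulas, using that the translation action $\{T_a\}_{a\in\R^d}$ on $\Omega_{\calL}$ is by homeomorphisms and satisfies $T_a T_b = T_{a+b}$, $T_0 = \mathrm{id}$. First I would confirm the algebraic groupoid structure. The two descriptions of $\calG$ agree because $x\in\calL^{(\omega)}$ exactly when $\calL^{(\omega)}$, hence $\calL^{(T_{-x}\omega)} = \calL^{(\omega)}-x$, contains $0$, i.e. $T_{-x}\omega\in\Omega_0$. Composability $(\omega,x)\cdot(\omega',y)$ requires $s(\omega,x)=r(\omega',y)$, that is $T_{-x}\omega=\omega'$; then the product $(\omega,x+y)$ lies in $\calG$ since $T_{-(x+y)}\omega = T_{-y}T_{-x}\omega = T_{-y}\omega'\in\Omega_0$. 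The inverse $(\omega,x)^{-1}=(T_{-x}\omega,-x)$ lies in $\calG$ since $T_{x}(T_{-x}\omega)=\omega\in\Omega_0$, and $((\omega,x)^{-1})^{-1}=(T_x T_{-x}\omega, x)=(\omega,x)$. The identities $(\omega,x)(\omega,x)^{-1}=(\omega,0)$ and $(\omega,x)^{-1}(\omega,x)=(T_{-x}\omega,0)$ identify $r$ and $s$ with points of $\Omega_0=\{(\omega,0):\omega\in\Omega_0\}$, giving $\calG^{(0)}=\Omega_0$; associativity and the cancellation axioms (3) and (4) of the definition of a groupoid then follow by the additivity of the $\R^d$-coordinate and the cocycle identity for $T$. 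Continuity of multiplication, inversion, and of $r$, $s$ is clear from the formulas, since addition on $\R^d$, negation, and the map $(\omega,x)\mapsto T_{-x}\omega$ are jointly continuous.

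Next I would establish that $\calG$ is Hausdorff. Here the key point is that $\calG$ is a subset of $\Omega_0\times\R^d$ with the subspace topology, $\Omega_0$ is compact metrizable (hence Hausdorff) by Proposition~\ref{prop:disc_meas_compact}, $\R^d$ is Hausdorff, so the product $\Omega_0\times\R^d$ is Hausdorff and therefore so is $\calG$. One should also check $\calG$ is closed in $\Omega_0\times\R^d$ (so that it is locally compact): this follows because $\{(\omega,x): T_{-x}\omega\in\Omega_0\}$ is the preimage of the closed set $\Omega_0$ under the continuous map $(\omega,x)\mapsto T_{-x}\omega$. In fact the cleanest route is to invoke the abstract transversal machinery: verify that $X:=\Omega_0$ is an abstract transversal in $\calF:=\Omega_{\calL}\rtimes\R^d$ in the sense of Definition~\ref{transversal}, namely that $\Omega_0$ meets every orbit (every Delone set in the orbit closure can be translated so that $0$ is a lattice point, using relative density) and that $r,s$ restricted to $\calF_X$ are open; then $\calG = \calF_X\cap\calF_X^{-1}$ is automatically a closed subgroupoid, and Hausdorffness is inherited from $\calF$.

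For the étale property I need the range map $r:\calG\to\calG$ to be a local homeomorphism — equivalently, $\calG$ has a basis of open bisections (open sets on which both $r$ and $s$ restrict to homeomorphisms onto open subsets of $\Omega_0$). The main obstacle, and the part requiring genuine care rather than formula-pushing, is constructing these bisections, and this is where the Delone hypotheses $r$-uniform discreteness and $R$-relative density enter. The idea: fix $(\omega_0,x_0)\in\calG$, so $x_0\in\calL^{(\omega_0)}$. Using $r$-uniform discreteness, $x_0$ is the \emph{unique} point of $\calL^{(\omega_0)}$ in the ball $B(x_0;r)$; by the definition of the neighbourhood base on $\Omega_0$ (patches of Delone sets agreeing on large balls up to small Hausdorff distance), for $\omega$ near $\omega_0$ there is a unique point $\tau(\omega)\in\calL^{(\omega)}\cap B(x_0;r/2)$, and $\omega\mapsto\tau(\omega)$ is continuous. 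The set $U=\{(\omega,\tau(\omega)):\omega\in\mathcal{U}\}$ for a small neighbourhood $\mathcal{U}$ of $\omega_0$ in $\Omega_0$ is then an open neighbourhood of $(\omega_0,x_0)$ in $\calG$ on which $r(\omega,\tau(\omega))=\omega$ is a homeomorphism onto $\mathcal{U}$, and $s(\omega,\tau(\omega))=T_{-\tau(\omega)}\omega$ is continuous and injective with continuous inverse (injectivity again from uniform discreteness, since distinct $\omega$ give distinct germs). Hence $r$ is a local homeomorphism and $\calG$ is étale. Since all of these local-bisection facts are established for the transversal groupoid in the cited references~\cite{Bel86,Kellendonk97,BHZ00}, in the write-up I would state the construction and refer to those sources for the detailed verification of openness/continuity, presenting the explicit formulas \eqref{eq:derivation_properties} as the concrete model.
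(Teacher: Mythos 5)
Your proposal is correct, and it follows essentially the same route as the paper and the cited literature. Note that the paper itself does not give a standalone proof of Proposition~\ref{prop:etale} (it is stated with citations), but the substance of the \'{e}tale structure is worked out in the lemma preceding Proposition~\ref{dtop} (uniqueness of a lattice point in a ball of radius $\varepsilon<r/2$) and in Proposition~\ref{dtop} itself, where the open bisections $V_{(U,y,\varepsilon)}$ are shown to be $s$-sets and the inverse map $t_y(\omega)=(T_{-x_\omega^{-y}}\omega,-x_\omega^{-y})$ is shown to be continuous. Your bisection $U=\{(\omega,\tau(\omega)):\omega\in\mathcal{U}\}$ with $\tau(\omega)$ the unique point of $\calL^{(\omega)}\cap B(x_0;r/2)$ is precisely the same object, phrased for $r$ instead of $s$, which is exactly what the paper's Definition~1.3 of \'{e}tale asks for. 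The one step you defer to the references --- continuity of $\omega\mapsto\tau(\omega)$ --- is the step the paper proves in detail in Proposition~\ref{dtop} via the factorisation $t_y = (T_y\times T_y)\circ t_0\circ T_y$ and the continuity of $t_0$ from~\cite{BBG06}; referring to the literature there is legitimate since the paper does the same for the proposition itself. Your remark that one could instead invoke the abstract transversal machinery of Definition~\ref{transversal} also matches the paper's organisational choice (Proposition~\ref{dtop} is exactly the verification that $\Omega_0$ is an abstract transversal in $\Omega_\calL\rtimes\R^d$). In short, nothing is missing; the proposal is a sound expansion of the argument the paper outsources.
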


The transversal groupoid $\calG$ and its corresponding (twisted) $C^*$-algebra 
will be our central object of study. The space 
$\Omega_{0}$ is an abstract transversal in the sense of Definition \ref{transversal}, so that 
$\calG \subset \Omega_0\times \mathbb{R}^{d}$ with its subspace topology is 
Morita equivalent to $\Omega_{\mathcal{L}}\rtimes\R^{d}$. This result is well-known to experts, 
see~\cite[Chapter 2, Section 2]{FHK} for the case of tilings. We find it worthwhile 
to give a detailed proof in the Delone lattice setting. 
To this end we first make the following observation.
\begin{lemma} 
Let  $0<\varepsilon<r/2$. For any $\omega\in\Omega_0$, the intersection 
$\mathcal{L}^{(\omega)}\cap B(y;\varepsilon)$ contains at most one point.
\end{lemma}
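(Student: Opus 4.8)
The plan is to use the defining property of an $(r,R)$-Delone set directly, exploiting that $\Omega_0 \subset \Omega_{\calL}$ consists of limits of translates of $\calL$, so that every $\calL^{(\omega)}$ with $\omega\in\Omega_0$ inherits the $r$-uniform discreteness. First I would recall that by definition $\omega\in\Omega_{\calL}$ means $\nu=\omega$ lies in the orbit closure of $\delta_{\calL}$ inside $UD_r(\R^d)$, and since $UD_r(\R^d)$ is closed (indeed compact, Proposition \ref{prop:disc_meas_compact}), we have $\omega\in UD_r(\R^d)$. Hence for every $x\in\R^d$, $\omega(B(x;r))\le 1$, which translates to the statement that $\calL^{(\omega)}=\mathrm{supp}(\omega)$ contains at most one point in any open ball of radius $r$.

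The key step is then purely geometric: if $0<\varepsilon<r/2$, then the ball $B(y;\varepsilon)$ has diameter less than $r$, so it is contained in $B(z;r)$ for any $z\in B(y;\varepsilon)$ — in particular, if $\calL^{(\omega)}\cap B(y;\varepsilon)$ contained two distinct points $p$ and $q$, then both would lie in $B(p;r)$ (since $\|p-q\|<2\varepsilon<r$), contradicting $r$-uniform discreteness of $\calL^{(\omega)}$. So the cardinality bound follows. The only mild subtlety — and the place I would be most careful — is the passage from the measure-theoretic statement $\omega(B(x;r))\le 1$ to the statement about the support $\calL^{(\omega)}$: since $\omega\in QD(\R^d)$ is pure point with integer masses, two distinct points of $\mathrm{supp}(\omega)$ in a ball would already force $\omega$ of that ball to be $\ge 2$, so this is immediate, but it is worth stating explicitly that one uses both the pure-point and the $\N$-valued-mass conditions in the definition of $QD(\R^d)$.

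I do not anticipate a genuine obstacle here; the statement is essentially a restatement of $r$-uniform discreteness for the lattices parametrised by the transversal, and the factor $r/2$ is exactly what makes the triangle inequality close. The one bookkeeping point to handle cleanly is whether $\Omega_{\calL}$ was defined as a closure inside $UD_r(\R^d)$ or inside the larger space $\calM(\R^d)$; in the latter case I would invoke Proposition \ref{prop:disc_meas_compact} to note that $UD_r(\R^d)$ is closed in $\calM(\R^d)$, so the orbit closure of a point of $UD_r(\R^d)$ stays inside $UD_r(\R^d)$, and then proceed as above.
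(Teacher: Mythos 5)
Your proposal is correct and takes essentially the same approach as the paper: both arguments observe that two points of $\calL^{(\omega)}\cap B(y;\varepsilon)$ would be within distance $2\varepsilon<r$ of each other, contradicting $r$-uniform discreteness. The extra bookkeeping you include (that $\Omega_0\subset\Omega_{\calL}\subset UD_r(\R^d)$ and that the pure-point, $\N$-valued-mass conditions in $QD(\R^d)$ translate the measure bound into a support bound) is careful and accurate, but the paper treats it as implicit.
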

\begin{proof} 
Suppose that the intersection is nonempty and $x_{1},x_{2}\in \mathcal{L}^{(\omega)}\cap B(y;\varepsilon)$. 
Then $d(x_{1},x_{2})<2\varepsilon<r$ so it must hold that $x_1=x_2$.
\end{proof}

For $\mu\in \mathbb{R}_{>0}$ we denote by
\[P_{\mu}:=\{\mathcal{L}^{(\omega)}\cap B(0;\mu):\omega\in\Omega_0\},\]
the \emph{set of patterns of radius} $\mu$. 
The sets
$$ 
U_{p,\mu}:=\{\omega\in \Omega_\calL \,:\, 0\in\mathcal{L}^{(\omega)}, \mathcal{L}^{(\omega)}\cap B(0;\mu)=p\}\subset\Omega_0,\quad \mu\in\mathbb{R}_{>0},\quad p\in P_{\mu},
$$ 
define the relative topology on the closed subset $\Omega_{0}:=\{\omega \in\Omega_\calL : 0\in\mathcal{L}^{(\omega)}\}$. In case $\calL$ has finite local complexity each set $P_{\mu}$ is finite and the clopen sets $U_{p,\mu}$ determine the 
totally disconnected topology on $\Omega_0$. We now provide the proof that $\Omega_0$ is indeed an abstract transversal.

\begin{prop}\label{dtop} 
Let $\mathcal{L}\subset\mathbb{R}^{d}$ be a uniformly $r$-discrete subset with transversal $\Omega_0$ and associated groupoid $\mathcal{G}$. For $U\subset\Omega_0$ an open set,  the sets
\begin{align*}
V_{(U,y,\varepsilon)}&:=\left(U\times B(y;\varepsilon)\right){}\cap {}\mathcal{G}\\ 
     &=\{(\omega, x)\in \Omega_{0}\times \mathbb{R}^{d}: \omega\in U,
      \quad x \in \mathcal{L}^{(\omega)}\cap B(y;\varepsilon)\},
\end{align*}
form a base for the topology on $\mathcal{G}$. For $0<\varepsilon<r/2$, the restriction
$s:V_{(U,y,\varepsilon)}\to \Omega_0$ is a homeomorphism onto its image. Moreover
 the restrictions 
\[s:\Omega_{\mathcal{L}}\rtimes\mathbb{R}^{d}\cap r^{-1}(\Omega_{0})\to \Omega_{\mathcal{L}},\quad r:\Omega_{\mathcal{L}}\rtimes\mathbb{R}^{d}\cap r^{-1}(\Omega_{0})\to \Omega_0,\]
are open maps. Therefore the set $\Omega_0$ is an abstract transversal and 
the groupoid $\calG\subset \Omega_\mathcal{L}\rtimes \R^d$, with the subspace topology, 
is Morita equivalent to $\Omega_{\mathcal{L}}\rtimes\mathbb{R}^{d}$. 
\end{prop}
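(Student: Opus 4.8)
The plan is to verify the two conditions in Definition~\ref{transversal} for $X=\Omega_0\subset(\Omega_{\mathcal L}\rtimes\R^d)^{(0)}=\Omega_{\mathcal L}$, and then invoke \cite[Example 2.7]{MRW} to conclude the Morita equivalence (together with the identification $\mathcal F_X\cap\mathcal F_X^{-1}=\calG$, which is immediate from the description of $\calG$ in Proposition~\ref{prop:etale}). Along the way I would also establish the claimed topological facts, since they are what makes the transversal conditions work and they are used elsewhere.

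First I would check that $\Omega_0$ meets every orbit. Given $\omega\in\Omega_{\mathcal L}$, the lattice $\calL^{(\omega)}$ is $R$-relatively dense, hence nonempty; picking any $x\in\calL^{(\omega)}$ gives $T_{-x}\omega\in\Omega_0$ (as $0\in\calL^{(T_{-x}\omega)}=\calL^{(\omega)}-x$), so the orbit of $\omega$ hits $\Omega_0$. Next I would prove the assertion that the sets $V_{(U,y,\varepsilon)}$ form a base for the subspace topology on $\calG$: a general basic open set of $\Omega_{\mathcal L}\times\R^d$ is $U'\times W$ with $U'\subset\Omega_{\mathcal L}$ open and $W\subset\R^d$ open, so $U'\times W\cap\calG$ is covered by sets $(U'\cap\Omega_0)\times B(y;\varepsilon)\cap\calG$ with $\ol{B(y;\varepsilon)}\subset W$, and $U'\cap\Omega_0$ is a typical open subset of $\Omega_0$; this reduces everything to the $V_{(U,y,\varepsilon)}$. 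Using the preceding Lemma (for $0<\varepsilon<r/2$ the set $\calL^{(\omega)}\cap B(y;\varepsilon)$ has at most one point), the map $s:V_{(U,y,\varepsilon)}\to\Omega_0$, $(\omega,x)\mapsto T_{-x}\omega$, is injective; continuity is clear, and I would exhibit a continuous inverse on its image using the fact that from $\eta=T_{-x}\omega\in\Omega_0$ and the knowledge that the unique point of $\calL^{(\omega)}\cap B(y;\varepsilon)$ equals $x$, one recovers $x$ continuously as the unique lattice point of $T_{x}\eta$ near $y$ — more precisely, $x$ depends continuously on $\eta$ because small perturbations of $\eta$ in $\Omega_0$ move the nearby lattice point of $T_x\eta$ only slightly (a consequence of the weak-$\ast$/Hausdorff-metric description of the topology). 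Hence $s$ restricted to each $V_{(U,y,\varepsilon)}$ is a homeomorphism onto its (open) image, and being a local homeomorphism it is an open map; this simultaneously re-proves that $\calG$ is \'etale and Hausdorff (the latter since $\Omega_0\times\R^d$ is Hausdorff).

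With the basic open sets understood, I would deduce condition (2) of Definition~\ref{transversal} for $\mathcal F=\Omega_{\mathcal L}\rtimes\R^d$. For $r:\mathcal F_{\Omega_0}=\mathcal F\cap r^{-1}(\Omega_0)\to\Omega_0$: an open subset of $\mathcal F_{\Omega_0}$ is a union of sets $V'\times W\cap\mathcal F_{\Omega_0}$ with $V'\subset\Omega_{\mathcal L}$ open, $W\subset\R^d$ open, and $r(V'\times W\cap\mathcal F_{\Omega_0})=\{\omega\in\Omega_0: \exists\,x\in W,\ T_{-x}\omega\in V'\}=\bigcup_{x\in W}(T_{-x}^{-1}V'\cap\Omega_0)$, which is open since each $T_{-x}$ is a homeomorphism of $\Omega_{\mathcal L}$; so $r$ is open. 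For $s:\mathcal F_{\Omega_0}\to\Omega_{\mathcal L}$: by the paragraph above $s$ is locally a homeomorphism onto an open subset of $\Omega_{\mathcal L}$ on each $V_{(U,y,\varepsilon)}$ (this uses that $V'\times W\cap\mathcal F_{\Omega_0}$ is covered by such $V_{(U,y,\varepsilon)}$), hence $s$ is open. This verifies that $\Omega_0$ is an abstract transversal. Finally, since $\mathcal F_{\Omega_0}$ is a $(\mathcal F,\calG)$-equivalence by \cite[Example 2.7]{MRW} (freeness and properness of the two actions being automatic here), the corresponding $C^*$-algebras are Morita equivalent.

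The main obstacle is the continuity of the inverse of $s|_{V_{(U,y,\varepsilon)}}$, i.e.\ showing that the selected lattice point $x\in\calL^{(\omega)}\cap B(y;\varepsilon)$ varies continuously as $(\omega,x)$ ranges over $V_{(U,y,\varepsilon)}$; equivalently, that $\omega\mapsto$ (that point) is continuous on the image of $s$. This is exactly where the precise description of the topology on $\Omega_{\mathcal L}$ (via the weak-$\ast$ topology on measures, or the Hausdorff-distance neighbourhood base $U_{\epsilon,M}(\omega)$) and uniform discreteness of $\calL$ must be combined: uniform $r$-discreteness prevents lattice points from colliding or splitting under small perturbations, so the single point of $\calL^{(\omega)}$ in $B(y;\varepsilon)$ moves continuously, and for $\varepsilon<r/2$ it cannot jump to or from the boundary of the ball. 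Everything else — meeting the orbits, openness of $r$ and $s$, and the final appeal to \cite{MRW} — is then routine.
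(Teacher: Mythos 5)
The overall scaffolding is right (verify Definition~\ref{transversal}, then cite \cite[Example 2.7]{MRW}), the reduction to the sets $V_{(U,y,\varepsilon)}$ is right, and your identification of ``continuity of the inverse of $s|_{V_{(U,y,\varepsilon)}}$'' as an obstacle is correct: the paper disposes of it by citing \cite[Lemma 2.10]{BBG06} and writing the inverse explicitly as $t_y = (T_y\times T_y)\circ t_0\circ T_y$. But the proof has a genuine gap in the paragraph on openness of the restricted range and source maps, which is where the paper spends most of its effort.

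First, a bookkeeping error: for the crossed product groupoid, $r(\omega,x)=\omega$ and $s(\omega,x)=T_{-x}\omega$, so $r\bigl((V'\cap\Omega_0)\times W\bigr)=V'\cap\Omega_0$ (trivially open in $\Omega_0$); the formula you write down with ``$\exists\,x\in W,\ T_{-x}\omega\in V'$'' is not $r$'s image at all. More seriously, the argument you give for $s:\mathcal{F}_{\Omega_0}\to\Omega_\mathcal{L}$ does not work. The sets $V_{(U,y,\varepsilon)}$ are subsets of $\calG$, i.e.\ they only contain pairs $(\omega,x)$ with $x\in\calL^{(\omega)}$; they do \emph{not} cover $\mathcal{F}_{\Omega_0}=\Omega_0\times\R^d$, so you cannot appeal to the local homeomorphism property on the $V_{(U,y,\varepsilon)}$ to get openness of $s$ on all of $\mathcal{F}_{\Omega_0}$. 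And even on $\calG$, the image $s(V_{(U,y,\varepsilon)})$ is open only \emph{in} $\Omega_0$, which is a closed, typically nowhere dense subset of $\Omega_\mathcal{L}$; being the image under a homeomorphism $T_{-x}$ of a relatively open subset of $\Omega_0$ does not make it open in $\Omega_\mathcal{L}$. What must be shown is that the image $s\bigl((V'\cap\Omega_0)\times W\bigr)=\{T_{-x}\omega:\omega\in V'\cap\Omega_0,\ x\in W\}$ contains a full $\Omega_\mathcal{L}$-neighbourhood of each of its points, and the parameter $x$ ranging over the open set $W$ is essential for this. The paper's argument is a genuine $\delta$--$\varepsilon$ computation with Hausdorff distances: given $\eta$ close to a point $T_{-x}\omega$ of the image, uniform $r$-discreteness is used to locate a unique lattice point $w\in\calL^{(\eta)}$ with $\|w+x\|<\delta$, and one then checks that $(T_{-w}\eta,-w)$ lies in $(V'\cap\Omega_0)\times W$ and maps to $\eta$. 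Nothing in your sketch replaces this step, so the openness of $s:\mathcal{F}_{\Omega_0}\to\Omega_\mathcal{L}$ — the heart of the transversal verification — is left unproved.

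One further small remark: your verification that $\Omega_0$ meets every orbit uses $R$-relative density, which is natural if $\calL$ is Delone; note that the proposition as stated assumes only uniform $r$-discreteness, under which this is not automatic (and indeed the paper's own proof does not explicitly address condition (1)). Worth flagging, but this is a wrinkle in the statement rather than a defect in your approach.
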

\begin{proof} The sets $V_{(U,y,\varepsilon)}$ generate the relative topology on $\mathcal{G}$ 
as a subset of the crossed product groupoid
$\Omega_\calL \rtimes \mathbb{R}^d$ of the hull of $\mathcal{L}$. 
To see that each of the basic sets is an $s$-set, 
we adapt the proof of \cite[Lemma 2.10]{BBG06}. 
The map $s$ is injective on $V_{(U,y,\varepsilon)}$, for $(\omega ,x), (\eta,z)\in V_{(p,\mu,y,\varepsilon)}$ 
the equality $T_{-x}\omega=T_{-z}\eta$ implies that $\omega=T_{x-z}\eta$,
 and $x-z\in\mathcal{L}^{(\omega)}$. Now $x,z\in B(y;\varepsilon)$ so $d(x,z)<2\varepsilon<r$, 
 and thus $x=z$ because $\mathcal{L}^{(\omega)}$ is $r$-discrete. 
 It then follows that $\omega=\eta$ as well. Now consider $s:(\omega, x)\mapsto T_{-x}\omega$ and 
 the image
\begin{align*} 
s\left(V_{(U,y,\varepsilon)}\right)&=\{\omega\in\Omega_{0}: \exists x\in B(y;\varepsilon), \quad T_{x}\omega\in U \}\\
&=\{\omega\in\Omega_{0}: \exists x\in B(0;\varepsilon), \quad T_{x+y}\omega\in U \}\\
&=\Omega_{0}\cap T_{-y}\left(\{\omega\in\Omega_0  : \exists x\in B(0;\varepsilon), \quad T_{x}\omega\in U\}\right)\\
&=\Omega_{0}\cap T_{-y}\left(s(U\times B(0;\varepsilon))\right),
\end{align*}
with $s(\omega ,x)=\phi(\omega,-x)$ and $\phi$ as in~\cite[Lemma 2.10]{BBG06}, 
and by that result the map $s$ is a homeomorphism onto its image. 
Thus, since $y$ is fixed, the set $s\left(V_{(U,y,\varepsilon)}\right)$ is open in $\Omega_{0}$. 
Now $\omega\in s\left(V_{(U,y,\varepsilon)}\right)$ implies that 
$B(-y;\varepsilon)\cap\mathcal{L}^{(\omega)}\neq \emptyset$ and thus contains a unique point $x_{\omega}^{-y}$. 
The map
\[
t_{y}:s\left(V_{(U,y,\varepsilon)}\right)\to V_{(U,y,\varepsilon)},\quad 
     \omega \mapsto (T_{-x_{\omega}^{-y}}\omega, -x_{\omega}^{-y}),
\]
is an inverse for $s$: If $\omega=T_{-x}\eta$ with $\{x\}=B(y;\varepsilon)\cap\mathcal{L}^{(\eta)}$ then
\[
x^{-y}_{\omega}=x^{-y}_{T_{-x}\eta}=B(-y;\varepsilon)\cap \mathcal{L}^{(T_{-x}\eta)}=-x,
\]
and so indeed
\[
t_{y}\circ s(\eta,x)=s_{y}(\omega)=(T_{x}\omega,x)=(\eta,x).
\]

The points $x^{y}_{\omega}$ satisfy the equality $x^{y}_{\omega}=y + x^{0}_{T_{-y}\omega}$ 
and thus the  map $t_{y}$ can be written
$$
t_{y}(\omega)=(T_{-x_{\omega}^{-y}}\omega, -x_{\omega}^{-y})=
   (T_{y}T_{-x^{0}_{T_{y}\omega}}\omega, y-x^{0}_{T_{y}\omega})=
     (T_{y}\times T_{y})\circ t_{0}\circ T_{y}(\omega). 
$$
The map $t_{0}$ is continuous by \cite[Lemma 2.10]{BBG06} and $y$ is fixed, proving continuity of $t_y$.\newline
We now proceed to show the maps $s,r$ are open when restricted to $r^{-1}(\Omega_0)$. As above we have
\[s(U\times B(y;\varepsilon) \cap r^{-1}(\Omega_0))=\{T_{-x}(\omega):\omega\in U\cap\Omega_0, x\in B(y;\varepsilon)\cap\mathcal{L}^{(\eta)}\},\]
and to prove that the map $s|_{r^{-1}(\Omega_0)}$ is open we may restrict ourselves to 
sets $U=U_{\delta,M}(\omega)\cap\Omega_0$ and $M$ sufficiently large, $\delta$ sufficiently small. It then suffices to show that the set
$s(U\times B(y;\varepsilon)\cap r^{-1}(\Omega_0))$ contains a basic open neighborhood of any of its elements $T_{-x}(\omega)$. Let $\delta<\varepsilon<r/2$ and $M>\delta$. Then if $\eta\in\Omega_{\mathcal{L}}$ is such that 
\[d_{H}\left(B(0;M+\|y\|+r)\cap\mathcal{L}^{(T_{-x}\omega)},B(0;M+\|y\|+r)\cap\mathcal{L}^{(\eta)}\right)<\delta/2,\]
we have that $-x\in \mathcal{L}^{(T_{-x}\omega)}\cap B(0;M+\|y\|+r)$. By definition of the Hausdorff distance, we have
\[\inf_{w\in B(0;M+\|y\|)\cap\mathcal{L}^{(\eta)}}\|w+x\|<\delta/2,\]
and since the sets involved are discrete, there exists a point $w\in\mathcal{L}^{(\eta)}$ with $\|w+x\|\leq\delta/2$. Moreover, if $\delta<r$ then this point $w$ is unique because $\mathcal{L}^{(\eta)}$ is $r$-discrete. Then for $z\in B(0;M)\cap\mathcal{L}^{(\omega)}$ and $v\in B(0;M)\cap \mathcal{L}^{(T_{-x}\eta)}$ we have
\[ (z-w)\in B(0;M+\|y\|+r)\cap\mathcal{L}^{(T_{-x}\omega)},\quad (v+x)\in B(0;M+\|y\|+r)\cap\mathcal{L}^{(\eta)}, \]
from which we deduce
\[\|z-v\|\leq \|(v+x)-(z-w)\|+\|x+w\|<\delta,\]
and therefore it follows that
\[d_{H}(B(0;M)\cap\mathcal{L}^{(\omega)},B(0;M)\cap \mathcal{L}^{(T_{-w}\eta)})<\delta.\]
Since $\|w+y\|\leq \|w+x\|+\|x-y\|<\delta<\varepsilon$  it holds that  $(T_{-w}\eta,-w)\in U\times B(y,\varepsilon)$ and $0\in T_{-w}\eta$. Therefore $\eta\in s((U\times B(y,\varepsilon))\cap r^{-1}(\Omega_0))$ and $s:r^{-1}(\Omega_0)\to \Omega_{\mathcal{L}}$ is an open map. The statement that $r$ is an open map is immediate because $\Omega_{0}$ carries the relative topology inherited from $\Omega_{\mathcal{L}}$. This completes the proof.
\end{proof}

From this we derive several structure statements for the groupoid $\mathcal{G}$. 

\begin{prop} \label{prop:exact_cocycles}
For any $1\leq k \leq d$ the groupoid cocycles 
$$\hat{c}_{k}:=(c_{1},\cdots , c_{k}) :(\omega,x)\mapsto (x_{1},\cdots,x_{k}),$$ are exact in the 
sense of \cite[Definition 3.3]{MeslandGpoid}.
\end{prop}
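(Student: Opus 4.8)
To establish exactness of $\hat{c}_k$ in the sense of \cite[Definition 3.3]{MeslandGpoid} I would verify the two conditions recalled above: (a) the closed subgroupoid $\calH_k:=\Ker(\hat{c}_k)=\{(\omega,x)\in\calG : x_1=\cdots=x_k=0\}$ admits a Haar system, and (b) the map $r\times\hat{c}_k:\calG\to\Omega_0\times\R^k$, $(\omega,x)\mapsto(\omega,(x_1,\dots,x_k))$, is a quotient map onto its image. The whole argument should be run through the concrete picture of $\calG$ established in Proposition \ref{dtop}: $\calG$ carries the subspace topology from $\Omega_0\times\R^d$, the sets $V_{(U,y,\varepsilon)}$ form a base, and for $0<\varepsilon<r/2$ both $s$ and $r$ restrict on $V_{(U,y,\varepsilon)}$ to homeomorphisms onto open subsets of $\Omega_0$.

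The case $k=d$ is immediate and sets the pattern: there $r\times\hat{c}_d$ is nothing but the inclusion $\calG\hookrightarrow\Omega_0\times\R^d$, which by Proposition \ref{dtop} is a homeomorphism onto its image, in particular a quotient map onto it, while $\Ker(\hat{c}_d)=\calG^{(0)}=\Omega_0$ is the unit space carrying the Haar system of point masses. For $1\leq k<d$, write $r\times\hat{c}_k$ as the composite of this inclusion with the open projection $\operatorname{id}_{\Omega_0}\times P_k$, where $P_k:\R^d\to\R^k$ is the projection onto the first $k$ coordinates. For (b) I would check the quotient-map property on a basic set $V_{(U,y,\varepsilon)}$ with $\varepsilon<r/2$: there the assignment sending $\omega$ to the unique point of $\calL^{(\omega)}\cap B(y;\varepsilon)$ (when that set is nonempty) is a continuous section $\omega\mapsto x_\omega$ over the open set $r(V_{(U,y,\varepsilon)})$, so $V_{(U,y,\varepsilon)}$ is carried to the ``graph'' $\{(\omega,P_k(x_\omega))\}$; one then argues this graph is relatively open in the image, the decisive input being $r$-uniform discreteness: since distinct points of $\calL^{(\omega)}$ lie at distance $\geq r$, for $\omega$ near a given configuration a lattice point whose first $k$ coordinates are near $P_k(x_\omega)$ cannot jump away from $x_\omega$, so it must coincide with $x_\omega$ and the graph absorbs a neighbourhood in the image of each of its points.

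For (a) with $k<d$, the $r$-fibres $\calH_k^\omega=\calL^{(\omega)}\cap(\{0\}^k\times\R^{d-k})$ are $r$-uniformly discrete, so the natural candidate is the system of counting measures $\lambda^\omega=\sum_{\gamma\in\calH_k^\omega}\delta_\gamma$. Left-invariance is automatic; continuity of $\omega\mapsto\int f\,d\lambda^\omega$ for $f\in C_c(\calH_k)$ follows by covering $\operatorname{supp}(f)$ by finitely many sets $V_{(U,y,\varepsilon)}\cap\calH_k$ with $\varepsilon<r/2$, on each of which integration against $\lambda^\omega$ collapses to a single evaluation of $f$ along the continuous section from Proposition \ref{dtop}. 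I expect the genuine difficulty to lie in exactly this kind of continuity/openness statement in (a) and (b): a priori the fibres of $\calH_k$ and the slices of the image $\{(\omega,P_k(x)):(\omega,x)\in\calG\}$ could behave badly as $\omega$ varies, and it is $r$-uniform discreteness that locally pins down the realising lattice point and forces the required continuity. The remaining points --- continuity and surjectivity of $r\times\hat{c}_k$ onto its image, and that $\calH_k$ is a closed subgroupoid --- are routine verifications.
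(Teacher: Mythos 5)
Your plan follows the same two-step skeleton as the paper — verify that $\Ker(\hat c_k)$ admits a Haar system and that $r\times\hat c_k$ is a quotient map onto its image — but the way you deploy $r$-uniform discreteness in step (b) contains a genuine gap, and in step (a) you do more work than necessary because you miss the paper's key observation.

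The decisive fact in the paper's proof is that $\hat c_k$ is \emph{locally constant} on $\calG$, not merely that it admits locally continuous sections of the range map. To see this one must shrink the basic open set further than you do: for $(\omega,x)\in\calG$ fix $\mu>\|x\|+r/2$, put $p=\calL^{(\omega)}\cap B(0;\mu)$, and consider $\bigl(U_{p,\mu}\times B(x;\varepsilon)\bigr)\cap\calG$ with $\varepsilon<r/2$ — that is, one also pins down the \emph{pattern} of the Delone set out to radius $\mu$, not just an arbitrary cylinder $U$. For any $(\eta,z)$ in this set one has $x,z\in\calL^{(\eta)}\cap B(0;\mu)=p$ and $\|z-x\|<r$, so uniform discreteness forces $z=x$. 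Your sets $V_{(U,y,\varepsilon)}$ only yield the continuity of $\omega\mapsto x_\omega$, which is weaker. Once local constancy is established, (a) is essentially free: $\Ker(\hat c_k)=\hat c_k^{-1}(0)$ is a clopen subgroupoid of the \'etale groupoid $\calG$, hence itself \'etale, so counting measures automatically constitute a Haar system — your hands-on verification of invariance and continuity is correct but superfluous.

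For (b) the crucial step in your argument is false when $k<d$. You claim that a lattice point of a nearby configuration whose \emph{first $k$ coordinates} are near $P_k(x_\omega)$ must coincide with $x_\omega$. But two points of $\calL^{(\omega)}$, say $x_\omega$ and $x_\omega+(0,\dots,0,v)$ with $v\in\R^{d-k}\setminus\{0\}$, can have identical first $k$ coordinates while being at distance $\geq r$; uniform discreteness controls only the full $d$-dimensional distance. Consequently the ``absorption'' claim — that the graph $\{(\omega,P_k(x_\omega))\}$ is relatively open in the image — does not follow as you argue (indeed $\hat c_k(\calL^{(\omega)})\subset\R^k$ need not even be discrete). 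The paper instead uses local constancy to identify $r\times\hat c_k$ with the restriction to $\calG$ of the projection $\mathrm{id}\times\pi_k$ and invokes the complete-quotient-map structure of \cite[Definition 3.3]{MeslandGpoid} for such restrictions; this is a different mechanism from the pointwise absorption argument you propose, and the distinction matters precisely because of the failure above.
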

\begin{proof} The subspace topology on $\mathcal{G}$ has a base consisting of the sets 
$$
\left(U_{(p,\mu)}\times B(y;\varepsilon)\right)\cap\mathcal{G}=\{(\omega,x)\in\Omega_0 \times\mathbb{R}^{d}: \mathcal{L}^{(\omega)}\cap B(0;\mu)=p,\quad x\in\mathcal{L}^{(\omega)}\cap B(y;\varepsilon)\},
$$ 
with $\mu\in [0,\infty)$, $p\in P_{\mu}$, $y\in\mathbb{R}^{d}$ and $0<\varepsilon<r/2$. For $(\omega,x)\in\calG$, choose $\mu>\|x\|+r/2$ and let $p:=\mathcal{L}^{(\omega)}\cap B(0;\mu)$. Consider
$(\eta,z)\in (U_{p,\mu}\times B(x;\varepsilon))\cap \mathcal{G}$. Then it holds that $\|z-x\|<\varepsilon<r/2$ and 
\[z,x\in \mathcal{L}^{(\eta)}\cap B(0,\mu)=\mathcal{L}^{(\omega)}\cap B(0,\mu),\]
from which we conclude that $z=x$.
 In particular each $\hat{c}_{k}$ is locally constant and $\hat{c}^{-1}_{k}(0)$ 
is a clopen subgroupoid. Since $\mathcal{G}$ is \'{e}tale, counting measures define a Haar system 
on $\hat{c}_{k}^{-1}(0)$.  
Exactness of the cocycles $\hat{c}_k$ entails that the map 
$(\omega,x)\mapsto (\omega,\hat{c}_{k}(x))=(\omega,x_{1},\cdots,x_{k})$ is a complete quotient map onto its image. 
This map is equal to the restriction of the map $\textnormal{id}\times \pi_{k}$ to $\mathcal{G}$, 
with $\pi_{k}:\mathbb{R}^{d}\to \mathbb{R}^{k}$ the projection onto the first $k$ coordinates, 
which is a complete quotient map.
\end{proof}

Note that the above proof applies to any cocycle $c:\mathcal{G}\to \mathbb{R}^{k}$ that factors through the cocycle $\hat{c}_{d}:\mathcal{G}\to\mathbb{R}^{d}$. Now that we have characterised the \'{e}tale topology on $\calG$, we recall the constructions 
in Section \ref{sec:KK_Gpoid_prelim} and consider an $s$-cover for $\mathcal{G}$ (Definition \ref{def:s_cover}), 
which will then give a frame for the $C^*$-module over the unit space, which we denote 
$E_{C(\Omega_0)}$.
We fix a choice of $0<\varepsilon<r/2$ and a countable set of points $Y\subset\mathbb{R}^{d}$ 
for which $B(y;\varepsilon)$ form an open cover of $\mathbb{R}^{d}$. Note that we can choose 
the set $Y=\lambda\mathbb{Z}^{d}$ with $\lambda>0$ sufficiently small, which is convenient but not necessary. 

\begin{prop}\label{Deloneframe} 
Let $\mathcal{L}\subset\mathbb{R}^{d}$ be a uniformly discrete subset, 
$\mathcal{G}$ the associated groupoid and $E_{C(\Omega_0)}$ the $C^*$-module 
over the unit space. 
For any $0<\varepsilon<r/2$ and any countable cover $\{B(y;\varepsilon)\}_{y\in Y}$ the open sets
\[
V_{y}:=V_{(0, 0, y, \varepsilon)}=\{(\omega,x)\in \Omega_{0}\times \mathbb{R}^{d}:x\in \mathcal{L}^{(\omega)}\cap B(y;\varepsilon)\},
\]
form an $s$-cover for $\mathcal{G}$. Any partition of unity $\chi_{y}$ subordinate to the cover $\{B(y;\varepsilon)\}_{y\in Y}$
of $\mathbb{R}^{d}$ can be lifted to a partition of unity subordinate to the cover $V_{y}$ of $\mathcal{G}$ via $\chi_{y}(\omega,x)=\chi_{y}(x)$.
Consequently the functions $\chi_{y}:\mathcal{G}\to\mathbb{R}$ define a frame for $E_{C(\Omega_0)}$.
\end{prop}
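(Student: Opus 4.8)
The plan is to verify the three defining properties of an $s$-cover from Definition \ref{def:s_cover} for the family $\{V_y\}_{y\in Y}$, then invoke Proposition \ref{prop:s-cover_frame} to conclude that the lifted partition of unity gives a frame for $E_{C(\Omega_0)}$. First I would check that $\{V_y\}_{y\in Y}$ is an open cover: openness of each $V_y$ is immediate since $V_y = V_{(\Omega_0, 0, y, \varepsilon)}$ is one of the basic open sets of Proposition \ref{dtop} (taking $U = \Omega_0$), and the covering property follows because $\{B(y;\varepsilon)\}_{y\in Y}$ covers $\R^d$: given $(\omega,x)\in\calG$ we have $x\in\calL^{(\omega)}\subset\R^d$, so $x\in B(y;\varepsilon)$ for some $y\in Y$, whence $(\omega,x)\in V_y$. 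Local finiteness of $\{V_y\}$ follows from local finiteness of the cover $\{B(y;\varepsilon)\}$ of $\R^d$ (which holds for $Y=\lambda\Z^d$, or more generally for any uniformly discrete $Y$): a compact subset of $\calG$ projects to a compact subset of $\R^d$ under $(\omega,x)\mapsto x$, which meets only finitely many $B(y;\varepsilon)$.

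Next I would address precompactness and the $s$-homeomorphism property. For precompactness, note $\ol{V_y}\subset \Omega_0\times \ol{B(y;\varepsilon)}$; since $\Omega_0$ is compact (Proposition \ref{prop:disc_meas_compact}) and $\ol{B(y;\varepsilon)}$ is compact, the closure of $V_y$ in $\calG$ is contained in a compact set, hence $V_y$ is precompact. For the statement that $s:V_y\to\Omega_0$ is a homeomorphism onto its image: this is exactly the content of Proposition \ref{dtop} applied with $U=\Omega_0$ and $0<\varepsilon<r/2$, which gives that $s:V_{(U,y,\varepsilon)}\to\Omega_0$ is a homeomorphism onto its (open) image. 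So the hypothesis $0<\varepsilon<r/2$ is precisely what is needed here, and no new work is required beyond citing Proposition \ref{dtop}.

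It then remains to handle the partition of unity. Given a partition of unity $\{\chi_y\}_{y\in Y}$ on $\R^d$ subordinate to $\{B(y;\varepsilon)\}$ with $\sum_y \chi_y(x)^2 = 1$ (one can always arrange the square-summing normalisation by passing to $\chi_y/\big(\sum_{y'}\chi_{y'}^2\big)^{1/2}$), I would define $\tilde\chi_y(\omega,x):=\chi_y(x)$. These are continuous on $\calG$ (pullback along the continuous projection), supported in $V_y$ since $\chi_y$ is supported in $B(y;\varepsilon)$, and satisfy $\sum_y \tilde\chi_y(\omega,x)^2 = \sum_y \chi_y(x)^2 = 1$ for every $(\omega,x)\in\calG$, where local finiteness makes the sum finite pointwise. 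Thus $\{\tilde\chi_y\}$ is a partition of unity subordinate to the $s$-cover $\{V_y\}$ in the sense of Proposition \ref{prop:s-cover_frame}, and that proposition yields that $\{\tilde\chi_y\}$ is a frame for $E_{C(\Omega_0)}$.

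The main obstacle, such as it is, is bookkeeping rather than conceptual: one must be slightly careful that the enumeration $Y\subset\R^d$ is chosen so that $\{B(y;\varepsilon)\}_{y\in Y}$ is genuinely \emph{locally finite} (not merely a cover) — taking $Y=\lambda\Z^d$ for $\lambda$ small enough that the balls cover $\R^d$ does this — since local finiteness of the $V_y$ in $\calG$ is inherited from local finiteness of the balls in $\R^d$ via properness of the second-coordinate projection on $\calG$, and this properness in turn uses that $r^{-1}(\text{compact})\cap s^{-1}(\text{compact})$ relationships behave well for the étale groupoid $\calG\subset\Omega_0\times\R^d$ with $\Omega_0$ compact. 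Everything else is a direct application of Proposition \ref{dtop} and Proposition \ref{prop:s-cover_frame}.
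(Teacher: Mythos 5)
Your proposal is correct and follows essentially the same route as the paper: the covering property comes from $\{B(y;\varepsilon)\}$ covering $\R^d$, the $s$-homeomorphism property is read off from Proposition \ref{dtop}, and the frame conclusion is Proposition \ref{prop:s-cover_frame}. The paper's proof is considerably terser — it records only the covering property and the $s$-set property, leaving precompactness and local finiteness implicit — whereas you spell out precompactness via the closedness of $\calG$ in the compact space $\Omega_0\times\ol{B(y;\varepsilon)}$, and explicitly flag that local finiteness depends on the choice of $Y$ (consistent with the paper's remark that $Y=\lambda\Z^d$ is convenient); these are reasonable details to make explicit given Definition \ref{def:s_cover}, and they do not change the argument.
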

\begin{proof} 
The sets $V_{y}$ form an open cover of $\mathcal{G}$ because each $(\omega,x)\in\mathcal{G}$ is an 
element of $V_{y}$ whenever $x\in B(y;\varepsilon)$ and such $y$ exists because $B(y;\varepsilon)$ 
form an open cover. Moreover, each of the $V_{y}$ is an $s$-set by Lemma \ref{dtop}. 
The functions $\chi_{y}$ define a frame by Proposition \ref{prop:s-cover_frame}.
\end{proof}

\subsection{The twisted groupoid algebra and its $K$-theory}

Given our transversal groupoid, we fix a normalised  $2$-cocycle 
$\sigma:\calG^{(2)}\to \T$ (or $\{\pm 1\}$ in the real case). 
Our central motivation for working with twisted groupoid algebras comes 
from the following example.

\begin{example}[Magnetic twists] \label{ex:mag_twist}
For the transversal groupoid, we can encode 
the action of a magnetic field that twists the 
translation action of the lattice. 
Working first with the continuous hull $\Omega_\calL \rtimes \R^d$, we 
follow~\cite[Section 2.2]{BLM13} and define a 
$2$-cocycle,
$$
   \sigma:\R^d\times\R^d \to \calU(C(\Omega_\calL)), \qquad 
   \sigma(x,y) = \exp\big( -i\Gamma \langle 0, x, x+y \rangle \big)
$$
where $\Gamma \langle 0, x, x+y \rangle$ is the magnetic 
flux through the triangle defined by the points $0,x,x+y\in \R^d$. 
The magnetic field need not be constant over $C(\Omega_\calL)$ and can 
generally be described by a continuous map $B: \Omega_\calL \to \bigwedge^2 \R^d$, 
where $\Gamma \langle x,y,z \rangle = \int_{\langle x,y,z \rangle} \! B_\omega$ 
and $\langle x,y,z \rangle \subset \R^{2d}$ is the triangle with corners 
$x,y,z \in \R^d$. 
If the magnetic field is constant over $\Omega_\calL$, then our general flux equation 
can be simplified by a skew-symmetric matrix $B$ with 
$$
   \sigma( x,y) = \exp \big( -i \langle x, B(x+y) \rangle \big) = 
   \exp \big( -i \langle x, By \rangle \big).
$$

Our choice of $2$-cocycle on the crossed product $C(\Omega_\calL)\rtimes_\sigma \R^d$ 
restricts to a $2$-cocycle on the transversal groupoid, which we also denote by $\sigma$. 
Namely, we define 
$$
 \sigma((\omega,x),(T_{-x}\omega,y)) = 
 \exp\big( -i\Gamma_{\calL^{(\omega)}} \langle 0, x, x+y \rangle \big)
$$
where $\Gamma_{\calL^{(\omega)}} \langle 0, x, x+y \rangle$ is the magnetic 
flux through the triangle defined by the points $0,x,x+y\in\calL^{(\omega)}$. 
We note that our twist will always be trivial for $d=1$ and is 
normalised because
$$
  \sigma((\omega,x),(T_{-x},-x)) = 
  \exp\big( -i\Gamma_{\calL^{(\omega)}} \langle 0, x, 0 \rangle \big) = 1.
$$
The cocycle condition on $\sigma$ 
translates into the condition that 
for $x,\, x+y,\, x+y+z\in \calL^{(\omega)}$,
$$
  \Gamma_{\calL^{(\omega)}}\langle 0,x,x+y \rangle + 
     \Gamma_{\calL^{(\omega)}} \langle 0,x+y,x+y+z \rangle 
  = \Gamma_{\calL^{(\omega)}}\langle 0,x,x+y+z \rangle + 
      \Gamma_{\calL^{(T_{-x}\omega)}}\langle 0, y, y+z \rangle,
$$
which follows from Stokes' Theorem and the observation that 
$$
  \Gamma_{\calL^{(T_{-x}\omega)}}\langle 0, y, y+z \rangle 
  = \Gamma_{\calL^{(\omega)}} \langle x , x+y, x+y+z \rangle.
$$
\end{example}

Given our groupoid $\calG$ and cocycle $\sigma$, we can construct the 
groupoid $C^*$-algebra by the method given in Section \ref{sec:twisted_gpoid_alg}, 
acting on the $C^*$-module over the unit space. The $K$-theory of the twisted groupoid 
algebra is used to describe topological phases of gapped Hamiltonians, which we will then 
pair with $KK$-cycles to obtain numerical labels for these phases. 
In the absence of a $2$-cocycle twist, the continuous dynamical system $(\Omega_\calL, T, \R^d)$ can 
be described via the crossed product groupoid $\Omega_\calL \rtimes \R^d$, which is then groupoid-equivalent 
to $\calG$. Applying the equivalence theorem of~\cite{MRW,SimsWilliams} and the Connes--Thom 
isomorphism~\cite{ConnesThom},
$$
  K_\ast(C^*_r(\calG)) \cong K_\ast (C(\Omega_\calL)\rtimes \R^d) \cong K_{\ast-d}(C(\Omega_\calL)) 
  \cong K^{d-\ast}(\Omega_\calL),
$$
in both real and complex $K$-theory. This result remains true for twists  by $2$-cocycles.

\begin{prop}Let $\mathcal{L}$ be a Delone set and 
$\sigma:\big(\Omega_{\mathcal{L}}\rtimes\mathbb{R}^{d}\big)^{(2)}\to \mathbb{T}$ (or $\{\pm 1\}$ in the real case) a continuous $2$-cocycle. 
Then the twisted groupoid $C^{*}$-algebra $C^{*}(\mathcal{G},\sigma)$ is 
Morita equivalent to the twisted crossed product $C(\Omega_{\mathcal{L}})\rtimes_{\sigma}\mathbb{R}^{d}$ 
and there is an isomorphism $K_{*}(C^{*}_{r}(\mathcal{G},\sigma))\to K^{d-*}(\Omega_{\mathcal{L}})$.
\end{prop}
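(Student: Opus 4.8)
The plan is to combine the abstract-transversal picture of Proposition \ref{dtop} with the twisted Morita-equivalence machinery of Section \ref{sec:twisted_gpoid_equiv} and a twisted Connes--Thom isomorphism. We may assume $\sigma$ is normalised: any continuous $2$-cocycle on $\Omega_\calL\rtimes\R^d$ is cohomologous to a normalised one, and cohomologous cocycles give isomorphic twisted (full and reduced) groupoid $C^*$-algebras, hence the same Morita class and $K$-theory. By Proposition \ref{dtop}, $\Omega_0$ is an abstract transversal of $\mathcal{F}:=\Omega_\calL\rtimes\R^d$, so $\mathcal{F}_{\Omega_0}=\{\xi\in\mathcal{F}:r(\xi)\in\Omega_0\}$ is an equivalence of groupoids in the sense of \cite{MRW} between $\mathcal{F}$ and $\calG=\mathcal{F}_{\Omega_0}\cap\mathcal{F}_{\Omega_0}^{-1}$. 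Restricting $\sigma$ from $\mathcal{F}$ to $\calG$ and pulling it back along this equivalence to the associated linking groupoid produces compatible normalised $2$-cocycles on all groupoids in the picture, since they are all inherited from the single cocycle on $\mathcal{F}$. The twisted equivalence theorems, namely \cite{MRW,SimsWilliams} in the untwisted case, \cite[Theorem 9.1]{Daenzer09} for full twisted algebras, and \cite[Proposition 6.2]{MT11}, \cite{SimsWilliamsFell} for reduced twisted algebras, exactly as in the discussion preceding Proposition \ref{prop:twisted_morita_reduced}, then give $C^*(\calG,\sigma)\sim_{\mathrm{Morita}}C^*(\mathcal{F},\sigma)$ and $C^*_r(\calG,\sigma)\sim_{\mathrm{Morita}}C^*_r(\mathcal{F},\sigma)$.

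Next I identify $C^*(\mathcal{F},\sigma)$ with the twisted crossed product $C(\Omega_\calL)\rtimes_\sigma\R^d$. A continuous $2$-cocycle on $(\Omega_\calL\rtimes\R^d)^{(2)}\cong\Omega_\calL\times\R^d\times\R^d$ is precisely a $\calU(C(\Omega_\calL))$-valued group $2$-cocycle on $\R^d$ twisting the translation action (cf. Example \ref{ex:mag_twist}), and the twisted convolution $*$-algebras coincide under the canonical identification, so $C^*(\mathcal{F},\sigma)=C(\Omega_\calL)\rtimes_\sigma\R^d$ as full twisted crossed product. Since $\R^d$ is amenable, $\mathcal{F}=\Omega_\calL\rtimes\R^d$ is an amenable groupoid; amenability passes to the equivalent groupoid $\calG$, so full and reduced twisted $C^*$-algebras coincide throughout (and the full twisted crossed product by $\R^d$ equals the reduced one). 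Thus $C^*(\calG,\sigma)=C^*_r(\calG,\sigma)$ is Morita equivalent to $C(\Omega_\calL)\rtimes_\sigma\R^d$, which is the first assertion.

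For the $K$-theory, Morita invariance gives $K_*(C^*_r(\calG,\sigma))\cong K_*(C(\Omega_\calL)\rtimes_\sigma\R^d)$ in both the complex and real categories. Untwisting via the Packer--Raeburn stabilisation trick yields $\big(C(\Omega_\calL)\rtimes_\sigma\R^d\big)\otimes\mathbb{K}\cong\big(C(\Omega_\calL)\otimes\mathbb{K}\big)\rtimes_\beta\R^d$ for an ordinary action $\beta$, and iterating the Connes--Thom isomorphism \cite{ConnesThom} $d$ times gives $K_*\big((C(\Omega_\calL)\otimes\mathbb{K})\rtimes_\beta\R^d\big)\cong K_{*-d}(C(\Omega_\calL)\otimes\mathbb{K})\cong K_{*-d}(C(\Omega_\calL))$, with the real Thom isomorphism handling the $\{\pm 1\}$-twisted real case in the same way. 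Finally $K_{*-d}(C(\Omega_\calL))\cong K^{d-*}(\Omega_\calL)$ by the definition of topological $K$-theory of the compact space $\Omega_\calL$, which assembles the desired isomorphism $K_*(C^*_r(\calG,\sigma))\to K^{d-*}(\Omega_\calL)$.

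The main obstacle is the twisted Connes--Thom step: one must check that Packer--Raeburn untwisting applies with the non-scalar coefficient algebra $C(\Omega_\calL)$ and the $\calU(C(\Omega_\calL))$-valued (respectively $O(1)$-valued) cocycle, and that the degree shift is exactly $d$, independent of the antisymmetric part of the cocycle. This is transparent once one untwists, but it needs the correct naturality of Connes--Thom and, in the real case, a $KO$-theoretic Thom isomorphism. Everything else only assembles Morita-equivalence and amenability facts already recorded in Sections \ref{sec:KK_Gpoid_prelim} and \ref{sec:Transversal}.
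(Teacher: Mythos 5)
Your proof follows the same route as the paper's: Daenzer's twisted equivalence theorem gives the Morita equivalence between $C^*_r(\calG,\sigma)$ and $C(\Omega_\calL)\rtimes_\sigma\R^d$, and the $K$-theory isomorphism then comes from Packer--Raeburn stabilisation followed by iterated Connes--Thom. You add some useful bookkeeping that the paper leaves implicit — normalising the cocycle, invoking amenability of $\Omega_\calL\rtimes\R^d$ (hence of the equivalent groupoid $\calG$) to conflate full and reduced algebras, and spelling out the real/$KO$ version of Connes--Thom — but the core argument is identical.
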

\begin{proof}
As the $2$-cocycle on $\calG$ comes from the restriction of a $2$-cocycle on 
$\Omega_\calL \rtimes \R^d$, we can apply~\cite[Theorem 9.1]{Daenzer09}, which gives 
that $C^*_r(\calG,\sigma)$ is Morita equivalent to the twisted crossed 
product $C(\Omega_\calL) \rtimes_\sigma \R^d$.
Then, by Packer--Raeburn stabilisation, \cite[Section 3]{PR89}, and the Connes--Thom 
isomorphism we obtain that
$$
  K_\ast(C^*_r(\calG,\sigma)) \cong K_\ast (C(\Omega_\calL) \rtimes_\sigma \R^d) \cong 
   K_\ast ((C(\Omega_\calL) \otimes \mathbb{K})\rtimes \R^d) \cong K_{\ast-d} (C(\Omega_\calL)\otimes\mathbb{K}) 
   \cong K^{d-\ast}(\Omega_\calL).
$$
Hence the $K$-theory of the twisted groupoid $C^{*}$-algebra reduces to that 
of the continuous hull $\Omega_\calL$. 
\end{proof}

Let us emphasise that the computation 
of the $K$-theory of $\Omega_\calL$ is highly non-trivial. 
A homological description of the $K$-theory of $\Omega_{\calL}$ for a large 
class of tilings with finite local complexity is given in~\cite{FHK} as well as 
computational techniques. See also the review~\cite{HuntonTiling}.
 In the case that 
$\calL$ is repetitive, aperiodic and has finite local complexity, one can characterise 
$\Omega_\calL$ as a projective limit~\cite{AndersonPutnam, KellendonkPutnam, BBG06} and 
compute its $K$-theory using the 
Pimsner--Voiculescu spectral sequence~\cite{BelSav} (adapted 
from the spectral sequence used by Kasparov~\cite[{\S}6.10]{KasparovNovikov}), 
whose $E_{2}$-page is isomorphic to the \v{C}ech cohomology of $\Omega_\calL$ with 
integer coefficients. 
In the case of low-dimensional substitution tilings with finite local complexity and a primitive and 
injective substitution map, Gon\c{c}alves--Ramirez-Solano relate the 
\v{C}ech cohomology of $\Omega_\calL$ to 
the $K$-theory of the groupoid $C^*$-algebra of the unstable equivalence relation 
on $\Omega_\calL$ (note that this groupoid $C^*$-algebra is Morita equivalent 
to $C^*_r(\calG)$)~\cite[Theorem 2.3]{TilingKTheory}. 
See~\cite{TilingKTheory} for a detailed exposition on these (and other) matters.

In contrast to $\Omega_\calL$, the $K$-theory of the transversal $\Omega_0$ is often 
very simple to compute. If $\calL$ is a Delone set with finite local complexity, then 
by Proposition \ref{prop:tranversal_properties} $\Omega_0$ is totally disconnected and, 
by continuity of the $K$-functor, $K_\ast(C(\Omega_0)) \cong C(\Omega_0, K_\ast(\mathbb{F}))$, 
where $\mathbb{F} = \C$ or $\R$.

\subsubsection{The bulk $KK$-cycle}  \label{subsec:bulk_cycle}
We now introduce our main tool to extract numerical invariants from $K_{*}(C^{*}_{r}(\mathcal{G},\sigma))$ (see Section \ref{sec:Applications}).
The transversal groupoid $\calG$ is \'{e}tale and the cocycles 
$\hat{c}_{k}:\calG \to \R^k$, $\hat{c}_{k}(\omega,x) = (x_1,\ldots, x_k)$ are exact by Proposition 
\ref{prop:exact_cocycles}. Hence we can construct a family of unbounded $KK$-cycles 
for $\calG$ by Proposition \ref{prop:bulk_kasmod}.

 We call the special case 
$c(\omega,x):=\hat{c}_{d}(\omega,x) = x$, where $\Ker(c) \cong \calG^{(0)}\cong \Omega_0$, the bulk $KK$-cycle as it spans 
all dimensions of the lattice, where the terminology is taken from topological phases. 
Explicitly, 
\begin{equation} \label{eq:bulk_K-cycle}
  {}_d\lambda_{\Omega_0} = \bigg( C_c(\calG, \sigma)\hat\otimes Cl_{0,d},\, E_{C(\Omega_0)} \hat\otimes 
   \bigwedge\nolimits^{\! *} \R^d, \, \sum_{j=1}^d X_j \hat\otimes \gamma^j \bigg),
\end{equation}
is an unbounded Kasparov module, 
with $X_j$ the self-adjoint regular operator $(X_j f)(\omega,x) = x_jf(\omega,x)$ on 
$E_{C(\Omega_0)}$. 
We will consider other unbounded $KK$-cycles from cocycles on $\calG$ and their 
properties in Section \ref{sec:factorisation}.

\subsection{One dimensional Delone sets as Cuntz--Pimsner algebras} \label{sec:CP_description}
Given an $(r,R)$-Delone set $\calL\subset\R^d$, we have constructed the 
groupoid $\calG$ and a class in $KK^d(C^*_r(\calG,\sigma),C(\Omega_0))$ that encodes 
the translation action on the transversal. For the case $d=1$ and trivial 
cocycle $\sigma=1$, we now give an equivalent description of 
$C^*_r(\calG)$ as a Cuntz--Pimsner algebra. We also find that 
the Kasparov cycle from Equation \eqref{eq:bulk_K-cycle} is equivalent 
to the class of the defining extension of 
the Cuntz--Pimsner algebra.  We remark that a similar construction is done in~\cite{WilliamsonThesis} 
that includes higher dimensions but for more restrictive substitution tilings. Here we leave open the question of higher dimensions
where, in analogy with crossed products by $\mathbb{Z}^{d}$, a description of $C^{*}(\mathcal{G},\sigma)$ as 
an iterated Cuntz--Pimnser algebra or  
$C^{*}$-algebra of a product system~\cite{SimsYeend} is a natural aim.

In the case $d=1$, recall the cocycle $c(\omega,x)=x \in \R$ and write
\[
\mathcal{G}^{(0)}=\mathcal{G}_{0}:=c^{-1}(0),\quad \mathcal{G}_{1}:=c^{-1}(r,R), \quad \quad 
\mathcal{G}_{-1}:=c^{-1}(-R,-r).
\]

\begin{lemma}\label{unperf} 
Let $(\omega,x)\in\mathcal{G}$ and $x>0$. There exist $(\omega_{j},x_{j})\in \mathcal{G}_{1}$, $j=1,\cdots n$ such that 
\[
(\omega,x)=\prod_{j=1}^{n}(\omega_{j},x_{j}),
\]
and this decomposition is unique. A similar satement holds for $(\omega,x)$ with $x<0$ where we replace $\mathcal{G}_{1}$ with $\mathcal{G}_{-1}$.
\end{lemma}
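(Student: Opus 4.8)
The plan is to decompose a groupoid element $(\omega,x)$ with $x>0$ by ``walking along'' the points of $\mathcal{L}^{(\omega)}$ that lie between $0$ and $x$. Since $\mathcal{L}^{(\omega)}$ is an $(r,R)$-Delone set, the points of $\mathcal{L}^{(\omega)}$ can be enumerated as $0 = y_0 < y_1 < \cdots < y_n = x$, where consecutive gaps satisfy $r \le y_{j+1} - y_j \le R$: relative density gives the upper bound $R$, and uniform discreteness gives the lower bound $r$. First I would set $x_j := y_j - y_{j-1}$ and $\omega_{j} := T_{-y_{j-1}}\omega$, and check that $(\omega_j, x_j) \in \mathcal{G}$, i.e. that $x_j \in \mathcal{L}^{(\omega_j)} = \mathcal{L}^{(\omega)} - y_{j-1}$, which holds since $y_j \in \mathcal{L}^{(\omega)}$. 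The gap bounds $r \le x_j \le R$ give $c(\omega_j, x_j) = x_j \in (r,R)$... wait, the endpoints; actually $c_j \in [r,R]$, but $\mathcal{G}_1 = c^{-1}(r,R)$ is an open interval. I would need to note that in a Delone set the gaps actually satisfy $x_j \in [r,R]$ and that the relevant quantities $r,R$ in the statement $\mathcal{G}_1 = c^{-1}(r,R)$ should be read as the closed interval $[r,R]$ (or that one may shrink $r$ and enlarge $R$ slightly so the open interval contains all gaps); this is a minor bookkeeping point about how $\mathcal{G}_1$ is defined.

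Next I would verify the product formula. Using the groupoid operations from Proposition~\ref{prop:etale}, one has
$$
(\omega_1, x_1)(\omega_2, x_2) = (T_{-y_0}\omega, y_1)(T_{-y_1}\omega, y_2 - y_1),
$$
and since $s(T_{-y_0}\omega, y_1) = T_{-y_1}\omega = r(T_{-y_1}\omega, y_2 - y_1)$, the product is composable and equals $(\omega, y_2)$. Iterating, $\prod_{j=1}^n (\omega_j, x_j) = (\omega, y_n) = (\omega, x)$. The composability at each stage is exactly the statement that $y_j \in \mathcal{L}^{(\omega)}$, which is how we chose the $y_j$.

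For uniqueness, suppose $(\omega, x) = \prod_{j=1}^m (\tilde\omega_j, \tilde x_j)$ with each $(\tilde\omega_j, \tilde x_j) \in \mathcal{G}_1$. Composability forces $\tilde\omega_1 = \omega$ and $\tilde\omega_{j+1} = T_{-(\tilde x_1 + \cdots + \tilde x_j)}\omega$, so the decomposition is determined by the partial sums $s_j := \tilde x_1 + \cdots + \tilde x_j$; these satisfy $s_0 = 0$, $s_m = x$, and $s_j \in \mathcal{L}^{(\omega)}$ (since $(\tilde\omega_j, \tilde x_j) \in \mathcal{G}$ means $\tilde x_j \in \mathcal{L}^{(\tilde\omega_j)}$, i.e. $s_{j-1} + \tilde x_j = s_j \in \mathcal{L}^{(\omega)}$), with consecutive differences in $[r,R]$. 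But the only such chain is the full list of points of $\mathcal{L}^{(\omega)} \cap [0,x]$ in increasing order: any omitted point $z$ with $y_{k} < z < y_{k+1}$ would create two consecutive points $y_k, y_{k+1}$ of the chain with $z$ strictly between them, yet $z \in \mathcal{L}^{(\omega)}$ and $y_{k+1} - y_k \le R$ while $z - y_k \ge r$ and $y_{k+1} - z \ge r$ — this does not immediately contradict, so more carefully: if the chain $(s_j)$ skips a Delone point, it is still a valid chain only if no gap exceeds $R$; but the genuine issue is that any chain must include \emph{all} points, since between two chosen consecutive chain points there can be no Delone point at all (otherwise that point, together with the gap bound, would not force anything). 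Here I would instead argue directly: the set $\{s_0, \ldots, s_m\}$ is a subset of $\mathcal{L}^{(\omega)} \cap [0,x]$ containing $0$ and $x$ with all gaps $\le R$; the claim is it must be \emph{all} of $\mathcal{L}^{(\omega)} \cap [0,x]$. This is where I expect the main (small) obstacle: showing no Delone point in $(0,x)$ can be skipped. The resolution is that $\mathcal{G}_1 = c^{-1}(r,R)$ consists of elements whose displacement is a \emph{nearest-neighbour} gap — but that is not literally in the definition. So the honest reading is that the decomposition into $\mathcal{G}_1$-elements need not be into nearest-neighbour steps, and uniqueness as stated must use that $(r,R)$ is the \emph{exact} gap range, forcing each $\mathcal{G}_1$-step to be a single nearest-neighbour gap because any displacement realized in $\mathcal{L}^{(\omega)}$ and lying in $(r,R)$ connecting two Delone points must skip no intermediate Delone point (an intermediate point would be within distance $< R$... still not a contradiction). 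I would therefore, in the write-up, take the cleaner route: \emph{define} $\mathcal{G}_1$ as the set of $(\omega, x)$ where $x$ is the smallest positive element of $\mathcal{L}^{(\omega)}$ (equivalently, the nearest-neighbour gap to the right of $0$), note that this set equals $c^{-1}((0,R]) \cap \{\text{no Delone point in } (0,x)\}$ and is both the natural generating set and forces uniqueness, then the existence argument above goes through verbatim and uniqueness is immediate from induction on $n$: the first factor $(\tilde\omega_1, \tilde x_1) = (\omega, \tilde x_1)$ must have $\tilde x_1$ equal to the least positive point of $\mathcal{L}^{(\omega)}$, hence $\tilde x_1 = y_1$, and then we recurse on $(T_{-y_1}\omega, x - y_1)$. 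The statement for $x < 0$ follows by applying the $x>0$ case to $(\omega,x)^{-1} = (T_{-x}\omega, -x)$ and inverting, or symmetrically by walking leftward through $\mathcal{L}^{(\omega)} \cap [x,0]$.
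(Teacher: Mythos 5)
Your existence argument is the paper's: enumerate $\mathcal{L}^{(\omega)}\cap[0,x]$ as $0=y_0<y_1<\cdots<y_n=x$, set $x_j:=y_j-y_{j-1}$ and $\omega_j:=T_{-y_{j-1}}\omega$, and telescope. For uniqueness the paper argues more directly than your chain-of-partial-sums formulation: given a second decomposition $\prod_j(\eta_j,z_j)$, composability forces $\eta_1=\omega$, so $z_1,x_1\in\mathcal{L}^{(\omega)}\cap(r,R)$, and the paper asserts this forces $z_1=x_1$; one then iterates, and any leftover factors (if $m>n$) would compose to a unit with strictly positive displacement $z_{n+1}+\cdots+z_m$, a contradiction. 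The load-bearing step is the tacit claim that $\mathcal{L}^{(\omega)}\cap(r,R)$ contains at most one point — which is exactly what you could not justify, and rightly so.

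Nothing in the Delone axioms makes $\mathcal{L}^{(\omega)}\cap(r,R)$ a singleton. If the gaps lie in $(r,R)$ as asserted in \eqref{order}, then $y_1\in(r,R)$ and $y_2=y_1+g_1>2r$; this excludes $y_2\in(r,R)$ only when $R\leq 2r$. For $R>2r$ one can arrange a Delone set with $y_1,y_2\in(r,R)$, whereupon $(\omega,y_2)\in\mathcal{G}_1$ has both a length-one decomposition (itself) and a length-two one through $y_1$, so uniqueness fails. (There is also a mismatch in \eqref{order} itself: the ball-based definitions of Section 2 give gaps roughly in $(2r,2R]$, not $(r,R)$, so without renormalising the constants the set $\mathcal{G}_1=c^{-1}(r,R)$ need not even contain every nearest-neighbour step, threatening existence.) As literally stated the lemma therefore requires an unstated constraint on $(r,R)$, of the rough form ``all gaps lie in $(r,R)$ and $R\leq 2r$''. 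Your proposed fix — defining $\mathcal{G}_1$ intrinsically as the set of nearest-neighbour steps — is the right cure and is in fact what Proposition~\ref{prop:CP_alg_iso} needs for $E^{(r,R)}$ to be the genuine degree-one module, but it changes the definition, so it proves a corrected statement rather than the one printed.
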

\begin{proof} 
The lattice $\mathcal{L}^{(\omega)}\subset \mathbb{R}$ is discrete, so we can order it as
\begin{equation}\label{order}
\mathcal{L}^{(\omega)}=\{y_n\}_{n\in\mathbb{Z}}, \quad y_{0}=0,\quad y_{j}<y_{j+1},\quad r<y_{j+1}-y_{j}<R.
\end{equation}
Then $(\omega,x)=(\omega,y_n)$ for some $n$ and we set
\[\omega_{j}:=T_{-x_{j-1}}\omega,\quad x_{j}:=y_{j}-y_{j-1}.\] 
It follows that
\[
(\omega,x)=(\omega,y_{n})=(\omega, y_{1})\cdot(T_{-y_{1}}\omega, y_{2}-y_{1})\cdots 
   (T_{-y_{n-1}}\omega, y_{n}-y_{n-1})=\prod_{j=1}^{n}(\omega_{j},x_{j}),
\]
as claimed. Suppose that 
\[(\omega,x)=\prod_{j=1}^{m}(\eta_{j},z_{j}),\]
is another such decomposition and assume without loss of generality that $m\geq n$. Then $\eta_{1}=\omega_{1}=\omega$. 
Since $z_{1},x_{1}\in \mathcal{L}^{(\omega)}\cap (r,R)$ it follows that $z_{1}=x_{1}$. 
This argument can be repeated to find $\eta_{j}=\omega_{j}$ and $x_{j}=z_{j}$ for $1\leq j\leq n$, so 
the decompositions are the same if $m=n$.  
If $m>n$, then
\[
(\eta_{n+1},0)=(\eta_{n+1},z_{n+1})\cdots (\eta_{m}, z_{m})=(\eta_{n+1},z_{n+1}+\cdots+z_{m}),
\]
so $0<z_{n+1}+\cdots+z_{m}=0$, a contradiction.
\end{proof}

The previous result indicates that the $1$-dimensional tranversal groupoid is in some 
sense generated by $\calG_1 = c^{-1}(r,R)$. This then gives us a pathway to recharacterise 
$C^*_r(\calG)$ as a Cuntz--Pimsner algebra. The following result comes from standard arguments.
\begin{lemma}
Suppose $d=1$ and let $E^{(r,R)}_{C(\Omega_0)}$ be the completion of $C_c( \calG_1 )$ in $C^*_r(\calG)$. 
Then $E^{(r,R)}_{C(\Omega_0)}$ is a $C^*$-bimodule over $C(\Omega_0)$ with structure 
\begin{align*}
    &(f_1\mid f_2)_{C(\Omega_0)}(\omega) = (f_1^* \ast f_2)(\omega, 0),  &&{}_{C(\Omega_0)}(f_1\mid f_2)(\omega) = (f_1\ast f_2^*)(\omega,0), \\
    &\hspace{0cm}(g_1\cdot f \cdot g_2)(\omega,x) = g_1(\omega) f(\omega,x) g_2(T_{-x}\omega).
\end{align*}
An analogous result also holds for the completion of $C_c(\calG_{-1})$.
\end{lemma}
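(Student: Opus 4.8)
The plan is to realise $E^{(r,R)}_{C(\Omega_0)}$ as a norm-closed sub-bimodule of the $C^{*}$-algebra $A:=C^{*}_{r}(\calG)$ and then invoke the standard fact that such a sub-bimodule is automatically a Hilbert (indeed imprimitivity) bimodule. That is, one identifies $C(\Omega_0)$ with the $C^{*}$-subalgebra $B\subset A$ obtained by completing $C_c(\calG^{(0)})$, sets $E$ equal to the closure of $C_c(\calG_1)$ in $A$, and checks the four conditions $BE\subseteq E$, $EB\subseteq E$, $E^{*}E\subseteq B$, $EE^{*}\subseteq B$, together with the compatibility of the algebraic formulas in the statement with the product and involution of $A$. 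The step I expect to require the most care is $E^{*}E\subseteq B$ (and its mirror $EE^{*}\subseteq B$): a priori the cocycle values of elements of $\calG_1^{-1}\calG_1$ lie only in $(r-R,R-r)$, so it is precisely the fact that $\calG_1$ is a bisection — which is where the one-dimensional Delone geometry enters — that forces $f_1^{*}\ast f_2$ onto the unit space; everything else is routine bookkeeping.

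First I would record the structure of $\calG_1=c^{-1}(r,R)$. By the ordering \eqref{order} of Lemma \ref{unperf} (equivalently, by the uniqueness of decompositions proved there), for every $\omega\in\Omega_0$ the set $\calL^{(\omega)}\cap(r,R)$ is a single point $y_{1}(\omega)$, and likewise $\calL^{(\omega)}\cap(-R,-r)$ is a single point. Hence $\omega\mapsto(\omega,y_{1}(\omega))$ is a homeomorphism $\Omega_0\to\calG_1$, so $\calG_1$ is clopen in $\calG$ (open as $c^{-1}(r,R)$, closed as a compact subset of the Hausdorff space $\calG$), and both $r$ and $s$ restrict to homeomorphisms $\calG_1\to\Omega_0$; thus $\calG_1$ is a full bisection with $\calG_1^{-1}=\calG_{-1}$, $\calG_1^{-1}\calG_1=\calG_1\calG_1^{-1}=\calG^{(0)}$, and $\calG^{(0)}\calG_1=\calG_1\calG^{(0)}=\calG_1$. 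Since the twist is trivial for $d=1$ (Example \ref{ex:mag_twist}), the involution is $f^{*}(\gamma)=\ol{f(\gamma^{-1})}$ and the twisted convolution of Section \ref{sec:twisted_gpoid_alg} reduces to its untwisted form; and since $\calG^{(0)}$ is clopen, $C_c(\calG^{(0)})\hookrightarrow C_c(\calG)$ extends to an isometric embedding of $C(\Omega_0)$ onto $B:=\overline{C_c(\calG^{(0)})}\subset A$, with the restriction map $\rho$ the associated conditional expectation (standard, see \cite[Chapter~II]{Renault80}).

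Next I would verify the sub-bimodule conditions on compactly supported functions. For $g\in C_c(\calG^{(0)})$ and $f,f_1,f_2\in C_c(\calG_1)$: (a) $g\ast f$ and $f\ast g$ are supported in $\calG^{(0)}\calG_1=\calG_1$, and since $g$ is supported on units and $\sigma=1$ one gets $(g\ast f)(\omega,x)=g(\omega)f(\omega,x)$ and $(f\ast g)(\omega,x)=f(\omega,x)g(T_{-x}\omega)$, i.e.\ the left and right $C(\Omega_0)$-actions of the statement; (b) $f_1^{*}\ast f_2$ is supported in $\calG_1^{-1}\calG_1=\calG^{(0)}$, hence lies in $C_c(\calG^{(0)})$, and at the unit over $\omega$ the sum $\sum_{x\in\calL^{(\omega)}}\ol{f_1((\omega,x)^{-1})}\,f_2((\omega,x)^{-1})$ collapses to the single term with $(\omega,x)^{-1}\in\calG_1$, giving the continuous function $(f_1\mid f_2)_{C(\Omega_0)}(\omega)=(f_1^{*}\ast f_2)(\omega,0)$; (c) symmetrically $f_1\ast f_2^{*}$ is supported in $\calG_1\calG_1^{-1}=\calG^{(0)}$ with restriction ${}_{C(\Omega_0)}(f_1\mid f_2)(\omega)=(f_1\ast f_2^{*})(\omega,0)$. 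Writing $E:=E^{(r,R)}_{C(\Omega_0)}$ for the closure of $C_c(\calG_1)$ in $A$, (a)--(c) pass to closures to give $BE\subseteq E$, $EB\subseteq E$, $E^{*}E\subseteq B$, $EE^{*}\subseteq B$, and exhibit $(f_1\mid f_2)_{C(\Omega_0)}=f_1^{*}f_2$ and ${}_{C(\Omega_0)}(f_1\mid f_2)=f_1f_2^{*}$ as elements of $B\subset A$.

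Finally I would invoke the general principle (see e.g.\ \cite[Ch.~2--3]{Lance}, and \cite{MRW} for the imprimitivity statement): a norm-closed subspace $E$ of a $C^{*}$-algebra $A$ with $BE\subseteq E$, $EB\subseteq E$, $E^{*}E\subseteq B$, $EE^{*}\subseteq B$ over a $C^{*}$-subalgebra $B$ is a right Hilbert $B$-module for $(e_1\mid e_2)_B:=e_1^{*}e_2$ — the induced norm being $\|(e\mid e)_B\|^{1/2}=\|e^{*}e\|_A^{1/2}=\|e\|_A$, so that $E$ is already complete — equipped with a left $B$-action by adjointable operators and a left inner product ${}_B(e_1\mid e_2):=e_1e_2^{*}$, the compatibility ${}_B(e_1\mid e_2)\,e_3=e_1\,(e_2\mid e_3)_B$ being associativity in $A$. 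Together with (a)--(c) this yields the bimodule structure claimed. One can further note that, $\calG_1$ being a full bisection, every positive element of $C_c(\calG^{(0)})$ arises as $(f^{*}\ast f)|_{\calG^{(0)}}$ with $f\in C_c(\calG_1)$, so the closed linear spans of $E^{*}E$ and $EE^{*}$ both equal $B$, whence $E^{(r,R)}_{C(\Omega_0)}$ is in fact a $C(\Omega_0)$--$C(\Omega_0)$ imprimitivity bimodule implementing the shift homeomorphism $\omega\mapsto T_{-y_{1}(\omega)}\omega$ of $\Omega_0$. The statement for $C_c(\calG_{-1})$ follows by the same argument applied to $\calG_{-1}=\calG_1^{-1}$, yielding the conjugate bimodule $E^{*}$.
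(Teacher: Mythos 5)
Your argument is correct and is precisely the ``standard argument'' the paper alludes to without writing out: the paper offers no proof of this lemma. The two load-bearing facts you isolate --- that $\calG_1=c^{-1}(r,R)$ is a full clopen bisection of the transversal groupoid (so that $\calG_{-1}\calG_1=\calG_1\calG_{-1}=\calG^{(0)}$ and the convolution sums collapse to single terms), and that a norm-closed $B$-sub-bimodule $E\subset A$ with $E^{*}E,\,EE^{*}\subseteq B$ is automatically a Hilbert $B$-bimodule with $(e_1\mid e_2)_B=e_1^{*}e_2$, ${}_B(e_1\mid e_2)=e_1e_2^{*}$, and $\|e\|_E=\|e\|_A$ --- are exactly what makes the $C_c$-level formulas pass to the completion and identify $E^{(r,R)}_{C(\Omega_0)}$ as a self-Morita equivalence, consistent with the subsequent lemma in the paper.
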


Denote by $d:\mathcal{G}\to\mathbb{Z}$ the map that associates to an element $(\omega,x)$ the 
integer $n$ for which $x=y_{n}$ with $\mathcal{L}^{(\omega)}=\{y_{n}\}_{n\in\mathbb{Z}}$ as in 
Equation \eqref{order} on page \pageref{order}. We call $d(\omega,x)$ the \emph{degree} of $(\omega,x)$.
\begin{prop}  \label{prop:CP_alg_iso}
The map $d:\mathcal{G}\to \mathbb{Z}$ is a continuous 1-cocycle that is unperforated in the sense of \cite{RRSgpoid}. 
Consequently $C^{*}_r(\mathcal{G})$ is isomorphic to the Cuntz--Pimsner algebra $\mathcal{O}_{E^{(r,R)}}$ 
and for $n>0$ the sets
\[
\mathcal{G}_{\pm n}:=\{\xi_{1}\cdots \xi_{n}: \xi_{i}\in\mathcal{G}_{\pm 1}\},
\]
define a decompositon $\mathcal{G}=\bigcup_{n\in\mathbb{Z}}\mathcal{G}_{n}$
into clopen subsets. 
\end{prop}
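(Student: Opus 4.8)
The plan is to verify that $d$ is a continuous, unperforated $\mathbb{Z}$-valued cocycle in the sense of \cite{RRSgpoid}, and then to invoke the groupoid-to-Cuntz--Pimsner theorem of that paper; the clopen decomposition of $\mathcal{G}$ falls out of the verification. The cocycle identity is the first thing to check: since $\mathcal{L}^{(T_{-x}\omega)}=\mathcal{L}^{(\omega)}-x$, the enumeration \eqref{order} of $\mathcal{L}^{(T_{-x}\omega)}$ is obtained from that of $\mathcal{L}^{(\omega)}$ by relabelling so that $y_{d(\omega,x)}=x$ becomes the origin; hence if $x=y_n$ in $\mathcal{L}^{(\omega)}$ and $y=z_m$ in $\mathcal{L}^{(T_{-x}\omega)}$ then $x+y=y_{n+m}$, which says
\[
 d\big((\omega,x)\cdot(T_{-x}\omega,y)\big)=d(\omega,x)+d(T_{-x}\omega,y),
\]
and $d\equiv 0$ on $\mathcal{G}^{(0)}=\Omega_0$ since there $x=0=y_0$.

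For continuity I would show $d$ is locally constant, using the base from the proof of Proposition \ref{prop:exact_cocycles}. Given $(\omega,x)\in\mathcal{G}$, pick $\mu>\|x\|+r/2$, $0<\varepsilon<r/2$ and $p=\mathcal{L}^{(\omega)}\cap B(0;\mu)$; on the basic open set $(U_{p,\mu}\times B(x;\varepsilon))\cap\mathcal{G}$ the second coordinate is forced to equal $x$ exactly as in that proof, and since $y_0,\dots,y_{d(\omega,x)}$ all lie in the segment between $0$ and $x$, hence in $B(0;\mu)$, the ordered finite pattern $p$ pins down the position of $x$ in the enumeration. Thus $d$ is constant on this neighbourhood, and, as $\mathbb{Z}$ is discrete, $\mathcal{G}_n:=d^{-1}(n)$ is clopen for every $n$.

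Next I would identify $\mathcal{G}_n$ with the product set $\{\xi_1\cdots\xi_n:\xi_i\in\mathcal{G}_{\pm 1}\}$. Using, as in the proof of Lemma \ref{unperf}, that $\mathcal{L}^{(\omega)}\cap(r,R)=\{y_1\}$, we get $\mathcal{G}_1=c^{-1}(r,R)=d^{-1}(1)$ and dually $\mathcal{G}_{-1}=\mathcal{G}_1^{-1}=d^{-1}(-1)$; so, by the cocycle identity, any product of $n$ elements of $\mathcal{G}_1$ lies in $d^{-1}(n)$, while Lemma \ref{unperf} supplies for $x>0$ the (unique) factorisation of any $(\omega,x)\in d^{-1}(n)$ into $n$ elements of $\mathcal{G}_1$, the case $x<0$ being symmetric. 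Hence $\mathcal{G}=\bigsqcup_{n\in\mathbb{Z}}\mathcal{G}_n$ with each $\mathcal{G}_n$ clopen, and taking the first factor in Lemma \ref{unperf} shows that every $\xi$ with $d(\xi)=n>1$ splits as $\xi=\alpha\beta$ with $d(\alpha)=1$ and $d(\beta)=n-1$ (dually for $n<-1$), which is precisely unperforatedness in the sense of \cite{RRSgpoid}. I would also note at this point that $\mathcal{G}_1$ is clopen and carries the locally finite $s$-cover of Proposition \ref{Deloneframe}, which makes $E^{(r,R)}_{C(\Omega_0)}$ a bi-Hilbertian $C(\Omega_0)$-bimodule on which the left action of $C(\Omega_0)$ is injective and by compact operators, as needed to form the Cuntz--Pimsner algebra.

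With these hypotheses established, the main theorem of \cite{RRSgpoid} applies and yields an isomorphism $C^*_r(\mathcal{G})\cong\mathcal{O}_{E^{(r,R)}}$ carrying the $d$-grading of $C^*_r(\mathcal{G})$ onto the canonical $\mathbb{Z}$-grading of $\mathcal{O}_{E^{(r,R)}}$; restricted to the dense cores it identifies $C_c(\mathcal{G}_n)$ with the $n$-th spectral subspace and matches the bimodule structure on $E^{(r,R)}_{C(\Omega_0)}$ recorded just before the statement. I expect the main obstacle to lie not in the cocycle bookkeeping but in fitting our set-up precisely into the framework of \cite{RRSgpoid}: confirming that the left action on $E^{(r,R)}_{C(\Omega_0)}$ lands in the compacts (so that the genuine, not relative, Cuntz--Pimsner algebra appears) and that the module and grading produced by the theorem are the expected ones, together with pinning down the convention $\mathcal{G}_1=c^{-1}(r,R)=d^{-1}(1)$, which is exactly where the $(r,R)$-geometry of the Delone set must be used.
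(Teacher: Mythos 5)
Your proof is correct and follows essentially the same route as the paper's: show $d$ is locally constant on the basic open sets $(U_{p,\mu}\times B(y;\varepsilon))\cap\mathcal{G}$ (forcing the second coordinate to agree, then reading off the position from the shared pattern), derive additivity and the clopen decomposition from Lemma \ref{unperf}, and invoke \cite[Proposition 10]{RRSgpoid}. You spell out the cocycle identity and the bimodule hypotheses a bit more explicitly than the paper does, but the structure of the argument is identical.
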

\begin{proof}
We prove that $d$ is locally constant. Let $(\omega,x)\in\mathcal{G}$ and choose 
$\mu,y$ such that $x\in  B(y;\varepsilon)\subset B(0;\mu)$ with $\varepsilon<r/2$. 
Then $(\omega,x)\in V_{(p,\mu,y,\varepsilon)}$ for $p=\mathcal{L}^{(\omega)}\cap B(0;\mu)$ and consider 
$(\eta,z)\in V_{(p,\mu,y,\varepsilon)}$. Since
\[
x,z\in B(y;\varepsilon)\subset \mathcal{L}^{(\omega)}\cap B(0;\mu)=\mathcal{L}^{(\eta)}\cap B(0;\mu),
\]
and $\varepsilon<r/2$ it follows that $x=z$. Then since
$$\mathcal{L}^{(\omega)}\cap B(0;\mu)=\mathcal{L}^{(\eta)}\cap B(0;\mu)$$
it follows that $d(\omega,x)=d(\eta,z)$. Thus the degree is locally constant on $\mathcal{G}$.
By Lemma \ref{unperf} the degree is additive, and it thus defines a continuous 1-cocycle with 
$$
d^{-1}(n)=\mathcal{G}_{n}:=(\mathcal{G}_{\frac{n}{|n|}})^{|n|},
$$ 
and each $\mathcal{G}_{n}$ is clopen. We thus satisfy the hypothesis of~\cite[Proposition 10]{RRSgpoid}, 
which gives the isomorphism $\mathcal{O}_{E^{(r,R)}}\to C_r^{*}(\mathcal{G})$.
\end{proof}

\subsubsection{The Cuntz--Pimsner extension class}
We extend the equivalence of the one-dimensional transversal groupoid with a Cuntz--Pimsner algebra 
to a compatibility of the bulk $KK$-cycle from Equation \eqref{eq:bulk_K-cycle} on page \ref{eq:bulk_K-cycle} 
with the class in 
$KK^1(\calO_{E^{(r,R)}}, C(\Omega_0))$ that comes from the defining extension of $\calO_{E^{(r,R)}}$.

\begin{lemma}
The $C^*$-module $E^{(r,R)}_{C(\Omega_0)}$ is a self-Morita equivalence bimodule (SMEB).
\end{lemma}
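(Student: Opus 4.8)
The plan is to show that the two inner products on $E^{(r,R)}_{C(\Omega_0)}$ are both full, and that the module is both left- and right-finitely generated projective; equivalently, that the left and right inner products induce isomorphisms of $C(\Omega_0)$ onto the respective algebras of compact operators. Recall from Proposition \ref{prop:CP_alg_iso} that $\calG = \bigsqcup_{n\in\Z}\calG_n$ with each $\calG_n$ clopen, and $E^{(r,R)}_{C(\Omega_0)}$ is (by the preceding Lemma) the completion of $C_c(\calG_1)$ with $\calG_1 = c^{-1}(r,R)$, where $C(\Omega_0)$ acts on the left via $r$ and on the right via $s$.

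First I would establish fullness of the right inner product. For $\omega\in\Omega_0$, the fibre $s^{-1}(\omega)\cap\calG_1$ consists of the single element $(T_{y_{-1}}\omega,\,-y_{-1})^{-1}$-type point — more precisely the unique $(\eta,x)\in\calG_1$ with $T_{-x}\eta=\omega$, coming from the unique lattice point of $\calL^{(\eta)}$ immediately to the left of $0$ (this uses the ordering \eqref{order} and uniform discreteness). Hence $s:\calG_1\to\Omega_0$ is a homeomorphism, so $C_c(\calG_1)$ is, as a right $C(\Omega_0)$-module with this inner product, isomorphic to $C(\Omega_0)$ itself via pullback along $s$; in particular the right inner product is full and $E^{(r,R)}$ is right-singly-generated projective, so $\mathbb{K}(E^{(r,R)}_{C(\Omega_0)})\cong C(\Omega_0)$. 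An identical argument with $r$ in place of $s$ — now using that $r^{-1}(\omega)\cap\calG_1$ is the single point determined by the unique lattice point of $\calL^{(\omega)}$ immediately to the right of $0$ — shows $r:\calG_1\to\Omega_0$ is also a homeomorphism, so the left inner product is full and the left action of $C(\Omega_0)$ is by compact operators and is unital (the identity of $C(\Omega_0)$ acts as the identity operator). Combining these: the left action gives an isomorphism $C(\Omega_0)\xrightarrow{\simeq}\mathbb{K}(E^{(r,R)}_{C(\Omega_0)})$, which is exactly the statement that $E^{(r,R)}$ is a self-Morita equivalence bimodule.

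Alternatively, and perhaps more cleanly, I would phrase the whole argument via the two homeomorphisms $r,s:\calG_1\to\Omega_0$ simultaneously: the composite $\phi:=r\circ s^{-1}:\Omega_0\to\Omega_0$ is a homeomorphism, and one checks directly that $E^{(r,R)}_{C(\Omega_0)}$ is isomorphic (as a bimodule with its two inner products) to the standard SMEB $\,{}_{\mathrm{id}}C(\Omega_0)_{\phi}\,$ associated to $\phi$ — that is, $C(\Omega_0)$ with right action twisted by $\phi$. The homeomorphism property of both $r$ and $s$ on $\calG_1$ is the content that makes this work, and it follows from Lemma \ref{unperf} together with the $r$-discreteness of $\calL^{(\omega)}$ (which forces the ``next lattice point'' maps to be well-defined and continuous, by the topology description in Lemma \ref{dtop}).

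The main obstacle is verifying that $r:\calG_1\to\Omega_0$ is a homeomorphism onto \emph{all} of $\Omega_0$ rather than just onto its image: this is where relative density (the $R$ in $(r,R)$-Delone) enters, ensuring that for every $\omega\in\Omega_0$ there genuinely is a lattice point of $\calL^{(\omega)}$ in the interval $(r,R)$, so that $r^{-1}(\omega)\cap\calG_1\neq\emptyset$. Surjectivity of $s$ is symmetric (a lattice point in $(-R,-r)$, using that $\calG_1^{-1}=\calG_{-1}$). Continuity of the inverse maps is the only mildly technical point and follows from the explicit base of clopen sets for the étale topology on $\calG$ given in Proposition \ref{dtop} and Proposition \ref{prop:exact_cocycles}; I would cite these rather than re-derive them.
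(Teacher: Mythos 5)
Your argument is correct in outline and takes a genuinely different route from the paper. The paper verifies the imprimitivity condition $\,{}_{C(\Omega_0)}(f_1\mid f_2)\cdot f_3 = f_1\cdot(f_2\mid f_3)_{C(\Omega_0)}\,$ by a direct convolution computation in the groupoid picture, then argues fullness of both inner products in a final short step. You instead prove the stronger structural statement that both $r$ and $s$ restrict to homeomorphisms $\calG_1\to\Omega_0$, which exhibits $E^{(r,R)}$ explicitly as the standard rank-one SMEB ${}_{\phi}C(\Omega_0)_{\mathrm{id}}$ attached to the homeomorphism $\phi=r\circ s^{-1}$; imprimitivity, fullness, and the identification $C(\Omega_0)\xrightarrow{\simeq}\mathbb{K}(E^{(r,R)})$ then all follow at once. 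What your approach buys is transparency: it makes visible the geometric source of the self-Morita equivalence (a ``next-point'' homeomorphism of the transversal). What the paper's direct computation buys is that it never needs to name or manipulate $\phi$; it is also the style used throughout that section. One small notational slip: under the pullback by $s$ the \emph{left} action becomes the one twisted by $\phi$, so the identification is with ${}_{\phi}C(\Omega_0)_{\mathrm{id}}$ rather than ${}_{\mathrm{id}}C(\Omega_0)_{\phi}$; these are of course isomorphic.

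One point worth making explicit, which is a shared hypothesis of both proofs rather than a defect unique to yours: injectivity of $r$ and $s$ on $\calG_1$ (equivalently, that $\calL^{(\omega)}\cap(r,R)$ is a singleton for every $\omega\in\Omega_0$) is not automatic from $\calL$ being $(r,R)$-Delone. The paper's displayed formula $(f_1*f_2^*)(T_{-x_n}\omega,0)=f_1(T_{-x_n}\omega,x_{n+1}-x_n)\,\overline{f_2(T_{-x_n}\omega,x_{n+1}-x_n)}$ silently replaces the fibre sum by a single term, which is exactly the same singleton assumption, and the uniqueness asserted in Lemma~\ref{unperf} requires it as well. You correctly identify surjectivity (relative density forcing a point of $\calL^{(\omega)}$ into the window) as the other ingredient; both your homeomorphism claim and the paper's fullness argument need it. So your proposal is as rigorous as the paper's here, and has the merit of flagging where the $(r,R)$ window enters, but if you want a fully self-contained write-up you should record the assumption that for all $\omega\in\Omega_0$ the set $\calL^{(\omega)}\cap(r,R)$ has exactly one point (which holds whenever the gaps of $\calL^{(\omega)}$ lie in an interval $[a,b]$ with $b<2a$ and $(r,R)\supset[a,b]$), and cite Proposition~\ref{dtop} and Proposition~\ref{prop:exact_cocycles} for the openness of $r$ and $s$, as you indicate.
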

\begin{proof}
Given $\omega\in\Omega_0$ with ordering $\calL^{(\omega)}=\{x_n\}_{n\in\Z}$ with 
$x_0=0$ and $x_n-x_{n-1} \in (r,R)$, a generic element in 
$c^{-1}(r,R)$ can be written as $(T_{-x_n}\omega, x_{n+1}-x_n)$.
We first compute
\begin{align*}
    \big(  {}_{C(\Omega_0)}(f_1\mid f_2)\cdot f_3 \big)(T_{-x_n}\omega,x_{n+1}-x_n) 
      &=  (f_1 \ast f_2^*)(T_{x_n}\omega, 0) f_3(T_{-x_n}\omega,x_{n+1}-x_n)  \\
      &\hspace{-3cm}= f_1(T_{-x_n}\omega, x_{n+1}-x_n) f_2^*(T_{x_{n+1}}\omega,x_n-x_{n+1})  f_3(T_{-x_n}\omega,x_{n+1}-x_n) \\
      &\hspace{-3cm}= f_1(T_{-x_n}\omega, x_{n+1}-x_n) \ol{f_2(T_{-x_n}\omega, x_{n+1}-x_n)}  f_3(T_{-x_n}\omega,x_{n+1}-x_n)
\end{align*}
and then compare to 
\begin{align*}
  \big( f_1\cdot (f_2\mid f_3)_{C(\Omega_0)}\big)(T_{-x_n}\omega,x_{n+1}-x_n) 
    &= f_1(T_{-x_n}\omega,x_{n+1}-x_n) (f_2^* \ast f_3)(T_{x_{n+1}}\omega,0) \\
    &\hspace{-3cm}= f_1(T_{-x_n}\omega,x_{n+1}-x_n) f_2^*(T_{x_{n+1}}\omega, x_n-x_{n+1}) f_3(T_{x_n}\omega, x_{n+1}-x_n) \\
    &\hspace{-3cm}= f_1(T_{-x_n}\omega, x_{n+1}-x_n) \ol{f_2(T_{-x_n}\omega, x_{n+1}-x_n)}  f_3(T_{-x_n}\omega,x_{n+1}-x_n)
\end{align*}
as required. Lastly the bi-module is full as by the compactness of $\Omega_0$, any $g\in C(\Omega_0)$ can be written 
\begin{align*}
 g(\omega) &= f_1(\omega, x_1) \ol{f_2(\omega,x_1)} = {}_{C(\Omega)}(f_1\mid f_2)(\omega)   \\
   &= \ol{ \tilde{f}_1(T_{-x_1}\omega,-x_1)} \tilde{f}_2(T_{-x_1}\omega, -x_1) =  ( \tilde{f}_1\mid  \tilde{f}_2)_{C(\Omega_0)}(\omega)
\end{align*}
for some $f_1,f_2, \tilde{f}_1, \tilde{f}_2 \in C_c(c^{-1}(r,R))$.
\end{proof}

Given the bimodule $E^{(r,R)}_{C(\Omega_0)}$, the Cuntz--Pimsner algebra 
$\calO_{E^{(r,R)}}$ is defined 
by a short exact sequence
\begin{equation} \label{eq:CP_SES}
   0 \to \mathbb{K}\left( (F_{E^{(r,R)}})_{C(\Omega_0)} \right)  \to \calT_{E^{(r,R)}} \to \calO_{E^{(r,R)}} \to 0,
\end{equation}
where $\calT_{E^{(r,R)}}$ is generated by creation and annihilation operators on the 
Fock module $F_{E^{(r,R)}}= \bigoplus_{n\geq 0} (E^{(r,R)})_{C(\Omega_0)}^{ \otimes n}$ 
with $(E^{(r,R)})_{C(\Omega_0)}^{ \otimes 0}:= C(\Omega_0)$.

The extension Equation \eqref{eq:CP_SES}  
gives a  $KK^1$-class $[\mathrm{ext}]$ which can be composed with the 
natural Morita equivalence between $\mathbb{K}\left( (F_{E^{(r,R)}})_{C(\Omega_0)} \right)$ 
and $C(\Omega_0)$. Thus the Cuntz--Pimsner algebra gives an element 
$[\mathrm{ext}] \hat\otimes_{\mathbb{K}} [F_E] \in KK^1(\calO_{E^{(r,R)}}, C(\Omega_0))$. 
We can use~\cite[Section 3.1]{RRS} to construct an unbounded 
Kasparov module  representing this class.

Using the conjugate module $\ol{E}_{C(\Omega)}^{(r,R)}$, we define for $n<0$, 
$\big(E^{(r,R)} \big)^{ \otimes n} = \big(\ol{E}^{(r,R)} \big)^{\otimes |n|}$. We can then 
consider the bi-infinite Fock module
$$
   F_{E,\Z} := \bigoplus_{n\in \Z} \big(E^{(r,R)} \big)_{C(\Omega_0)}^{ \otimes n},
$$
which carries a natural representation of $\calO_{E}$ and an operator making it into a $KK$-cycle.
\begin{prop}[\cite{RRS}, Theorem 3.1] \label{prop:CP_Kasmod}
Define an operator $N$ on the algebraic direct sum 
$\bigoplus_{m\in\mathbb{Z}}^{\textnormal{alg}} E^{\otimes m}$ by $N\xi = n \xi$ for $\xi \in E^{\otimes n}$.
There is a $*$-homomorphism $\calO_{E^{(r,R)}} \to \End^*\big((F_{E,\Z})_{C(\Omega_0)}\big)$ such
that $S_f\cdot \xi := f \otimes \xi$ for all $f \in E^{(r,R)}$ and $\xi \in \big(E^{(r,R)} \big)^{\otimes n}$. 
The triple $\big( \calO_{E^{(r,R)}}, (F_{E,\Z})_{C(\Omega_0)}, N)$ is an unbounded Kasparov module 
that represents the class $[\mathrm{ext}] \hat\otimes_{\mathbb{K}} [F_E] \in KK^1(\calO_{E^{(r,R)}}, C(\Omega_0))$.
\end{prop}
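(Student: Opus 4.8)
The plan is to deduce the statement as an instance of the general construction of~\cite[Section~3.1]{RRS}, applied to the bimodule $E^{(r,R)}_{C(\Omega_0)}$, which the preceding lemma shows is a self-Morita equivalence bimodule over $C(\Omega_0)$. The one structural point I would check first is that this puts us squarely in the setting of~\cite{RRS}: since $\Omega_0$ is compact the coefficient algebra $C(\Omega_0)$ is unital, and a self-Morita equivalence bimodule over a unital $C^*$-algebra has $\mathbb{K}(E^{(r,R)}) \cong C(\Omega_0)$ acting on the left, hence is finitely generated projective as a right (and left) module; consequently every tensor power $(E^{(r,R)})^{\otimes m}$ is again finitely generated projective over $C(\Omega_0)$. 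The left action of $\calO_{E^{(r,R)}}$ on the bi-infinite Fock module $(F_{E,\Z})_{C(\Omega_0)}$ is then assembled from the creation operators $S_f$ with $S_f\cdot\xi = f\otimes\xi$ together with the left $C(\Omega_0)$-action on each summand; the bi-infinite Fock module carries a covariant representation of the Cuntz--Pimsner relations, so this representation of $\calT_{E^{(r,R)}}$ descends to a $*$-homomorphism on $\calO_{E^{(r,R)}}$.

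Next I would verify the unbounded Kasparov module axioms for $N$. For self-adjointness and regularity, observe that the algebraic direct sum $\bigoplus^{\mathrm{alg}}_{m\in\Z}(E^{(r,R)})^{\otimes m}$ is a dense submodule on which $N$ is diagonal with integer eigenvalues, so $N\pm i$ both have range containing this dense submodule; hence the closure of $N$ is self-adjoint and regular with this submodule as a core. For the commutator condition, a direct computation on homogeneous elements gives $[N,S_f]\xi = S_f\xi$ and $[N,S_f^*]\xi = -S_f^*\xi$, while the $C(\Omega_0)$-action commutes with $N$; hence $[N,a]$ extends boundedly for $a$ in the dense $*$-subalgebra generated by the $S_f$ and $C(\Omega_0)$. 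For the resolvent condition, note that $(1+N^2)^{-1} = \sum_{m\in\Z}(1+m^2)^{-1}P_m$, where $P_m$ is the orthogonal projection onto $(E^{(r,R)})^{\otimes m}$; each $P_m$ is a finite sum of rank-one operators on $F_{E,\Z}$ by finite generation of the summand, so the series converges in norm to a compact operator, whence $a(1+N^2)^{-1}\in\mathbb{K}((F_{E,\Z})_{C(\Omega_0)})$ for every $a\in\calO_{E^{(r,R)}}$.

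It remains to identify the class of $(\calO_{E^{(r,R)}},(F_{E,\Z})_{C(\Omega_0)},N)$ with $[\mathrm{ext}]\hat\otimes_{\mathbb{K}}[F_E]$. Restricting the bounded transform $N(1+N^2)^{-1/2}$ to the nonnegative-degree part of $F_{E,\Z}$ recovers, modulo $\mathbb{K}$, the phase of the number operator on the ordinary Fock module $F_{E^{(r,R)}}$, and the way the positive- and negative-degree parts interact under the $S_f$ is exactly what implements the Busby invariant of the defining extension~\eqref{eq:CP_SES}; matching these data against the Kasparov product with the Morita class $[F_E]$ is the content of~\cite[Theorem~3.1]{RRS}, which we invoke directly. \textbf{Main obstacle.} The only real work is confirming that our concrete bimodule $E^{(r,R)}$ satisfies the hypotheses under which~\cite{RRS} operates; as indicated this reduces to unitality of $C(\Omega_0)$ together with the self-Morita equivalence property established in the previous lemma, after which both the construction of the triple and its $KK$-theoretic identification are precisely those of~\cite[Section~3.1]{RRS}.
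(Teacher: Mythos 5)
The paper itself gives no proof of this proposition: it is stated as a direct citation of~\cite[Theorem 3.1]{RRS}, applied to the self-Morita equivalence bimodule $E^{(r,R)}_{C(\Omega_0)}$ established in the preceding lemma. Your proposal takes exactly this route, and the supplementary checks you add — unitality of $C(\Omega_0)$, finite generation of each $(E^{(r,R)})^{\otimes m}$ so that the degree projections $P_m$ are compact and $(1+N^2)^{-1}$ converges in norm, the standard computation $[N,S_f]=S_f$ — are correct and are precisely the hypotheses one needs to verify to invoke the cited theorem.
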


\begin{cor} \label{cor:CP_extension_KK_equiv}
The odd Kasparov module from Proposition \ref{prop:CP_Kasmod} defines the same class 
in \newline $KK^1(C^*_r(\calG), C(\Omega_0))$ as the bulk Kasparov module ${}_{d}\lambda_{\Omega_0}$ 
from Equation \eqref{eq:bulk_K-cycle} on page \ref{eq:bulk_K-cycle} with $d=1$.
\end{cor}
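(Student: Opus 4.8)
The plan is to move both cycles onto the module $E_{C(\Omega_0)}$, use the clopen decomposition of $\calG$ by degree to recognise it as the bi-infinite Fock module of Proposition \ref{prop:CP_Kasmod}, and then check that the bounded transforms of the two Kasparov modules are, modulo compacts, multiplication by the \emph{same} self-adjoint unitary.

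First I would spell out the identification. By Proposition \ref{prop:CP_alg_iso} the degree cocycle $d:\calG\to\Z$ is locally constant and gives $\calG=\bigsqcup_{n\in\Z}\calG_n$ with $\calG_{\pm n}=(\calG_{\pm 1})^{n}$, hence $E_{C(\Omega_0)}=\bigoplus_{n\in\Z}E_n$ as a $C(\Omega_0)$-module, where $E_n$ is the completion of $C_c(\calG_n)$. The lemma preceding Proposition \ref{prop:CP_alg_iso} identifies $E_{1}$ with $E^{(r,R)}$ and $E_{-1}$ with $\ol{E}^{(r,R)}$, and, using the unique factorisation of elements of $\calG_{\pm n}$ from Lemma \ref{unperf}, the convolution map $f_1\otimes\cdots\otimes f_n\mapsto f_1\ast\cdots\ast f_n$ extends to an isomorphism $(E^{(r,R)})^{\otimes n}_{C(\Omega_0)}\cong E_n$ for $n\geq 0$, and analogously with conjugate modules for $n<0$; these are compatible with the $C(\Omega_0)$-valued inner products. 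Under the resulting identification $F_{E,\Z}\cong E_{C(\Omega_0)}$, the isomorphism $\calO_{E^{(r,R)}}\cong C^*_r(\calG)$ of Proposition \ref{prop:CP_alg_iso} intertwines the Fock representation of Proposition \ref{prop:CP_Kasmod} with the regular representation $\pi$ of $C^*_r(\calG)$, and it carries the number operator $N$ to the operator $D_d$ of multiplication by the integer-valued cocycle $d$. Thus the cycle of Proposition \ref{prop:CP_Kasmod} becomes $\big(C^*_r(\calG), E_{C(\Omega_0)}, D_d\big)$.

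Next I would compare bounded transforms. Write $P$ for the projection of $E_{C(\Omega_0)}$ onto $\bigoplus_{n\geq 0}E_n$. Since $D_d$ acts on $E_n$ by the scalar $n$, the operator $(1+D_d^2)\big(D_d(1+D_d^2)^{-1/2}-(2P-1)\big)$ acts on $E_n$ by $n(1+n^2)^{1/2}-(1+n^2)$ for $n\geq 0$ and by $n(1+n^2)^{1/2}+(1+n^2)$ for $n<0$, and both are bounded in $n$, so this operator is bounded. As $\pi(f)(1+D_d^2)^{-1}$ is compact, it follows that $\pi(f)\big(D_d(1+D_d^2)^{-1/2}-(2P-1)\big)$ is compact for every $f$; hence $\big(C^*_r(\calG),E_{C(\Omega_0)},D_d\big)$ and $\big(C^*_r(\calG),E_{C(\Omega_0)},2P-1\big)$ represent the same class in $KK^1(C^*_r(\calG),C(\Omega_0))$. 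On the other hand, by the remark following Lemma \ref{lem:pi_c_almost_proj}, the bulk cycle ${}_{1}\lambda_{\Omega_0}$ represents the same class as $\big(C^*_r(\calG),E_{C(\Omega_0)},2\Pi_c-1\big)$, where $\Pi_c=\chi_+(D_c)$ and $D_c$ is multiplication by $c(\omega,x)=x$. Because $c$ takes its values in $\{0\}\cup(-\infty,-r]\cup[r,\infty)$ while $0<\varepsilon<r$, the function $\chi_+\circ c$ is $\{0,1\}$-valued and equal to $1$ precisely on $\bigsqcup_{n\geq 0}\calG_n$, so $\Pi_c=P$ and $2\Pi_c-1=2P-1$. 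Therefore the two cycles represent the same $KK^1$-class, which is the assertion of the corollary.

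The lengthy but routine part is the identification in the second paragraph: verifying that convolution restricts on the Fock module to the interior tensor product over $C(\Omega_0)$, that the left actions match, and that this respects the inner products. The one genuinely delicate point is the bookkeeping at degree zero: one needs $\chi_+(0)=1$ and $E_0\subset\Ran P$ so that $\Pi_c$ equals the spectral projection of $N$ onto $[0,\infty)$ as a projection, not merely modulo the rank-one projection onto $E_0$; this is precisely where the gap in the range of $c$, a feature of the one-dimensional Delone setting, is essential. An alternative to working with bounded transforms would be to identify the generalised Toeplitz extension of Section \ref{subsec:exact_extension} directly with the defining Pimsner extension \eqref{eq:CP_SES}: here $\Ker(c)=\calG^{(0)}$, so $C^*_r(\calG\ltimes\calG/\calH)\cong\mathbb{K}(E_{C(\Omega_0)})$, $\calT$ acts on $\Pi_c E_{C(\Omega_0)}\cong F_{E^{(r,R)}}$, and the compression of $\pi$ to this Fock module is the Toeplitz representation.
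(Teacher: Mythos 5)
Your argument is correct and, while considerably more detailed, follows essentially the same route as the paper: both rest on the single observation that under the identification $E_{C(\Omega_0)}\cong F_{E,\Z}$ of Proposition~\ref{prop:CP_alg_iso}, the positive semi-splitting $\Pi_c$ of the groupoid cycle coincides with the projection onto non-negative degree, i.e. the positive semi-splitting of the Cuntz--Pimsner extension (made possible by the gap in $\Ran(c)$, as you note). The alternative you mention in your final sentences --- identifying the generalised Toeplitz extension with the Pimsner extension directly and concluding via equivalence of extensions --- is exactly the phrasing the paper's proof uses.
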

\begin{proof}
The $C^{*}$-algebras are isomorphic by Proposition \ref{prop:CP_alg_iso}. 
Furthermore, the positive semi-splitting from both 
the groupoid and Cuntz--Pimsner Kasparov modules is the projection onto elements with 
non-negative cocycle values. Hence the extensions are equivalent, which also gives 
equivalence within $KK^1$.
\end{proof}

By the presentation of $C^*_r(\calG)$ as a Cuntz--Pimser algebra, we can use the long (cyclic) exact 
sequence as a tool for the computation of $K_\ast(C^*_r(\calG))$. Namely, for complex algebras,
\begin{displaymath}
  \xymatrix{ 
    K_0(C(\Omega_0)) \ar[rrr]^{\otimes([C(\Omega_0)] - [E^{(r,R)}])} & &  & K_0(C(\Omega_0)) \ar[rrr]^{\iota_\ast} & &   & K_0(C^*_r(\calG)) \ar[d]^{\partial} \\
    K_1(C^*_r(\calG)) \ar[u]^{\partial} & & &  K_1(C(\Omega_0)) \ar[lll]^{\iota_\ast}  & & &  K_1(C(\Omega_0)) \ar[lll]^{\otimes([C(\Omega_0)] - [E^{(r,R)}])}
   }
\end{displaymath}
where the map $K_\ast(C(\Omega_0)) \xrightarrow{\otimes [E^{(r,R)}]} K_\ast (C(\Omega_0))$ comes 
from the internal product of the $K$-theory class with the element 
$[E^{(r,R)}] \in KK(C(\Omega_0),C(\Omega_0))$.
There is an analogous but longer exact sequence for real $C^*$-algebras,
\begin{align*}
 \cdots \to KO_j(C(\Omega_0)) \xrightarrow{\otimes([C(\Omega_0)] - [E^{(r,R)}])} KO_j(C(\Omega_0)) \xrightarrow{\iota_\ast} 
      KO_j(C^*_r(\calG)) \xrightarrow{\partial} KO_{j-1}(C(\Omega_0)) \to \cdots.
\end{align*}
By the Morita equivalence of $C^*_r(\calG)$ and $C(\Omega_\calL)\rtimes \R$, we know that 
$K_\ast(C^*_r(\calG)) \cong K_{\ast -1}(C(\Omega_\calL))$ by the Connes--Thom isomorphism. 
As the $K$-theory of $C(\Omega_\calL)$ is generally quite difficult to compute, the Pimsner exact 
sequence for $C^*_r(\calG)$ is a helpful tool for such $K$-theory computations.
For example, if $K_1(C(\Omega_0))=0$ (e.g. if $\calL$ has finite local complexity), then 
we immediately obtain that
\begin{align*}
   K_0(C^*_r(\calG)) \cong \mathrm{Coker}(1-[E^{(r,R)}]),  &&K_1(C^*_r(\calG)) \cong \Ker(1-[E^{(r,R)}]).
\end{align*} 
Hence, for a one-dimensional lattice $\calL$ with finite local complexity, 
\begin{align*}
  K_0(C(\Omega_\calL)) \cong \Ker(1-[E^{(r,R)}]),  &&K_1(C(\Omega_\calL)) \cong \mathrm{Coker}(1-[E^{(r,R)}]).
\end{align*}
Of course, this result is  
restricted to one-dimensional lattices or tilings. A description of $C^*_r(\calG)$ 
for higher dimensions using the $C^*$-algebra of a product system or as an iterated Cuntz-Pimsner algebra 
may be possible. We leave this analysis to future research.

\begin{remark}
As a brief cautionary remark, we note that our bimodule ${}_{C(\Omega_0)}E^{(r,R)}_{C(\Omega_0)}$ 
looks quite similar but is different from the crossed product bimodule ${}_\alpha A_A$ with $\alpha:\Z \to \Aut(A)$ and 
 such that $\calO_\alpha \cong A\rtimes_\alpha \Z$. Indeed, given $\omega\in \Omega_0$ 
 and $x_1\in \calL^{(\omega)}\cap (r,R)$,  there is no 
guarantee that $T_{-2x_1}\omega \in \Omega_0$ as would be the case for a $\Z$-action.
\end{remark}

\section{Factorisation and the bulk-boundary correspondence} \label{sec:factorisation}

A key attribute of the operator algebra approach to topological phases via crossed products 
is that both bulk and boundary systems can be treated under the same general framework with 
an extension of $C^*$-algebras linking the two systems. 
Namely, up to stabilisation the 
edge algebra can be descibed via $C(\Omega)\rtimes_\sigma \Z^{d-1}$ and, we can recover 
the bulk algebra by the iterated crossed product 
$(C(\Omega)\rtimes_\sigma \Z^{d-1})\rtimes \Z \cong C(\Omega)\rtimes_\sigma \Z^d$ for 
normalised twists. 

In this section we use the groupoid cocycle $c_d:\calG \to \R$ to consider the closed 
subgroupoid $\Upsilon = \Ker(c_d)$. This subgroupoid is too small to completely model 
an edge system but is groupoid equivalent to one that we argue encodes the translation 
dynamics on the transversal in $(d-1)$-directions. Furthermore, we show that the subgroupoid 
$\Upsilon$ gives rise to a canonical \emph{bulk-boundary extension} of reduced $C^*$-algebras that generalises 
the Toeplitz extension for crossed products. In particular, we use this extension to factorise the groupoid 
$KK$-cycle into a product of a $(d-1)$-dimensional system and the bulk-boundary extension 
that recovers the bulk system.

\subsection{The edge groupoid} \label{sec:edge_gpoid_toeplitz}

We now apply our results on twisted groupoid equivalences to the transversal groupoid 
and the bulk-boundary short exact sequence.

Recall the groupoid cocycle $c_d:\calG\to\R$, $c_d(\omega,x) = x_d$. Because $c_d$ is exact, 
we can apply the results from Section \ref{sec:bulk_Kasmod} and construct an 
unbounded $KK$-cycle.
We consider the closed subgroupoid $\Upsilon = \Ker(c_d)$, namely
$$
  \Upsilon = \big\{ (\omega, y)\in \Omega_0 \times\R^{d-1} \, :\, T_{(-y,0)}\omega \in \Omega_0 \big\}.
$$
with multiplication, range and source maps inherited from $\calG$. 
Furthermore, the restriction of 
$\sigma$ to $\Upsilon$ gives a well defined $2$-cocycle for $\Upsilon$. 
Recalling Section \ref{sec:twisted_gpoid_equiv}, 
the restriction map 
$$\rho_\Upsilon:C_c(\calG,\sigma) \to C_c(\Upsilon,\sigma)$$
defines a $(C^{*}_{r}(\mathcal{G},\sigma),C^{*}_{r}(\Upsilon,\sigma))$-bimodule $E_{C^*_r(\Upsilon,\sigma)}$.
Applying Proposition \ref{prop:bulk_kasmod} to the cocycle 
$c_{d}:\mathcal{G}\to\mathbb{R}$ and writing $X_{d}:=D_{c_{d}}$, gives us the following.

\begin{prop}[\cite{MeslandGpoid}, Theorem 3.9] \label{prop:extension_like_class}
The triple
$$
 {}_{d}\lambda_{d-1} =  \left( C_c(\calG,\sigma)\hat\otimes Cl_{0,1}, \, E_{C^*_r(\Upsilon,\sigma)} \hat\otimes 
    \bigwedge\nolimits^{\!*}\R, X_d \hat\otimes \gamma \right)
$$
is a real or complex unbounded Kasparov module.
\end{prop}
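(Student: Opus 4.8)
The plan is to derive this statement directly as an instance of Proposition \ref{prop:bulk_kasmod}. Indeed, Proposition \ref{prop:exact_cocycles} already establishes that the coordinate cocycle $\hat c_d = (c_1,\dots,c_d):\calG\to\R^d$ is exact in the sense of \cite[Definition 3.3]{MeslandGpoid}; since $c_d = \pi_d\circ\hat c_d$ factors through $\hat c_d$, the remark following the proof of Proposition \ref{prop:exact_cocycles} gives that $c_d:\calG\to\R$ is itself exact. Consequently $\Upsilon = \Ker(c_d)$ is a closed (in fact clopen, by local constancy of $c_d$) subgroupoid of the \'etale groupoid $\calG$, and since $\calG$ is \'etale the counting measures furnish a Haar system on $\Upsilon$. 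The restricted $2$-cocycle $\sigma|_{\Upsilon}$ is again normalised, so the constructions of Section \ref{sec:twisted_gpoid_alg} and Section \ref{sec:twisted_gpoid_equiv} apply: we obtain the $C^*$-module $E_{C^*_r(\Upsilon,\sigma)}$ with its $(C^*_r(\calG,\sigma),C^*_r(\Upsilon,\sigma))$-bimodule structure.

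With $n=1$ and the single exact cocycle $c_d$, Proposition \ref{prop:bulk_kasmod} then immediately produces the unbounded Kasparov module
$$
\left( C_c(\calG,\sigma)\hat\otimes Cl_{0,1},\ E_{C^*_r(\Upsilon,\sigma)}\hat\otimes\textstyle\bigwedge\nolimits^{\!*}\R,\ D_{c_d}\hat\otimes\gamma \right),
$$
and by definition $X_d := D_{c_d}$, so this is precisely the triple ${}_d\lambda_{d-1}$ in the statement. Thus essentially nothing new needs to be verified: self-adjointness and regularity of $X_d\hat\otimes\gamma$ follow from the core argument in the proof of Proposition \ref{prop:bulk_kasmod} (the dense subalgebra $C_c(\calG,\sigma)\hat\otimes\bigwedge^{\!*}\R$ is a core, and $(X_d\hat\otimes\gamma)^2 = X_d^2\hat\otimes 1$ is multiplication by $x_d^2$), compactness of $(1+D_{c_d}^2)^{-1}$ on $E_{C^*_r(\Upsilon,\sigma)}$ is \cite[Theorem 3.9]{MeslandGpoid} applied via exactness of $c_d$, and the bounded commutators $[D_{c_d},\pi(f)] = \pi(\partial_d f)\hat\otimes\gamma$ follow from invariance of $C_c(\calG,\sigma)$ under the derivation $\partial_d$. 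The real-versus-complex dichotomy is handled exactly as in Proposition \ref{prop:bulk_kasmod}: using $O(1)$-valued cocycles and $\bigwedge^{\!*}\R$ in the real category, or $U(1)$-valued cocycles and $\bigwedge^{\!*}\C$ in the complex category.

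There is no genuine obstacle here; the only point requiring a sentence of justification is the reduction from the exactness of the full coordinate cocycle $\hat c_d$ to exactness of its last component $c_d$. This is where one invokes that $c_d$ factors through $\hat c_d$ and that, as in the proof of Proposition \ref{prop:exact_cocycles}, the quotient map property passes along the coordinate projection $\pi_d:\R^d\to\R$ (a complete quotient map) and $\Ker(c_d)$ inherits a Haar system from the \'etale structure. Once that is noted, the proof is simply a citation of Proposition \ref{prop:bulk_kasmod} with $n=1$ and $c=c_d$.
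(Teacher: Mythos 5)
Your proof is correct and takes essentially the same route as the paper: the paper also simply applies Proposition \ref{prop:bulk_kasmod} with $n=1$ to the cocycle $c_d$, having noted in Proposition \ref{prop:exact_cocycles} (and the remark following it) that any cocycle factoring through $\hat c_d$ is exact. Your spelling out of the exactness reduction, the Haar system on the clopen subgroupoid $\Upsilon$, and the real/complex dichotomy are exactly the points the paper leaves implicit, so there is nothing missing.
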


The groupoid $\Upsilon$ is too small to be thought of as representing 
an edge system. Instead, we will consider the groupoid $\calG \ltimes \calG/ \Upsilon$ 
whose twisted reduced $C^*$-algebra is Morita equivalent to $C^*_r(\Upsilon,\sigma)$, cf. Section \ref{sec:twisted_gpoid_equiv}.

The cocycle $c_d$ determines the subset $\Ran(c_d)\subset \R^d$ (which need not be a subgroup). Having fixed 
this set, the groupoid 
$\calG \ltimes \calG/ \Upsilon$ allows us to put a groupoid 
structure back into our system with the translation action in 
$(d-1)$-directions. 

The space $\calG/ \Upsilon$ is given by equivalence classes 
of elements $[(\omega,x)]\in\calG$ under the relation
\[(\omega,x)\sim (\omega',x')\,\Leftrightarrow \quad \exists (T_{-x}\omega,(y,0))\in\Upsilon\quad (\omega,x+(y,0)) = (\omega',x').\] Hence  the quotient 
$\calG/\Upsilon$ can be described by equivalence classes of  pairs $[(\omega,x_d)]$ with 
$(\omega,x_d) \in \Omega_0 \times \Ran(c_d)$. We have the presentation of 
$\calG \ltimes \calG/ \Upsilon$ by pairs
$$
  \calG \ltimes \calG/ \Upsilon \cong 
    \left\{ ((\omega,x),[(\omega',y_d)])\,: \, r_{\calG}(c_d^{-1}(y_d)) = T_{-x}\omega \right\} 
    \cong \left\{ ((\omega,x),[(T_{-x}\omega,y_d)]) \right\} \subset \calG\times \calG/\Upsilon.
$$
Recall that $(\omega,x)\in\calG$ if $x\in\calL^{(\omega)}$. Our presentation says that 
$((\omega,x),[(T_{-x}\omega,y_d)]) \in \calG\ltimes \calG/\Upsilon$ if there is some 
$u\in \R^{d-1}$ such that $x, x+ (u,y_d) \in \calL^{(\omega)}$. The unit space is given by 
$$\left(\calG \ltimes \calG/ \Upsilon \right)^{(0)} = \calG/\Upsilon,$$ 
and the groupoid structure is determined by 
\begin{align*}
  &\hspace{1.5cm} s((\omega,x),[(T_{-x}\omega,y_d)]) = [(T_{-x}\omega,y_d)], \quad  r((\omega,x),[(T_{-x}\omega,y_d)]) = [(\omega,x_d+y_d)], \\
    &\hspace{3cm} ((\omega,x),[(T_{-x}\omega,y_d)])^{-1} = ((T_{-x}\omega,-x),[(\omega,x_d+y_d)]), \\
  &\hspace{0cm} ((\omega,x),[(T_{-x}\omega,y_d)])\cdot ((T_{-x}\omega,z),[(T_{-x-z}\omega,y_d-z_d)] ) = ((\omega,x+z),[(T_{-x-z}\omega,y_d-z_d)]).
\end{align*}
We note that for $((T_{-x}\omega,z),[(T_{-x-z}\omega,y_d-z_d)] )$ to be in $\calG\ltimes \calG/\Upsilon$, 
there must be some 
$v\in\R^{d-1}$ such that $x+(v,y_d) \in \calL^{(\omega)}$. Because 
$((\omega,x),[(T_{-x}\omega,y_d)])\in \calG\ltimes \calG/\Upsilon$ 
implies $x+(u,y_d) \in \calL^{(\omega)}$ 
for some $u\in \R^{d-1}$, 
the groupoid multiplication involves a translation in 
$(d-1)$-dimensions only. Thus we see that the groupoid $\calG \ltimes \calG/ \Upsilon$ 
models the dynamics of the transversal $\Omega_0$ relative to the fixed set $\Ran(c_d)$.
We  use the 2-cocycle $\sigma$ on $\calG$ to define a 2-cocycle on 
$\calG \ltimes \calG/\Upsilon$ via
$$
  \sigma\big( ((\omega,x),[(T_{-x}\omega,y_d)]), ((T_{-x}\omega,z),[(T_{-x-z}\omega,y_d-z_d)]) \big) = 
      \sigma((\omega,x),(T_{-x}\omega,z)).
$$
Applying Proposition \ref{prop:twisted_morita_reduced}, we obtain the following.

\begin{prop} 
The $C^*$-module $E_{C^*_r(\Upsilon,\sigma)}$ is a Morita equivalence bimodule between 
$C^*_r(\calG \ltimes \calG/ \Upsilon,\sigma)$ and $C^*_r(\Upsilon,\sigma)$. 
In particular, 
$C^*_r(\calG \ltimes \calG/ \Upsilon,\sigma) \cong \mathbb{K}(E_{C^*_r(\Upsilon,\sigma)})$.
\end{prop}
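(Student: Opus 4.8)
The plan is to obtain this as an immediate instance of Proposition \ref{prop:twisted_morita_reduced} with the closed subgroupoid taken to be $\calH = \Upsilon = \Ker(c_d)$. The module $E_{C^*_r(\Upsilon,\sigma)}$ appearing in the statement is precisely the one produced by the general construction of Section \ref{sec:twisted_gpoid_equiv}: the completion of $C_c(\calG,\sigma)$ in the $C_c(\Upsilon,\sigma)$-valued inner product coming from the restriction map $\rho_\Upsilon$, together with the left $C_c(\calG \ltimes \calG/\Upsilon,\sigma)$-action given by the formula displayed before Proposition \ref{prop:twisted_morita_reduced}. So all that remains is to check that $\Upsilon$ satisfies the standing hypotheses of that section and to invoke the proposition.

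First I would confirm that $\Upsilon$ is a closed subgroupoid of $\calG$ admitting a Haar system. Closedness was already noted when $\Upsilon$ was introduced. For the Haar system: the cocycle $c_d = \pi_d \circ \hat c_d$ factors through the exact cocycle $\hat c_d$, so by the remark following Proposition \ref{prop:exact_cocycles} it is itself exact and locally constant; hence $\Upsilon = c_d^{-1}(\{0\})$ is a clopen subgroupoid of the \'{e}tale groupoid $\calG$, is itself \'{e}tale, and the counting measures on its source-fibres constitute the required Haar system. Next, the restriction $\sigma|_\Upsilon$ is a normalised $2$-cocycle (immediate from the cocycle identity and the normalisation of $\sigma$, as recorded above), and the $2$-cocycle placed on $\calG \ltimes \calG/\Upsilon$ in the preceding paragraph is compatible with it by construction, both descending from the single cocycle $\sigma$ on $\calG$; this is exactly the compatibility hypothesis under which Proposition \ref{prop:twisted_morita_reduced} (via \cite[Theorem 9.1]{Daenzer09} and its Fell-bundle/reduced refinements in \cite{MT11, SimsWilliams, SimsWilliamsFell}) yields a Morita equivalence of the \emph{reduced} twisted $C^*$-algebras.

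With these verifications in hand, Proposition \ref{prop:twisted_morita_reduced} applies verbatim and gives that $E_{C^*_r(\Upsilon,\sigma)}$ is an imprimitivity bimodule between $C^*_r(\Upsilon,\sigma)$ and $C^*_r(\calG \ltimes \calG/\Upsilon,\sigma)$. The asserted isomorphism $C^*_r(\calG \ltimes \calG/\Upsilon,\sigma) \cong \mathbb{K}(E_{C^*_r(\Upsilon,\sigma)})$ is then the defining property of an imprimitivity bimodule, namely that the algebra acting on the left is recovered as the compact operators; concretely it is the extension to the $C^*$-completions of the left-action formula $(\pi(g)f)(\gamma) = \sum_{\xi \in r^{-1}(r(\gamma))} g(\xi,[\xi^{-1}\gamma])\, f(\xi^{-1}\gamma)\, \sigma(\xi,\xi^{-1}\gamma)$. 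The only points genuinely requiring attention are the ones flagged above — that $\Upsilon$ indeed carries a Haar system and that the reduced rather than the full twisted groupoid $C^*$-algebras are in play — and both are dispatched by the cited references, so I expect no substantive obstacle: everything else is formal.
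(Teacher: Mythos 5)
Your proof is correct and follows the same route as the paper: the paper simply states that the result follows by "Applying Proposition \ref{prop:twisted_morita_reduced}," and you invoke exactly that proposition with $\calH = \Upsilon$, filling in the (straightforward) verifications that $\Upsilon$ is a clopen \'{e}tale subgroupoid with counting-measure Haar system and that the twists on $\Upsilon$ and $\calG \ltimes \calG/\Upsilon$ are compatibly inherited from $\sigma$. These checks are implicit in the paper's brief statement, so your write-up is a sound, slightly more explicit account of the same argument.
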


The 
Morita equivalence bimodule gives an invertible element in 
$KK(C^*_r(\calG\ltimes \calG/\Upsilon, \sigma), C^*_r(\Upsilon,\sigma))$.
From the 
perspective of index theory, we can work with either 
$\Upsilon$ or $\calG \ltimes \calG/ \Upsilon$. While we consider 
$\calG \ltimes \calG/ \Upsilon$ to be our edge groupoid, the subgroupoid $\Upsilon\subset \mathcal{G}$ 
will be easier to work with for some of our mathematical arguments.

\subsection{The bulk-boundary extension}

Because the Kasparov module from Proposition \ref{prop:extension_like_class} 
comes from an exact $\R$-valued cocycle, we can construct an extension 
of $C^*$-algebras.
As in Lemma \ref{lem:pi_c_almost_proj} in Section \ref{subsec:exact_extension}, we fix an $\varepsilon >0$ and a function $\chi_{+}$ to define 
a self-adjoint operator 
$\Pi_{d}:=\Pi_{c_{d}} \in \End_{C^*_r(\Upsilon,\sigma)}^*(E)$  on the $C^*$-module $E_{C^*_r(\Upsilon,\sigma)}$ satisfying 
$\Pi_{d}^2- \Pi_{d} \in \mathbb{K}_{C^*_r(\Upsilon,\sigma)}(E)$.
Since the Delone set $\mathcal{L}$ is relatively dense,  the cocycle 
$c_d$ takes arbitrarily large values in each $r$-fibre and is $r$-unbounded. Therefore the map 
$$\varphi: C^*_r(\calG,\sigma) \to \calQ(E_{C^*_r(\Upsilon,\sigma)}),\quad \varphi(a) = q(\Pi_{d} a \Pi_{d}),$$ is 
injective by Proposition \ref{prop:injective_busby}. Hence we can construct 
the generalised Toeplitz extension 
\begin{equation} \label{eq:bulkedge_extension}
   0 \to C^*_r(\calG \ltimes \calG/ \Upsilon, \sigma) \to 
     C^*( \Pi_{d}  C^*_r(\calG,\sigma) \Pi_d, C^*_r(\calG \ltimes \calG/ \Upsilon, \sigma) ) \to C^*_r(\calG,\sigma) \to 0
\end{equation}
with completely positive semi-splitting $a\mapsto \Pi_d a \Pi_d$ and Busby invariant $\varphi$. 

In the case that our lattices have a canonical $\Z^d$-labelling, then this extension 
reduces to the usual bulk-boundary short exact sequence considered in~\cite{PSBbook}.
 In the case $d=1$ we have $\Upsilon \cong \calG^{(0)}\cong \Omega_0$, and the extension \eqref{eq:bulkedge_extension}
is equivalent to the Toeplitz--Cuntz--Pimsner  extension for $C^*_r(\calG,\sigma)$  of Corollary \ref{cor:CP_extension_KK_equiv}.

For a fixed $\omega\in\Omega_0$, the Toeplitz algebra can be represented on the 
the space $\Pi_d^\omega \ell^2(\calL^{(\omega)})$, which we can interpret 
as a half-infinite system with boundary. Because we work with general Delone sets, 
$0$ need not be an isolated point in $\Ran(c_d)$. As such, our boundary 
operator $\Pi_d$ is not a projection in general, but if we have a $\Z^d$-labelling, the above 
construction yields a genuine projection.

\begin{remark}[Integer-valued cocycles and the Pimsner--Voiculescu extension]
It is shown in~\cite[Proposition 3.22]{MeslandGpoid} that if the an exact cocycle $c_d$ is integer-valued, then the 
associated $KK$-class $[D_{c_d}] \in KK^1(C^*_r(\calG,\sigma), C^*_r(\Upsilon,\sigma))$ coincides with the 
$KK$-class defined from the circle action 
$$\alpha^{c}:\T \to \mathrm{Aut}(C^*_r(\calG,\sigma)),\quad \alpha^{c}_{t}(f)(\xi):=e^{itc_d(\xi)}f(\xi),$$ 
via the construction in~\cite{CNNR}. 
For crossed products by $\mathbb{Z}$, the Kasparov module of a circle action 
is the same as the Kasparov module constructed from the Toeplitz extension of 
the crossed product (constructed in, for example, \cite{BKR1}) 
$$
   0 \to C^*_r(\Upsilon,\sigma) \otimes\mathbb{K} \to \calT \to C^*_r(\Upsilon,\sigma)\rtimes \Z \to 0
$$
with $C^*_r(\Upsilon,\sigma)\rtimes \Z \cong C^*_r(\calG,\sigma)$. A similar result holds for 
semisaturated circle actions (see~\cite[Section 3]{AriciKaadLandi} or \cite[Section 3.3]{ADL16}).
Hence we recover the `usual' bulk-boundary 
extension considered in~\cite{PSBbook} for special cases of integer-valued cocycles $c_d$. 
This applies in particular if $c_{d}$ is unperforated.
\end{remark}

\begin{remark}[The Connes--Thom class]
Let us now consider the relation between the Kasparov module of 
Proposition \ref{prop:extension_like_class} and its Toeplitz extension with the Connes--Thom isomorphism and the 
Wiener--Hopf extension of~\cite{KR08}, when $d\geq 2$. 

The transversal groupoid $\calG$ is Morita equivalent to the crossed product 
groupoid $\Omega_\calL \rtimes \R^d$ and  for $d\geq 2$ the boundary groupoid $\Upsilon$ is 
equivalent to $\Omega_\calL \rtimes \R^{d-1}$. Given a normalised $2$-cocycle  
$\sigma:\R^d\times \R^d \to \calU(C(\Omega_\calL))$, 
there is an isomorphism 
$C(\Omega_\calL)\rtimes_\sigma \R^d \cong \big(C(\Omega_\calL)\rtimes_\sigma \R^{d-1} \big)\rtimes \R$ 
and a Wiener--Hopf extension
$$
  0 \to (C(\Omega_\calL)\rtimes_\sigma \R^{d-1}  ) \otimes \mathbb{K}[L^2(\R)] \to \mathcal{W} \to 
  C(\Omega_\calL)\rtimes_\sigma \R^{d} \to 0,
$$
see~\cite{KR08}.
In~\cite[Section 6]{BRCont}, it was shown that the Wiener--Hopf extension can be 
represented by the unbounded Kasparov module 
\begin{equation} \label{eq:ConnesThom_kasmod}
  \Big( C_c \big(\R, C(\Omega_\calL)\rtimes_\sigma \R^{d-1} \big) \hat\otimes Cl_{0,1}, F_{C(\Omega_\calL)\rtimes_\sigma \R^{d-1} } \hat\otimes 
  \bigwedge\nolimits^{\! *} \R, \, X \hat\otimes \gamma \Big),
\end{equation}
where $F_{C(\Omega_\calL)\rtimes_\sigma \R^{d-1} }$ is the bimodule obtained from the 
conditional expectation induced from the restriction to the closed subgroupoid 
$\Omega_\calL \rtimes \R^{d-1} \subset \Omega_\calL \rtimes \R^{d}$, 
$$\rho: C_{c}(\Omega_\calL\rtimes_\sigma \R^{d}) \to C_{c}(\Omega_\calL\rtimes_\sigma \R^{d-1}), 
\quad \rho(f)(x_{1},\cdots , x_{d-1}) = f(x_{1},\cdots, x_{d-1},0).$$

We consider the composition of $KK$-classes 
\begin{align*}
   \Big( C(\Omega_\calL)\rtimes_\sigma \R^{d}, \, F^d_{C^*_r(\calG,\sigma)}, \, 0 \Big) \hat\otimes_{C^*_r(\calG,\sigma)} 
   \big[ {}_{d}\lambda_{d-1} \big] \hat\otimes_{C^*_r(\Upsilon, \sigma)}
    \Big( C^*_r(\Upsilon,\sigma), \, (F^{d-1})^*_{ C(\Omega_\calL)\rtimes_\sigma \R^{d-1}}, \, 0 \Big),
\end{align*}
where the left and right Kasparov modules represent the Morita equivalence (resp. dual Morita equivalence)
of the groupoid algebras and crossed products.
The end result of this triple product is a Kasparov module representing a class in 
$KK^1(C(\Omega_{\mathcal{L}}) \rtimes_\sigma \R^d, C(\Omega_\calL)\rtimes_\sigma \R^{d-1} )$.
Its relation to the Kasparov module in Equation \eqref{eq:ConnesThom_kasmod} is as follows. 
By Definition \ref{transversal} 
and Proposition \ref{dtop} the Morita equivalence bimodule $F^d_{C^*_r(\calG,\sigma)}$ is obtained from restriction 
of the crossed product dynamics to the transversal $\Omega_0$. 
The $C^*$-module $E_{C^*_r(\Upsilon,\sigma)}$ from 
${}_d\lambda_{d-1}$ in Proposition \ref{prop:extension_like_class} is defined by a 
restriction $C_c(\calG,\sigma) \to C_c(\Upsilon,\sigma)$. Lastly the 
dual Morita equivalence bimodule $(F^{d-1})^*_{ C(\Omega_\calL)\rtimes_\sigma \R^{d-1}}$ 
is induced by the inclusion of $\Upsilon$ into $\Omega_\calL \rtimes \R^{d-1}$. 
Hence the inner product on the balanced tensor product can be 
considered as coming from a generalised conditional expectation 
$C_c( \Omega_\calL \rtimes \R^d,\sigma) \to C_c( \Omega_\calL \rtimes \R^{d-1},\sigma)$ and there is a natural identification
$$
  F^d  \otimes_{C^*_r(\calG,\sigma)}  E 
  \otimes_{C^*_r(\Upsilon,\sigma)} (F^{d-1})^*_{ C(\Omega_\calL)\rtimes_\sigma \R^{d-1}} \xrightarrow{\sim} 
  F_{C(\Omega_\calL)\rtimes_\sigma \R^{d-1} }.
$$

An argument similar to that in the proof of Theorem \ref{thm:weak_bulkedge} 
below shows that the operator $X$ in  $F_{C(\Omega_\calL)\rtimes_\sigma \R^{d-1}} $ satisfies 
the connection condition with respect to the operator $X\otimes 1$ in $
E \otimes_{C^*_r(\Upsilon,\sigma)} (F^{d-1})^*_{ C(\Omega_\calL)\rtimes_\sigma \R^{d-1}}$.
As these maps are also compatible with the Clifford actions, we recover the 
unbounded representative of the Wiener--Hopf extension from 
Equation \eqref{eq:ConnesThom_kasmod} on page \pageref{eq:ConnesThom_kasmod}. The boundary 
maps in $K$-theory and $K$-homology from the Wiener--Hopf extension, i.e. the product with the 
unbounded Kasparov module from Equation \eqref{eq:ConnesThom_kasmod},  
implement the inverse of the Connes--Thom isomorphism~\cite{Rieffel82}. Hence, these maps are represented by our Toeplitz extension up to groupoid/Morita equivalence. 
\end{remark}

\subsection{Factorisation}
By the same basic argument as the bulk algebra, we can build a 
$KK$-cycle for $C^*_r(\Upsilon,\sigma)$ which is stably 
isomorphic to the edge algebra $C^*_r(\calG \ltimes \calG/ \Upsilon, \sigma)$. 
We denote by $F_{C(\Omega_0)}$ the $(C^*_r(\Upsilon,\sigma),C(\Omega_0))$-$C^*$-bimodule coming from the 
restriction of $C^*_r(\Upsilon,\sigma)$ to the unit space. The notation $F_{C(\Omega_0)}$ distinguishes it from the $C^*$-module $E_{C(\Omega_0)}$ constructed 
from $C^*_r(\calG,\sigma)$.
Specifically,
$$
  {}_{d-1}\lambda_{\Omega_0} = \bigg( C_c(\Upsilon,\sigma)\hat\otimes Cl_{0,d-1}, \, F_{C(\Omega_0)}\hat\otimes 
    \bigwedge\nolimits^{\! *}\R^{d-1}, \, 
    \sum_{j=1}^{d-1} X_j \hat\otimes \gamma^j  \bigg)
$$
is an unbounded Kasparov module and gives rise to a class in 
$KK^{d-1}(C^*_r(\Upsilon,\sigma), C(\Omega_0))$ (real or complex).
We have constructed a class representing our bulk-boundary extension 
and a $KK$-cycle for the edge algebra. The key $K$-theoretic result that 
drives the bulk-boundary correspondence is that these two $KK$-cycles can be 
put together using the unbounded Kasparov product to reconstruct the bulk $KK$-cycle.

\begin{thm} \label{thm:bulk-edge_main}
Under the boundary map coming from the extension of Equation 
\eqref{eq:bulkedge_extension} on page \pageref{eq:bulkedge_extension},
$$
   \partial[{}_{d-1}\lambda_{\Omega_0}] = [{}_{d}\lambda_{d-1}] \hat\otimes_{C^*_r(\Upsilon,\sigma)} [{}_{d-1}\lambda_{\Omega_0}]
     = (-1)^{d-1}[{}_{d}\lambda_{\Omega_0}],
$$
with ${}_{d}\lambda_{\Omega_0}$ the bulk $KK$-cycle from Equation \eqref{eq:bulk_K-cycle} on page \pageref{eq:bulk_K-cycle}
and where $-[x]$ denotes the inverse in the $KK$-group. Furthermore, the equality is 
an unbounded equivalence up to a permutation of the Clifford algebra basis.
\end{thm}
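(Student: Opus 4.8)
The plan is to verify the hypotheses of the unbounded Kasparov product theorem (Kucerovsky's criterion, or the constructive version in \cite{MR}) for the two factors ${}_{d}\lambda_{d-1}$ and ${}_{d-1}\lambda_{\Omega_0}$, and then identify the product cycle with ${}_{d}\lambda_{\Omega_0}$ up to the predicted sign and Clifford reshuffle. First I would set up the interior tensor product of the underlying $C^*$-modules: we have $E_{C^*_r(\Upsilon,\sigma)}\hat\otimes_{C^*_r(\Upsilon,\sigma)} F_{C(\Omega_0)}$, and the key structural observation is that this is naturally isomorphic to $E_{C(\Omega_0)}$, since both restriction maps $\rho_\Upsilon:C_c(\calG,\sigma)\to C_c(\Upsilon,\sigma)$ and the restriction $C_c(\Upsilon,\sigma)\to C(\Omega_0)$ compose to the restriction $\rho:C_c(\calG,\sigma)\to C(\Omega_0)$ defining $E_{C(\Omega_0)}$. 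Tensoring with the exterior algebras $\bigwedge^*\R\hat\otimes\bigwedge^*\R^{d-1}\cong \bigwedge^*\R^d$ (after the Clifford basis permutation placing $\gamma^d$ last) gives the correct target module $E_{C(\Omega_0)}\hat\otimes\bigwedge^*\R^d$.

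Next I would construct a connection for ${}_{d-1}\lambda_{\Omega_0}$ on $E\hat\otimes F$ and form the candidate operator $D_c\hat\otimes 1 + 1\hat\otimes_\nabla \big(\sum_{j=1}^{d-1} X_j\hat\otimes\gamma^j\big)$, then check it agrees (on the dense core $C_c(\calG,\sigma)$ tensored with the exterior algebras) with $\sum_{j=1}^d X_j\hat\otimes\gamma^j$. Here the point, which makes this cleaner than a generic product, is that the operator $X_d = D_{c_d}$ on $E_{C^*_r(\Upsilon,\sigma)}$ and the operators $X_1,\ldots,X_{d-1}$ on $F_{C(\Omega_0)}$ all act as pointwise multiplication on $C_c(\calG,\sigma)$-valued functions by the coordinate functions of $c=\hat c_d$; a connection built from the frame $\{\chi_y\}$ of Proposition \ref{Deloneframe} should be essentially flat on this dense subspace, so the connection term is literally $\sum_{j=1}^{d-1} X_j\hat\otimes 1\hat\otimes\gamma^j$ up to compact/lower-order corrections. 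I would then verify Kucerovsky's two conditions: (i) the connection/domain condition, $\Dom(D_c\hat\otimes1)\cap\Dom(1\hat\otimes_\nabla D')$ is a core and the graded commutator $[D_c\hat\otimes1, 1\hat\otimes_\nabla D']_\pm$ is bounded on that core — this reduces to the commuting-coordinates structure just described; and (ii) positivity of $D_c\hat\otimes1$ on the module, i.e. $\langle (D_c\hat\otimes1)\xi, (1\hat\otimes_\nabla D')\xi\rangle + \langle(1\hat\otimes_\nabla D')\xi,(D_c\hat\otimes1)\xi\rangle \geq -C\langle\xi,\xi\rangle$, which again follows since the two operators act by multiplication by real coordinate functions and hence the cross terms are controlled. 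For the sign, I would track the Clifford reordering: moving $\gamma^d$ from position $d$ to position $1$ (or vice versa, depending on the chosen presentation $\bigwedge^*\R\hat\otimes\bigwedge^*\R^{d-1}$ versus $\bigwedge^*\R^{d-1}\hat\otimes\bigwedge^*\R$) produces a factor $(-1)^{d-1}$ from the graded tensor flip, exactly as in \cite[\S5]{Kasparov80}; this is the source of the claimed $(-1)^{d-1}$.

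The identification of $\partial[{}_{d-1}\lambda_{\Omega_0}]$ with the internal Kasparov product $[{}_d\lambda_{d-1}]\hat\otimes[{}_{d-1}\lambda_{\Omega_0}]$ is the standard fact that the $KK^1$-class of an extension, composed with a $KK$-class of the ideal (here via the Morita equivalence $C^*_r(\calG\ltimes\calG/\Upsilon,\sigma)\cong\mathbb{K}(E_{C^*_r(\Upsilon,\sigma)})$ identifying $\partial$ with the product by $[{}_d\lambda_{d-1}]$, using that the semi-splitting is $a\mapsto\Pi_d a\Pi_d$ and $2\Pi_d-1$ is the bounded transform of $D_{c_d}$ by Lemma \ref{lem:pi_c_almost_proj}), computes the boundary map; I would invoke this together with Proposition \ref{prop:extension_like_class} and the injectivity of the Busby invariant from Proposition \ref{prop:injective_busby}. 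The main obstacle I anticipate is condition (i) — making precise that the connection can be chosen so that the candidate operator is \emph{exactly} (not just up to bounded perturbation) $\sum_{j=1}^d X_j\hat\otimes\gamma^j$ on the core, or else showing the bounded perturbation does not change the $KK$-class; the frame $\{\chi_y\}$ is adapted to the $\R^d$-geometry and the derivations $\partial_j$ preserve $C_c(\calG,\sigma)$, so I expect the connection term to be a genuine multiplication operator, but verifying adjointability and the core property rigorously (rather than the formal computation) is where the real work lies, and is precisely the kind of analysis the paper flags as requiring \cite{MR} and \cite{MesLesch}.
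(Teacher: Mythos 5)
Your proposal follows the same skeleton the paper uses (treating this as the $k=d-1$ case of Theorem \ref{thm:weak_bulkedge}): identify the balanced tensor product of modules with $E_{C(\Omega_0)}$ via composition of the restriction maps, verify Kucerovsky's criterion for the operator $\sum_j X_j\hat\otimes\gamma^j$, and extract the sign from the Clifford basis reordering via \cite[{\S}5, Theorem 3]{Kasparov80}. Where you depart is precisely the step you flag as the anticipated obstacle: instead of building a Grassmann connection from the frame $\{\chi_y\}$ and arguing its deviation from $\sum_{j\leq d-1} X_j\hat\otimes\gamma^j$ is a controlled or vanishing perturbation, the paper verifies Kucerovsky's connection condition directly for each $X_j$ with $j\leq d-1$ using the exact algebraic identity $X_j\circ v\circ|f\rangle - v\circ|f\rangle\circ X_j = v\circ|X_j f\rangle$, which holds because $X_j$ acts as a derivation of $C_c(\calG,\sigma)$ with respect to the convolution product, while $X_d$ commutes with the unitary $v$ simply because it is right $C^*_r(\Upsilon,\sigma)$-linear. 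The upshot is that the product operator is on the nose $\sum_{j=1}^d X_j\hat\otimes\gamma^j$ with no frame-dependent correction appearing at any stage, so the part you identify as requiring ``the real work'' dissolves once one checks the derivation identity rather than reaching for an auxiliary connection.
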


Theorem \ref{thm:bulk-edge_main} is a special case of 
Theorem \ref{thm:weak_bulkedge} with $k=d-1$. Hence we delay the proof until 
Section \ref{sec:weak}.

\subsubsection{A remark on more general boundaries}
Our edge groupoid $\Upsilon$ can be thought of as the result of a cut of the 
Delone sets $\calL^{(\omega)}\in \Omega_0$ along the plane defined by $\Ker(c_d)\cong\R^{d-1}\times\{0\}$. 
This choice of 
cut or boundary is somewhat arbitrary. Let us briefly consider more general boundary choices 
though, as we will show, our $KK$-theoretic factorisation still applies.

Let $b:\R^d\to\R$ be a continuous homomorphism such that as a vector 
space $\mathrm{dim}(\Ran(b)) = 1$.  The plane defined by $\Ker(b)$ defines a new 
$(d-1)$-dimensional plane in $\R^d$ which 
we can cut along to make a new boundary. 
It is easy to check that the corresponding 
map $c_b:\calG\to \R$, $c_b(\omega,x) = b(x)$ is an exact groupoid cocycle.
Hence, we can study this boundary via the  closed subgroupoid $\Upsilon_b = \Ker(c_b)$ and 
equivalent groupoid $\calG \ltimes \calG / \Upsilon_b$, which models the 
$(d-1)$-dimensional dynamics of the transversal relative to $\Ran(b)$. 
Because $c_b$ is exact, we can 
construct an ungraded and unbounded Kasparov module
$\left( C_c(\calG,\sigma), E_{C^*_r(\Upsilon_b,\sigma)}, D_b \right)$ that gives a class  
$[\mathrm{ext}_b] \in KK^1(C^*_r(\calG,\sigma), C^*_r(\Upsilon_b,\sigma))$ and a
bulk-boundary short exact sequence
$$
  0 \to C^*_r(\calG \ltimes \calG / \Upsilon_b, \sigma) \to \calT_b \to C^*_r(\calG,\sigma) \to 0.
$$

As a vector space, $\Ker(b)$ is $(d-1)$-dimensional and so fix an orthonormal basis $\{z_1,\ldots, z_{d-1}\}$. 
These basis vectors give rise to an exact $\R^{d-1}$-valued cocycle on the groupoid 
$\Upsilon_b$, which we use to build an unbounded Kasparov module and 
a class $[{}_{d-1}\lambda^{b}_{\Omega_0}] \in KK^{d-1}(C^*_r(\Upsilon_b), C(\Omega_0))$.

Following the proof of Theorem \ref{thm:weak_bulkedge} with $k=d-1$, the 
product of the class of the Kasparov modules $[\mathrm{ext}_b]$ 
and $[{}_{d-1}\lambda^{b}_{\Omega_0}]$  is represented by the 
unbounded Kasparov module
$$
  \bigg( C_c(\calG,\sigma) \hat\otimes Cl_{0,d}, \, E_{C(\Omega_0)} \hat\otimes \bigwedge\nolimits^{\! *} \R^d, \, 
  \sum_{j=1}^{d-1} Z_j \hat\otimes \gamma^{j+1} + D_b \hat\otimes \gamma^1 \bigg).
$$
At this point, we can take transformation from the basis $\{z_1,\ldots,z_{d-1},z_d\}$ 
to the standard basis of $\R^d$.
This transformation recovers the bulk $K$-cycle ${}_{d}\lambda_{\Omega_0}$ 
up to a Clifford basis re-ordering. 
Fixing the Clifford basis re-ordering, we have that 
$$
   [\mathrm{ext}_b] \hat\otimes_{C^*_r(\Upsilon_b, \sigma)} [{}_{d-1}\lambda^b_{\Omega_0}] 
   = (-1)^{d-1} [{}_{d}\lambda_{\Omega_0}]
$$
and our factorisation result extends.

Let us briefly note that while any crystallographic group $G\subset \R^d$ is a Delone set and 
our choice of boundary is quite general, the 
factorisation and bulk-boundary result in Theorem \ref{thm:bulk-edge_main} is too coarse to detect 
boundary indices derived from the crystalline structure as in~\cite{GT18}.

\subsection{$KK$-cycles with higher codimension} \label{sec:weak}

Let us now generalise the constructions and ideas from the previous section to consider 
subinvariants of arbitrary codimension. Such invariants are linked to so-called weak topological phases 
which are characterised by elements in $K_\ast (C^*_r(\calG,\sigma))$ that are not detected 
by the `top degree form' that comes via a pairing with ${}_{d}\lambda_{\Omega_0}$.

Once again we use a groupoid homomorphism $\check{c}_{k}:\calG\to \R^{d-k}$ via 
$\check{c}_{k}(\omega,x) = (x_{k+1},\ldots,x_{d})$ and define $\Upsilon_{k} = \Ker(\check{c}_{k})$, 
where we characterise 
$$
  \Upsilon_k = \left\{ (\omega,x_1,\ldots,x_k)\in 
    \Omega_0 \times\R^k  \,:\, (x_1,\ldots,x_k,0,\ldots,0) \in \calL^{(\omega)} \right\}.
$$ 
As in the case of $k=d-1$, $\Upsilon_k$ is a closed subgroupoid of $\calG$ and is equivalent to 
$\calG \ltimes \calG/\Upsilon_k$. By Proposition \ref{prop:bulk_kasmod}, we can build a 
Kasparov module
$$
  {}_{d}\lambda_{k} = \bigg( C_c(\calG,\sigma)\hat\otimes Cl_{0,d-k}, \, 
     E_{C^*_r(\Upsilon_k,\sigma)}^{d-k} \hat\otimes \bigwedge\nolimits^{\! *} \R^{d-k}, \, 
       \sum_{j=k+1}^d X_j \hat\otimes \gamma^{j-k} \bigg).
$$  
In the case $k=d-1$, ${}_{d}\lambda_{d-1}$ is the unbounded $KK$-cycle representing 
the bulk-boundary extension considered in Section \ref{sec:edge_gpoid_toeplitz}.
We will be interested in pairings of ${}_{d}\lambda_{k}$ with the $K$-theory of $C^*_r(\calG,\sigma)$. 
Such pairings naturally take values in the $K$-theory of $C^*_r(\Upsilon_k,\sigma)$.

As in the case of $k=d-1$, we can construct an unbounded $KK$-cycle for the 
subgroupoid $\Upsilon_k$, 
\begin{equation} \label{eq:k-dim_K-cycle}
  {}_{k}\lambda_{\Omega_0} = \bigg( C_c(\Upsilon_k,\sigma) \hat\otimes Cl_{0,k}, \, F^k_{C(\Omega_0)} \hat\otimes 
    \bigwedge\nolimits^{\! *} \! \R^k, \, \sum_{j=1}^k X_j \hat\otimes \gamma^j \bigg),  \qquad 
    F^k_{C(\Omega_0)} := \ol{C_c(\Upsilon_k,\sigma)}_{C(\Omega_0)},
\end{equation}
which represents the class 
$[{}_{k}\lambda_{\Omega_0}] \in KK(C_r^*(\Upsilon_k,\sigma) \hat\otimes Cl_{0,k}, C(\Omega_0))$. 

We now present our main factorisation result, which allows us to decompose the bulk Kasparov module 
${}_{d}\lambda_{\Omega_0}$ as the product of ${}_{d}\lambda_{k}$ with ${}_{k}\lambda_{\Omega_0}$ 
(up to a sign related to the orientation 
of Clifford algebras).

\begin{thm} \label{thm:weak_bulkedge}
Taking the Kasparov product,
$$
  [{}_{d}\lambda_{k}] \hat\otimes_{C^*_r(\Upsilon_k,\sigma)} [{}_{k}\lambda_{\Omega_0}] = (-1)^{k(d-k)} [{}_{d}\lambda_{\Omega_0}].
$$
Furthermore, our equivalence is at the unbounded level up to a permutation of the 
 Clifford basis.
\end{thm}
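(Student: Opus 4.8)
\emph{Proof proposal.} The plan is to realise the product at the unbounded level by the standard recipe for a flat Dirac-type operator on a ``product'', and to track the Clifford reordering carefully so that the sign $(-1)^{k(d-k)}$ drops out.

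First I would identify the underlying $C^*$-module of the product. Since $F^k_{C(\Omega_0)}$ is the completion of $C_c(\Upsilon_k,\sigma)$ in the $C(\Omega_0)$-valued inner product and $E_{C^*_r(\Upsilon_k,\sigma)}$ is the completion of $C_c(\calG,\sigma)$ in the $C^*_r(\Upsilon_k,\sigma)$-valued inner product, the balanced tensor product map $f\hat\otimes g\mapsto f\ast g$ should identify $E_{C^*_r(\Upsilon_k,\sigma)}\hat\otimes_{C^*_r(\Upsilon_k,\sigma)}F^k_{C(\Omega_0)}$ with $E_{C(\Omega_0)}$. The point that makes the inner products agree is that, since $\Upsilon_k=\Ker(\check c_k)$ and $g_1,g_2$ are supported on $\Upsilon_k$ while $\omega\in\Omega_0$ is a unit, in the expansion of $\rho_{\Omega_0}(g_1^\ast\ast f_1^\ast\ast f_2\ast g_2)(\omega)$ only the restriction of $f_1^\ast\ast f_2$ to $\Upsilon_k$ contributes: cocycle additivity forces the middle factor into $\Ker(\check c_k)$, so this coincides with $\rho_{\Omega_0}(g_1^\ast\ast\rho_{\Upsilon_k}(f_1^\ast\ast f_2)\ast g_2)$. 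Tensoring with the identification $\bigwedge^{\!*}\R^{d-k}\hat\otimes\bigwedge^{\!*}\R^k\cong\bigwedge^{\!*}\R^d$ (so that $Cl_{0,d-k}\hat\otimes Cl_{0,k}\cong Cl_{0,d}$) then identifies the product module with $E_{C(\Omega_0)}\hat\otimes\bigwedge^{\!*}\R^d$; but in this identification the $d-k$ Clifford generators coming from ${}_{d}\lambda_{k}$ are labelled by the coordinates $x_{k+1},\dots,x_d$ while the $k$ generators from ${}_{k}\lambda_{\Omega_0}$ are labelled by $x_1,\dots,x_k$, so matching them to the generators of ${}_{d}\lambda_{\Omega_0}$ amounts to precomposing the $Cl_{0,d}$-action with the automorphism $\theta_\pi$ permuting generators along the $(k,d-k)$-shuffle $\pi$. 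This $\pi$ has exactly $k(d-k)$ inversions, and precomposition by $\theta_\pi$ multiplies the $KK$-class by $\mathrm{sgn}(\pi)=(-1)^{k(d-k)}$, which is the origin of the sign in the statement.

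Next I would exhibit the candidate product operator. The coordinate operators $X_1,\dots,X_k$ — i.e. the generators $D_{c_1},\dots,D_{c_k}$ of the natural one-parameter flows on $C_c(\calG,\sigma)$, which restrict to $C^*_r(\Upsilon_k,\sigma)$ since these flows preserve $\Upsilon_k$ — extend to self-adjoint regular operators on $E_{C^*_r(\Upsilon_k,\sigma)}$ commuting with $X_{k+1},\dots,X_d$, and this choice is precisely a flat connection on $E_{C^*_r(\Upsilon_k,\sigma)}$ with respect to the differential structure used in $\lambda_{\Omega_0}$. Using the Leibniz identity $X_j(f\ast g)=(X_jf)\ast g+f\ast(X_jg)$, together with the fact that the second-factor contribution vanishes for $j>k$ when $g$ is supported on $\Upsilon_k$, a direct computation shows that under the identification above $D_1\hat\otimes 1$ becomes $\sum_{j>k}X_j\hat\otimes\gamma^{j-k}$ and the connection-lift $1\hat\otimes_\nabla D_2$ becomes $\sum_{j\le k}X_j\hat\otimes\gamma^j$, so the candidate operator is $D=\sum_{j=1}^d X_j\hat\otimes\tilde\gamma^j$ on $E_{C(\Omega_0)}\hat\otimes\bigwedge^{\!*}\R^d$ with $\{\tilde\gamma^j\}$ the shuffled generators. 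Since $D_1\hat\otimes 1$ and $1\hat\otimes_\nabla D_2$ act through generators of the two mutually graded-commuting Clifford factors, their graded anticommutator vanishes, hence $D^2=(\sum_j X_j^2)\hat\otimes 1$ and $D$ is self-adjoint and regular with core $C_c(\calG,\sigma)\hat\otimes\bigwedge^{\!*}\R^d$, exactly as in Proposition \ref{prop:bulk_kasmod}.

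Finally I would apply the unbounded Kasparov product criterion of Kucerovsky, in the form used in \cite{MR, MesLesch}, to conclude that $(E_{C(\Omega_0)}\hat\otimes\bigwedge^{\!*}\R^d,D)$ represents $[{}_{d}\lambda_{k}]\hat\otimes_{C^*_r(\Upsilon_k,\sigma)}[{}_{k}\lambda_{\Omega_0}]$: the domain condition holds since $\Dom D\supseteq\Dom(D_1\hat\otimes1)\cap\Dom(1\hat\otimes_\nabla D_2)$; the connection condition holds because for $\xi\in C_c(\calG,\sigma)\hat\otimes\bigwedge^{\!*}\R^{d-k}$ the creation operator $T_\xi$ has graded commutator with $D$ a finite sum of $T_{X_j\xi}$ and $T_{D_1\xi}$, all bounded since $X_j\xi,D_1\xi\in C_c(\calG,\sigma)\hat\otimes\bigwedge^{\!*}\R^{d-k}$; and the positivity condition is automatic, the cross term being the vanishing graded anticommutator of the two halves. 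Combining this with the Clifford-reordering identification yields $[{}_{d}\lambda_{k}]\hat\otimes_{C^*_r(\Upsilon_k,\sigma)}[{}_{k}\lambda_{\Omega_0}]=(-1)^{k(d-k)}[{}_{d}\lambda_{\Omega_0}]$, the equivalence holding already at the unbounded level up to the permutation $\pi$. I expect the main obstacle to be bookkeeping rather than conceptual: making the module identification rigorous at the level of $C^*$-completions (not just the dense $\ast$-algebras), checking that the flat connection lift $1\hat\otimes_\nabla D_2$ is self-adjoint and regular on the correct domain — here one needs the frame of Proposition \ref{Deloneframe} and the regularity machinery of \cite{MR} — and, most delicately, chasing the $\Z_2$-gradings and the signs coming from the iterated graded tensor products so that the accumulated sign is exactly $(-1)^{k(d-k)}$ and not its negative.
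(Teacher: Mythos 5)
Your plan is correct and follows essentially the same route as the paper's proof: identify $E^{d-k}\hat\otimes_{C^*_r(\Upsilon_k,\sigma)}F^k$ with $E_{C(\Omega_0)}$ via $f\otimes h\mapsto f\ast h$, verify the connection condition through the Leibniz rule $X_j(f\ast h)=(X_jf)\ast h+f\ast(X_jh)$ (with the $j>k$ terms vanishing on $\Upsilon_k$), invoke Kucerovsky's criterion, and extract $(-1)^{k(d-k)}$ from the Clifford basis permutation. Your cocycle-additivity explanation for why the inner products match is the right underlying reason, which the paper leaves implicit.
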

\begin{proof}
Much of this proof is book-keeping and is very similar to the proof in~\cite[Theorem 3.4]{BKR1}.
To take the product of the $C^*_r(\calG,\sigma)\hat\otimes Cl_{0,d-k}$--$C^*_r(\Upsilon_k,\sigma)$ Kasparov 
module with a $C^*_r(\Upsilon_k,\sigma)\hat\otimes Cl_{0,k}$--$C(\Omega_0)$ Kasparov module, we first 
take the external product of ${}_{d}\lambda_{k}$ with a $KK$-cycle representing the 
identity in $KK(Cl_{0,k},Cl_{0,k})$. This identity class can be represented by 
$\left( Cl_{0,k}, {Cl_{0,k}}_{Cl_{0,k}}, 0 \right)$ with right and left actions by multiplication. 
We then take the product of a 
$(C^*_r(\calG,\sigma)\hat\otimes Cl_{0,d},C^*_r(\Upsilon_k,\sigma) \hat\otimes Cl_{0,k})$ Kasparov 
module with a $(C^*_r(\Upsilon_k,\sigma)\hat\otimes Cl_{0,k},C(\Omega_0))$ Kasparov module.
First the balanced tensor product gives the $C^*$-module
\begin{align*}
  &\Big( E^{d-k} \hat\otimes \bigwedge\nolimits^{\!*}\R^{d-k} \hat\otimes Cl_{0,k} \Big)  \hat\otimes_{C^*_r(\Upsilon_k,\sigma)\hat\otimes Cl_{0,k}} 
    \Big( F^k \hat\otimes \bigwedge\nolimits^{\!*}\R^{k} \Big)_{C(\Omega_0)} \\
  &\hspace{2cm} \cong  \big( E^{d-k} \otimes_{C^*_r(\Upsilon_k,\sigma)} F^k_{C(\Omega_0)} \big) \hat\otimes  \bigwedge\nolimits^{\!*}\R^{d-k} \hat\otimes 
     \Big( Cl_{0,k} \hat\otimes_{Cl_{0,k}} \bigwedge\nolimits^{\!*}\R^{k} \Big) \\
    &\hspace{2cm} \cong  \big(E^{d-k} \otimes_{C^*_r(\Upsilon_k,\sigma)} F^k \big)_{C(\Omega_0)}  \hat\otimes  \bigwedge\nolimits^{\!*}\R^{d-k} 
         \hat\otimes \bigwedge\nolimits^{\!*}\R^{k}
\end{align*}
as $Cl_{0,d-1}$ acts on $\bigwedge^* \R^{d-1}$ non-degenerately. 
Next we define a unitary isomorphism 
$$E^{d-k}  \otimes_{C^*_r(\Upsilon_k,\sigma)} F^k_{C(\Omega_0)} \to E_{C(\Omega_0)},$$ by 
first considering defining a map on dense submodules,
\begin{align*}
 v:    C_c(\calG,\sigma) \otimes_{C_c(\Upsilon_k,\sigma)} C_c(\Upsilon_k,\sigma)_{C(\Omega_0)} 
      \ni f \otimes h &\mapsto f \cdot h \in C_c(\calG,\sigma)_{C(\Omega_0)}.
\end{align*}
This map preserves the inner-product structures, is thus  uniformly bounded and, hence,
extends to an isomorphism of $C^*$-modules. Furthermore the map commutes with the 
representation of $C^*_r(\calG,\sigma)$ as elements in $\End^*(E^{d-k}_{C^*_r(\Upsilon_k,\sigma)})$ commute 
with the right-action of $C^*(\Upsilon_k,\sigma)$. 
Similarly, $\{X_l\}_{l=k+1}^d$ also commutes with this map as $X_l$ is right $C^*_r(\Upsilon_k,\sigma)$-linear on 
$E^{d-k}_{C^*_r(\Upsilon_k,\sigma)}$. The operators $\{X_{j}\}_{j=1}^{k}$ satisfy the connection condition under the unitary isomorphism $v$. Let $f\in C_{c}(\mathcal{G},\sigma)$ and consider the map
\[v\circ |f\rangle: C_{c}(\Upsilon_{k},\sigma)\to C_{c}(\mathcal{G},\sigma),\quad h\mapsto f\cdot h.\]
Then we need to check that $X_{j}\circ v\circ | f\rangle-v\circ  | f\rangle\circ X_{j}$ defines a bounded operator $F^k_{C(\Omega_0)} \to E_{C(\Omega_0)}$. 
This follows since each $X_{j}$ acts as a derivation of $C_{c}(\mathcal{G},\sigma)$:
\begin{align*}
  \big( (X_j f)\cdot h + f\cdot (X_j h) \big)(\omega,x) &= \sum_{(y,0_{d-k})\in\calL^{(\omega)}-x}\!\! \Big( (x_j+y_j)f(\omega,x+y) h(T_{-x-y}\omega,-y) \\
             &\hspace{0.5cm}+ f(\omega,x+y)(-y_j)h(T_{-x-y}\omega,-y) \Big) \sigma((\omega,x+y),(T_{-x-y}\omega,-y))   \\
        &\hspace{-2cm}= x_j \left(\sum_{(y,0_{d-k})\in\calL^{(\omega)}-x} f(\omega,x+y) h(T_{-x-y}\omega,-y) \sigma((\omega,x+y),(T_{-x-y}\omega,-y))\right)\\
        &\hspace{-2cm}= X_j ( f\cdot h)(\omega,x).
\end{align*}
It follows that $X_{j}\circ v\circ | f\rangle-v\circ  | f\rangle\circ X_{j}=v\circ|X_{j}f\rangle,$ which is a bounded adjointable operator.
The left and right Clifford actions on $\bigwedge^*\R^{d-k} \hat\otimes \bigwedge^*\R^{k}$ are given by
\begin{align*}
   &\rho^l \hat\otimes 1(\omega_1\hat\otimes\omega_2) = (e_l \wedge \omega_1 - \iota(e_l)\omega_1)\hat\otimes \omega_2,  
     &&1\hat\otimes \rho^j(\omega_1\hat\otimes\omega_2) = (-1)^{|\omega_1|} \omega_1\hat\otimes(e_j\wedge \omega_2 - \iota(e_j)\omega_2),  \\
   &\gamma^l \hat\otimes 1(\omega_1 \hat\otimes \omega_2) =  (e_l\wedge \omega_1 + \iota(e_l)\omega_1)\hat\otimes \omega_2,     
     &&1\hat\otimes \gamma^j(\omega_1\hat\otimes\omega_2) = (-1)^{|\omega_1|} \omega_1\hat\otimes(e_j\wedge \omega_2 + \iota(e_j)\omega_2),
\end{align*}
with $|\omega|$ the degree of the form and $\{e_l\}_{l=1}^{d-k}$ and $\{e_j\}_{j=1}^k$ 
the standard bases of $\R^{d-k}$ and $\R^{k}$ 
respectively.

We relate 
$\bigwedge^*\R^{d-k} \hat\otimes \bigwedge^*\R^{k} \cong \bigwedge^*\R^d$, 
which sends the $Cl_{0,d-k}\hat\otimes Cl_{0,k} \to Cl_{0,d}$ by the map on generators,
\begin{align*}
    &\rho^l \hat\otimes 1 \mapsto \rho^l,  &&1\hat\otimes \rho^j \mapsto \rho^{d-k+j}
\end{align*}
with $l\in\{1,\ldots,d-k\}$ and $j\in\{1,\ldots, k\}$ (see~\cite[{\S}2.16]{Kasparov80}). There is 
an analogous map for the right-action of $Cl_{d-k,0}\hat\otimes Cl_{k,0}$.

This leads us to conclude that the 
unbounded Kasparov module
\begin{equation}
\label{eq:prod_candidate}
  \bigg( C_c(\calG,\sigma) \hat\otimes Cl_{0,d}, \, E_{C(\Omega_0)} \hat\otimes \bigwedge\nolimits^{\! *} \R^d, \, 
  \sum_{l=k+1}^d X_l \hat\otimes \gamma^{l-k} 
    + \sum_{j=1}^{k} X_j \hat\otimes \gamma^{d-k+j} \bigg),
\end{equation}
represents the Kasparov product $[{}_{d}\lambda_{k}] \hat\otimes_{C^*_r(\Upsilon_k,\sigma)} [{}_{k}\lambda_{\Omega_0}]$, 
because it satisfies the hypotheses of ~\cite[Theorem 13]{Kucerovsky97}. 
Its operator satisfies the connection condition as shown above, the domain of 
the operator is included 
 in the domain of  $\sum_{l=k+1}^d X_l \hat\otimes \gamma^{l-k}$ 
 and since the $X_{j}\hat\otimes \gamma^{j}$ mutually anticommute, the positivity condition is satisfied as well.

The Kasparov module \eqref{eq:prod_candidate} recovers the bulk module ${}_{d}\lambda_{\Omega_0}$ 
up to a re-ordering of the Clifford basis, as we now show. 
We consider the map $\eta_{d-k}(\gamma^j) =\gamma^{j-(d-k)}$ on $Cl_{d,0}$ where we identify 
$\gamma^{l}=\gamma^{d-l}$ if $l\leq 0$. We define the same map on $\rho^j$ and $Cl_{0,d}$. 
The map $\eta$ is an automorphism of Clifford algebras but may 
reverse the canonical orientation, namely
$$
  \eta_{d-k}( \omega_{Cl_{d,0}}) = \eta_{d-k}\big( \gamma^1\cdots\gamma^d\big) = 
    \gamma^{k+1} \cdots \gamma^d \gamma^1 \cdots \gamma^{k} = (-1)^{k(d-k)} \gamma^1\gamma^2\cdots \gamma^d 
    = (-1)^{k(d-k)}\omega_{Cl_{d,0}},
$$
with the same result for the orientation of $Cl_{0,d}$. We can apply the map $\eta_{d-k}$ 
to obtain the bulk-cycle ${}_{d}\lambda_{\Omega_0}$ but at the 
expense that at the level of $KK$-classes 
$[x] \mapsto (-1)^{k(d-k)}[x]$~\cite[{\S}5, Theorem 3]{Kasparov80}. This finishes the proof.
\end{proof}

\subsubsection{Another factorisation}

Let us also show another way our Kasparov modules can be factorised using 
a different short exact sequence. 
Starting with $\Upsilon_k$, $\Upsilon_{k-1}$ is a closed subgroupoid and we can build the 
$C^*$-bimodule $F_{C^*_r(\Upsilon_{k-1},\sigma)}$ via the restriction 
$C_c(\Upsilon_{k},\sigma)\to C_c(\Upsilon_{k-1},\sigma)$. Applying 
Proposition \ref{prop:bulk_kasmod}, we obtain the unbounded Kasparov module 
$$
  {}_{k}\lambda_{k-1} = \bigg( C_c(\Upsilon_k,\sigma)\hat\otimes Cl_{0,1}, \, 
    F_{C^*_r(\Upsilon_{k-1},\sigma)} \hat\otimes \bigwedge\nolimits^{\! *} \R, \, X_k\hat\otimes \gamma \bigg)
$$
and for $(\Pi_k f)(\omega,y) = \chi_{[-\delta,\infty)}(y_k) f(\omega,y)$, we have an extension
$$
  0 \to C^*_r(\Upsilon_k \ltimes \Upsilon_k/\Upsilon_{k-1}, \sigma) \to 
    C^*\big(\Pi_k C^*_r(\Upsilon_k,\sigma) \Pi_k, C^*_r(\Upsilon_k \ltimes \Upsilon_k/\Upsilon_{k-1}, \sigma) \big) 
    \to C^*_r(\Upsilon_k,\sigma) \to 0.
$$

\begin{thm} \label{thm:weak_factorisation2}
Taking the Kasparov product,
$$
  [{}_{d}\lambda_{k}] \hat\otimes_{C^*_r(\Upsilon_k,\sigma)} [{}_{k}\lambda_{k-1}] = (-1)^{d-k}[{}_{d}\lambda_{k-1}].
$$
Furthermore, our equivalence is at the unbounded level up to a permutation of the 
 Clifford basis.
\end{thm}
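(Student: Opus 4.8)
The plan is to follow the same template as the proof of Theorem \ref{thm:weak_bulkedge}, since Theorem \ref{thm:weak_factorisation2} is structurally identical: we are again composing the unbounded $KK$-cycle ${}_{d}\lambda_{k}$ (built from the cocycle $\check{c}_k:\calG\to\R^{d-k}$) with a lower-codimension cycle, this time ${}_{k}\lambda_{k-1}$ (built from the single cocycle $c_k$ on the subgroupoid $\Upsilon_k$, with kernel $\Upsilon_{k-1}$). First I would form the external product of ${}_{d}\lambda_{k}$ with the identity class in $KK(Cl_{0,1},Cl_{0,1})$ represented by $(Cl_{0,1},{Cl_{0,1}}_{Cl_{0,1}},0)$, so that both modules are $(C^*_r(\calG,\sigma)\hat\otimes Cl_{0,d-k+1},\,C^*_r(\Upsilon_k,\sigma)\hat\otimes Cl_{0,1})$ and $(C^*_r(\Upsilon_k,\sigma)\hat\otimes Cl_{0,1},\,C^*_r(\Upsilon_{k-1},\sigma))$ respectively. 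Then I would compute the balanced tensor product of $C^*$-modules, obtaining
\[
\big(E^{d-k}_{C^*_r(\Upsilon_k,\sigma)} \otimes_{C^*_r(\Upsilon_k,\sigma)} F_{C^*_r(\Upsilon_{k-1},\sigma)}\big)
\hat\otimes \textstyle\bigwedge\nolimits^{\!*}\R^{d-k} \hat\otimes \bigwedge\nolimits^{\!*}\R,
\]
using non-degeneracy of the $Cl_{0,1}$-action on $\bigwedge^*\R$ exactly as before.

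Next I would identify the multiplication map $v:\,C_c(\calG,\sigma)\otimes_{C_c(\Upsilon_k,\sigma)}C_c(\Upsilon_{k-1},\sigma)\to C_c(\calG,\sigma)$, $f\otimes h\mapsto f\cdot h$, and check as in Theorem \ref{thm:weak_bulkedge} that it extends to a unitary isomorphism $E^{d-k}\otimes_{C^*_r(\Upsilon_k,\sigma)}F_{C^*_r(\Upsilon_{k-1},\sigma)}\xrightarrow{\sim}E_{C^*_r(\Upsilon_{k-1},\sigma)}$ of $C^*$-modules over $C^*_r(\Upsilon_{k-1},\sigma)$: it preserves the inner products (by the tower-of-restrictions identity for $\rho_{\Upsilon_{k-1}}$ factoring through $\rho_{\Upsilon_k}$), intertwines the left $C^*_r(\calG,\sigma)$-actions, and commutes with the operators $X_{k+1},\ldots,X_d$ since these are right $C^*_r(\Upsilon_k,\sigma)$-linear. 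The operator $X_k$ supplies the connection: the same derivation computation as in the previous proof shows $X_k\circ v\circ|f\rangle - v\circ|f\rangle\circ X_k = v\circ|X_k f\rangle$, which is bounded adjointable. Feeding this into Kucerovsky's criterion \cite[Theorem 13]{Kucerovsky97} — connection condition just verified, domain inclusion automatic, and positivity from the mutual anticommutation of the $X_j\hat\otimes\gamma^j$ — identifies the Kasparov product with the unbounded module
\[
\bigg( C_c(\calG,\sigma)\hat\otimes Cl_{0,d-k+1},\; E_{C^*_r(\Upsilon_{k-1},\sigma)}\hat\otimes \textstyle\bigwedge\nolimits^{\!*}\R^{d-k+1},\; \sum_{l=k+1}^{d}X_l\hat\otimes\gamma^{l-k} + X_k\hat\otimes\gamma^{d-k+1}\bigg),
\]
where I use the Clifford identification $Cl_{0,d-k}\hat\otimes Cl_{0,1}\cong Cl_{0,d-k+1}$ and the corresponding identification of exterior algebras from \cite[{\S}2.16]{Kasparov80}.

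Finally I would compare this module with ${}_{d}\lambda_{k-1}$, whose operator is $\sum_{l=k}^{d}X_l\hat\otimes\gamma^{l-(k-1)}$ acting on $E_{C^*_r(\Upsilon_{k-1},\sigma)}\hat\otimes\bigwedge^*\R^{d-k+1}$. The two differ only by a cyclic permutation of the Clifford generators (sending $\gamma^{d-k+1}\mapsto\gamma^1$ and $\gamma^{l-k}\mapsto\gamma^{l-k+1}$), which is the automorphism $\eta_{d-k}$ of $Cl_{d-k+1,0}$ appearing in the previous proof. Tracking the orientation element $\omega_{Cl_{d-k+1,0}}=\gamma^1\cdots\gamma^{d-k+1}$ under this permutation — moving one generator past $d-k$ others — produces the sign $(-1)^{d-k}$, and by \cite[{\S}5, Theorem 3]{Kasparov80} this is exactly the effect on the $KK$-class, yielding $[{}_{d}\lambda_{k}]\hat\otimes_{C^*_r(\Upsilon_k,\sigma)}[{}_{k}\lambda_{k-1}]=(-1)^{d-k}[{}_{d}\lambda_{k-1}]$, with the equivalence holding at the unbounded level up to that Clifford permutation. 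The main obstacle — and it is a mild one, essentially bookkeeping — is verifying that $v$ genuinely preserves the $C^*_r(\Upsilon_{k-1},\sigma)$-valued inner products, which requires the compatibility of the iterated conditional expectations $\rho_{\Upsilon_{k-1}}=\rho_{\Upsilon_{k-1}}\circ(\text{inclusion})$ with the twist $\sigma$; everything else is a direct transcription of Theorem \ref{thm:weak_bulkedge} with $\calG$ replaced by $\Upsilon_k$ and the single cocycle $c_k$ in place of the $k$ cocycles $(c_1,\ldots,c_k)$.
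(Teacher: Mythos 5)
Your proposal is correct and follows essentially the same route as the paper's proof: the unitary $v$ induced by multiplication $f\otimes h\mapsto f\cdot h$, the connection condition via the derivation property of $X_k$, Kucerovsky's criterion to identify the product module, and the cyclic Clifford permutation producing the sign $(-1)^{d-k}$. You spell out the external tensor with the identity in $KK(Cl_{0,1},Cl_{0,1})$ more explicitly than the paper's (abbreviated) proof of this theorem, but this is exactly the step the paper performs in the proof of Theorem \ref{thm:weak_bulkedge} to which it refers, so the two arguments coincide.
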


\begin{proof}
The proof follows a very similar argument 
as the proof of Theorem \ref{thm:weak_bulkedge}. First we define a map 
\begin{align*}
  v:&\, C_c(\calG,\sigma) \otimes_{C_c(\Upsilon_k,\sigma)} C_c(\Upsilon_k,\sigma)_{C_{c}(\Upsilon_{k-1},\sigma)} \to E^{d-(k-1)}_{C^*_r(\Upsilon_{k-1} ,\sigma)},  &&f \otimes h_k \mapsto f\cdot h_k
\end{align*}
where we consider $f\cdot h_k$ as an element in $E^{d-(k-1)}_{C^*_r(\Upsilon_{k-1} ,\sigma)}$
One can check analogously to the proof of Theorem \ref{thm:bulk-edge_main} that 
this map extends to a unitary isomorphism of $C^*$-modules
$$v:E^{d-k}\otimes_{C^*_r(\Upsilon_k ,\sigma)} F_{C^*_r(\Upsilon_{k-1} ,\sigma)} \to 
   E^{d-(k-1)}_{C^*_r(\Upsilon_{k-1} ,\sigma)}.$$
Similarly, we check that 
\begin{align*}
\big(X_{k}\circ v\circ  |f\rangle-v\circ  |f\rangle\circ X_{k}\big)h= (X_{k}f)\cdot h=v\circ |X_{j}f\rangle (h)
\end{align*}
defines a bounded operator
and for $j\in\{k+1,\ldots,d\}$,
$(X_j\otimes 1) \mapsto X_j$ as $X_j$ is right $C^*_r(\Upsilon_k)$-linear. 

Applying the isomorphism and grouping together the Clifford actions, 
we obtain the unbounded Kasparov module
$$
  \bigg( C_c(\calG,\sigma)\hat\otimes Cl_{0,d-(k-1)}, \, E_{C^*_r(\Upsilon_k)}^{d-(k-1)} 
    \hat\otimes \bigwedge\nolimits^{\!*}\R^{d-(k-1)}, \, 
    X_k\hat\otimes \gamma^{d-k+1} + \sum_{j=k+1}^d X_j \hat\otimes \gamma^{j-k} \bigg),
$$
which as before satisfies ~\cite[Theorem 13]{Kucerovsky97} and thus represents the Kasparov product. 
To relate this $KK$-cycle to 
${}_{d}\lambda_{k-1}$, we correct the Clifford labelling by the map 
$\gamma^j \mapsto \gamma^{j+1}$ for $1\leq j \leq d-k$ and 
$\gamma^{d-k+1}\mapsto \gamma^1$. Such a map 
will change the orientation of $Cl_{0,d-(k-1)}$ and $Cl_{d-(k-1),0}$ 
by a factor of $(-1)^{d-k}$. The result follows.
\end{proof}

\section{Spectral triple constructions} \label{sec:bulk_spectral_triples}
We now present two constructions of (semifinite) spectral triples obtained from localising the 
bulk $KK$-cycle for $(C^{*}_{r}(\calG,\sigma),C(\Omega_0))$ over a state of 
$C(\Omega_0)$. Their index theoretic properties are discussed in Section \ref{sec:Applications}.

\subsection{The evaluation spectral triple}
We can directly construct a spectral 
triple on $\ell^2(\calL^{(\omega)})$ by considering the internal 
product of the Kasparov module ${}_{d}\lambda_{\Omega_0}$ with the 
trivially graded Kasparov module 
$\mathrm{ev}_\omega =(C(\Omega_0), {}_{\mathrm{ev}_\omega}\R_\R, 0 )$ 
coming from the evaluation map on $C(\Omega_0)\to \R$ (or $\C$). 
This spectral triple was considered in~\cite{BP17} for complex algebras.
The Kasparov module $\mathrm{ev}_\omega$ gives a class in 
$KKO(C(\Omega_0),\R)$ or $KK(C(\Omega_0),\C)$ if the algebra 
and space is complex.
If we take the internal product, then 
\begin{align*}
   &\bigg( C_{c}(\calG,\sigma) \hat\otimes Cl_{0,d}, \, E_{C(\Omega_0)} \hat\otimes 
    \bigwedge\nolimits^{\! *} \R^d, \, X= \sum_{j=1}^d X_j \hat\otimes \gamma^j \bigg) \hat\otimes_{C(\Omega_0)} 
   \left(C(\Omega_0), {}_{\mathrm{ev}_\omega}\R_\R, 0 \right) \\
  &\hspace{2cm} \cong \bigg( C_c(\calG,\sigma) \hat\otimes Cl_{0,d}, \, (E_{C(\Omega_0)} \otimes_{\mathrm{ev}_\omega} \R) \hat\otimes 
    \bigwedge\nolimits^{\! *} \R^d, \,  \sum_{j=1}^d X_j\otimes 1 \hat\otimes \gamma^j \bigg).
\end{align*}
There is an isometric isomorphism
$E_{C(\Omega_0)} \otimes_{\mathrm{ev}_\omega} \R \to \ell^{2}(s^{-1}(\omega))$ (see for instance \cite[p.50]{KS02}). Since 
$$s^{-1}(\omega)=\left\{(T_{-x}\omega, -x):x\in\mathcal{L}^{(\omega)}\right\}\simeq \mathcal{L}^{(\omega)},\quad (T_{-x}\omega, -x)\mapsto x,$$ 
the Hilbert space $\ell^{2}(s^{-1}(\omega))$ can be canonically identified with $\ell^2(\calL^{(\omega)})$. This gives a map
\[\rho_{\omega}:E_{C(\Omega_0)} \otimes_{\mathrm{ev}_\omega} \R \to \ell^{2}(\mathcal{L}^{(\omega)}),\quad \rho_{\omega} (f\otimes t)(x) = tf(T_{-x}\omega, -x), \]
and the action of $C_{c}(\mathcal{G},\sigma)$ is then computed to be
\begin{align*}
  \rho_\omega(\pi(f_1)f_2)(x) &= (f_1\ast f_2)(T_{-x}\omega,-x)  \\
   &= \sum_{y\in \calL^{(\omega)} - x} \sigma((T_{-x}\omega,y),(T_{-x-y}\omega,-x-y))
        f_1(T_{-x}\omega,y) f_2(T_{-x-y}\omega,-x-y) \\
    &=  \sum_{u\in \calL^{(\omega)}} \sigma((T_{-x}\omega,u-x),(T_{-u}\omega,-u)) 
       f_1(T_{-x}\omega,u-x) f_2(T_{-u}\omega,-u) \\
    &= \sum_{u\in \calL^{(\omega)}} \sigma((T_{-x}\omega,u-x),(T_{-u}\omega,-u))
       f_1(T_{-x}\omega,u-x) (\rho_{\omega} f_2)(u).
\end{align*}
Hence for $f\in C_c(\calG,\sigma)$ the representation of $C^*_r(\calG,\sigma)$ on $\ell^{2}(\mathcal{L}^{(\omega)})$  is given by
$$
  \big(\pi_\omega(f)\psi)(x) = \sum_{y\in\calL^{(\omega)}} \sigma((T_{-x}\omega,y-x),(T_{-y}\omega,-y)) f(T_{-x}\omega,y-x)\psi(y).
$$

\begin{prop}[\cite{BP17}, Proposition 5.1] \label{prop:evaluation_spec_trip}
The triple
$$
  {}_{d}\lambda_\omega = \bigg( C_{c}(\calG,\sigma) \hat\otimes Cl_{0,d}, \, {}_{\pi_\omega}\ell^2(\calL^{(\omega)}) 
    \hat\otimes\bigwedge\nolimits^{\!*}\R^d, \, \sum_{j=1}^d X_j\hat\otimes\gamma^j \bigg)
$$
is a $QC^\infty$ and $d$-summable spectral triple. 
If $\omega,\omega'\in\Omega_0$ are such that $\omega'= T_{-a}\omega$. 
then the spectral triples ${}_{d}\lambda_\omega$ and ${}_{d}\lambda_{\omega'}$ 
define the same class in the $K$-homology of $C_r^*(\calG,\sigma)$.
\end{prop}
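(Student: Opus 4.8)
The plan is to prove the two assertions separately: first that ${}_{d}\lambda_\omega$ is a $QC^\infty$, $d$-summable spectral triple, and then that translation-equivalent base points give $K$-homologous spectral triples.

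For the first part, I would note that ${}_{d}\lambda_\omega$ is obtained as an internal Kasparov product of the bulk $KK$-cycle ${}_{d}\lambda_{\Omega_0}$ (which is an unbounded Kasparov module by the case $k=0$ of Proposition~\ref{prop:bulk_kasmod}) with the evaluation cycle $\mathrm{ev}_\omega$; since $\mathrm{ev}_\omega$ has zero operator and the module is fibred over a point, no $KK$-product construction is needed — the candidate operator $\sum_j X_j \hat\otimes \gamma^j$ is literally the unbounded operator $X\otimes 1$ on the localised module, and one checks directly that it is self-adjoint with compact resolvent on $\ell^2(\calL^{(\omega)})\hat\otimes\bigwedge^*\R^d$. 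Self-adjointness follows because $X^2 = \sum_j X_j^2$ acts as the multiplication operator $\psi(x)\mapsto \|x\|^2\psi(x)$ on $\ell^2(\calL^{(\omega)})$, whose domain is a core. For $d$-summability, since $\calL^{(\omega)}$ is $(r,R)$-Delone, the counting function $\#\{x\in\calL^{(\omega)} : \|x\|\le T\}$ grows like $T^d$, so $(1+\|X\|^2)^{-s/2}$ is trace class on $\ell^2(\calL^{(\omega)})$ precisely for $s>d$, and tensoring with the finite-dimensional $\bigwedge^*\R^d$ does not change this. Boundedness of the commutators $[X, \pi_\omega(f)]$ for $f\in C_c(\calG,\sigma)$ follows from the derivation formula $[X_j,\pi(f)] = \pi(\partial_j f)$ established in Proposition~\ref{prop:bulk_kasmod}, together with the fact that $\partial_j f$ again lies in $C_c(\calG,\sigma)$ and hence acts boundedly; the $QC^\infty$ property follows by iterating this observation, since $\delta(\pi_\omega(f)) = [|X|,\pi_\omega(f)]$ and the derivations $\partial_j$ preserve $C_c(\calG,\sigma)$, so all iterated commutators $\delta^k(\pi_\omega(f))$ and $\delta^k([X,\pi_\omega(f)])$ remain bounded.

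For the second part, suppose $\omega' = T_{-a}\omega$ with $a\in\calL^{(\omega)}$ (this must hold if both $\omega,\omega'\in\Omega_0$ lie in the same orbit, since $0\in\calL^{(\omega')}$ forces $a\in\calL^{(\omega)}$). The natural guess is that the unitary $U:\ell^2(\calL^{(\omega)})\to\ell^2(\calL^{(\omega')})$ given by $(U\psi)(y) = \psi(y+a)$ (using $\calL^{(\omega')} = \calL^{(\omega)} - a$) intertwines the two representations up to the magnetic cocycle phase and conjugates $\sum_j X_j\hat\otimes\gamma^j$ to $\sum_j (X_j + a_j)\hat\otimes\gamma^j$. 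So the two spectral triples are not unitarily equivalent on the nose but differ by a bounded (indeed translation-by-$a$) perturbation of the Dirac operator. I would then invoke the standard fact that adding a bounded self-adjoint operator affiliated to the algebra — or more precisely, that the straight-line homotopy $t\mapsto \sum_j(X_j + ta_j)\hat\otimes\gamma^j$ for $t\in[0,1]$ — is an operator homotopy of spectral triples (the commutators with $C_c(\calG,\sigma)$ stay bounded since the $ta_j\hat\otimes\gamma^j$ terms are central scalars, and the resolvent stays compact since we are shifting the multiplication operator by a bounded amount), hence all members of the homotopy define the same class in $KK(C^*_r(\calG,\sigma),\C)$ (resp. $KKO(\cdot,\R)$). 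Combining the homotopy invariance with the unitary intertwiner $U$ yields $[{}_{d}\lambda_\omega] = [{}_{d}\lambda_{\omega'}]$ in $K$-homology.

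The main obstacle is the bookkeeping around the magnetic $2$-cocycle $\sigma$ in the second part: one must verify that the unitary $U$ really does intertwine $\pi_\omega$ and $\pi_{\omega'}$ up to a \emph{phase that can be absorbed into a gauge transformation}, i.e. a unitary multiplier in $\End^*$ of the module, rather than an honest discrepancy. Concretely one computes $(U^*\pi_{\omega'}(f)U\psi)(x)$ and compares it with $(\pi_\omega(f)\psi)(x)$; the difference is a product of $\sigma$-values along translated triangles, and by the cocycle identity for $\sigma$ derived in Example~\ref{ex:mag_twist} this equals $\overline{\theta(x)}\,\theta(x+\,\cdot\,)$ for a unimodular function $\theta$ on $\calL^{(\omega)}$, which is implemented by conjugating by the diagonal unitary $\psi(x)\mapsto\theta(x)\psi(x)$ — this diagonal unitary commutes with each $X_j$, so it does not disturb the Dirac operator and the argument goes through. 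For $d=1$ the twist is trivial and this step is vacuous, but in general it is the only non-formal point. Everything else is a routine consequence of the Delone growth estimates and the derivation property already packaged in Propositions~\ref{prop:bulk_kasmod} and~\ref{prop:evaluation_spec_trip}'s setup.
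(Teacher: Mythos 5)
The paper does not reproduce a proof of this proposition---it simply cites \cite{BP17}---so I am evaluating your argument on its own terms; its overall strategy (direct verification of the spectral triple axioms from Delone growth, then a translation unitary and linear homotopy for the second claim) is the natural one and almost certainly what the cited reference does.

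There is one genuine logical gap, in the $QC^\infty$ step. You write that $\delta(\pi_\omega(f))=[|X|,\pi_\omega(f)]$ is handled because ``the derivations $\partial_j$ preserve $C_c(\calG,\sigma)$,'' but this conflates the partial derivations $\partial_j$ (which satisfy $[X_j,\pi_\omega(f)]=\pi_\omega(\partial_j f)$ and indeed preserve $C_c(\calG,\sigma)$) with the modulus derivation $\delta=[|X|,\cdot]$. The operator $[|X|,\pi_\omega(f)]$ is \emph{not} of the form $\pi_\omega(g)$ for any $g\in C_c(\calG,\sigma)$: its integral kernel on $\ell^2(\calL^{(\omega)})$ is $(\|x\|-\|y\|)\,k_f(\omega;x,y)$, and $\|x\|-\|y\|$ depends on $x$ and $y$ separately, not only on $y-x$. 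The ``so the iterated commutators remain bounded'' step therefore does not follow from the derivation property you quote. The conclusion is nonetheless correct and the fix is short: since $f$ has compact support, the kernel $k_f(\omega;x,y)$ is supported in $\|x-y\|\le C_f$ for some $C_f$, and $|\,\|x\|-\|y\|\,|\le\|x-y\|\le C_f$ on that set, so $\delta^k(\pi_\omega(f))$ has kernel $(\|x\|-\|y\|)^k k_f(\omega;x,y)$, uniformly bounded by $C_f^k\,\|k_f\|_\infty$ times the row/column summability constant. The same estimate handles $\delta^k([X,\pi_\omega(f)])$. You should replace the appeal to $\partial_j$ with this propagation estimate. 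The remainder of the proof---self-adjointness and $d$-summability from the Delone counting bound, the unitary $U\psi(y)=\psi(y+a)$ conjugating $X_j$ to $X_j\mp a_j$, the diagonal gauge unitary absorbing the cocycle discrepancy, and the operator homotopy $t\mapsto \sum_j(X_j+ta_j)\hat\otimes\gamma^j$---is sound, including your observation that $a\in\calL^{(\omega)}$ is automatic from $0\in\calL^{(\omega')}$, and your check that the shift terms $a_j\hat\otimes\gamma^j$ (graded-)commute with $\pi_\omega(C_c(\calG,\sigma))\hat\otimes 1$.
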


\subsection{Invariant measures and the semifinite spectral triple} \label{sec:semifinite_construction}

Measure theoretic properties of the continuous hull $\Omega_\calL$ have been extensively 
studied. We note a useful result below.

\begin{prop}[\cite{BBG06, SavThesis}]
There is a one-to-one 
correspondence between measures on $\Omega_\calL$ invariant 
under the $\R^d$-action and measures on the unit space $\Omega_0$ 
invariant under the groupoid action. Furthermore, if $\calL$ is repetitive, 
aperiodic and
has finite local complexity, then there is a one-to-one 
correspondence between the invariant measures on $\Omega_\calL$ and 
a canonical positive cone in $H_d(\Omega_\calL, \R)$.
\end{prop}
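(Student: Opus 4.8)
The plan is to prove the two assertions separately, following~\cite{BBG06} and~\cite{SavThesis}, and I would present both in sketch form since the proposition is essentially quoted from those sources.

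\emph{The first bijection} is a flow-box disintegration. Fix $0<\varepsilon<r/2$ and write $\mathrm{Leb}$ for Lebesgue measure on $\R^d$. By Proposition~\ref{dtop}, for each clopen $U\subset\Omega_0$ and each $y\in\R^d$ the map $\psi_{U,y}(\omega,a)=T_{-a}\omega$ is a homeomorphism of $U\times B(y;\varepsilon)$ onto an open subset of $\Omega_\calL$, and such ``flow boxes'' cover $\Omega_\calL$ (a finite subfamily suffices by compactness). Given an $\R^d$-invariant Borel measure $\mu$ on $\Omega_\calL$, translation invariance forces $(\psi_{U,y}^{-1})_{*}\mu$ to be of the form $\nu_U\times\mathrm{Leb}$ for a measure $\nu_U$ on $U$; on overlaps of boxes the gluing maps differ only by a translation in the $a$-variable, so the $\nu_U$ agree and assemble into a single measure $\nu$ on $\Omega_0$, and groupoid-invariance of $\nu$ is the infinitesimal form of $\R^d$-invariance of $\mu$. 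Conversely, a groupoid-invariant $\nu$ on $\Omega_0$ determines $\mu$ by declaring it to equal $\nu\times\mathrm{Leb}$ in every flow box, groupoid-invariance being precisely the consistency condition on overlaps; the resulting $\mu$ is manifestly translation invariant. These two assignments are mutually inverse up to the normalising constant $\vol(B(0;\varepsilon))$. (Equivalently, one may invoke the Morita equivalence $\calG\sim\Omega_\calL\rtimes\R^d$ of Proposition~\ref{dtop} together with the standard identification of invariant measures under groupoid equivalence.)

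\emph{The second bijection} uses the inverse-limit structure. When $\calL$ is repetitive, aperiodic and of finite local complexity we pass to its Voronoi tiling (cf.\ the note on topologies) and use the tiling-space machinery: $\Omega_\calL$ is minimal and $\Omega_\calL\cong\varprojlim(K_n,f_n)$, with each $K_n$ an oriented flat branched $d$-manifold and the $\R^d$-action covering the flows on the $K_n$~\cite{AndersonPutnam,KellendonkPutnam,BBG06}. An $\R^d$-invariant probability measure $\mu$ has a Ruelle--Sullivan current which at each level records, for every $d$-cell of $K_n$, the frequency of the corresponding patch -- a non-negative real number -- and hence a class $[\mu]_n\in H_d(K_n;\R)$. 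These classes are compatible under $(f_n)_{*}$, so they define an element of
$$
  \varprojlim_n H_d(K_n;\R)\ \cong\ \check H_d(\Omega_\calL;\R),
$$
the finite type of the tower ruling out a $\varprojlim^1$-obstruction and ensuring that \v{C}ech homology computes this inverse limit; this is the homology group $H_d(\Omega_\calL,\R)$ in the statement. Positivity of $\mu$ says all the cell frequencies are $\geq 0$, which cuts out a cone $C$ independent of the chosen tower; conversely a class in $C$ prescribes consistent non-negative frequencies at every level, and a Riesz-type extension argument reconstructs an invariant measure on $\Omega_\calL$~\cite{BBG06}. The frequency map $\mu\mapsto[\mu]$ is affine and restricts to a bijection onto $C$, invariant \emph{probability} measures corresponding to a distinguished affine slice of $C$.

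The substantive points, which I would import from~\cite{BBG06,SavThesis} rather than reprove, are: in the first part, that the flow-box disintegrations are mutually consistent and that finite local complexity (via uniform discreteness) prevents any escape of mass under the gluings; and in the second part, that the patch-frequency data genuinely assembles into a well-defined \emph{homology} class -- this is where orientability and the Anderson--Putnam/Benedetti--Gambaudo branched-manifold structure enter -- together with the independence of the cone $C$ from the approximating tower. The remainder is bookkeeping.
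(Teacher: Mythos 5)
The paper states this proposition without proof, citing \cite{BBG06, SavThesis}, so there is no internal proof to compare against; your sketch follows exactly the strategy of those sources (flow-box disintegration for the first bijection, Ruelle--Sullivan patch frequencies through the Anderson--Putnam/G\"{a}hler tower for the second) and is essentially correct. One small imprecision worth noting: in the first part you assert that ``on overlaps of boxes the gluing maps differ only by a translation in the $a$-variable, so the $\nu_U$ agree.'' This is not quite accurate --- when a point $T_{-a}\omega$ lies in two flow boxes $\psi_{U_1,y_1}(U_1\times B(y_1;\varepsilon))$ and $\psi_{U_2,y_2}(U_2\times B(y_2;\varepsilon))$, the transition map shifts $a$ \emph{and} moves the $\Omega_0$-coordinate by a groupoid element $(\omega, a-a')\in\calG$. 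Consequently the consistency of the local transverse measures $\nu_U$ is \emph{not} automatic from the translation structure alone; it is precisely the groupoid-invariance of $\nu$ (which you invoke in the next clause) that makes them glue. As written, the sentence suggests the $\nu_U$ glue for free and that groupoid-invariance is a separate afterthought, when in fact the two statements are the same thing. This does not affect the correctness of the overall argument, but tightening that sentence would make the logic of the disintegration step cleaner.
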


Hence under additional hypotheses, invariant measure theory on the transversal $\Omega_0$ can be reduced to a homological 
condition on the continuous hull $\Omega_\calL$. 
We will now assume that the unit space $\Omega_0$ has a probability measure $\bP$ that 
is invariant under the groupoid action with 
$\mathrm{supp}(\bP) = \Omega_0$. Using~\cite[Theorem 1.1]{LN04}, given the trace 
$$\tau_{\bP}:C(\Omega_0)\to \mathbb{C},\quad f\mapsto \int f(\omega)\,\mathrm{d}\bP(\omega)$$ 
on $C(\Omega_0)$ we can define 
the dual trace on finite-rank endomorphisms $\mathrm{Fin}(E_{C(\Omega_0)}) \subset \mathbb{K}(E_{C(\Omega_0)})$ by the formula
$$
  \Tr_\tau(\Theta_{e_1,e_2}) = \tau_{\bP}((e_2\mid e_1)_{C(\Omega_0)}),
$$
which then extends to a faithful, semifinite and norm lower semicontinuous  trace on the von Neumann algebra 
$\calN = \mathrm{Fin}(E_{C(\Omega_0)})'' \subset \calB (\calH_\tau)$, with $\calH_\tau$  
the completion of $C_c(\calG,\sigma)$ under 
the inner-product 
$$
    \langle f_1, f_2 \rangle = \int_{\Omega_0} (f_1\mid f_2)_{C(\Omega_0)}(\omega)\, \mathrm{d}\bP(\omega) 
      = \int_{\Omega_0}\! (f_1^\ast \ast f_2)(\omega,0)\,\mathrm{d}\bP(\omega).
$$

We note that for $f\in C_c(\calG,\sigma)$, the dual trace $\Tr_\tau$ can also be written 
by the simple formula
\begin{equation} \label{eq:dual_trace_formula}
  \Tr_\tau(f) = \int_{\Omega_0} \! f(\omega,0)\,\mathrm{d}\bP(\omega).
\end{equation}

The semifinite trace we use is quite abstract but can be related to the so-called 
trace per unit volume  if we also assume ergodicity. 
\begin{prop}[\cite{BP17}, Proposition 4.23] \label{prop:ergodic_trace_is_vol_trace}
If the measure on $\Omega_{{\calL}}$ is ergodic under the translation 
action, then for almost all $\omega\in\Omega_0$ and any $f\in C_c(\calG,\sigma)$,
$$
   \Tr_\tau(f) = \Tr_\mathrm{Vol}( \pi_\omega(f)) 
     := \lim_{\Lambda \nearrow \calL^{(\omega)}}\frac{1}{|\Lambda|} \Tr_{\ell^2(\calL^{(\omega)})}\big( P_\Lambda \pi_\omega(f) \big), 
     \qquad P_\Lambda: \ell^2(\calL^{(\omega)}) \to \ell^2(\Lambda),
$$
where the limit $\Lambda \nearrow \calL^{(\omega)}$ is an increasing sequence of 
finite sets approximating $\calL^{(\omega)}$. 
\end{prop}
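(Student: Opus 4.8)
The plan is to reduce the statement to Birkhoff's pointwise ergodic theorem for the $\R^d$-action on $\Omega_\calL$, transported to the transversal. The key observation is that for $f \in C_c(\calG,\sigma)$, the formula \eqref{eq:dual_trace_formula} expresses $\Tr_\tau(f)$ as an integral of the continuous function $\omega \mapsto f(\omega,0)$ over $\Omega_0$ against the invariant probability measure $\bP$. On the other side, for fixed $\omega$, the diagonal matrix element of $\pi_\omega(f)$ at a lattice point $x \in \calL^{(\omega)}$ is, using the representation formula computed just before Proposition \ref{prop:evaluation_spec_trip},
$$
  \big(\pi_\omega(f)\delta_x \mid \delta_x\big) = \sigma\big((T_{-x}\omega,0),(T_{-x}\omega,0)\big) f(T_{-x}\omega, 0) = f(T_{-x}\omega,0),
$$
since $\sigma$ is normalised. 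Hence for a finite set $\Lambda \subset \calL^{(\omega)}$,
$$
  \frac{1}{|\Lambda|}\Tr_{\ell^2(\calL^{(\omega)})}\big(P_\Lambda \pi_\omega(f)\big) = \frac{1}{|\Lambda|}\sum_{x\in\Lambda} f(T_{-x}\omega,0),
$$
so $\Tr_\mathrm{Vol}(\pi_\omega(f))$ is a Birkhoff-type average of the continuous function $g(\eta) := f(\eta,0)$ on $\Omega_0$ along the orbit $\{T_{-x}\omega : x \in \calL^{(\omega)}\}$ of the \emph{groupoid} action, which is exactly the discretisation of the $\R^d$-orbit of $\omega$.

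First I would record the dictionary between the invariant measure $\bP$ on $\Omega_0$ and an invariant measure $\mu$ on $\Omega_\calL$ (the correspondence of the Proposition cited just above \eqref{eq:dual_trace_formula}), under which the transverse measure $\bP$ is obtained from $\mu$ by the standard desintegration/flow-box construction along the $\R^d$-flow. Ergodicity of $\mu$ on $\Omega_\calL$ is equivalent to ergodicity of $\bP$ for the groupoid $\calG$. I would then invoke the continuous ($\R^d$-)version of Birkhoff's ergodic theorem for the action on $(\Omega_\calL,\mu)$: for $\bP$-almost every $\omega$ (equivalently $\mu$-almost every point whose orbit meets $\Omega_0$), the van Hove averages $\frac{1}{|B(0;L)|}\int_{B(0;L)} g(T_{-t}\omega)\,dt$ converge to $\int_{\Omega_\calL}\tilde g\,d\mu$, where $\tilde g$ is the continuous extension of $g$ to $\Omega_\calL$ obtained by noting $g(\eta)=f(\eta,0)$ only ``sees'' $\eta$ with $0\in\calL^{(\eta)}$ but extends continuously to a neighbourhood. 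The main work is then to pass from the continuous $\R^d$-average to the discrete lattice sum: because $\calL^{(\omega)}$ is an $(r,R)$-Delone set, each Voronoi cell has volume bounded between constants $c_1,c_2>0$ depending only on $r,R$, so the discrete sum $\sum_{x\in\Lambda}g(T_{-x}\omega)$ and the integral $\int_{\bigcup_{x\in\Lambda}V_x}\tilde g(T_{-t}\omega)\,dt$ differ by an error controlled by the modulus of continuity of $\tilde g$ times $|\Lambda|\cdot\mathrm{diam}(\text{cells})$, while the two normalisations $|\Lambda|$ and $\vol(\bigcup_x V_x)$ are comparable. Taking $\Lambda \nearrow \calL^{(\omega)}$ along a van Hove sequence and matching normalisations then gives $\Tr_\mathrm{Vol}(\pi_\omega(f)) = \frac{1}{\text{(density of }\calL^{(\omega)})}\int \tilde g\,d\mu$, and a density computation (again via the measure correspondence, which builds in the uniform density guaranteed by ergodicity) identifies this with $\int_{\Omega_0}f(\eta,0)\,d\bP(\eta)=\Tr_\tau(f)$.

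The main obstacle I anticipate is the passage between the continuous $\R^d$-ergodic average and the discrete lattice average, together with getting the normalising constant exactly right: one must be careful that the ``trace per unit volume'' divides by $|\Lambda|$ (number of points) rather than by the Euclidean volume of the region, so the comparison introduces the mean point-density of $\calL^{(\omega)}$, which must be shown to be $\bP$-a.s.\ constant and to match the normalisation implicit in \eqref{eq:dual_trace_formula}. This is where ergodicity is genuinely used (beyond merely invoking Birkhoff). A clean way to handle this is to apply Birkhoff's theorem not to $g$ alone but simultaneously to the indicator-type quantity counting lattice points — i.e.\ consider the tautological groupoid element or use that $\Tr_\tau(\mathbf{1}) $ on a suitable clopen set computes the density — so that both numerator and denominator of the volume-average are realised as ergodic averages of fixed continuous functions on $\Omega_0$, and their ratio converges $\bP$-a.s.\ to the ratio of the corresponding integrals, which is precisely $\int f(\eta,0)\,d\bP(\eta)$ after the density factors cancel. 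Since the precise form of this argument already appears in \cite{BP17}, I would cite that computation for the delicate normalisation bookkeeping and only sketch the ergodic-theorem input here.
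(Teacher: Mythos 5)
The paper does not supply its own proof of this proposition; it is cited verbatim from~\cite{BP17}. That said, your sketch captures the correct strategy and, importantly, pinpoints the genuine technical subtlety: the trace per unit volume normalises by the \emph{number of lattice points}, so after reducing $\Tr_\tau(f)=\int_{\Omega_0}f(\omega,0)\,\mathrm{d}\bP(\omega)$ and $\frac{1}{|\Lambda|}\Tr\left(P_\Lambda\pi_\omega(f)\right)=\frac{1}{|\Lambda|}\sum_{x\in\Lambda}f(T_{-x}\omega,0)$ one must show that the discrete ergodic average along $\calL^{(\omega)}$ converges $\bP$-a.s.\ to the integral against the \emph{normalised} transverse probability measure, with the intensity of the point process cancelling. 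Your proposal to handle this by comparing with the continuous $\R^d$-Birkhoff average via Voronoi cells and then applying the ergodic theorem to a lattice-counting function so that the density factors cancel in a ratio is a viable route; the cleanest formulation is the pointwise ergodic theorem for the Palm measure of an ergodic point process, which is exactly what the invariant measure correspondence for $(\Omega_\calL,\Omega_0)$ encodes, and this is what~\cite{BP17} uses.

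One small inaccuracy: in computing $\bigl(\pi_\omega(f)\delta_x\mid\delta_x\bigr)$ the cocycle factor that appears is $\sigma\!\left((T_{-x}\omega,0),(T_{-x}\omega,-x)\right)$, not $\sigma\!\left((T_{-x}\omega,0),(T_{-x}\omega,0)\right)$. This equals $1$ because $(T_{-x}\omega,0)=r\!\left((T_{-x}\omega,-x)\right)$ and $\sigma(r(\gamma),\gamma)=1$ is part of the $2$-cocycle \emph{definition}, not the normalisation $\sigma(\gamma,\gamma^{-1})=1$. The conclusion $\bigl(\pi_\omega(f)\delta_x\mid\delta_x\bigr)=f(T_{-x}\omega,0)$ is unaffected, but the justification should invoke the correct identity.
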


The following result 
does not require an ergodicity assumption.

\begin{prop} \label{prop:semifinite_smooth_spec_trip}
The triple
$$
 {}_{d}\lambda_{\tau} = \bigg( C_c(\calG,\sigma) \hat\otimes Cl_{0,d}, \, \calH_\tau \hat\otimes \bigwedge\nolimits^{\!*} \R^d, \,
    \sum_{j=1}^d X_j \otimes \gamma^j \bigg)
$$
is a $QC^\infty$ and $d$-summable semifinite spectral triple relative to $(\calN,\Tr_\tau)$. 
Furthermore, for $f\in C_c(\calG,\sigma)$, the identity
$$
  \res_{z=d} \Tr_\tau(\pi(f)(1+|X|^2)^{-s/2}) = \mathrm{Vol}_{d-1}(S^{d-1}) \,\Tr_{\tau}(f),
$$
holds true.
\end{prop}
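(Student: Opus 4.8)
The plan is to check the semifinite spectral triple axioms directly, and then to obtain the residue identity from a heat-kernel computation which, after localising the dual trace along the frame of Proposition~\ref{Deloneframe}, reduces to the residue at $z=d$ of an Epstein-type zeta function attached to the Delone set. Throughout I suppress the overall constant $\dim\bigwedge^{*}\R^{d}$ coming from the trace over the Clifford module, as it occurs on both sides of the residue formula. The operator $X=\sum_{j=1}^{d}X_{j}\hat\otimes\gamma^{j}$ is odd, self-adjoint and regular on $\calH_\tau\hat\otimes\bigwedge^{*}\R^{d}$, with $X^{2}=\bigl(\sum_{j}X_{j}^{2}\bigr)\hat\otimes 1$ acting as multiplication by $|x|^{2}$; since each $X_{j}$ is the strong limit of the bounded operators $X_{j}(1+n^{-1}X_{j}^{2})^{-1}\in\calN$, the operator $X$ is affiliated to $\calN\hat\otimes\End(\bigwedge^{*}\R^{d})$. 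Boundedness of $[X,\pi(f)]$ for $f\in C_{c}(\calG,\sigma)$ is exactly the computation of Proposition~\ref{prop:bulk_kasmod}, since $[X,\pi(f)]=\sum_{j}\pi(\partial_{j}f)\hat\otimes\gamma^{j}$ with $\partial_{j}f=c_{j}f\in C_{c}(\calG,\sigma)$. For the compactness of the resolvent and $d$-summability I use the frame $\{\chi_{y}\}_{y\in Y}$: since $\sum_{y}\Theta_{\chi_{y},\chi_{y}}=1$ and $\chi_{y}(\omega,x)=\chi_{y}(x)$ is supported in $B(y;\varepsilon)$ with $\varepsilon<r/2$, one obtains
\[
  \Tr_{\tau}\bigl((1+|X|^{2})^{-s/2}\bigr)=\sum_{y\in Y}\tau_{\bP}\bigl((\chi_{y}\mid(1+|X|^{2})^{-s/2}\chi_{y})_{C(\Omega_0)}\bigr)=\int_{\Omega_0}\ \sum_{x\in\calL^{(\omega)}}(1+|x|^{2})^{-s/2}\,\mathrm{d}\bP(\omega),
\]
and $r$-uniform discreteness gives $\#\bigl(\calL^{(\omega)}\cap B(0;R)\bigr)\le C(1+R)^{d}$ with $C$ independent of $\omega$, so the inner sum is dominated by $C\sum_{k\ge 0}(1+k)^{d-1}(1+k^{2}r^{2}/4)^{-s/2}<\infty$ for $\Re(s)>d$. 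Thus $(1+|X|^{2})^{-s/2}$ is $\tau$-trace class for $s>d$, so $\pi(f)(1+|X|^{2})^{-1}\in\calK_{\calN}$ and the triple is $d$-summable; $QC^\infty$ follows exactly as in Proposition~\ref{prop:evaluation_spec_trip}, the iterated commutators $\delta^{k}(\pi(f))$ and $\delta^{k}([X,\pi(f)])$ with $\delta=[\,|X|,\,\cdot\,]$ being again bounded operators of the form $\pi(g)$, $g\in C_{c}(\calG,\sigma)$, using stability of $C_{c}(\calG,\sigma)$ under the $\partial_{j}$ and the standard passage from $X$ to $|X|$.

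For the residue, note first that $z\mapsto\Tr_{\tau}\bigl(\pi(f)(1+|X|^{2})^{-z/2}\bigr)$ is holomorphic for $\Re(z)>d$ by $d$-summability. Localising along the frame as above, and using that $r$-uniform discreteness forces only the unit $(\omega,0)$ to contribute to the convolution $f\ast\bigl((1+|X|^{2})^{-z/2}\chi_{y}\bigr)$ on $\operatorname{supp}\chi_{y}$ — any two distinct points of $\calL^{(\omega)}$ lie at distance $\ge r>2\varepsilon$ — together with $\sigma((\omega,0),(\omega,x))=1$, one finds
\[
  \Tr_{\tau}\bigl(\pi(f)(1+|X|^{2})^{-z/2}\bigr)=\int_{\Omega_0}f(\omega,0)\,\zeta_{\omega}(z)\,\mathrm{d}\bP(\omega),\qquad \zeta_{\omega}(z):=\sum_{x\in\calL^{(\omega)}}(1+|x|^{2})^{-z/2}.
\]
Writing $(1+|X|^{2})^{-z/2}=\tfrac{1}{\Gamma(z/2)}\int_{0}^{\infty}t^{z/2-1}e^{-t}e^{-t|X|^{2}}\,\mathrm{d}t$ expresses $\zeta_{\omega}$ as the Laplace transform of the theta function $\theta_{\omega}(t)=\sum_{x\in\calL^{(\omega)}}e^{-t|x|^{2}}$; comparing the sum with the Gaussian integral $\int_{\R^{d}}e^{-t|x|^{2}}\,\mathrm{d}x=\pi^{d/2}t^{-d/2}$ (relative denseness bounds the sum above, uniform discreteness below) gives, after normalising $\calL$ to unit point density, $\theta_{\omega}(t)=\pi^{d/2}t^{-d/2}+O\bigl(t^{-(d-1)/2}\bigr)$ as $t\downarrow 0$, uniformly in $\omega$. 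Hence $\zeta_{\omega}$ continues meromorphically past $\Re(z)=d$ with a simple pole there and $\res_{z=d}\zeta_{\omega}(z)=\tfrac{2\pi^{d/2}}{\Gamma(d/2)}=\mathrm{Vol}_{d-1}(S^{d-1})$, uniformly in $\omega$. Interchanging $\res_{z=d}$ with the integral over $\Omega_0$ and inserting \eqref{eq:dual_trace_formula} yields
\[
  \res_{z=d}\Tr_{\tau}\bigl(\pi(f)(1+|X|^{2})^{-z/2}\bigr)=\mathrm{Vol}_{d-1}(S^{d-1})\int_{\Omega_0}f(\omega,0)\,\mathrm{d}\bP(\omega)=\mathrm{Vol}_{d-1}(S^{d-1})\,\Tr_{\tau}(f).
\]

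The main obstacle is this last step. One must establish the theta-function expansion uniformly in $\omega\in\Omega_0$, so that the $O\bigl(t^{-(d-1)/2}\bigr)$ remainder — equivalently the holomorphic part of $\zeta_{\omega}$ — remains regular at $z=d$ and the residue is the $\omega$-independent constant $\mathrm{Vol}_{d-1}(S^{d-1})$; the point density of $\calL^{(\omega)}$ enters precisely here. One must then justify interchanging $\res_{z=d}$ with $\int_{\Omega_0}\mathrm{d}\bP$ by a dominated-convergence estimate on the family $\zeta_{\omega}(z)-\dfrac{\mathrm{Vol}_{d-1}(S^{d-1})}{z-d}$ near the pole. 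The $(r,R)$-Delone uniformity — density bounds independent of $\omega$ — is exactly what makes both of these possible; the remaining verifications are routine given Propositions~\ref{prop:bulk_kasmod}, \ref{Deloneframe} and \ref{prop:evaluation_spec_trip}.
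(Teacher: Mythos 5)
Your proposal is correct, and it arrives at the same pointwise zeta function $\zeta_\omega(z)=\sum_{x\in\calL^{(\omega)}}(1+|x|^2)^{-z/2}$ that governs both the $d$-summability and the residue, but by a somewhat different technical route than the paper. The paper works directly with the ``integral kernel'' of $\pi(f)(1+|X|^2)^{-s/4}$: it estimates the $\Tr_\tau$-Hilbert--Schmidt norm by summing the kernel against its adjoint, uses the $2$-cocycle identity to cancel the $\sigma$-phases, and thereby reduces summability to the same lattice sum you obtain from the frame $\{\chi_y\}$ of Proposition~\ref{Deloneframe}; the diagonal of the kernel then gives the trace of $\pi(f)(1+|X|^2)^{-z/2}$. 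Your use of the frame formula $\Tr_\tau(T)=\sum_y\tau_\bP((\chi_y\mid T\chi_y)_{C(\Omega_0)})$, combined with the observation that $r$-uniform discreteness forces only the unit $(\omega,0)$ to survive in the convolution, is a cleaner and equally valid route to the identity $\Tr_\tau(\pi(f)(1+|X|^2)^{-z/2})=\int_{\Omega_0}f(\omega,0)\,\zeta_\omega(z)\,\mathrm{d}\bP(\omega)$. For the residue you replace the paper's terse ``integral approximation'' by a Mellin/theta-function expansion, which is equivalent in content but more systematic and, importantly, exposes a subtlety that the paper's phrasing obscures: the paper writes the preceding integral as $C(z)\int f(\omega,0)\,\mathrm{d}\bP(\omega)$ with an $\omega$-independent $C(z)$, which is not literal since $\zeta_\omega$ depends on $\omega$; what one actually needs is that $\res_{z=d}\zeta_\omega(z)$ is constant in $\omega$ and that the holomorphic remainder is dominated uniformly so the residue passes through the $\bP$-integral. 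You correctly flag both of these as the crux. One further shared caveat worth stating explicitly: in general $\res_{z=d}\zeta_\omega(z)=\rho(\omega)\cdot\mathrm{Vol}_{d-1}(S^{d-1})$ where $\rho(\omega)$ is the asymptotic point density of $\calL^{(\omega)}$, and the $(r,R)$-Delone conditions only bound $\rho$ from above and below, they do not force $\rho\equiv 1$ or even that the density exists. Your phrase ``after normalising $\calL$ to unit point density'' is therefore not a harmless renaming but an extra hypothesis that both you and the paper are implicitly invoking; without it the stated constant $\mathrm{Vol}_{d-1}(S^{d-1})$ should carry a density factor.
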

\begin{proof}
The representation of $C^*_r(\calG,\sigma)$ on $E_{C(\Omega_0)}$ gives 
a representation $\pi:C^*_r(\calG,\sigma)\to \calB(\calH_\tau)$ as 
$\calH_\tau \cong E \otimes_{C(\Omega_0)} L^2(\Omega_0,\bP)$. 
This representation retains the property that $[X_j, \pi(f)] = \pi(\partial_j f)$ and, as such, 
$[|X|^k,\pi(f)]$ is well-defined and bounded for all $k\in\N$. To 
consider the summability, we first note that 
$(1+X^2)^{-s/2} = (1+|X|^2)^{-s/2}\hat\otimes 1_{\bigwedge^* \R^d}$ and so 
it suffices to prove the summability of $(1+|X|^2)^{-s/2}$. 
We then observe that the space of trace class elements under the dual trace 
$\calL^1(\calN,\Tr_\tau)$ contains the trace class 
operators on the space 
$\int_{\Omega_0}^\oplus \ell^2(\calL^{(\omega)})\,\mathrm{d}\bP(\omega)$
and, on this subalgebra, the dual trace acts as the usual trace on the 
direct integral. 
With this in mind, we first compute 
\begin{align*}
  &\big(\pi(f)(1+|X|^2)^{-s/4}\psi\big)(T_{-x}\omega,-x) = \\
  &\hspace{3cm}
  \sum_{y\in \calL^{(\omega)}} \sigma((T_{-x}\omega,y-x),(T_{-y}\omega,-y)) 
    f(T_{-x}\omega,y-x)(1+|y|^2)^{-s/4} \psi(T_{-y}\omega,-y).
\end{align*}
Hence the `integral kernel' of this operator is 
$$
  k_f(\omega;x,y) = \sigma((T_{-x}\omega,y-x),(T_{-y}\omega,-y)) 
    f(T_{-x}\omega,y-x)(1+|y|^2)^{-s/4}.
$$
Similarly, one can compute that the integral kernel of 
$(1+|X|^2)^{-s/4}\pi(f^*)$ is
$$
  k_{f^*}(\omega;x,y) = \sigma((T_{-x}\omega,y-x),(T_{-y}\omega,-y))
     f^*(T_{-x}\omega,y-x)(1+|x|^2)^{-s/4}.
$$
Then we can estimate the $\Tr_\tau$-Hilbert--Schmidt norm
\begin{align*}
  \big\|\pi(f)(1+|X|^2)^{-s/4}\big\|_{2}^2 &= \int_{\Omega_0} \sum_{x,y\in \calL^{(\omega)}} 
    k_{f^*}(\omega;x,y)k_f(\omega;y,x)\,\mathrm{d}\bP(\omega) \\
  &\hspace{-2cm}= \int_{\Omega_0} \sum_{x,y\in \calL^{(\omega)}} 
  \sigma((T_{-x}\omega,y-x),(T_{-y}\omega,-y))\sigma(T_{-y}\omega,x-y),(T_{-x}\omega,-x)) \\
    &\hspace{3cm}\times f^*(T_{-x}\omega,y-x)f(T_{-y}\omega,x-y)(1+|x|^2)^{-s/2} 
  \,\mathrm{d}\bP(\omega) \\
  &\hspace{-2cm}= \int_{\Omega_0} \sum_{x,y\in \calL^{(\omega)}} 
  \sigma((T_{-x}\omega,y-x),(T_{-y}\omega,x-y))\sigma((T_{-x}\omega,0),(T_{-x}\omega,-x)) \\
    &\hspace{5cm}\times |f(T_{-y}\omega,x-y)|^2(1+|x|^2)^{-s/2} \,\mathrm{d}\bP(\omega) \\
  &\hspace{-2cm}= \int_{\Omega_0} \sum_{x,y\in \calL^{(\omega)}} 
  |f(T_{-y}\omega,x-y)|^2(1+|x|^2)^{-s/2} \,\mathrm{d}\bP(\omega) \\
  &\hspace{-2cm}= \int_{\Omega_0} \sum_{x\in\calL^{(\omega)}} \sum_{u\in \calL^{(\omega)-x}} 
    |f(T_{u-x}\omega,u)|^2 (1+|x|^2)^{-s/2} \,\mathrm{d}\bP(\omega) \\
  &\hspace{-2cm}\leq C \int_{\Omega_0} \sum_{x\in\calL^{(\omega)}} 
     (1+|x|^2)^{-s/2} \,\mathrm{d}\bP(\omega) = C \int_{\Omega_0} \!C_s(\omega) \,\mathrm{d}\bP(\omega),
\end{align*}
where in the third line we have used the  cocycle identity, where we then note that 
$$
  \sigma((T_{-x}\omega,y-x),(T_{-y}\omega,x-y))\sigma((T_{-x}\omega,0),(T_{-x}\omega,-x)) 
  = \sigma(\xi,\xi^{-1})\sigma(r(\eta),\eta) = 1.
$$
Because Delone subsets of $\R^d$ display the same summability asymptotics as $\Z^d$, 
we see that $C_s(\omega)$ is bounded for all $\omega\in\Omega_0$ and $s>d$. 
Hence we have that $\pi(f)(1+|X|^2)^{-s/4}$ is $\Tr_\tau$-Hilbert-Schmidt. 
Therefore $(1+|X|^2)^{-s/4}\pi(f^*f)(1+|X|^2)^{-s/4}$ is $\Tr_\tau$-trace class 
for all $f\in C_c(\calG,\sigma)$ and $s>d$. In particular, 
$(1+|X|^2)^{-s/2}$ is $\Tr_\tau$-trace class for $s>d$. 

Let us now consider the residue trace of  $\pi(f)(1+|X|^2)^{-z/2}$ for $\Re(z)<d$.
By the properties of the dual 
trace, we can compute the trace by summing along the diagonal of this 
integral kernel.
\begin{align*}
  \Tr_\tau \big(\pi(f)(1+|X|^2)^{-z/2} \big) &= \int_{\Omega_0} \sum_{x\in\calL^{(\omega)}} k(x,x) \,\mathrm{d}\bP(\omega) \\
    &= \int_{\Omega_0} \sum_{x\in\calL^{(\omega)}} \sigma((T_{-x}\omega,0),(T_{-x},-x)) 
         f(T_{-x}\omega,0) (1+|x|^2)^{-z/2} \,\mathrm{d}\bP(\omega) \\
    &= \int_{\Omega_0} f(\omega,0) \sum_{x\in\calL^{(\omega)}} (1+|x|^2)^{-z/2} \,\mathrm{d}\bP(\omega) \\
    &= C(z) \int_{\Omega_0} f(\omega,0)  \,\mathrm{d}\bP(\omega),
\end{align*}
where we have used that $\sigma(r(\xi),\xi)=1$ for all $\xi \in\calG$ and the 
invariance of the measure $\bP$ under the groupoid action. For  
 \emph{any} Delone set $\omega\in\Omega_0$, we use an integral approximation 
 to compute that
$$
 C(z) = \sum_{x\in\calL^{(\omega)}} (1+|x|^2)^{-z/2} =  \mathrm{Vol}_{d-1}(S^{d-1}) \, 
      \frac{\Gamma\!\left(\frac{d}{2}\right) \Gamma\!\left(\frac{z-d}{2}\right)}{2\Gamma\!\left(\frac{d}{2}\right)} + h(z)
$$
with $h$ a function holomorphic in a neighbourhood of $\Re(z)=d$. 
The function $C(z)$ has a meromorphic extension to the complex plane with a simple pole at $z=d$ 
with $\res_{z=d} C(z) = \mathrm{Vol}_{d-1}(S^{d-1})$. The result follows.
\end{proof}

Also of use to us for complex algebras  is the semifinite spectral triple  
from the spin$^c$ $KK$-cycle in Proposition \ref{prop:spin_KasMod}. That is, 
\begin{equation} \label{eq:spin_semifinite_spec_trip}
 {}_{d}\lambda_{\tau}^{S_\C} = \bigg( C_c(\calG, \sigma),\, \calH_\tau \hat\otimes 
       \C^{2^{\lfloor \frac{d}{2} \rfloor}}, \, \sum_{j=1}^d X_j \hat\otimes \gamma^j \bigg)
\end{equation}
is a $QC^\infty$ and $d$-summable semifinite spectral triple that
is even or odd depending on the parity of $d$. We recall that, as the spin and 
oriented Kasparov modules are equivalent at the level of $KK$-theory 
(up to a renormalisation), we can equivalently consider pairings with the spin semifinite 
spectral triple.

\section{Unbounded Fredholm modules for lattices with finite local complexity} \label{sec:PB_product}

We will now assume that our lattice $\calL$ has finite local complexity. 
Recall from Proposition \ref{prop:tranversal_properties} 
that this implies that the transversal $\Omega_0$ is totally 
disconnected. In particular, we have 
an explicit description of the basis of the topology of $\Omega_0$ by closed and open sets. 
Namely, 
for some $n\in\N$ and $P \subset B(0;n)$ discrete, the sets 
$U_{P,n}=\{ \omega\in\Omega_0\,:\, B(0;n) \cap \calL^{(\omega)}=P \}$  
give a basis of the topology of $\Omega_0$, see~\cite{Kellendonk95}. 
We will use these sets to characterise $\Omega_0$ as the boundary 
of a rooted tree. This then allows us to use the Pearson--Bellissard 
construction to obtain a spectral triple and corresponding 
class in $KK_0(C(\Omega_0),\C)$. We compose this spectral triple with our 
bulk $KK$-cycle via the unbounded Kasparov product. As in~\cite[Section 6]{GM15}, the resulting operator exhibits mildly unbounded commutators with the algebra  $C_{c}(\calG)$ and its bounded transform is a Fredholm module.

Spectral triple constructions for $C^*_r(\calG)$ building from the Pearson--Bellissard framework 
have already appeared in the tiling literature~\cite{JKS15,MW17}. 
While the setting of each construction is quite different, 
it would be interesting to better understand the relationship between these spectral triples and our unbounded 
Fredholm module.

\subsection{The Pearson--Bellissard spectral triple}
In the case that $\mathcal{L}$ has finite local complexity, 
$\Omega_0$ is totally disconnected and can be conveniently described 
as the boundary of a rooted tree $\mathcal{T}=\mathcal{T}_{\mathcal{L}}$ 
using the local patterns $p\in P_{\mu}$. The set of vertices of 
$\mathcal{T}_{\mathcal{L}}$ is denoted $\mathcal{V}_{\mathcal{L}}$ and the set of 
edges by $\mathcal{E}_{\mathcal{L}}$. They are given explicitly by
\[\mathcal{V}_{\mathcal{L}}:=\{p\in P_{nR}: n\in \mathbb{N}\},\quad \mathcal{E}_{\mathcal{L}}:=\{(p,q)\in P_{nR}\times P_{(n+1)R}: p\subset q\}.\]
Thus, the vertices are the patterns seen at all levels $nR$ and there is an 
edge from $p\in P_{nR}$ to $q\in P_{(n+1)R}$ if and only if $p\subset q$. 
The root of this tree is the unique element $\{0\}\in P_{0}$. The vertex set 
$\mathcal{V}$ is naturally $\mathbb{N}$-graded by
\[\mathcal{V}_{n}:=\{p\in\mathcal{V}: p\in P_{nR}\},\]
and we denote the degree of $v\in\mathcal{V}$ by $|v|$. The boundary $\partial \mathcal{T}$ 
is defined to be the set of infinite paths $\alpha=p_{0}\cdots p_{n}\cdots $ with 
$$\{0\}=p_{0}\subset p_{1}\subset\cdots \subset p_{n}\subset p_{n+1}\subset \cdots$$
Such a boundary point determines a unique set 
$\mathcal{L}^{(\alpha)}:=\bigcup_{n=0}^{\infty}p_{n}\subset \mathbb{R}^{d}$ and 
since $0\in\mathcal{L}^{(\alpha)}$ we have $\mathcal{L}^{(\alpha)}\in\Omega_0$. 
Conversely, any element $\mathcal{L}\in\Omega_0$ defines a boundary point by 
setting $p_{n}:=\mathcal{L}\cap B(0;nR)$. 

The topology on the boundary of a tree is defined by the so-called \emph{cylinder sets} associated to vertices
\[\mathcal{C}_{p}:=\{\alpha \in\partial\mathcal{T}: \alpha_{|p|}=p\}\simeq\{\omega\in\Omega_0: \mathcal{L}^{(\omega)}\cap B(0;nR)=p\}=U_{(nR,p)},\]
where the latter identification is given by sending a boundary point to its associated set. 
Thus the topology on $\partial \mathcal{T}$ matches that on $\Omega_0$ and the two 
spaces are homeomorphic. Equivalently the topology on $\partial \mathcal{T}$ can be defined through the ultrametric 
$$\rho(\alpha,\omega)=\min\{e^{-nR}:\exists p\in P_{nR}\quad \alpha,\omega \in \mathcal{C}_{p} \}.$$

By a \emph{choice function} we mean a map $\tau:\mathcal{V}\to \partial\mathcal{T}$ 
such that $\tau(v)\in \mathcal{C}_{v}$. A choice function defines a representation
\[\pi_{\tau}:C(\Omega_0)\to B(\ell^{2}(\mathcal{V})),\quad \pi(f)\phi(v):=f(\tau(v))\phi(v).\]
It is straightforward to verify that for any pair of choice functions $(\tau_{+},\tau_{-})$ the pair 
$(\pi_{\tau_{+}},\pi_{\tau_{-}})$ defines a quasi-homomorphism 
$C(\Omega_0)\to \mathbb{K}(\ell^{2}(\mathcal{V}))$ and hence a class in $KK(C(\Omega_0),\C)$~\cite{Cuntznewlook}. 
We associate a spectral triple to this data in the spirit of 
Pearson--Bellissard \cite{PearsonBellissard}, with some extra flexibility for 
reasons similar to those in \cite{GM15}, related to the pathologies of the unbounded Kasparov product.

\begin{prop}
\label{prop: BPlog}
 Let $(\tau_{+},\tau_{-})$ be a pair of choice functions, $\rho$ an ultrametric on $\Omega_0$ and let $\zeta:\mathbb{N}\to \mathbb{R}_{\geq 0}$ be a sequence with $\zeta_{n}\to\infty$ and for which there exists $C>0$ such that $\zeta_{n}\leq C\left(\sup_{p\in \calV_{n}}\textnormal{diam}_{\rho}\,\mathcal{C}_{p}\right)^{-1}$. The representation 
\[\pi(f)\begin{pmatrix}\phi_{+}\\ \phi_{-}\end{pmatrix}(v):=\begin{pmatrix}f(\tau_{+}(v))\phi_{+}(v)\\ f(\tau_{-}(v))\phi_{-}(v)\end{pmatrix},\]
and self-adjoint operator 
$$D\begin{pmatrix}\phi_{+}\\\phi_{-}\end{pmatrix}(v)=\begin{pmatrix} 0 & D_{-} \\ D_{+} &0\end{pmatrix}\begin{pmatrix}\phi_{+}\\\phi_{-}\end{pmatrix}(v):=\begin{pmatrix}\zeta_{|v|}\phi_{-}(v)\\ \zeta_{|v|}\phi_{+}(v)\end{pmatrix},$$
define a spectral triple $(\textnormal{Lip}(\Omega_0), \ell^{2}(\mathcal{V},\mathbb{C}^{2}),D)$ for $C(\Omega_0)$ whose $K$-homology class coincides with that of the quasi-homomorphism $(\pi_{\tau_{+}},\pi_{\tau_{-}})$.
\end{prop}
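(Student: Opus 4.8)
The plan is to verify the three defining conditions of a spectral triple and then to identify the $K$-homology class. First I would check boundedness of the commutators $[D,\pi(f)]$ for $f\in\mathrm{Lip}(\Omega_0)$. Since $D$ is off-diagonal and acts as multiplication by $\zeta_{|v|}$ on each summand, a direct computation gives
$$
  [D,\pi(f)]\begin{pmatrix}\phi_+\\\phi_-\end{pmatrix}(v) = \zeta_{|v|}\big(f(\tau_-(v))-f(\tau_+(v))\big)\begin{pmatrix}\phi_-(v)\\\phi_+(v)\end{pmatrix}(v).
$$
The key estimate is that $\tau_+(v)$ and $\tau_-(v)$ both lie in the cylinder $\mathcal{C}_v$, so $\rho(\tau_+(v),\tau_-(v))\leq \mathrm{diam}_\rho\,\mathcal{C}_v$, and by the Lipschitz condition together with the growth bound $\zeta_{|v|}\leq C(\sup_{p\in\calV_{|v|}}\mathrm{diam}_\rho\,\mathcal{C}_p)^{-1}$, the scalar factor $\zeta_{|v|}|f(\tau_-(v))-f(\tau_+(v))|$ is bounded uniformly in $v$ by $C\,\mathrm{Lip}(f)$. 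Hence $[D,\pi(f)]$ is bounded. Next, since $\zeta_n\to\infty$, for every $R>0$ the set $\{v:\zeta_{|v|}\leq R\}$ is a union of finitely many levels $\calV_n$, each of which is finite by finite local complexity (each $P_{nR}$ is finite). Therefore the spectral projections of $|D|$ onto bounded intervals have finite rank, so $(1+D^2)^{-1}$ is compact on $\ell^2(\calV,\C^2)$; multiplying by $\pi(f)$ keeps it compact. Self-adjointness of $D$ is immediate as it is a bounded-below-in-modulus real multiplication-type operator built from the off-diagonal involution swapping the two copies of $\ell^2(\calV)$, and $\mathrm{Lip}(\Omega_0)$ is a dense $*$-subalgebra of $C(\Omega_0)$. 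This establishes the spectral triple.

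For the $K$-homology identification I would use the standard fact (cf.\ \cite{Cuntznewlook} and the quasihomomorphism picture of $KK$) that the bounded transform $F = D(1+D^2)^{-1/2}$ of this spectral triple has the form
$$
  F = \begin{pmatrix} 0 & u_- \\ u_+ & 0\end{pmatrix},\qquad u_\pm\phi(v) = \frac{\zeta_{|v|}}{(1+\zeta_{|v|}^2)^{1/2}}\,\phi(v),
$$
so $F$ differs from the symmetry $\left(\begin{smallmatrix}0&1\\1&0\end{smallmatrix}\right)$ by a compact operator (since $\zeta_{|v|}/(1+\zeta_{|v|}^2)^{1/2}\to 1$ as $\zeta_n\to\infty$, and the tail correction lives on the cofinitely-many high levels). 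The Fredholm module $(\ell^2(\calV)\oplus\ell^2(\calV),\pi_{\tau_+}\oplus\pi_{\tau_-},\left(\begin{smallmatrix}0&1\\1&0\end{smallmatrix}\right))$ is, up to the usual conventions, precisely the Fredholm module assigned to the difference representation $\pi_{\tau_+}\ominus\pi_{\tau_-}$, i.e.\ the quasihomomorphism $(\pi_{\tau_+},\pi_{\tau_-})$, because conjugating the grading-swap symmetry into diagonal form identifies the class with $[\pi_{\tau_+}]-[\pi_{\tau_-}]$ in $KK(C(\Omega_0),\C)$. Since adding a compact operator to $F$ does not change the $KK$-class, the spectral triple $(\mathrm{Lip}(\Omega_0),\ell^2(\calV,\C^2),D)$ represents the same class as the quasihomomorphism.

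The main obstacle is the commutator estimate, specifically making the interplay between the Lipschitz seminorm (defined via $\rho$), the diameters of the cylinder sets, and the divergence rate $\zeta_n$ fully rigorous; this is exactly where the extra flexibility in the choice of $\zeta$ (compared to the original Pearson--Bellissard prescription $\zeta_n = \rho$-gap$^{-1}$) is used, and one must be careful that the hypothesis $\zeta_n\leq C(\sup_{p\in\calV_n}\mathrm{diam}_\rho\,\mathcal{C}_p)^{-1}$ is applied with the correct supremum so that the bound is uniform over \emph{all} vertices at a given level, not just the one containing $\tau_\pm(v)$. Everything else is routine bookkeeping, and the finiteness of each $P_{nR}$ (finite local complexity) is what guarantees the resolvent compactness with no further work.
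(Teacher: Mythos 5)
Your proof is correct and the core commutator estimate is the same one the paper gives: $\tau_{\pm}(v)\in\mathcal{C}_{v}$, so $\rho(\tau_{+}(v),\tau_{-}(v))\leq\mathrm{diam}_{\rho}\,\mathcal{C}_{v}$, and combined with the hypothesis $\zeta_{|v|}\leq C\bigl(\sup_{p\in\calV_{|v|}}\mathrm{diam}_{\rho}\,\mathcal{C}_{p}\bigr)^{-1}$ this gives $\zeta_{|v|}\lvert f(\tau_{+}(v))-f(\tau_{-}(v))\rvert\leq C\,\mathrm{Lip}(f)$ uniformly in $v$. The paper's proof in fact treats only this commutator bound explicitly, regarding resolvent compactness and the identification with the quasi-homomorphism $(\pi_{\tau_{+}},\pi_{\tau_{-}})$ as standard (the latter via the discussion and the reference to Cuntz's picture of $KK$ immediately preceding the proposition); your write-up simply makes those routine parts explicit.
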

\begin{proof} The only thing to check is that the Lipschtiz functions for the metric $\rho$ have bounded commutators with each such $D$. This follows since
\[\|[D,f]\phi(v)\|=\zeta_{|v|}\|f(\tau_{+}(v))-f(\tau_{-}(v))\|\|\phi(v)\|,\]
and by assumption the sequence $\zeta_{|v|}$ satisfies $\zeta_{|v|}\leq C \rho(\tau_{+}(v),\tau_{-}(v))^{-1}$.
\end{proof}

The spectral triple constructed in \cite[Proposition 8]{PearsonBellissard} corresponds to choosing the sequence $\zeta_{n}:=e^{nR}$. Here we choose the sequence $\zeta_{n}:=\log(1+n)$. Before we proceed we record the following observation which serves as the main technical tool in our arguments below.
\begin{lemma}
\label{rosetta}
 Let $x,y\in B(0;nR)$ and $\|x-y\|<r$. If $\alpha,\omega\in\Omega_{0}$ are such that $x\in\mathcal{L}^{(\omega)},$ $y\in\mathcal{L}^{(\alpha)}$ and $\rho(T_{-x}\omega,T_{-y}\alpha)\leq e^{-nR}$, then $x=y$ and $\rho(\alpha,\omega)\leq e^{-nR+\|x\|}$.
\end{lemma}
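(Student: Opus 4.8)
The plan is to unravel the ultrametric hypothesis $\rho(T_{-x}\omega, T_{-y}\alpha) \leq e^{-nR}$ in terms of patterns. By the definition of $\rho$ on $\partial\calT \cong \Omega_0$, this inequality means there is some pattern $q \in P_{nR}$ with $T_{-x}\omega$ and $T_{-y}\alpha$ both in the cylinder set $\calC_q$; equivalently $\calL^{(T_{-x}\omega)} \cap B(0;nR) = q = \calL^{(T_{-y}\alpha)} \cap B(0;nR)$. Translating back, this says $(\calL^{(\omega)} - x) \cap B(0;nR) = (\calL^{(\alpha)} - y) \cap B(0;nR)$.

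First I would use this pattern equality to force $x = y$. Since $x \in \calL^{(\omega)}$ we have $0 \in \calL^{(\omega)} - x$, so $0 \in q$, hence $0 \in \calL^{(\alpha)} - y$, i.e. $y \in \calL^{(\alpha)}$ (which we already assumed). The real point is the other direction: $y \in \calL^{(\alpha)}$ gives $0 \in \calL^{(\alpha)} - y \subset \calL^{(\alpha)} - y$, wait — I need to locate $x$ and $y$ as the \emph{same} lattice point. The clean argument: both $x$ and $y$ lie in $B(0;nR)$ by hypothesis, so $0 = x - x$ is a point of $(\calL^{(\omega)}-x)\cap B(0;nR)$ — no, let me instead observe that $\|x-y\| < r$ and build a point of the common pattern near the origin. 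We know $0 \in \calL^{(\alpha)} - y$ and also, since $x \in \calL^{(\omega)}$, the point $0 \in \calL^{(\omega)} - x$. From the equality of patterns, $0 \in (\calL^{(\omega)}-x)\cap B(0;nR) = (\calL^{(\alpha)} - y)\cap B(0;nR)$, so nothing new. The trick must be to use $\|x - y\| < r$ directly: consider the point $x - y$. Since $x \in \calL^{(\omega)}$ and $y$ — hmm, I want $x$ and $y$ in the same lattice. Here is the resolution: $\calL^{(\omega)} - x$ and $\calL^{(\alpha)} - y$ agree inside $B(0;nR)$, and $0$ lies in both. Now I claim $x - y \in \calL^{(\omega)} - x$: indeed this needs $x + (x-y) = 2x - y \in \calL^{(\omega)}$, which is not obviously true. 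So instead I should translate to compare $\calL^{(\alpha)}$ directly: the equality of patterns tells us $\calL^{(\alpha)}$ near $y$ looks like $\calL^{(\omega)}$ near $x$. In particular $y \in \calL^{(\alpha)}$ corresponds under the bijection to some point $x' \in \calL^{(\omega)}$ with $x' - x = y - y = 0$, so $x' = x$; but also $\|x' - y\| = \|x - y\| < r$. Since both $x = x'$ and (the point corresponding to) $y$ — I am going in circles; the honest statement is: $0 \in (\calL^{(\omega)} - x) \cap B(0;r/2)$ trivially and by the common-pattern equality together with $\|x-y\|<r$, uniform $r$-discreteness of $\calL^{(\alpha)}$ forces $x = y$. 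Concretely: $y \in \calL^{(\alpha)}$, and $x \in \calL^{(\omega)}$; applying $T_{-x}$ and $T_{-y}$ respectively, $0 \in \calL^{(T_{-x}\omega)}$ and $0 \in \calL^{(T_{-y}\alpha)}$, both lying in $q$. The element of $q$ at position $0$ is unique. But $q$ also equals, after undoing the translation by $x$, a piece of $\calL^{(\omega)}$: the position-$0$ point of $q$ is $x$ in $\calL^{(\omega)}$-coordinates and is also, after undoing translation by $y$, the point $y$ in $\calL^{(\alpha)}$-coordinates. Since $\|x - y\| < r$ and (by the common pattern) both $x$ and $y$ name points visible in a single $r$-discrete configuration, they must coincide: $x = y$. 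I expect this little argument — making precise that $x$ and $y$ both label the ``$0$-point'' of the shared pattern $q$ — to be the main obstacle; it is morally the same as Lemma on page preceding (``$\calL^{(\omega)}\cap B(y;\varepsilon)$ contains at most one point''), and once phrased correctly it is one line.

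Once $x = y$ is established, the pattern equality $(\calL^{(\omega)} - x) \cap B(0;nR) = (\calL^{(\alpha)} - x) \cap B(0;nR)$ reads $\calL^{(T_{-x}\omega)}$ and $\calL^{(T_{-x}\alpha)}$ agree on $B(0;nR)$. Then I would estimate $\rho(\alpha, \omega)$: I need a pattern of $\alpha$ and $\omega$ agreeing on a ball of radius $m R$ for $m$ as large as possible. If $p \in B(0;mR)$ is a point of $\calL^{(\omega)}$, then $p - x \in \calL^{(T_{-x}\omega)}$ and $\|p - x\| \leq mR + \|x\|$; provided $mR + \|x\| \leq nR$, i.e. $m \leq n - \|x\|/R$, the point $p - x$ lies in $B(0;nR)$ so belongs equally to $\calL^{(T_{-x}\alpha)}$, hence $p \in \calL^{(\alpha)}$; and symmetrically. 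Thus $\calL^{(\omega)}$ and $\calL^{(\alpha)}$ share the pattern of radius $mR$ for the largest integer multiple $mR \leq nR - \|x\|$, giving $\rho(\alpha,\omega) \leq e^{-(nR - \|x\|)} = e^{-nR + \|x\|}$, as claimed. I would present this as the two displayed inclusions for the balls, note the radius bookkeeping, and conclude. The whole proof is short; the only subtlety is the uniqueness argument forcing $x = y$, which I would handle by invoking $r$-discreteness exactly as in the earlier lemma.
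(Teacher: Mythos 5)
Your structure matches the paper's, and the second half (deducing $\rho(\alpha,\omega)\leq e^{-nR+\|x\|}$ once $x=y$ is known, by bookkeeping radii under the translation $T_x$) is correct and essentially the paper's argument. The trouble is the first half: you identify proving $x=y$ as the crux, try several approaches, visibly circle around it, and ultimately never land on a valid argument.

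Concretely, the assertion that ``$x$ and $y$ name points visible in a single $r$-discrete configuration'' is never justified, and it is not true as you set it up: $x\in\calL^{(\omega)}$ and $y\in\calL^{(\alpha)}$ live in \emph{different} lattices, so $r$-discreteness cannot be applied directly to that pair. Observing that $0$ is the unique position-$0$ point of $q$ is vacuous and buys nothing. The hypothesis you are not using is that $\alpha,\omega\in\Omega_0$, which means $0\in\calL^{(\omega)}$ and $0\in\calL^{(\alpha)}$. From this, $-x\in\calL^{(\omega)}-x=\calL^{(T_{-x}\omega)}$ and $-y\in\calL^{(T_{-y}\alpha)}$, and since $x,y\in B(0;nR)$ both $-x$ and $-y$ lie in $B(0;nR)$; the pattern equality $\calL^{(T_{-x}\omega)}\cap B(0;nR)=\calL^{(T_{-y}\alpha)}\cap B(0;nR)$ then puts $-x$ and $-y$ in the \emph{same} $r$-discrete set, and $\|(-x)-(-y)\|=\|x-y\|<r$ forces $-x=-y$. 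That is the one-line argument you were reaching for; without switching from the pair $(x,y)$ to the pair $(-x,-y)$ inside the common patch, and without invoking $0\in\calL^{(\alpha)}\cap\calL^{(\omega)}$, the uniqueness step does not go through.
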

\begin{proof}Since $\rho(T_{-x}\omega,T_{-y}\alpha)\leq e^{-nR}$ it holds that
\[\mathcal{L}^{(T_{-x}\omega)}\cap B(0;nR)=\mathcal{L}^{(T_{-y}\alpha)}\cap B(0;nR),\]
and $x,y\in \mathbb{R}^{d}\cap B(0;nR)$ gives
\[-x,-y \in \mathcal{L}^{(T_{-x}\omega)}\cap B(0;nR)=\mathcal{L}^{(T_{-y}\alpha)}\cap B(0;nR), \]
and since $\|x-y\|<r$ it follows that $x=y$. Then because
\[B(-x;nR-\|x\|)\subset B(0;nR),\quad T_{x}(B(-x;nR-\|x\|))=B(0;nR-\|x\|)\]
it follows that
\[\mathcal{L}^{(\omega)}\cap B(0;nR-\|x\|)=\mathcal{L}^{(\alpha)}\cap B(0;nR-\|x\|).\]
This means that $\rho(\alpha,\omega)\leq e^{-(nR-\|x\|)}=e^{-nR+\|x\|}.$
\end{proof}
\subsection{The product operator}
We now proceed to describe the product operator (in the sense of \cite{MR}) defined from the 
unbounded Kasparov module of Proposition \ref{prop:bulk_kasmod} and the Pearson--Bellissard 
spectral triples of Proposition \ref{prop: BPlog}. 
Because the formulas that appear in this section are somewhat involved, we condense our notation for the 
groupoid $\calG$. Namely, let $\xi = (\omega,x)\in\calG$ be a generic groupoid element and 
let $x(\xi) \in \R^d$ be the image of the cocycle $(\omega,x)\mapsto x \in \R^d$ with 
$x_k(\xi)$ the $k$-th component, $x_k$. 
Furthermore, to reduce computational complexity, for the remainder of this section we set $\sigma=1$. 
The case of a non-trivial 2-cocycle twist requires a separate treament and involves even longer computations, though we expect 
the analytic subtleties to be similar.

  Given a choice function $\tau:\calV\to\partial\calT=\Omega_0$, 
consider the fiber product
\[\mathcal{G}_{s}\times_{\tau}\mathcal{V}:=\{(\xi,v)\in \mathcal{G}\times\mathcal{V}: s(\xi)=\tau(v)\}.\]
Denote by $L^{2}(\calG_{s}\times_{\tau}\calV)$ the Hilbert space completion of 
$C_{c}(\mathcal{G}_{s}\times_{\tau}\mathcal{V})$ in the inner product
\[\langle \phi,\psi\rangle =\sum_{v\in\mathcal{V}}\sum_{\xi,s(\xi)=\tau(v)} \overline{\phi(\xi,v)}\psi(\xi,v).\]

The following lemma is a straightforward verification.
\begin{lemma} 
Let $\tau:\calV\to\Omega_0$ be a choice function. The map
\[\alpha: C_{c}(\mathcal{G})\otimes^{\textnormal{alg}}_{\pi_{\tau}}C_{c}(\calV)\to C_{c}(\calG_{s}\times_{\tau}\calV),\quad \alpha(f\otimes\psi)(\xi,v):=f(\xi)\psi(v),\]
extends to a unitary isomorphism $E_{C(\Omega_0)}\otimes_{\pi_{\tau}}L^{2}(\calV)\xrightarrow{\sim} L^{2}(\calG_{s}\times_{\tau}\calV)$. The left representation of $C_{c}(\mathcal{G})$ is concretely expressed as
\[f * \phi(\eta,w)=\sum_{s(\xi)=r(\eta)}f(\xi) \phi(\xi^{-1}\eta,v).\]
\end{lemma}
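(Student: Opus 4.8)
The plan is to verify directly that the map $\alpha$ is isometric on the algebraic tensor product, deduce that it extends to a unitary, and then compute the induced left action. First I would unwind the inner products on both sides. On the interior tensor product $E_{C(\Omega_0)}\otimes_{\pi_\tau}L^2(\calV)$ the inner product of elementary tensors is $\langle f_1\otimes\psi_1, f_2\otimes\psi_2\rangle = \langle \psi_1, \pi_\tau((f_1\mid f_2)_{C(\Omega_0)})\psi_2\rangle_{L^2(\calV)}$, and since $(f_1\mid f_2)_{C(\Omega_0)}(x)=\sum_{\xi\in r^{-1}(x)}\overline{f_1(\xi)}f_2(\xi)$ and $\pi_\tau$ acts by $(\pi_\tau(g)\psi)(v)=g(\tau(v))\psi(v)$, this becomes $\sum_{v\in\calV}\overline{\psi_1(v)}\psi_2(v)\sum_{\xi\in r^{-1}(\tau(v))}\overline{f_1(\xi)}f_2(\xi)$. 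Reindexing $r^{-1}(\tau(v))$ as $\{\xi : s(\xi^{-1})=\tau(v)\}$ — or more directly noting that the fibre product uses $s(\xi)=\tau(v)$ after the appropriate substitution $\xi\mapsto\xi^{-1}$ — I would match this against $\langle\alpha(f_1\otimes\psi_1),\alpha(f_2\otimes\psi_2)\rangle = \sum_{v\in\calV}\sum_{\xi:s(\xi)=\tau(v)}\overline{f_1(\xi)\psi_1(v)}f_2(\xi)\psi_2(v)$. One small point of care: the inner product on $E_{C(\Omega_0)}$ as defined earlier in the excerpt sums over $r^{-1}(x)$, so I would make sure the conventions in the definition of $L^2(\calG_s\times_\tau\calV)$ (which sums over $s(\xi)=\tau(v)$) are reconciled via the inversion symmetry $\xi\leftrightarrow\xi^{-1}$, which is a bijection $r^{-1}(x)\to s^{-1}(x)$; since $\sigma=1$ is not assumed here but the twist drops out of $|f(\xi)|^2$-type expressions anyway, this causes no trouble. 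Once the isometry is established on $C_c(\calG)\otimes^{\textnormal{alg}}_{\pi_\tau}C_c(\calV)$, I would invoke density of $C_c$ in both completions to conclude $\alpha$ extends to a unitary isomorphism; surjectivity onto a dense subspace is immediate since the image contains all $C_c(\calG_s\times_\tau\calV)$ (products $f(\xi)\psi(v)$ and their finite sums span it by a partition-of-unity / Stone--Weierstrass argument on the locally compact space $\calG_s\times_\tau\calV$).

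For the formula describing the left action, I would transport the left-multiplication action of $C_c(\calG)$ on $E_{C(\Omega_0)}$ (namely $(\pi(f_1)f_2)(\gamma)=(f_1\ast f_2)(\gamma)=\sum_{\gamma=\xi\eta}f_1(\xi)f_2(\eta)$, with $\sigma=1$) through $\alpha$. Writing a general element of $C_c(\calG_s\times_\tau\calV)$ via $\alpha$ and applying $f\ast(-)$ in the first leg, then re-expressing the convolution sum $\sum_{\gamma=\xi\eta}$ as a sum over $\xi$ with $\eta=\xi^{-1}\gamma$ (equivalently over $\xi$ with $s(\xi)=r(\eta)$ where $\eta=\xi^{-1}\gamma$ after renaming $\gamma$ to $\eta$), yields exactly $(f\ast\phi)(\eta,w)=\sum_{s(\xi)=r(\eta)}f(\xi)\phi(\xi^{-1}\eta,v)$. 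Here I would flag the index issue — the statement writes $v$ and $w$ — and note that the fibre-product constraint forces the $\calV$-variable to be carried along unchanged, so $v=w$ under the isomorphism; I would either correct this to $w$ throughout or remark that $v$ denotes the unique vertex label attached to $(\eta,w)$, which is $w$ itself.

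I do not expect a genuine obstacle: the lemma is, as the excerpt says, a straightforward verification, and the only things requiring attention are bookkeeping with the source/range conventions in the definition of $L^2(\calG_s\times_\tau\calV)$ versus the $r^{-1}(x)$-convention in the inner product on $E_{C(\Omega_0)}$, and the harmless notational slip between $v$ and $w$. The mildly delicate step is confirming that $\alpha$ has dense range, but this follows because finite sums $\sum_k f_k(\xi)\psi_k(v)$ are uniformly dense in $C_c$ of a locally compact product-type space — one can localise using a partition of unity subordinate to a product cover of $\calG_s\times_\tau\calV$ — so the extension to a unitary is automatic.
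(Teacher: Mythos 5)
The paper gives no proof of this lemma, calling it a straightforward verification, and your strategy (isometry on elementary tensors, dense range, transport of the left action) is the natural one and is correct. Two bookkeeping points are worth making precise, however.

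First, on the $r^{-1}$ versus $s^{-1}$ discrepancy: rather than ``reconciling by inversion,'' the cleaner observation is that the last equality in the displayed formula for $(f_1\mid f_2)_{C_0(\calG^{(0)})}$ in Section~1.3 is a typo and should read $\sum_{\xi\in s^{-1}(x)}\overline{f_1(\xi)}f_2(\xi)$. This follows from unwinding $\rho(f_1^*\!*\!f_2)(x)$ directly, and is forced by compatibility with the right $C_0(\calG^{(0)})$-module structure $(f\cdot g)(\xi)=f(\xi)g(s(\xi))$: only the $s^{-1}(x)$-sum satisfies $(f_1\mid f_2\cdot g)(x)=(f_1\mid f_2)(x)g(x)$. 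It is also what is used implicitly both in the frame computation that follows the definition and in the identification $E_{C(\Omega_0)}\otimes_{\mathrm{ev}_\omega}\R\cong\ell^2(s^{-1}(\omega))$ at the start of Section~5. With this corrected, $\alpha$ is an isometry literally term-by-term, with no inversion appearing anywhere.

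Second, you reproduce the typo in the left-action formula without noticing it. Your own computation of the convolution correctly produces the constraint $r(\xi)=r(\eta)$ (so that $\xi^{-1}\eta$ is defined), not $s(\xi)=r(\eta)$; the statement should read
\[
(f*\phi)(\eta,w)=\sum_{\xi\in r^{-1}(r(\eta))}f(\xi)\,\phi(\xi^{-1}\eta,w).
\]
The parenthetical gloss in your proof (``equivalently over $\xi$ with $s(\xi)=r(\eta)$ where $\eta=\xi^{-1}\gamma$'') conflates the dummy variable in the factorisation with the ambient argument of $f*\phi$ and produces a condition that is vacuously true rather than the actual constraint. The display immediately following the lemma in the paper, $\chi_y^**f(\eta,v)=\sum_{\xi\in r^{-1}(r(\eta))}\chi_y^*(\xi)f(\xi^{-1}\eta,v)$, confirms $r^{-1}(r(\eta))$ is the intended summation set. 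You correctly flag the $v$/$w$ mismatch, which should simply be $w$ throughout.
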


Using this lemma, we can decompose the tensor product Hilbert space 
via the choice function,
\[\calH=\calH_{+}\oplus \calH_{-}=L^{2}(\mathcal{G}_{s}\times_{\tau_{+}}\calV)\hat\otimes 
    \bigwedge\nolimits^{\! *} \R^d\oplus L^{2}(\calG_{s}\times_{\tau_{-}}\calV)\hat\otimes 
    \bigwedge\nolimits^{\! *} \R^d,
    \]
    though we note that $\calH_\pm$ is \emph{not} the decomposition of the tensor product 
    Hilbert space due to the grading, which also takes into account the $\Z_2$-graded 
    structure of $\bigwedge\nolimits^{\! *} \R^d$. 
On this Hilbert space the operator $X$ from the bulk $KK$-cycle in 
Equation \eqref{eq:bulk_K-cycle} on page \pageref{eq:bulk_K-cycle} is mapped to the operator 
\[X=\sum_{k=1}^{d}X_{k} \hat\otimes \gamma^{k}:\calH_{\pm}\to \calH_{\pm},\quad X(\phi\otimes w)(\xi,v)
  =\sum_{k=1}^{d}x_{k}(\xi)\phi(\xi,v) \hat\otimes\gamma^{k} w.\]
We fix $\varepsilon$ with $0<\varepsilon<\frac{r}{2}$, a discrete lattice 
$Y\subset\mathbb{R}^{d}$ and a uniformly locally finite cover for $\R^d$ with subordinate partition of unity
\[\mathcal{Y}:=\{B(y;\varepsilon)\}_{y\in Y},\quad\chi_{y}:B(y;\varepsilon)\to [0,1],\quad\sum_{y\in Y}\chi_{y}^{2}=1,\]
Recalling Proposition \ref{Deloneframe}, from $\mathcal{Y}$, we consider the sets $\{V_y\}_{y\in Y}$,   
$$
  V_y = \big\{ \xi=(\omega,x)\in \Omega_0\times\R^d \,:\, x \in \calL^{(\omega)}\cap B(y;\varepsilon) \big\},
$$
which form an $s$-cover of $\calG$. Consequently 
the functions $\chi_{y}:\calG\to\mathbb{R}$ define 
a frame for $E_{C(\Omega_0)}$. In order to construct the connection operator we wish to describe the maps
\[\langle \chi_{y}^{\pm}|:L^{2}(\mathcal{G}\times_{\tau_{\pm}}\mathcal{V})\to \ell^{2}(\mathcal{V}),\quad |\chi_{y}^{\pm}\rangle:\ell^{2}(\mathcal{V})\to L^{2}(\mathcal{G}_{s}\times_{\tau_{\pm}}\mathcal{V}).\]
Since the support $\chi_{y}$ is a compact subset of $B(y;\varepsilon)$, 
the convolution product takes the form
\begin{align*} \chi_{y}^{*}*f(\eta, v) &=\sum_{\xi\in r^{-1}(r(\eta))} \chi_{y}^{*}(\xi)f(\xi^{-1}\eta,v) 
      =\sum_{\xi\in r^{-1}(r(\eta))} \chi_{y}(\xi^{-1})f(\xi^{-1}\eta,v)  \\
&=\sum_{\{ \xi\in s^{-1}(r(\eta))\cap V_{y}\}}\chi_{y}(\xi)f(\xi\eta,v) 
 =\chi_{y}(\xi)f(\xi\eta,v)  ,\quad\textnormal{with } \xi\in s^{-1}(r(\eta))\cap V_{y},
\end{align*}
and $0$ when the latter set is empty. This shows that the maps become
\[\langle \chi_{y}^{\pm}|:L^{2}(\mathcal{G}\times_{\tau_{\pm}}\mathcal{V})\to \ell^{2}(\mathcal{V}),\quad \langle\chi_{y}^{\pm}|\phi(v):=\chi_{y}(\xi_{\pm}^{y}(v))\phi(\xi_{\pm}^{y}(v),v),\]
whenever $V_{y}\cap s^{-1}(\tau_{\pm}(v))\neq\emptyset$
and $\xi_{\pm}^{y}(v)$ is the unique point in $V_{y}\cap s^{-1}(\tau_{\pm}(v))$. In case $V_{y}\cap s^{-1}(\tau_{\pm}(v))=\emptyset$ we have $\langle\chi_{y}^{\pm}|\phi(v)=0$.
We can now define the operators
\[T_{\pm}:C_{c}(\mathcal{G}\times _{\tau_{\pm}}\mathcal{V})\to C_{c}(\mathcal{G}\times_{\tau_{\mp}}\mathcal{V}),\]
by
\[T_{+}\phi_{+}(\eta,v)=\sum_{y}\sum_{\xi\in s^{-1}(\tau_{+}(v))} \zeta_{|v|}\chi_{y}(\eta)\chi_{y}(\xi)\phi_{+}(\xi,v),\quad s(\eta)=\tau_{-}(v).\]
The above sum is in fact finite for each $(\eta,v)\in \mathcal{G}\times_{\tau_{-}}\mathcal{V}$, 
since the summands are nonzero only for those $y$ with $\eta\in V_{y}$ and 
$V_{y}\cap s^{-1}(\tau_{+}(v))\neq \emptyset$. For $T_{-}$ we have an analogous formula. 

The operators $T_{\pm}$ can be viewed as being constructed from the Grassmann connection associated to the frame $\{\chi_{y}\}$ as in \cite[Section 3.4]{MR}. We use the methods developed there to address self-adjointness properties of these operators.
\begin{lemma}The operator 
$$T:=\begin{pmatrix}0 & T_{-}\\ T_{+} & 0\end{pmatrix}:C_{c}(\mathcal{G}\times_{\tau_{+}}\mathcal{V})\oplus C_{c}(\mathcal{G}_{s}\times_{\tau_{-}}\mathcal{V})\to L^{2}(\mathcal{G}\times_{\tau_{+}}\mathcal{V})\oplus L^{2}(\mathcal{G}_{s}\times_{\tau_{-}}\calV),$$ is essentiallly self-adjoint.
\end{lemma}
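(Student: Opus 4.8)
The plan is to realise $T$ as the symmetric connection-type operator built from the Grassmann frame $\{\chi_y\}_{y\in Y}$ and then apply the self-adjointness machinery of \cite[Section 3.4]{MR}, exactly as was done there for product operators. First I would record that $T$ is symmetric on $C_c(\mathcal{G}\times_{\tau_+}\mathcal{V})\oplus C_c(\mathcal{G}_s\times_{\tau_-}\mathcal{V})$: this is a direct computation from the formula for $T_\pm$ using that $\chi_y$ is real-valued and that, for fixed $v$, the maps $\langle\chi_y^\pm|$ and $|\chi_y^\pm\rangle$ are formal adjoints (since $s^{-1}(\tau_\pm(v))\cap V_y$ is at most a single point, the relevant sums are finite and the pairing is unambiguous). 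In fact one sees $T_+ = \sum_y \zeta_{|\cdot|}\,|\chi_y^-\rangle\langle\chi_y^+|$ and $T_- = \sum_y \zeta_{|\cdot|}\,|\chi_y^+\rangle\langle\chi_y^-| = (T_+)^*$ on these cores, so $T$ is symmetric.

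Next I would use the standard criterion that a symmetric operator with a common core on which it is bounded ``blockwise'' and whose graph closure is stable under a suitable sequence of local projections is essentially self-adjoint. Concretely, let $Q_m$ be the projection onto functions supported on vertices $v$ with $|v|\le m$ together with groupoid elements in finitely many of the frame patches $V_y$ (i.e.\ $Q_m$ cuts off both the tree-degree and the fibre direction). One checks that $Q_m\to 1$ strongly, that $Q_m$ preserves $C_c$ of the fibre products, and that $[T,Q_m]$ is bounded on the core with $\|[T,Q_m]\|$ controlled — here the key point is that $T_\pm$ only couples a vertex $v$ to itself (the degree $|v|$ is unchanged) and couples $\xi^y_\pm(v)$ to $\eta\in V_y$, so $T$ is ``finite propagation'' in the $V_y$-patch labelling; since the cover $\mathcal{Y}$ is uniformly locally finite and $\zeta_{|v|}$ depends only on $|v|$, the off-diagonal coupling at fixed degree $m$ is a uniformly bounded operator. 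Then by the argument of \cite[Section 3.4]{MR} (the abstract essential self-adjointness lemma for Grassmann-connection operators, which requires precisely: symmetry, a frame, local finiteness of the frame, and a filtration $Q_m$ with bounded commutators) one concludes $T$ is essentially self-adjoint on $C_c(\mathcal{G}\times_{\tau_+}\mathcal{V})\oplus C_c(\mathcal{G}_s\times_{\tau_-}\mathcal{V})$.

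The step I expect to be the main obstacle is verifying the boundedness and commutator estimates for the restriction of $T$ to a fixed tree-degree, i.e.\ controlling $\sum_y \zeta_{|v|}\chi_y(\eta)\chi_y(\xi^y_\pm(v))$ uniformly in $\eta$ and $v$. The subtlety is that, unlike the bulk $KK$-cycle where the operator $X$ is diagonal, here $T$ genuinely mixes the two copies of the Hilbert space and involves the frame functions twice; one must use that for fixed $\eta$ there are only finitely many $y$ with $\eta\in V_y$ (uniform local finiteness of $\mathcal{Y}$), and that for each such $y$ there is at most one $\xi^y_\pm(v)\in V_y\cap s^{-1}(\tau_\pm(v))$ by Lemma~\ref{dtop} (the $s$-set property of $V_y$ coming from $\varepsilon<r/2$). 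Combined with $\sum_y\chi_y^2=1$ this gives, at fixed degree $m$, an operator of norm $\le \zeta_m\cdot(\text{uniform local multiplicity})$, which is finite; the filtration $Q_m$ then handles the unboundedness of $\zeta_{|v|}$ as $|v|\to\infty$. The fact that $\zeta_n=\log(1+n)$ grows slowly is not needed for essential self-adjointness (it will matter later for the Kasparov-product/Fredholm-module analysis), so I would not invoke it here.
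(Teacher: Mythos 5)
Your proposal captures the right structural observation but the execution has a gap in how it invokes \cite{MR}. The paper's argument is genuinely frame-theoretic: it verifies the hypotheses of \cite[Theorem 3.18]{MR}, namely that the frame $\{\chi_y\}$ is \emph{column-finite} (using Lemma \ref{rosetta} for local constancy of the Gram functions $(\chi_y\mid\chi_z)$) and \emph{complete}, the latter by exhibiting an approximate unit $u_n=\sum_{y\in I_n}\Theta_{\chi_y,\chi_y}$ in the convex hull of the frame $\Theta$-operators and proving $[T,u_n]=0$ exactly. That vanishing again uses Lemma \ref{rosetta}: for $|v|\ge nR$ the ultrametric constraint forces $x(\xi)=x(\eta)$, hence $\chi_z(\xi)=\chi_z(\eta)$, so the commutator integrand collapses; for $|v|\le nR$ one uses $\sum_{y\in I_n}\chi_y^2=1$ on the relevant set. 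Your $Q_m$, by contrast, is built partly from a literal tree-degree cutoff $P_{|v|\le m}$ on $\ell^2(\mathcal{V})$, which is \emph{not} a convex combination of the module-side operators $\Theta_{\chi_y,\chi_y}$. Consequently \cite[Definition 3.9, Theorem 3.18]{MR} do not apply to your $Q_m$ as stated, and your appeal to ``the abstract essential self-adjointness lemma'' of \cite[Section 3.4]{MR} is therefore unjustified. You also never invoke Lemma \ref{rosetta}, which is the load-bearing geometric input in the paper's proof; without it there is no mechanism forcing the commutators to be small (let alone zero) on the high-degree blocks.

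On the positive side, your remark that $T$ does not change the tree-degree $|v|$ and is bounded on each fixed-degree block (with norm $\lesssim N\zeta_{|v|}$ by the intersection-number count and the $s$-set property of $V_y$) is correct, and if you pursued it honestly it would yield a \emph{more elementary} proof than the paper's: $T$ commutes with each block projection $P_v$, the restriction $T|_v$ is a bounded self-adjoint operator, and a block-diagonal symmetric operator with bounded blocks on a core intersecting each block densely is automatically essentially self-adjoint. That argument bypasses the \cite{MR} frame machinery entirely and does not require Lemma \ref{rosetta}. As written, however, you have neither carried out that elementary argument nor verified the hypotheses of the abstract theorem you cite, so the proof is incomplete. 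Your observation that the slow growth $\zeta_n=\log(1+n)$ is irrelevant at this stage is correct and matches the paper.
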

\begin{proof} 
For fixed $z$ the continuous functions
\[ \big(\chi_{y} \mid \chi_{z} \big)_{C(\Omega_0)}(\omega) =\sum_{\xi\in s^{-1}(\omega)}\chi_{y}(\xi)\chi_{z}(\xi),\]
are possibly nonzero only for those $y$ with $B(y;\varepsilon)\cap B(z;\varepsilon)\neq\emptyset$. There are only 
finitely many such $y$ since the cover $\mathcal{Y}$ has finite intersection number. Moreover they are locally constant since for 
$\rho(\alpha,\omega)<e^{-nR}$ sufficiently small
 we have $$(\chi_y\mid \chi_z)_{C(\Omega_0)}(\alpha) = (\chi_y\mid \chi_z)_{C(\Omega_0)}(\omega),$$ 
 by Lemma \ref{rosetta}. 
 Thus the frame $\{\chi_{y}\}_{y \in Y}$ is column finite in the sense of \cite[Proposition 3.2]{MR}, 
 the operators $\Theta_{z,z}:=\Theta_{\chi_z,\chi_z}$  preserve a core for $T$ 
 by \cite[Lemma 3.15]{MR} and the commutators $ [T, \Theta_{z,z}]$ 
 extend to bounded operators by \cite[Lemma 3.8]{MR}.
 
It remains to show that there exists an approximate unit $u_n$ in the convex hull of the $\Theta_{z,z}$ that 
satisfies \cite[Definition 3.9]{MR}. For a fixed $n$, consider the set
\[I_{n}:=\bigcup_{v\subset§ B(0;nR)}\{y\in Y: s^{-1}(\tau_{\pm}(v))\cap V_{y}\neq \emptyset\},\]
and consider
the operator $u_{n}:=\sum_{y\in I_{n}} \Theta_{y,y}$. Since
\[[T,\Theta_{z,z} ]\phi(\eta,v)=\sum_{y,\xi}(\chi_{z}(\xi)^{2}-\chi_{z}(\eta)^{2})\zeta_{|v|}\chi_{y}(\xi)\chi_{y}(\eta)\phi(\xi,v),\]
we see that for $|v|\geq nR$, Lemma \ref{rosetta} gives that $x(\xi)=x(\eta)$ and thus $\chi_{z}(\xi)=\chi_{z}(\eta)$, so we have $[T,u_{n}]\phi(\eta,v)=0$. For $|v|\leq nR$ we find
\[\sum_{y\in I_{n}}(\chi_{y}(r(\xi))^{2}-\chi_{y}(r(\eta))^{2})\zeta_{|v|}\chi_{k}(\xi)\chi_{k}(\eta)\phi(\xi,v)=0,\]
because $\xi,\eta\in s^{-1}(\tau_{\pm}(v))$ and $v\in B(0;nR)$ so $\sum_{y\in I_{n}}\chi_{y}^{2}(\xi)=\sum_{y\in I_{n}}\chi_{y}^{2}(\eta)=1$. This proves that $[T,u_{n}]=0,$ so $\{\chi_{y}\}_{y\in Y}$ form a complete frame and $T$ is essentially self-adjoint by \cite[Theorem 3.18]{MR}.
\end{proof}
Denote by 
\[\mathcal{C}=\mathcal{C}_{+}\oplus \mathcal{C}_{-}:=C_{c}(\mathcal{G}\times_{\tau_{+}}\mathcal{V})\hat\otimes 
    \bigwedge\nolimits^{\! *} \R^d\oplus C_{c}(\mathcal{G}\times_{\tau_{-}}\mathcal{V})\hat\otimes 
    \bigwedge\nolimits^{\! *} \R^d,\]
    the common core for $X$ and $T$ and by $\kappa$ the grading operator on $\bigwedge\nolimits^{\! *} \R^d$.
  Then we have $X\kappa=-\kappa X$ and $T\kappa=\kappa T$. We now address self-adjointness of the densely defined symmetric Hilbert space operator $D=X+T\kappa$, using the methods of \cite{MesLesch}.
\begin{prop} 
\label{prop: prodcandidate}
The resolvent $(X\pm i)^{-1}$ maps the core $\mathcal{C}$ bijectively onto itself. For $\phi\in \mathcal{C}_{\pm}$ 
and $w\in \bigwedge^*\R^d$ we have the estimate
\[\big\langle (XT\kappa+T\kappa X) (\phi\otimes w), (XT\kappa+T\kappa X) (\phi\otimes w) \big\rangle 
  \leq r^{2}\| T\kappa( \phi\otimes w)\|^{2}.\]
Consequently the sum operator $D:=X+T\kappa$ is essentially self-adjoint with compact resolvent and the bounded transforms of $X$ and $D$ satisfy the Connes--Skandalis positivity and connection conditions (see \cite[Appendix A]{longitudinal}). 
\end{prop}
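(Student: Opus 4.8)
The plan is to establish the three claims in turn, with the operator estimate doing most of the analytic work. First I would verify that $(X\pm i)^{-1}$ preserves $\mathcal{C}$. Since $X = \sum_k X_k \hat\otimes\gamma^k$ acts diagonally with respect to the fibre decomposition $L^2(\calG_s\times_{\tau_\pm}\calV) = \bigoplus_{v\in\calV} \ell^2(s^{-1}(\tau_\pm(v)))$ and on each fibre $s^{-1}(\tau_\pm(v)) \simeq \calL^{(\tau_\pm(v))}$ is just the position operator twisted by the Clifford action, one has $X^2 = |x|^2 \hat\otimes 1$, so $(X\pm i)^{-1}$ is given fibrewise by bounded multiplication operators that manifestly preserve compact support in the $\calG$-variable and act as the identity in the $\calV$-variable. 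Bijectivity onto $\mathcal{C}$ follows because $(X\pm i)$ also preserves $\mathcal{C}$ (the cocycle $\xi\mapsto x(\xi)$ is bounded on compact sets) and $(X\pm i)(X\pm i)^{-1} = 1 = (X\pm i)^{-1}(X\pm i)$ on $\mathcal{C}$.

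The heart of the argument is the anticommutator estimate. On a dense core, $XT\kappa + T\kappa X = (X T + TX)\kappa$ (using $X\kappa = -\kappa X$), and I would compute $(XT_\pm + T_\mp X)\phi$ directly using the explicit formula for $T_\pm$ in terms of the frame $\{\chi_y\}$ and the points $\xi_\pm^y(v)$. The key point is that in the expression
\[
(XT_++T_-X)\phi(\eta,v) = \sum_y\sum_{\xi\in s^{-1}(\tau_+(v))} \zeta_{|v|}\chi_y(\eta)\chi_y(\xi)\big(x(\eta) - x(\xi)\big)\phi(\xi,v),
\]
the summand vanishes unless both $\eta$ and $\xi$ lie in $V_y = \{\,\cdot\in B(y;\varepsilon)\,\}$ for the same $y$, which forces $\|x(\eta)-x(\xi)\| < 2\varepsilon < r$. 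Hence the operator of multiplication by $x(\eta)-x(\xi)$ appearing here has norm at most $r$ on the relevant subspace, and since $\sum_y |\chi_y(\eta)\chi_y(\xi)|$ is controlled by the partition-of-unity identity exactly as in the proof that $T$ is well-defined, one gets $\|(XT+TX)(\phi\otimes w)\| \le r\,\|T\kappa(\phi\otimes w)\|$. Taking squares and using $\kappa$ unitary gives the stated bound. I would take care to present this as a Cauchy--Schwarz estimate over the (finite) index set of $y$ with $\eta\in V_y$, mirroring the computation in the preceding lemma.

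Finally, essential self-adjointness of $D = X+T\kappa$ with compact resolvent follows from the perturbation criterion of \cite{MesLesch}: one has the two self-adjoint pieces $X$ and $T\kappa$, the resolvent $(X\pm i)^{-1}$ preserves the common core $\mathcal{C}$, and the anticommutator estimate shows that $T\kappa$ is a ``bounded perturbation of $X$'' in the precise sense required (the anticommutator is bounded relative to $T\kappa$, with relative bound $r<\infty$, which together with essential self-adjointness of $X$ and of $T$ separately on $\mathcal{C}$ yields essential self-adjointness of the sum). Compactness of $(1+D^2)^{-1}$ follows since $(1+X^2)^{-1}$ is already compact on each fibre (Delone sets have the same summability asymptotics as $\Z^d$, cf. Proposition \ref{prop:semifinite_smooth_spec_trip}) and $D^2 \ge X^2 - \text{(lower order)}$ via the estimate, so the resolvent of $D$ is dominated by a compact operator. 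The Connes--Skandalis positivity and connection conditions for the bounded transforms then follow: the connection condition is precisely the boundedness of $[T, \Theta_{z,z}]$ established in the self-adjointness lemma for $T$ together with the frame structure, and positivity is the content of the anticommutator estimate (it makes $\{D,X\}$ bounded below modulo bounded operators). The main obstacle is organising the anticommutator computation cleanly enough that the geometric input — $\eta,\xi$ in a common $V_y$ forces $\|x(\eta)-x(\xi)\|<r$ — produces exactly the constant $r^2$; everything else is an application of the machinery of \cite{MR} and \cite{MesLesch} already set up above.
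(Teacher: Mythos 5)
Your overall plan matches the structure of the paper's proof: $(X\pm i)^{-1}$ preserves $\mathcal{C}$ because $X$ is Clifford multiplication by the real function $x(\xi)$; the anticommutator is computed in terms of the frame $\{\chi_y\}$; the geometric input is $|x(\xi)-x(\eta)|<2\varepsilon<r$ for $\xi,\eta$ in a common $V_y$; and the conclusion is delegated to \cite{MesLesch} and \cite{MR}. Two of your steps are, however, respectively imprecise and incorrect.

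To obtain the precise constant $r^{2}$ (rather than a constant involving the intersection number $N$), the argument is \emph{not} a Cauchy--Schwarz estimate over $y$. It rests on the observation that for fixed $(\eta,v)$ with $\eta\in s^{-1}(\tau_-(v))$, the element $\xi\in s^{-1}(\tau_+(v))$ with $\chi_y(\eta)\chi_y(\xi)\neq 0$ is the \emph{same} for all contributing $y$: if $\xi_y,\xi_{y'}$ both pair with $\eta$ then $|x(\xi_y)-x(\xi_{y'})|<4\varepsilon<2r$, while $-x(\xi_y),-x(\xi_{y'})\in\calL^{(\tau_+(v))}$ and distinct points of an $r$-uniformly discrete set lie at distance at least $2r$, forcing $\xi_y=\xi_{y'}$. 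This uniqueness is what legitimises factoring $(x_k(\xi)-x_k(\eta))$ out of the $y$-sum and identifying what remains with $T\phi(\eta,v)$, i.e.\ the pointwise identity
\[
(TX - XT)(\phi\otimes w)(\eta,v)=\sum_{k=1}^d (x_k(\xi)-x_k(\eta))\,T\phi(\eta,v)\otimes\gamma^k w,
\]
whence, by orthogonality of $\{\gamma^k w\}_k$ for real coefficients, the exact bound $\sum_k|x_k(\xi)-x_k(\eta)|^2 <4\varepsilon^2\leq r^2$ yields the claimed inequality. Cauchy--Schwarz over the $\leq N$ relevant $y$ gives a weaker bound that moreover does not compare directly with $\|T\kappa(\phi\otimes w)\|$.

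The proposed domination argument for compactness of the resolvent does not work. The operator $(1+X^2)^{-1}$ is \emph{not} compact on $\mathcal{H}$: on each fibre $\ell^{2}(s^{-1}(\tau_\pm(v)))$ it is multiplication by $(1+|x|^2)^{-1}$, which attains the value $1$ at the base point $x=0$, so the eigenvalue $1$ has infinite multiplicity as $v$ ranges over $\calV$. Thus ``compact on each fibre'' does not propagate to the direct sum, and $D^{2}\geq X^{2}-(\text{lower order})$ is not strong enough; the useful consequence of the anticommutator estimate is an inequality of the form $D^2\geq X^2+(1-\epsilon)T^2-C_\epsilon$, which, combined with $\zeta_{|v|}\to\infty$, yields compactness. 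The paper does not attempt a self-contained argument here; it simply cites \cite[Theorems 4.5, 7.4, Proposition 7.12]{MesLesch} and \cite[Theorem 4.4]{MR} once the anticommutator estimate is established, and your plan should do the same rather than rely on the flawed domination reasoning.
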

\begin{proof}
The statement about the resolvent is immediate since $X$ is given by Clifford multiplication by a real valued function. 
Thus the anticommutator
$XT\kappa+T\kappa X=(XT-TX)\kappa$ is defined on $\mathcal{C}$. The
commutator $XT-TX$ can be explicitly computed as
\begin{align*}
(TX-XT)(\phi(\eta,v)\otimes w)&=\sum_{k=1}^{d}\sum_{y,\xi}(x_{k}(\xi)-x_{k}(\eta))\zeta_{|v|}\chi_{y}(\xi)\chi_{y}(\eta)\phi(\xi,v) \otimes \gamma^k w \\
&=\sum_{k=1}^{d}(x_{k}(\xi)-x_{k}(\eta)) (T\phi)(\eta,v) \otimes \gamma^{k} w ,
\end{align*}
and since the $\gamma^{k}$ are Clifford matrices it holds that
\[
\left\|\sum_{k=1}^{d}(x_{k}(\xi)-x_{k}(\eta))(T\phi)(\eta,v) \otimes \gamma^k w \right\|^{2}
=\sum_{k}\|x_{k}(\xi)-x_{k}(\eta)\|^{2}\| T\kappa( \phi\otimes w)\|^{2}.
\]
Since
$x(\xi),x(\eta)\in B(y;\varepsilon)$ it follows that $\sum_{k}\|x_{k}(\xi)-x_{k}(\eta)\|^{2}< 4\varepsilon^{2}\leq r^{2}$, 
which gives us the desired estimate. Self-adjointness, compact resolvent and positivity follow 
from~\cite[Theorem 4.5, Theorem 7.4, Proposition 7.12]{MesLesch} and the connection condition 
follows from~\cite[Theorem 4.4]{MR}.
\end{proof}
\begin{remark} Note that we have not yet shown that $D$ has bounded commutators with 
$C_{c}(\calG)$ and that this is the only obstruction to $D$ representing the unbounded 
Kasparov product via \cite[Theorem 7.4]{MesLesch}. In fact the operator $X$ has bounded 
commutators with all $f\in C_{c}(\mathcal{G})$, but the operator $T$ does not. In the 
next section we will show that whenever $\zeta_{|v|}$ is chosen so that it grows sufficiently slowly, 
the bounded transform of $D$ will be a Fredholm module. This Fredholm module will satisfy the 
Connes--Skandalis connection and positivity conditions by the previous proposition, and thus 
represents the Kasparov product.
\end{remark}

\subsection{The bounded transform as a Fredholm module}
Recall that a continuous function $b:\mathbb{R}\to [-1,1]$ is a \emph{normalising function} if it is 
odd and $\lim_{x\to\pm\infty}b(x)=\pm 1$. To prove that for the right choice of $\zeta_{|v|}$ 
we obtain a Fredholm module we use the following Lemma.

\begin{lemma}
\label{shortcut}
Let $(S,T)$ be a weakly anticommuting pair of self-adjoint regular operators on the Hilbert $C^{*}$-module $E_B$, 
$a\in\End^{*}(E_B)$, $b:\mathbb{R}\to [-1,1]$  normalising function and $0<\delta<\frac{1}{2}$. Write $D=S+T$ and suppose that $a(D\pm i)^{-1}$ is compact and the operators 
\[[S,a],\quad (1+S^{2})^{-\delta}[T,a],\quad [T,a](1+S^{2})^{-\delta},\] extend boundedly to all of $E_B$. Then $[b(D),a]$ is a compact operator.
\end{lemma}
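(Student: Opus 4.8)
\textbf{Proof proposal for Lemma \ref{shortcut}.}

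The plan is to interpolate between the resolvent bounds on $S$ and the compactness of $a(D\pm i)^{-1}$ using an integral formula for the normalising function. First I would reduce to the case where $b$ is a specific convenient normalising function: since the difference of two normalising functions is a $C_0$-function, and $C_0(\R)$ is generated (in the $C^*$-sense) by the resolvents $(x\pm i)^{-1}$, it suffices to know that $(D\pm i)^{-1} - (\text{bounded})\cdot a$ commutators are compact; more precisely, once I show $[g(D),a]$ is compact for one normalising function $g$, then for any other normalising function $b$ we have $b-g\in C_0(\R)$, and $[(b-g)(D),a]$ is compact because $a(D\pm i)^{-1}$ is compact and $C_0(\R)$ is the closed span of functions $x\mapsto (x-\lambda)^{-1}$. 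So the whole problem collapses to producing \emph{one} good normalising function and controlling the commutator via the resolvent of $D$.

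The technical core is the following: for $\lambda\in\R$, write the resolvent identity
$$
[(D-i\lambda)^{-1},a] = -(D-i\lambda)^{-1}[D,a](D-i\lambda)^{-1} = -(D-i\lambda)^{-1}\big([S,a]+[T,a]\big)(D-i\lambda)^{-1},
$$
valid on the common core $\mathcal C$ (here I use that $S+T$ is essentially self-adjoint on $\mathcal C$, which is part of the weak-anticommutation hypothesis, cf. \cite{MesLesch}). The term $-(D-i\lambda)^{-1}[S,a](D-i\lambda)^{-1}$ is bounded by $\|[S,a]\|\,|\lambda|^{-2}$ and, after multiplying by $a$ on one side or using the compactness of $a(D\pm i)^{-1}$, is handled directly. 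For the $[T,a]$ term I would insert the factorisation $[T,a] = (1+S^2)^{\delta}\cdot(1+S^2)^{-\delta}[T,a]$ and use that $(D-i\lambda)^{-1}(1+S^2)^{\delta}$ is bounded with norm $O(|\lambda|^{\delta-1})$ — this is the key interpolation estimate and follows from weak anticommutation of $S$ and $T$ together with the resolvent comparison $\|(1+S^2)^{\delta}(1+D^2)^{-\delta}\|<\infty$ established in \cite{MesLesch}, plus $\|(1+D^2)^{\delta/2}(D-i\lambda)^{-1}\|=O(|\lambda|^{\delta-1})$. The symmetric factor on the right is treated with $[T,a](1+S^2)^{-\delta}$ bounded and $(1+S^2)^{\delta}(D-i\lambda)^{-1}=O(|\lambda|^{\delta-1})$. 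Thus $\|[(D-i\lambda)^{-1},a]\| = O(|\lambda|^{2\delta-2})$, which is integrable in $\lambda$ near infinity precisely because $\delta<\tfrac12$.

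Now I would choose $b$ so that $b(D)$ admits an integral representation $b(D)=\int_{\R} (D-i\lambda)^{-1}\,\phi(\lambda)\,d\lambda + (\text{bounded multiple of }1)$ with a nice rapidly-decaying weight $\phi$ (for instance $b(x)=\frac{2}{\pi}\arctan x$ gives $b'(x)=\frac2\pi(1+x^2)^{-1}$, and one writes $b(D)$ via Helffer--Sjöstrand or directly via $\int_0^\infty (D(D^2+s)^{-1})\,d\mu(s)$-type formulas). Commuting $a$ past the integral,
$$
[b(D),a] = \int_{\R} [(D-i\lambda)^{-1},a]\,\phi(\lambda)\,d\lambda,
$$
and each integrand is compact: it equals $-(D-i\lambda)^{-1}[D,a](D-i\lambda)^{-1}$, and one of the two outer resolvents can be traded, using $a(D\pm i)^{-1}\in\mathbb K(E_B)$ and the bounded-commutator hypotheses, for a compact operator (e.g. $(D-i\lambda)^{-1}[S,a] = [(D-i\lambda)^{-1},a]S\cdot(\dots)$ or more cleanly $a(D-i\lambda)^{-1}\in\mathbb K$ so $(D-i\lambda)^{-1}[S,a](D-i\lambda)^{-1} = (D-i\lambda)^{-1}(Sa-aS)(D-i\lambda)^{-1}$ and insert $a(D-i\lambda)^{-1}\in\mathbb K$). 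Since $\mathbb K(E_B)$ is closed and the integrand is Bochner-integrable with integrable norm bound $O(|\lambda|^{2\delta-2})|\phi(\lambda)|$, the integral lies in $\mathbb K(E_B)$. This gives compactness of $[b(D),a]$ for this particular $b$, and then the first paragraph's $C_0$-argument upgrades it to every normalising function.

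The main obstacle I anticipate is the interpolation estimate $\|(1+S^2)^{\delta}(D-i\lambda)^{-1}\| = O(|\lambda|^{\delta-1})$: weak anticommutation of $S$ and $T$ does not immediately give $D^2 \geq S^2$ as operators, so one must invoke the quantitative results of \cite{MesLesch} (domain comparisons, $C^1$-ness of the pair) rather than a naive $\|(1+S^2)^\delta u\|\le\|(1+D^2)^\delta u\|$ inequality. Getting the exponent bookkeeping right — matching $\delta$ on the left, $\delta$ on the right, a genuine $|\lambda|^{-1}$ from one full resolvent, and confirming $2\delta-2<-1$ — is the crux, and it is exactly why the hypothesis demands $\delta<\tfrac12$.
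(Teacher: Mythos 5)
Your approach is genuinely different from the paper's. The paper's proof is much shorter because it reduces the statement to the hypotheses of~\cite[Theorem A.6]{GM15} and then cites that result: since $[D,a]=[S,a]+[T,a]$ with $[S,a]$ bounded, one only needs $[T,a](1+D^{2})^{-\delta}$ and its adjoint to be bounded, and this follows from the factorisation $[T,a](1+D^{2})^{-\delta}=[T,a](1+S^{2})^{-\delta}\cdot(1+S^{2})^{\delta}(1+D^{2})^{-\delta}$ once one proves the boundedness of $(1+S^{2})^{\delta}(1+D^{2})^{-\delta}$. You instead essentially \emph{re-prove} the content of \cite[Theorem A.6]{GM15} inline, via a resolvent integral (Helffer--Sj\"ostrand) representation of $b(D)$ and a decay estimate for $[(D-i\lambda)^{-1},a]$. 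Both routes pivot on the same technical crux, $\|(1+S^{2})^{\delta}(1+D^{2})^{-\delta}\|<\infty$, but you delegate it to \cite{MesLesch}, while the paper gives a self-contained two-step argument that is worth knowing: since $D=S+T$ is self-adjoint and regular on $\Dom S\cap\Dom T$, the closed everywhere-defined operators $(S\pm i)(D\pm i)^{-1}$ are bounded, which yields the operator inequality $(1+D^{2})^{-1}\leq C(1+S^{2})^{-1}$; applying the L\"owner--Heinz inequality with exponent $2\delta\leq 1$ then gives $(1+D^{2})^{-2\delta}\leq C^{2\delta}(1+S^{2})^{-2\delta}$ and hence the desired bound. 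This is exactly where $\delta<\tfrac12$ enters, more cleanly than in your bookkeeping of integrability exponents. Two caveats about your sketch. First, the reduction in your opening paragraph (deducing compactness of $[h(D),a]$ for $h\in C_{0}(\R)$ from $a(D\pm i)^{-1}\in\mathbb K$ alone) is circular: passing from compactness of $a(D-\lambda)^{-1}$ to compactness of $(D-\lambda)^{-1}a$ already requires controlling $[(D-\lambda)^{-1},a]$, which is what the rest of the argument aims to prove; moreover this reduction is unnecessary since the integral representation works for a general normalising function. Second, your intermediate claim that $\|(D-i\lambda)^{-1}(1+S^{2})^{\delta}\|=O(|\lambda|^{\delta-1})$ is off — the correct order via spectral calculus is $O(|\lambda|^{2\delta-1})$ — though this does not affect your final exponent $O(|\lambda|^{2\delta-2})$, since the other resolvent contributes $O(|\lambda|^{-1})$.
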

\begin{proof} We need only show that $[T,a](1+D^{2})^{-\delta}$ and its adjoint extend to bounded operators. Then \cite[Theorem A.6]{GM15} applies to reach the conclusion. Since we have the factorisation
\[[T,a](1+D^{2})^{-\delta}=[T,a](1+S^{2})^{-\delta}(1+S^{2})^{\delta}(1+D^{2})^{-\delta},\]
it suffices to show that $(1+S^{2})^{\delta}(1+D^{2})^{-\delta}$ is bounded. Now if $R$ is a densely defined operator on $E_B$ with bounded adjoint, then
\begin{align*}\sup_{e_1 \in\Dom R,\, e_2\in E}\| (Re_1\mid e_2)_B \|=\sup_{e_1\in\Dom R,\, e_2\in E}\| (e_1\mid R^*e_2)_B \|\leq C\|e_1\|\|e_2\|,
\end{align*}
so $R$ is bounded on its domain.
Since $D=S+T$ is self-adjoint and regular on $\Dom S\cap \Dom T$ the operators $(S\pm i)(D\pm i)^{-1}$ 
are everywhere defined and closed, hence bounded. Their adjoints
are the extension of $(D\pm i)^{-1}(S\pm i)$. Hence
\begin{align*}
( (1+D^{2})^{-1}e\mid e)_{B} &=((D+ i)^{-1}e\mid (D+i)^{-1}e)_{B} \\ &=( (D+ i)^{-1}(S+ i)(S+i)^{-1}e \mid  (D+ i)^{-1}(S+ i)(S+i)^{-1}e)_{B} \\
&\leq C( (S+i)^{-1}e\mid (S+i)^{-1}e)_{B} =C( (1+S^{2})^{-1}e\mid e)_{B}.
\end{align*}
Thus it holds that 
\[(1+D^{2})^{-1}\leq C(1+S^{2})^{-1}.\]
For $0<\delta<\frac{1}{2}$ we have the form estimate
\begin{align*}
( (1+D^{2})^{-\delta}(1+S^{2})^{\delta}e\mid  (1+D^{2})^{-\delta}(1+S^{2})^{\delta}e)_{B} \leq C^{\delta}( e\mid e)_{B} ,
\end{align*}
which proves that the adjoint is bounded. 
\end{proof}
By Proposition \ref{prop:s-cover_frame} every $f\in C_{c}(\mathcal{G})$ can be expressed as a finite sum $f=\sum_{y}\chi_{y}f_{y}$ with $f_{y}\in C(\Omega_0)$.  it follows that $f^{*}=\sum f_{y}^{*}\chi_{y}^{*}$ and since $C_{c}(\mathcal{G})$ is closed under the adjoint operation it thus suffices to show that for all $f\in\textnormal{Lip}(\Omega_0)$ and all $\chi_{k}^{*}$ Lemma \ref{shortcut} is satisfied for certain choices of $\zeta_{|v|}$. From now on we fix the choice $\zeta_{|v|}:=\log(1+|v|)$.

\begin{lemma}
\label{loglemma}
 Let $f\in\textnormal{Lip}(\Omega_0)$. Then
\[\|f\|_{\log}:=\sup_{\alpha \neq \omega}\frac{|f(\alpha)-f(\omega)|}{\log (1-\log(\rho(\alpha,\omega)))}<\infty,\]
and so $\|f(\alpha)-f(\omega)\|\leq \|f\|_{\log}\log (1-\log(\rho(\alpha,\omega)))$.
\end{lemma}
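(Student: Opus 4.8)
The plan is to exploit that a Lipschitz function for the ultrametric $\rho$ is, in particular, bounded on the compact space $\Omega_0$, and to compare the two moduli of continuity --- the genuine Lipschitz modulus $x\mapsto x$ and the logarithmic modulus $x\mapsto\log(1-\log x)$ --- on the two regimes $\rho(\alpha,\omega)$ near $0$ and $\rho(\alpha,\omega)$ bounded away from $0$. Recall that $\rho$ takes values in $\{e^{-nR}:n\in\mathbb{N}\}\cup\{0\}$, so for distinct $\alpha,\omega$ we have $\rho(\alpha,\omega)=e^{-nR}$ for some $n\in\mathbb{N}$, whence $1-\log\rho(\alpha,\omega)=1+nR\geq 1$ and $\log(1-\log\rho(\alpha,\omega))=\log(1+nR)$ is strictly positive once $n\geq 1$ and equals $0$ only in the degenerate case $n=0$, i.e. $\rho(\alpha,\omega)=1$. (If $R$ is normalised so that $\rho\le e^{-R}<1$ on distinct points this degeneracy does not even occur; otherwise one simply notes $\rho(\alpha,\omega)=1$ forces $|f(\alpha)-f(\omega)|$ to be controlled directly by $2\|f\|_\infty$ times any positive constant, after shrinking $R$.)

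First I would write $L:=\|f\|_{\mathrm{Lip}}=\sup_{\alpha\ne\omega}|f(\alpha)-f(\omega)|/\rho(\alpha,\omega)<\infty$, which is finite by definition of $\mathrm{Lip}(\Omega_0)$, and set $M:=\|f\|_\infty<\infty$ by compactness of $\Omega_0$. For the regime of \emph{small} distance, say $\rho(\alpha,\omega)=e^{-nR}$ with $n\ge 1$, I would use the elementary inequality: the function $g(x)=x/\log(1+x)$ is bounded on, say, $x\in[R,\infty)$ — indeed $g$ is continuous there and $g(x)\to\infty$ only as $x\to\infty$, but we do not need a \emph{uniform} bound of $g$ on all of $[R,\infty)$; rather we want $\rho/\log(1-\log\rho)$ bounded, i.e. with $t=nR$, the quantity $e^{-t}/\log(1+t)$, which tends to $0$ as $t\to\infty$ and is continuous for $t\ge R>0$, hence is bounded by some constant $C_1$ depending only on $R$. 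Then
\[
\frac{|f(\alpha)-f(\omega)|}{\log(1-\log\rho(\alpha,\omega))}
=\frac{|f(\alpha)-f(\omega)|}{\rho(\alpha,\omega)}\cdot\frac{\rho(\alpha,\omega)}{\log(1-\log\rho(\alpha,\omega))}
\le L\,C_1.
\]
For the regime where $\rho(\alpha,\omega)$ is bounded below (finitely many values, in fact only the case that would make the denominator small, which by the discreteness of the range of $\rho$ is a finite set of values of $n$) one estimates the numerator crudely by $|f(\alpha)-f(\omega)|\le 2M$ and the denominator by a fixed positive constant, again obtaining a finite bound. Taking $\|f\|_{\log}$ to be the maximum of these two bounds gives the claimed finiteness, and the displayed inequality $\|f(\alpha)-f(\omega)\|\le\|f\|_{\log}\log(1-\log\rho(\alpha,\omega))$ is then immediate from the definition of the supremum.

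I do not expect a genuine obstacle here: the whole content is that $\log(1-\log(\cdot))$ is a weaker (smaller) modulus of continuity than the identity \emph{near} $\rho=0$, so Lipschitz control automatically upgrades to logarithmic control, while away from $0$ compactness/boundedness of $f$ takes over. The only point requiring a little care is the bookkeeping at $\rho(\alpha,\omega)=1$ (equivalently $n=0$), where $\log(1-\log\rho)=0$ and the quotient is a priori $\infty$; this is harmless because, as noted, one may assume $R$ chosen (or rescaled) so that the ultrametric never attains the value $1$ on a pair of distinct points of $\Omega_0$ — or, alternatively, interpret the supremum as ranging over pairs with $\rho(\alpha,\omega)<1$, which suffices for the subsequent applications of this lemma to commutator estimates with the operator $T$.
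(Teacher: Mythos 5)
Your proof is correct and follows essentially the same route as the paper's: compare the Lipschitz modulus $\rho \mapsto \rho$ with the logarithmic modulus $\rho \mapsto \log(1-\log\rho)$ in the small-$\rho$ regime, apply the Lipschitz bound there, and fall back on $\|f\|_\infty$ (compactness of $\Omega_0$) elsewhere. The paper packages the small-$\rho$ estimate as the chain $\rho \leq \log(e-\log\rho) \leq \log(1-\log\rho)+\log(e-1)$; the first inequality holds on all of $(0,1]$, but the second only for $\rho \leq e^{-1/(e-2)} \approx 0.25$, which the paper tacitly acknowledges with ``for $\rho(\alpha,\omega)$ small''. Your explicit two-regime split makes this cleaner.

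One genuine contribution in your write-up is the careful treatment of the degenerate value $\rho(\alpha,\omega)=1$, which the paper's proof does not address. When $\rho(\alpha,\omega)=1$ the denominator $\log(1-\log\rho(\alpha,\omega))$ vanishes, so the supremum defining $\|f\|_{\log}$ is literally $+\infty$ whenever $f(\alpha)\neq f(\omega)$ for such a pair; since $\rho=1$ occurs precisely when $\mathcal{L}^{(\alpha)}\cap B(0;R) \neq \mathcal{L}^{(\omega)}\cap B(0;R)$, this case cannot be excluded in general. Your proposed remedy --- restrict the supremum to pairs with $\rho(\alpha,\omega)<1$ --- is the right reading, and it costs nothing downstream: wherever the lemma is invoked in the commutator estimates, Lemma \ref{rosetta} already guarantees $\rho(r(\xi),r(\eta))\leq e^{-(|v|R-\|x(\xi)\|)}$ with $|v|R-\|x(\xi)\|>r>0$, so only values $\rho<1$ (indeed $\rho\leq e^{-r}$) ever arise.
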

\begin{proof}Since $0\leq \rho(\alpha,\omega)\leq 1$ it follows that 
$$\rho(\alpha,\omega)\leq \log (e-\log(\rho(\alpha,\omega)))\leq \log(1-\log(\rho(\alpha,\omega)))+\log(e-1).$$ 
So for $\rho(\alpha,\omega)$ small there is a uniform constant with
\[\rho(x,y)\leq C \log (1-\log(\rho(\alpha,\omega))),\]
and $\|f\|_{\log}\leq \frac{\|f\|_{\textnormal{Lip}}}{C}$. The statement follows.
\end{proof}
\begin{lemma} 
\label{Ncounting}
Let $\mathcal{V}:=\{V_{y}\}_{y\in Y}$ be an $s$-cover of a groupoid $\mathcal{G}$ with intersection number $N$ and $\chi_{y}$ a subordinate partition of unity. For  $\eta\in\calG$ and $\omega \in\mathcal{G}^{(0)}$ fixed, the set
\[Y_{\eta,\omega}:=\{(\xi,y)\in s^{-1}(\omega)\times Y:\chi_{y}(\xi)\chi_{y}(\eta)\neq 0\},\]
contains at most $N$ elements.
\end{lemma}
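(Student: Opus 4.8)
The plan is to fix $\eta \in \calG$ and $\omega \in \calG^{(0)}$, and to analyze the set $Y_{\eta,\omega}$ by splitting the condition $\chi_y(\xi)\chi_y(\eta) \neq 0$ into its two factors. If $(\xi, y) \in Y_{\eta,\omega}$, then $\chi_y(\eta) \neq 0$ forces $\eta \in V_y$, and $\chi_y(\xi) \neq 0$ forces $\xi \in V_y$. Since $\{V_y\}_{y \in Y}$ is an $s$-cover with intersection number $N$, there are at most $N$ indices $y \in Y$ with $\eta \in V_y$. So the first step is to observe that the $y$-coordinate of an element of $Y_{\eta,\omega}$ ranges over a set of size at most $N$, call it $Y_\eta := \{ y \in Y : \eta \in V_y \}$.

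The second step is to show that for each fixed $y \in Y_\eta$, there is at most one $\xi \in s^{-1}(\omega)$ with $\xi \in V_y$. This is exactly the defining property of an $s$-cover (Definition \ref{def:s_cover}): the restriction $s\colon V_y \to \calG^{(0)}$ is a homeomorphism onto its image, hence in particular injective. Therefore $s^{-1}(\omega) \cap V_y$ contains at most one point. Combining the two steps, the map $(\xi, y) \mapsto y$ from $Y_{\eta,\omega}$ to $Y_\eta$ is injective, so $|Y_{\eta,\omega}| \leq |Y_\eta| \leq N$.

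There is essentially no hard part here; the lemma is a bookkeeping statement that unpacks the two hypotheses (bounded intersection number of the cover, and injectivity of $s$ on each $V_y$) and combines them. The only point to be careful about is making sure the two factors $\chi_y(\xi)$ and $\chi_y(\eta)$ are being evaluated with the convention $\chi_y(\omega,x) = \chi_y(x)$ from Proposition \ref{Deloneframe}, so that "$\chi_y(\xi) \neq 0$" genuinely means $\xi \in V_y$ (the support of $\chi_y$ being contained in $V_y$). Once that is noted, the count $(\xi,y) \mapsto y$ injective into $Y_\eta$ with $|Y_\eta| \leq N$ finishes the argument.
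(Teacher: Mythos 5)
Your proof is correct and follows the same two-step argument as the paper: first, the partition of unity being subordinate to $\{V_y\}$ forces $\eta\in V_y$ (giving at most $N$ admissible indices $y$), and second, injectivity of $s$ on each $V_y$ pins down a unique $\xi\in s^{-1}(\omega)\cap V_y$, so projection onto the $y$-coordinate is injective on $Y_{\eta,\omega}$. This matches the paper's proof essentially line for line.
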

\begin{proof} First of all observe that  $(\xi,y)\in Y_{\eta,\omega}$ only if $\xi,\eta\in V_{y}$ and there 
can be at most $N$ distinct indices $y$ for which $\eta\in V_{y}$. Secondly  if $(\xi,y),(\xi',y)\in Y_{\eta,\omega}$ 
then since $V_{y}$ is an $s$-cover it follows that $\xi=\xi'$.  Thus there are at most $N$ distinct pairs 
$(\xi,y)\in Y_{\eta,\omega}$.
\end{proof}

\begin{prop} For $f\in\textnormal{Lip}(\Omega_0)$ and $\delta>0$ the operators $(1+X^{2})^{-\delta}[T,f]$ and $[T,f](1+X^{2})^{-\delta}$ extend to bounded operators.
\end{prop}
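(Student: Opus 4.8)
The plan is to estimate the kernel of the commutator directly; the decisive inputs are Lemma \ref{rosetta} and the logarithmic growth of $\zeta$.

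First I would record the explicit action of the operators. Regarding $f\in\textnormal{Lip}(\Omega_0)\subset C(\Omega_0)$ as a function supported on the unit space, its convolution action on $\mathcal{H}_{\pm}=L^{2}(\mathcal{G}_{s}\times_{\tau_{\pm}}\mathcal{V})\hat\otimes\bigwedge^{*}\R^{d}$ is $(\pi(f)\phi)(\eta,v)=f(r(\eta))\phi(\eta,v)$, while $X^{2}=|x(\cdot)|^{2}\hat\otimes 1$, so $(1+X^{2})^{-\delta}$ is multiplication by $(1+|x(\eta)|^{2})^{-\delta}$ and commutes with the $\bigwedge^{*}\R^{d}$-factor. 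Since $T$ and $f$ both act trivially on $\bigwedge^{*}\R^{d}$ and $\kappa$ is bounded, it suffices to bound $(1+X^{2})^{-\delta}[T_{\pm},f]$ and $[T_{\pm},f](1+X^{2})^{-\delta}$ on the $L^{2}$-parts. A short computation then gives, for $s(\eta)=\tau_{-}(v)$,
\[
  [T_{+},f]\phi_{+}(\eta,v)=\sum_{y\in Y}\ \sum_{\xi\in s^{-1}(\tau_{+}(v))}\zeta_{|v|}\,\chi_{y}(\eta)\chi_{y}(\xi)\,\big(f(r(\xi))-f(r(\eta))\big)\,\phi_{+}(\xi,v),
\]
with $r(\eta)=T_{x(\eta)}\tau_{-}(v)$ and $r(\xi)=T_{x(\xi)}\tau_{+}(v)$, and symmetrically for $T_{-}$.

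The core of the argument will be a uniform pointwise bound: there is $C=C(f,\delta,\varepsilon,R)$ with
\[
  (1+|x(\eta)|^{2})^{-\delta}\,\zeta_{|v|}\,\big|f(r(\xi))-f(r(\eta))\big|\ \leq\ C
\]
whenever $\chi_{y}(\eta)\chi_{y}(\xi)\neq0$, which forces $x(\eta),x(\xi)\in B(y;\varepsilon)$. To prove this I would split on the size of $|v|R$ relative to $A:=\|y\|+\varepsilon+R$. If $|v|R<A$, then $\zeta_{|v|}=\log(1+|v|)\leq\log(1+A/R)$ and $|f(r(\xi))-f(r(\eta))|\leq2\|f\|_{\infty}$; since $\|x(\eta)\|\geq(\|y\|-\varepsilon)_{+}$ and $t\mapsto(1+(t-\varepsilon)_{+}^{2})^{-\delta}\log(1+(t+\varepsilon+R)/R)$ is bounded on $[0,\infty)$ (here $\delta>0$ is used: polynomial decay beats logarithmic growth), the bound follows. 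If $|v|R\geq A$, then $x(\eta),x(\xi)\in B(0;|v|R)$, $\|x(\eta)-x(\xi)\|\leq2\varepsilon<r$, and $\rho(\tau_{-}(v),\tau_{+}(v))\leq e^{-|v|R}$ because $\tau_{\pm}(v)\in\mathcal{C}_{v}$; Lemma \ref{rosetta} with $n=|v|$ then yields $x(\eta)=x(\xi)$ and $\rho(r(\eta),r(\xi))\leq e^{-|v|R+\|y\|+\varepsilon}$, so the Lipschitz property of $f$ for $\rho$ gives $|f(r(\xi))-f(r(\eta))|\leq\|f\|_{\textnormal{Lip}}e^{-|v|R+\|y\|+\varepsilon}$. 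Writing $|v|R=A+s$ with $s\geq0$, the product $\zeta_{|v|}e^{-|v|R+\|y\|+\varepsilon}$ is at most $e^{-R}\big(\log(1+A/R)+\sup_{s\geq0}e^{-s}\log(1+s/R)\big)$, and multiplying by $(1+|x(\eta)|^{2})^{-\delta}\leq(1+(\|y\|-\varepsilon)_{+}^{2})^{-\delta}$ one absorbs the $\log(1+A/R)$-term exactly as in the first case.

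Finally I would derive boundedness by a crude $\ell^{2}$-estimate rather than a two-sided Schur test. For $\phi_{+}$ in the core,
\[
  \big\|(1+X^{2})^{-\delta}[T_{+},f]\phi_{+}\big\|^{2}=\sum_{v}\sum_{\eta\in s^{-1}(\tau_{-}(v))}\Big|(1+|x(\eta)|^{2})^{-\delta}\sum_{y,\xi}\zeta_{|v|}\chi_{y}(\eta)\chi_{y}(\xi)\big(f(r(\xi))-f(r(\eta))\big)\phi_{+}(\xi,v)\Big|^{2};
\]
by Lemma \ref{Ncounting} the inner sum over $(y,\xi)$ has at most $N$ nonzero terms, so Cauchy--Schwarz together with the pointwise bound gives
\[
  \big\|(1+X^{2})^{-\delta}[T_{+},f]\phi_{+}\big\|^{2}\ \leq\ N C^{2}\sum_{v}\sum_{y}\Big(\sum_{\eta\in s^{-1}(\tau_{-}(v))}\chi_{y}(\eta)^{2}\Big)\Big(\sum_{\xi\in s^{-1}(\tau_{+}(v))}\chi_{y}(\xi)^{2}|\phi_{+}(\xi,v)|^{2}\Big)\ \leq\ N C^{2}\|\phi_{+}\|^{2},
\]
using that $V_{y}$ is an $s$-set (so $\sum_{\eta\in s^{-1}(\tau_{-}(v))}\chi_{y}(\eta)^{2}\leq1$) and $\sum_{y}\chi_{y}^{2}=1$. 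The operator $[T_{+},f](1+X^{2})^{-\delta}$ is handled the same way with $(1+|x(\xi)|^{2})^{-\delta}$ in place of $(1+|x(\eta)|^{2})^{-\delta}$ — on the support of $\chi_{y}(\eta)\chi_{y}(\xi)$ these differ by a factor bounded in terms of $\varepsilon$ and $\delta$ — and the $T_{-}$ components are symmetric. I expect the pointwise bound to be the main obstacle: the tension between the Lipschitz decay $e^{-|v|R}$ produced by Lemma \ref{rosetta} and the exponential $e^{\|y\|}$ it multiplies is resolved only by the case split at $|v|R\sim\|y\|$, and it is precisely there that the residual exponential decay is needed to dominate $\zeta_{|v|}=\log(1+|v|)$; hence the logarithmic choice of $\zeta$, rather than the exponential one of the original Pearson--Bellissard construction, is essential.
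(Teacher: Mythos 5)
Your proof is correct, and it is in fact tighter than the paper's own argument. Both share the same skeleton: compute the kernel of $[T_{+},f]$, split on the size of $\|x(\xi)\|$ relative to $|v|R$ so that Lemma~\ref{rosetta} applies in one regime, establish a uniform pointwise bound on $(1+\|x\|^{2})^{-\delta}\zeta_{|v|}|f(r(\xi))-f(r(\eta))|$, and close with Lemma~\ref{Ncounting}, Cauchy--Schwarz and $\sum_{y}\chi_{y}^{2}=1$; your dichotomy $|v|R\lessgtr\|y\|+\varepsilon+R$ is essentially the paper's $\xi\in J_{v}=x^{-1}(B(0;|v|R-r))$ versus its complement. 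The substantive difference is how $|f(r(\xi))-f(r(\eta))|$ is controlled once Lemma~\ref{rosetta} gives $\rho(r(\xi),r(\eta))\le e^{-|v|R+\|x(\xi)\|}$. You apply the Lipschitz property of $f$ for $\rho$ directly, obtaining the exponentially small bound $\|f\|_{\mathrm{Lip}}e^{-|v|R+\|x(\xi)\|}$; after the case split has ensured $\|x(\xi)\|\lesssim|v|R$, this absorbs $\zeta_{|v|}=\log(1+|v|)$, with $(1+\|x\|^{2})^{-\delta}$ handling the residual slack near the boundary $|v|R\approx\|x(\xi)\|$. The paper instead routes through Lemma~\ref{loglemma} to write $|f(r(\xi))-f(r(\eta))|\le\|f\|_{\log}\log(1+|v|R-\|x(\xi)\|)$ and then asserts that $\sup_{v}\sup_{\xi\in J_{v}}\log(1+|v|R-\|x(\xi)\|)(1+\|x(\xi)\|^{2})^{-\delta}\log(1+|v|)$ is finite. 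That supremum is not finite --- keep $\|x(\xi)\|$ bounded, for instance taking $\xi$ and $\eta$ to be the unit arrows at $\tau_{\pm}(v)$, and let $|v|\to\infty$; the expression then grows like $\log^{2}|v|$ --- and moreover $\rho\le e^{-|v|R+\|x\|}$ gives a \emph{lower} bound $\log(1-\log\rho)\ge\log(1+|v|R-\|x\|)$ rather than the claimed upper bound, so the displayed inequality does not follow from Lemma~\ref{loglemma} on its own. Your direct Lipschitz estimate sidesteps both issues and makes it transparent why the logarithmic choice $\zeta_{n}=\log(1+n)$, rather than the exponential Pearson--Bellissard weight, is the right one here: the decay furnished by Lemma~\ref{rosetta} is $e^{-(|v|R-\|x\|)}$, and it is the slow growth of $\zeta_{|v|}$ together with the polynomial decay in $\|x\|$ that keeps $\zeta_{|v|}e^{-(|v|R-\|x\|)}(1+\|x\|^{2})^{-\delta}$ uniformly bounded over the admissible range of $\|x\|$.
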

\begin{proof}
Because $X^2$ and $T$ act diagonally on the finite dimensional Clifford representation 
space $\bigwedge^*\R^d$, it suffices to prove boundedness on 
$L^2(\calG_s \times_{\tau_+} \calV) \oplus L^2(\calG_s \times_{\tau_-} \calV)$.
For $f\in\textnormal{Lip}(\Omega_0)$ the commutator takes the simple form
\begin{align*}[T_{+},f]&(1+X^{2})^{-\delta}\phi_{+}(\eta,v)=\\ &\sum_{y}\sum_{\xi\in s^{-1}(\tau_{+}(v))} \zeta_{|v|}(1+\|x(\xi)\|^{2})^{-\delta}(f(r(\xi))-f(r(\eta)))\chi_{y}(\eta)\chi_{y}(\xi)\phi_{+}(\xi,v),
\end{align*}
with $s(\eta)=\tau_{-}(v)$. We consider the two cases namely
\[\xi\in J_{v}:=x^{-1} (B(0;|v|R-r)),\quad \xi\notin J_{v}.\]
In the first case we have $\chi_{y}(\xi),\chi_{y}(\eta)\neq 0$ only if
$x(\eta)\in B(0;|v|R),$
and Lemma \ref{rosetta} gives that $x(\xi)=x(\eta)$. Then applying Lemma \ref{loglemma} yields the estimate
\[\|f(r(\xi))-f(r(\eta))\|\leq \|f\|_{\textnormal{log}}\log(1+|v|R-\|x(\xi)\|). \]
Since
\[\sup_{v}\sup_{\xi\in J_{v}}\log(1+|v|R-\|x(\xi)\|)(1+\|x(\xi)\|^{2})^{-\delta}\log(1+|v|)<\infty,\]
and denoting by $J_{v}^{c}$ the complement of $J_{v}$,
\[\sup_{v}\sup_{\xi\in J_{v}^{c}}\zeta_{|v|}(1+\|x(\xi)\|^{2})^{-\delta}<\infty,\]
so we have the following norm estimates (with $C$ denoting a generic constant):
\begin{align*}\
  \big\| [T_{+},f](1+X^{2})^{-\delta}\phi_{+}(\eta,v) \big\|^{2} &=  \\ 
&\hspace{-4cm} \sum_{\substack{v\in\calV, \\ \eta\in s^{-1}(\tau_{-}(v))}} \left\|\sum_{\xi\in s^{-1}(\tau_{+}(v))} \sum_{y}\zeta_{|v|}(1+\|x(\xi)\|^{2})^{-\delta}(f(r(\xi))-f(r(\eta)))\chi_{y}(\eta)\chi_{y}(\xi)\phi_{+}(\xi,v)\right\|^{2}\\
&\hspace{-2cm} \leq  C\sum_{v,\eta}\left(\sum_{\xi\in s^{-1}(\tau_{+}(v))} \left\|\sum_{y}\chi_{y}(\eta)\chi_{y}(\xi)\phi_{+}(\xi,v)\right\|\right)^{2} \\
  &\hspace{-2cm} \leq C\sum_{v,\eta}\left(\sum_{\xi\in s^{-1}(\tau_{+}(v))} \sum_{y}\chi_{y}(\eta)\chi_{y}(\xi)\left\|\phi_{+}(\xi,v)\right\|\right)^{2}\\
  &\hspace{-2cm} \leq  CN\sum_{v,\eta}\sum_{\xi\in s^{-1}(\tau_{+}(v))} \sum_{y}\chi_{y}(\eta)^{2}\chi_{y}(\xi)^{2}\|\phi_{+}(\xi,v)\|^{2},
\end{align*}
where we have used Lemma \ref{Ncounting} and the estimate $(a_{1}+\cdots+a_{N})^{2}\leq N(a_{1}^{2}+\cdots + a_{N}^{2})$. Now we use that for a fixed $\xi\in s^{-1}(\tau_{+}(v))$ and $y\in Y$ with $\chi_{y}(\xi)\neq 0$ there is at most one $\eta\in s^{-1}(\tau_{-}(v))$ with $0\neq \chi_{y}(\eta)\leq 1$ so
\begin{align*}
 \big\| [T_{+},f](1+X^{2})^{-\delta}\phi_{+}(\eta,v) \big\|^{2} \leq &\, CN\sum_{v\in\calV}\sum_{\xi\in s^{-1}(\tau_{+}(v))}\sum_{y}\chi_{y}(\xi)^{2}\|\phi_{+}(\xi,v)\|^{2}\\
\leq & \, CN\sum_{v\in\calV}\sum_{\xi\in s^{-1}(\tau_{+}(v))} \|\phi_{+}(\xi,v)\|^{2}=N\|\phi\|^{2},
\end{align*}
it follows that $[T,f](1+X^{2})^{-\delta}$ defines a bounded operator for all $\delta>0$.
\end{proof}
Next we consider the commutator $[T,\chi_{z}^{*}]$ with $\chi_{z}$ the frame elements. We obtain the same statement for them.
\begin{lemma}For $\phi\in C_{c}(\mathcal{G}_{s}\times_{\tau_{+}}\calV)$ we have
\begin{align}\nonumber &\langle [T_{+},\chi_{z}^{*}]\phi , [T_{+},\chi_{z}^{*}]\phi\rangle=\\
\label{commnorm}&\sum_{v\in\mathcal{V}}\sum_{\eta\in s^{-1}(\tau_{-}(v))}\zeta_{|v|}^{2}\left\|\sum_{\xi\in s^{-1}(\tau_{+}(v))}\sum_{y,\alpha,\beta}\left(\chi_{z}(\alpha)\chi_{y}(\beta\eta)\chi_{y}(\alpha\xi)-\chi_{z}(\beta)\chi_{y}(\eta)\chi_{y}(\xi)\right)\phi(\xi,v) \right\|^{2},
\end{align}
where we used the shorthand notation
$\sum_{y,\alpha,\beta}:=\sum_{\alpha\in s^{-1}(r(\xi))}\sum_{\beta\in s^{-1}(r(\eta))}\sum_{y\in Y}$.
\end{lemma}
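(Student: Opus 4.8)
The plan is to compute the inner product $\langle [T_+,\chi_z^*]\phi, [T_+,\chi_z^*]\phi\rangle$ directly by unwinding the definitions of $T_+$ and of the left-multiplication action of $\chi_z^*$ on $C_c(\mathcal{G}_s\times_{\tau_-}\mathcal{V})$, and then re-expressing everything in terms of sums over source fibres so that the answer takes the stated form.

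First I would recall that $T_+\phi_+(\eta,v) = \sum_y \sum_{\xi\in s^{-1}(\tau_+(v))} \zeta_{|v|}\chi_y(\eta)\chi_y(\xi)\phi_+(\xi,v)$ for $\eta$ with $s(\eta)=\tau_-(v)$, and that the representation of $\chi_z^*\in C_c(\mathcal{G})$ on sections over the fibre product is given by the convolution formula $(\chi_z^**\phi)(\eta,w) = \sum_{s(\zeta')=r(\eta)}\chi_z^*(\zeta')\phi(\zeta'^{-1}\eta,w) = \sum_{\alpha\in s^{-1}(r(\eta))}\chi_z(\alpha)\phi(\alpha\eta,w)$ (using $\chi_z^*(\zeta')=\chi_z(\zeta'^{-1})$ and reindexing $\alpha=\zeta'^{-1}$, noting $\chi_z$ is real). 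Substituting, $T_+\chi_z^*$ applied to $\phi$ produces a term $\sum_{y}\sum_{\alpha\in s^{-1}(r(\eta))}\zeta_{|v|}\chi_y(\eta)\chi_z(\alpha)\chi_z^*\phi(\cdot)$-type expression; but one must be careful that $T_+$ acts on $\mathcal{G}_s\times_{\tau_-}\mathcal{V}$ while $\chi_z^*$ reshuffles the fibre, so the intermediate section lives over $\mathcal{G}_s\times_{\tau_+}\mathcal{V}$ before $T_+$ is applied. The cleanest route is to write out $T_+\chi_z^*\phi(\eta,v)$ and $\chi_z^* T_+\phi(\eta,v)$ as double/triple sums over frame indices $y$ and over the auxiliary source-fibre variables $\alpha\in s^{-1}(r(\xi))$ and $\beta\in s^{-1}(r(\eta))$ coming from the two occurrences of $\chi_z^*$, subtract, and collect the common factor $\zeta_{|v|}\phi(\xi,v)$. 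This yields exactly the combination $\chi_z(\alpha)\chi_y(\beta\eta)\chi_y(\alpha\xi) - \chi_z(\beta)\chi_y(\eta)\chi_y(\xi)$ appearing inside the norm.

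Then I would take the Hilbert-space norm: by definition $\langle\psi,\psi\rangle = \sum_{v\in\mathcal{V}}\sum_{\eta\in s^{-1}(\tau_-(v))}|\psi(\eta,v)|^2$ for $\psi\in C_c(\mathcal{G}_s\times_{\tau_-}\mathcal{V})$, and applying this to $\psi=[T_+,\chi_z^*]\phi$ gives precisely Equation \eqref{commnorm}, with the outer $\zeta_{|v|}^2$ pulled out of the squared norm and the inner sum over $\xi\in s^{-1}(\tau_+(v))$ together with $\sum_{y,\alpha,\beta}$ as defined in the shorthand. All sums are finite on $C_c$: the frame $\{\chi_y\}$ is locally finite, the $s$-cover property bounds the $\xi$- and $\alpha,\beta$-summations locally, and $\phi$ has compact support.

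The main obstacle — more bookkeeping than conceptual — is tracking the fibre-product variables correctly through the two non-commuting operations: $T_+$ averages over a frame index and jumps between the $\tau_+$- and $\tau_-$-fibre products, while $\chi_z^*$ acts by a groupoid convolution that translates the $\mathcal{G}$-coordinate, so one must be vigilant that the composition $\alpha\xi$, $\beta\eta$ are the genuine groupoid products (which is why the condition $\alpha\in s^{-1}(r(\xi))$, $\beta\in s^{-1}(r(\eta))$ appears) and that no spurious cross terms survive. Since the lemma asserts only the identity \eqref{commnorm} and not an estimate, the proof terminates once both sides are seen to be the same finite sum; no analytic input beyond the definitions is required, so I would simply remark that the computation is a direct expansion and verification.

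\begin{proof}
Expanding the definitions of $T_+$ and of the convolution action of $\chi_z^* \in C_c(\mathcal{G})$ on sections over the fibre product, and using $\chi_z^*(\zeta')=\chi_z(\zeta'^{-1})$ with $\chi_z$ real, one writes $T_+\chi_z^*\phi(\eta,v)$ and $\chi_z^*T_+\phi(\eta,v)$ as finite sums over the frame index $y\in Y$ and the source-fibre variables $\alpha\in s^{-1}(r(\xi))$, $\beta\in s^{-1}(r(\eta))$; subtracting and collecting the common factor $\zeta_{|v|}\phi(\xi,v)$ produces
\[
[T_+,\chi_z^*]\phi(\eta,v) = \sum_{\xi\in s^{-1}(\tau_+(v))}\sum_{y,\alpha,\beta}\zeta_{|v|}\big(\chi_z(\alpha)\chi_y(\beta\eta)\chi_y(\alpha\xi)-\chi_z(\beta)\chi_y(\eta)\chi_y(\xi)\big)\phi(\xi,v),
\]
for $\eta$ with $s(\eta)=\tau_-(v)$, all sums being finite because $\{\chi_y\}$ is locally finite, the $V_y$ form an $s$-cover, and $\phi\in C_c$. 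Taking the Hilbert-space norm $\langle\psi,\psi\rangle=\sum_{v\in\mathcal{V}}\sum_{\eta\in s^{-1}(\tau_-(v))}|\psi(\eta,v)|^2$ and pulling $\zeta_{|v|}^2$ out of the squared norm yields Equation \eqref{commnorm}.
\end{proof}
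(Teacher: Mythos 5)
Your proposal follows the same direct-computation approach as the paper: unwind the definitions of $T_{+}$ and of the convolution action of $\chi_{z}^{*}$, subtract, and take the $L^{2}$-norm. The paper proceeds in two separated displays — it first records the commutator in its ``naive'' form, namely
\[
[T_{+},\chi_{z}^{*}]\phi(\eta,v)=\sum_{y,\xi}\zeta_{|v|}\left(\sum_{\alpha\in s^{-1}(r(\xi))}\chi_{z}(\alpha)\chi_{y}(\eta)\chi_{y}(\alpha\xi)-\sum_{\beta\in r^{-1}(r(\eta))}\chi_{z}(\beta)\chi_{y}(\beta^{-1}\eta)\chi_{y}(\xi)\right)\phi(\xi,v),
\]
where the two terms are sums over \emph{disjoint} index variables $\alpha$ and $\beta$, and only afterwards, inside the norm computation, does it reindex ($\beta\mapsto\beta^{-1}$ and symmetrize) to arrive at the combined triple sum $\sum_{y,\alpha,\beta}$ applied to the difference. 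Your write-up compresses this: you assert that ``subtracting and collecting the common factor'' already yields the symmetrized form with $\sum_{y,\alpha,\beta}$, which skips over the same reindexing step that the paper performs between its second and third displayed lines. This is a small gap of exposition rather than of substance — the paper itself states that passage tersely — but if you were writing this out for real you would want to show the naive form first, then the reindexing, exactly as the paper does, since only then is each sum manifestly finite (each of $\sum_{\alpha}$, $\sum_{\beta}$ picks out at most one term by the $s$-cover property). With that caveat, your approach and the paper's coincide, and taking the Hilbert-space norm $\langle\psi,\psi\rangle=\sum_{v}\sum_{\eta\in s^{-1}(\tau_{-}(v))}\|\psi(\eta,v)\|^{2}$ and factoring out $\zeta_{|v|}^{2}$ then gives the claimed identity.
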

\begin{proof} The formula is obtained by direct calculation. First we compute the commutator acting on a function $\phi \in C_{c}(\calG {}_{s}\times_{\tau_+} \calV)$: 
\begin{align*}
[T_{+}&,\chi_{z}]\phi(\eta,v)=\sum_{y}\sum_{\xi\in s^{-1}(\tau_{+}(v))}\sum_{ \alpha\in r^{-1}(r(\xi))}\zeta_{|v|}\chi_{y}(\eta)\chi_{y}(\xi)\chi_{z}(\alpha)\phi(\alpha^{-1}\xi,v)\\
&\quad\quad\quad\quad\quad\quad\quad-\sum_{y}\sum_{ \beta\in r^{-1}(r(\eta))}\sum_{\xi\in s^{-1}(\tau_{+}(v))}\zeta_{|v|}\chi_{z}(\beta)\chi_{y}(\beta^{-1}\eta)\chi_{y}(\xi)\phi(\xi,v)\\
&=\sum_{y}\sum_{\xi\in s^{-1}(\tau_{+}(v))}\sum_{ \alpha\in s^{-1}(r(\xi))}\zeta_{|v|}\chi_{z}(\alpha)\chi_{y}(\eta)\chi_{y}(\alpha\xi)\phi(\xi,v)\\
&\quad-\sum_{y}\sum_{ \beta\in r^{-1}(r(\eta))}\sum_{\xi\in s^{-1}(\tau_{+}(v))}\zeta_{|v|}\chi_{z}(\beta)\chi_{y}(\beta^{-1}\eta)\chi_{y}(\xi)\phi(\xi,v)\\
&=\sum_{y,\xi}\zeta_{|v|}\left(\sum_{ \alpha\in s^{-1}(r(\xi))}\chi_{z}(\alpha)\chi_{y}(\eta)\chi_{y}(\alpha\xi)-\sum_{ \beta\in r^{-1}(r(\eta))}\chi_{z}(\beta)\chi_{y}(\beta^{-1}\eta)\chi_{y}(\xi)\right)\phi(\xi,v).
\end{align*}
The $L^2$-norm of the vector $[T_{+},\chi_{z}^{*}]\phi$ is thus computed as
\begin{align*}
&\langle [T_{+},\chi_{z}^{*}]\phi, [T_{+},\chi_{z}^{*}]\phi\rangle =\sum_{v\in\mathcal{V}}\sum_{\eta\in s^{-1}(\tau_{-}(v))} \|[T_{+},\chi_{k}^{*}]\phi(\eta,v)\|^{2}\\
&=\sum_{v,\eta}\zeta_{|v|}^{2} \left\|\sum_{y,\xi}\left(\sum_{ \alpha\in s^{-1}(r(\xi))}\chi_{z}(\alpha)\chi_{y}(\eta)\chi_{y}(\alpha\xi)-\sum_{ \beta\in r^{-1}(r(\eta))}\chi_{z}(\beta)\chi_{y}(\beta^{-1}\eta)\chi_{y}(\xi)\right)\phi(\xi,v) \right\|^{2}\\
&=\sum_{v,\eta}\zeta_{|v|}^{2}\left\|\sum_{y,\xi}\left(\sum_{ \alpha\in s^{-1}(r(\xi))}\sum_{ \beta\in s^{-1}(r(\eta))}\chi_{z}(\alpha)\chi_{y}(\beta\eta)\chi_{y}(\alpha\xi)-\chi_{z}(\beta)\chi_{y}(\eta)\chi_{y}(\xi)\right)\phi(\xi,v) \right\|^{2}\\
&=\sum_{v,\eta}\zeta_{|v|}^{2}\left\| \sum_{\xi } \sum_{y,\alpha,\beta} \left(\chi_{z}(\alpha)\chi_{y}(\beta\eta)\chi_{y}(\alpha\xi)-\chi_{z}(\beta)\chi_{y}(\eta)\chi_{y}(\xi)\right)\phi(\xi,v) \right\|^{2}.
\end{align*}
This is the desired formula.
\end{proof}
 In the inner product expression \eqref{commnorm} on page \pageref{commnorm}, we split the sum over $\mathcal{V}$ into a sum over 
\[\calV_{z}:=\{v\in\calV: z\in B(0;|v|R-r)\}, \quad\textnormal{and}\quad \calV\setminus\calV_{z}=\{v\in\calV: z\notin B(0;|v|R-r)\}.\]
The sum over $\calV\setminus\calV_{z}$ is easily seen to define a bounded operator $B$. 
We further examine the expression that occurs inside the norm bars in \eqref{commnorm}, namely
\begin{equation}
\label{jsum}
\sum_{\xi\in s^{-1}(\tau_{+}(v))} \sum_{ \alpha\in s^{-1}(r(\xi))}\sum_{ \beta\in s^{-1}(r(\eta))}\sum_{y}\left(\chi_{z}(\alpha)\chi_{y}(\beta\eta)\chi_{y}(\alpha\xi)-\chi_{z}(\beta)\chi_{y}(\eta)\chi_{y}(\xi)\right)\phi(\xi,v), 
\end{equation}
for $v\in \calV_{z}$ and $\eta\in s^{-1}(\tau_{-}(v))$ fixed. We need to further distinguish between two cases for $\xi\in s^{-1}(\tau_{+}(v))$ with $\xi,\eta\in V_{y}$. For the fixed function $\chi_{z}$ we split the sum over $\xi\in s^{-1}(\tau_{+}(v))$ into a sum over
\[J_{(z,v)}:=\{\xi\in s^{-1}(\tau_{+}(v)): x(\xi)\notin B(0; |v|R-\|z\|-r)\},\quad\textnormal{and}\quad s^{-1}(\tau_{+}(v))\setminus J_{(z,v)},\]
for which we obtain the following expression:

\begin{lemma}We have an equality
\begin{align*}
&\sum_{\substack{v\in\calV, \\ \eta\in s^{-1}(\tau_{-}(v))}} \zeta_{|v|}^{2}\left\| \sum_{\xi\in s^{-1}(\tau_{+}(v))} \sum_{y,\alpha,\beta}\left(\chi_{z}(\alpha)\chi_{k}(\beta\eta)\chi_{y}(\alpha\xi)-\chi_{z}(\beta)\chi_{y}(\eta)\chi_{y}(\xi)\right)\phi(\xi,v) \right\|^{2}\\
&=\langle B\phi,B\phi\rangle+\sum_{\substack{v\in\calV_z, \\ \eta\in s^{-1}(\tau_{-}(v))}} \zeta_{|v|}^{2}\left\|\sum_{\xi\in J_{(z,v)}}\sum_{y,\alpha,\beta}\left(\chi_{z}(\alpha)\chi_{y}(\beta\eta)\chi_{y}(\alpha\xi)-\chi_{z}(\beta)\chi_{y}(\eta)\chi_{y}(\xi)\right)\phi(\xi,v) \right\|^{2},
\end{align*}
where $B$ is a bounded operator.
\end{lemma}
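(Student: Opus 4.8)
The plan is to prove the claimed equality by splitting the sum over $\calV$ in expression \eqref{commnorm} according to whether $z \in B(0;|v|R-r)$ or not, and then further splitting the inner sum over $\xi \in s^{-1}(\tau_+(v))$ according to the sets $J_{(z,v)}$ and its complement, absorbing everything except the $J_{(z,v)}$-contribution into a single bounded operator $B$. First I would write
\[
\sum_{v\in\calV} = \sum_{v\in\calV_z} + \sum_{v\in\calV\setminus\calV_z},
\]
and handle the second summand. For $v\in\calV\setminus\calV_z$ we have $z\notin B(0;|v|R-r)$, so $\|z\|\geq |v|R-r$, which forces $|v|$ to be bounded in terms of $\|z\|$: there are only finitely many vertices $v$ with $z$ appearing at that level. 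Combined with the local finiteness of the cover $\mathcal{Y}$ and Lemma \ref{Ncounting}, the corresponding partial operator has an integral kernel supported on a uniformly finite set of $(\xi,\eta,v)$, with the $\zeta_{|v|}^2$ factors uniformly bounded on this finite range; hence this piece extends to a bounded operator, which I call $B_1$.

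Next, for the remaining sum over $v\in\calV_z$ I would split $s^{-1}(\tau_+(v)) = J_{(z,v)} \sqcup \big(s^{-1}(\tau_+(v))\setminus J_{(z,v)}\big)$ inside the norm bars. The point of this second split is that for $\xi\notin J_{(z,v)}$, i.e. $x(\xi)\in B(0;|v|R-\|z\|-r)$, one is in the regime where Lemma \ref{rosetta} applies: together with the constraint $\chi_y(\beta\eta)\chi_y(\alpha\xi)\neq 0$ and $\chi_z(\alpha)\neq 0$ or $\chi_z(\beta)\neq 0$, the relevant elements $\alpha,\beta$ are forced to have $x(\alpha)=x(\beta)$ close to $z$, and $\alpha\xi$, $\beta\eta$ are then pinned down so that the two terms $\chi_z(\alpha)\chi_y(\beta\eta)\chi_y(\alpha\xi)$ and $\chi_z(\beta)\chi_y(\eta)\chi_y(\xi)$ cancel (or differ only by a locally constant, finitely supported discrepancy). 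More precisely, expanding the cross terms of the squared norm, the $\xi\notin J_{(z,v)}$ contributions either vanish identically or yield kernels supported where $x(\xi)$ lies within bounded distance of $z$ at level $|v|$ — again a uniformly finite set once $z$ is fixed — so after a Cauchy–Schwarz estimate controlling the remaining cross terms between the $J_{(z,v)}$-part and the complement, the whole residual collects into another bounded operator. Setting $B$ equal to the sum of these bounded contributions gives the stated identity.

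The main obstacle I expect is the bookkeeping of the cross terms: expanding $\|u+w\|^2 = \|u\|^2 + \|w\|^2 + 2\,\mathrm{Re}\langle u,w\rangle$ with $u$ the $\xi\in J_{(z,v)}$ part and $w$ the $\xi\notin J_{(z,v)}$ part, one must show that $\|w\|^2$ and $\langle u,w\rangle$ together assemble to something of the form $\langle B'\phi,B'\phi\rangle$ (plus, if needed, a cross term absorbed by a polarization/Cauchy–Schwarz trick into the norm of a single bounded operator applied to $\phi$). This requires carefully tracking that, once $z$ is fixed, the $\zeta_{|v|}^2 = \log(1+|v|)^2$ weights are tamed either because the supporting set of $v$'s is finite or because the terms simply cancel via Lemma \ref{rosetta}; the geometric input — that $\chi_y$ is supported in $B(y;\varepsilon)$ with $\varepsilon<r/2$ so that at a fixed radius each relevant translate is unique — is exactly what makes the cancellation exact rather than merely approximate. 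Once this is in place, the displayed equality follows, and the operator $\sum_{v\in\calV_z}(\cdots)$ restricted to $\xi\in J_{(z,v)}$ is what will be analyzed (and shown unbounded, hence requiring the $\log$ growth of $\zeta$) in the subsequent propositions.
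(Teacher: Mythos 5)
Your overall decomposition is the right one — split the outer sum over $\calV$ into $\calV_z$ and $\calV\setminus\calV_z$, then for $v\in\calV_z$ split the inner sum over $\xi$ into $J_{(z,v)}$ and its complement. But the way you propose to handle the complementary piece $s^{-1}(\tau_+(v))\setminus J_{(z,v)}$ would not give the exact equality claimed, and this is where the gap lies.

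You write $\|u+w\|^2 = \|u\|^2 + \|w\|^2 + 2\Re\langle u,w\rangle$ and hope to control $\|w\|^2$ and the cross term $2\Re\langle u,w\rangle$ by Cauchy--Schwarz and absorb them into a bounded operator. This cannot work as stated: $u$ is the $J_{(z,v)}$ contribution, which is exactly the part that will turn out to be unbounded (it has the unbounded $\zeta_{|v|}^2$ weight on it), so the cross term $\langle u,w\rangle$ cannot be dominated by a bounded quadratic form in $\phi$ alone unless $w=0$. Moreover, even granting such an estimate, the statement is an exact identity, not an inequality up to bounded error; a Cauchy--Schwarz absorption would give something weaker.

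The point the paper actually proves, and which you hedge on ("cancel, or differ only by a locally constant, finitely supported discrepancy"), is that for $v\in\calV_z$ the inner sum over $\xi\in s^{-1}(\tau_+(v))\setminus J_{(z,v)}$ vanishes \emph{identically}, i.e. $w=0$. The mechanism is the implication \eqref{implication}: when $x(\xi)\in B(0;|v|R-\|z\|-r)$, Lemma \ref{rosetta} forces $x(\xi)=x(\eta)$, $x(\alpha)=x(\beta)$, and $x(\alpha\xi)=x(\beta\eta)$. Since $\chi_y$ and $\chi_z$ depend only on the $x$-coordinate of a groupoid element, each summand then factors as $\chi_z(\beta)\left(\chi_y(\beta\eta)^2-\chi_y(\eta)^2\right)$, and summing over $y$ kills it because $\sum_y\chi_y^2=1$. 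With $w=0$ there is no cross term, and the displayed equality is literal rather than approximate. Your plan recognizes that \ref{rosetta} is relevant but stops short of extracting the three identities of \eqref{implication} and running the partition-of-unity cancellation, which is the heart of the lemma. The bounded operator $B$ in the statement consists of \emph{only} the $\calV\setminus\calV_z$ contribution (where $|v|\leq(\|z\|+r)/R$ so $\zeta_{|v|}$ is uniformly bounded), not of any residue from the complementary $\xi$-sum.
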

\begin{proof} 
We need to show that the sum over
\[\xi\in s^{-1}(\tau_{+}(v))\setminus J_{(z,v)}=\{\xi\in s^{-1}(\tau_{+}(v)): x(\xi)\in B(0; |v|R-\|z\|-r)\}  \]
vanishes. To this end we prove the implication
\begin{equation}
\label{implication}
\xi\in s^{-1}(\tau_{+}(v))\setminus J_{(z,v)}\Rightarrow x(\xi)=x(\eta), \quad x(\alpha)=x(\beta),\quad x(\alpha\xi)=x(\beta\eta).
\end{equation}
Using \eqref{implication} we deduce that the sum over $J_{(z,v)}^{c}:=s^{-1}(\tau_{+}(v))\setminus J_{(z,v)}$ vanishes, because $\chi_{z}(\eta)$ depends only on $x(\eta)$:
\begin{align*}
&\sum_{y}\sum_{\xi\in J_{(z,v)}^{c}}\sum_{ \alpha\in s^{-1}(r(\xi))}\sum_{ \beta\in s^{-1}(r(\eta))}\left(\chi_{z}(\alpha)\chi_{y}(\beta\eta)\chi_{y}(\alpha\xi)-\chi_{z}(\beta)\chi_{y}(\eta)\chi_{y}(\xi)\right)\phi(\xi,v)\\
&=\sum_{\xi\in J_{(z,v)}^{c}}\sum_{ \alpha\in s^{-1}(r(\xi))}\sum_{ \beta\in s^{-1}(r(\eta))}\sum_{y}\chi_{z}(\beta)\left(\chi_{y}(\beta\eta)^{2}-\chi_{y}(\eta)^{2}\right)\phi(\xi,v)=0, 
\end{align*}
and we are left with the sum over the complement $J_{(z,v)}$. It thus remains to show \eqref{implication} holds true.

 Let $\eta\in s^{-1}(\tau_{-}(v))$ and $\xi\in s^{-1}(\tau_{+}(v))$ with $x(\xi),x(\eta)\in B(y;\varepsilon)$ as well as $\xi\in s^{-1}(\tau_{+}(v))\setminus J_{(z,v)}$.
 First observe that we have
\[\mathcal{L}^{(\tau_{+}(v))}\cap B(0; |v|R)=\mathcal{L}^{(\tau_{-}(v))}\cap B(0;|v|R),\]
since  $\rho(\tau_{+}(v),\tau_{-}(v))\leq e^{-|v|R}$. Then since $$x(\xi)\in B(0;|v|R-\|z\|-r)\subset B(0;|v|R-r)$$ 
and $\|x(\xi)-x(\eta)\|<r$ it must hold that $x:=x(\xi)=x(\eta)$. We also conclude that 
$$\rho(T_x\tau_{+}(v),T_x\tau_{-}(v))\leq e^{-|v|R+\|x(\xi)\|},$$ 
by Lemma \ref{rosetta},
and thus
\[ \mathcal{L}^{(T_x\tau_{+}(v))}\cap B(0;|v|R-\|x(\xi)\|)=\mathcal{L}^{(T_x\tau_{-}(v))}\cap B(0;|v|R-\|x(\xi)\|).\]

For any two elements 
\[\alpha=(T_{x(\alpha)}r(\xi),x(\alpha))\in s^{-1}(r(\xi))\cap V_{z},\quad\beta=(T_{x(\beta)}r(\eta),x(\beta))\in s^{-1}(r(\eta))\cap V_{z},\]
we have 
\[-x(\alpha)\in \mathcal{L}^{(T_{x}\tau_{+}(v))}\cap B(z;\varepsilon),\quad -x(\beta)\in \mathcal{L}^{(T_{x}\tau_{-}(v))}\cap B(z;\varepsilon).\]
Now for $x=x(\xi)=x(\eta)\in B(0; |v|R-\|z\|-r)$ we have
$B(z;\varepsilon)\subset B(0;|v|R-\|x(\xi)\|).$
 Using Lemma \ref{rosetta} we find 
\[-x(\alpha),-x(\beta)\in \mathcal{L}^{(T_x\tau_{+}(v))}\cap B(0;|v|R-\|x(\xi)\|)=\mathcal{L}^{(T_x\tau_{-}(v))}\cap B(0;|v|R-\|x(\xi)\|),\]
and since $\|x(\alpha)-x(\beta)\|<r$ it must hold that $z:=x(\alpha)=x(\beta)$. Thus
\[\alpha=(T_{x+w}(\tau_{+}(v)),w),\quad \beta=(T_{x+w}(\tau_{-}(v)),w)\]
where $-w$ is the unique point in $\mathcal{L}^{(T_x\tau_{+}(v))}\cap B(z;\varepsilon)=\mathcal{L}^{(T_x\tau_{-}(v))}\cap B(z;\varepsilon)$. 
We then have
\[\alpha\xi=(T_{w+x}(\tau_{+}(v)),w+x),\quad \beta\eta=(T_{w+x}(\tau_{-}(v)),w+x),\]
that is $x(\alpha\xi)=x(\beta\eta)$. This proves \eqref{implication} on page \pageref{implication}.
\end{proof}
\begin{prop} Let $\chi_{k}$ be the partition of unity elements associated to the $s$-cover $\{V_y\}_{y\in Y}$. The operators
\[[T,\chi_{k}^{*}](1+X^{2})^{-\delta},\quad (1+X^{2})^{-\delta}[T,\chi_{k}^{*}]\]
extend boundedly to all of 
$L^{2}(\mathcal{G}\times_{\tau_{+}}\mathcal{V})\hat\otimes \bigwedge^* \R^d \oplus L^{2}(\mathcal{G}\times_{\tau_{-}}\mathcal{V})\hat\otimes \bigwedge^* \R^d$.
\end{prop}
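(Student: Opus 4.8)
The plan is to bound $[T,\chi_k^{*}](1+X^{2})^{-\delta}$ and $(1+X^{2})^{-\delta}[T,\chi_k^{*}]$ by feeding the extra resolvent factor into the norm identity \eqref{commnorm} on page \pageref{commnorm} and the vanishing argument \eqref{implication} on page \pageref{implication}, absorbing it against the slow growth of $\zeta_{|v|}=\log(1+|v|)$. Since $X^{2}$ and $T$ act diagonally on $\bigwedge^{*}\R^{d}$ and $[T,\chi_k^{*}]$ is off-diagonal for $\calH=\calH_{+}\oplus\calH_{-}$, it suffices to treat $[T_{\pm},\chi_k^{*}](1+X^{2})^{-\delta}$ and $(1+X^{2})^{-\delta}[T_{\pm},\chi_k^{*}]$ on $L^{2}(\calG_{s}\times_{\tau_{\pm}}\calV)$, the two signs being symmetric. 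In the first case the resolvent acts as multiplication by $(1+\|x(\xi)\|^{2})^{-\delta}$ in the input variable, so \eqref{commnorm} holds verbatim with $\phi(\xi,v)$ replaced by $(1+\|x(\xi)\|^{2})^{-\delta}\phi(\xi,v)$; in the second case it acts as multiplication by $(1+\|x(\eta)\|^{2})^{-\delta}$, which depends only on the output variable and hence factors out of the inner norm in \eqref{commnorm}. Either way the part of the sum over $\calV\setminus\calV_{z}$ is bounded exactly as the operator $B$ in the preceding lemma, and the part over $\xi\in s^{-1}(\tau_{+}(v))\setminus J_{(z,v)}$ still vanishes since \eqref{implication} does not involve $\phi$.

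We are then reduced to $v\in\calV_{z}$ and $\xi\in J_{(z,v)}$, i.e.\ $\|x(\xi)\|>|v|R-\|z\|-r$. Next I would observe that whenever one of the partition products $\chi_{y}(\eta)\chi_{y}(\xi)$ or $\chi_{y}(\beta\eta)\chi_{y}(\alpha\xi)$ with $\chi_{z}(\alpha)\chi_{z}(\beta)\neq 0$ is nonzero, the bound $\varepsilon<r/2$ forces $\|x(\eta)-x(\xi)\|<4\varepsilon<2r$, so on $J_{(z,v)}$ both $(1+\|x(\xi)\|^{2})^{-\delta}$ and $(1+\|x(\eta)\|^{2})^{-\delta}$ are dominated by $\big(1+\max\{0,|v|R-\|z\|-3r\}^{2}\big)^{-\delta}$. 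Hence $\zeta_{|v|}(1+\|x(\xi)\|^{2})^{-\delta}\le\sqrt{C_{z}}$ (and likewise with $x(\eta)$) on $J_{(z,v)}$, where $C_{z}:=\sup_{v\in\calV_{z}}\log(1+|v|)^{2}\big(1+\max\{0,|v|R-\|z\|-3r\}^{2}\big)^{-2\delta}<\infty$: for the finitely many $v$ with $|v|$ small the weight is $\le1$ and $\zeta_{|v|}$ is finite, while for large $|v|$ the quantity tends to $0$ precisely because $\log(1+n)$ grows slower than $n^{\delta}$ — this is where the slow choice $\zeta_{|v|}=\log(1+|v|)$ is used. Bounding the weight pointwise on $J_{(z,v)}$ by $\sqrt{C_{z}}$ reduces everything to showing
\[
  \sum_{\substack{v\in\calV_{z}\\ \eta\in s^{-1}(\tau_{-}(v))}}\Big\|\sum_{\xi\in J_{(z,v)}}\sum_{y,\alpha,\beta}\big(\chi_{z}(\alpha)\chi_{y}(\beta\eta)\chi_{y}(\alpha\xi)-\chi_{z}(\beta)\chi_{y}(\eta)\chi_{y}(\xi)\big)\phi(\xi,v)\Big\|^{2}\le C\,\|\phi\|^{2},
\]
which is the same type of combinatorial $\ell^{2}$-estimate already carried out in the previous proposition.

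The hard part will be the bookkeeping in this last inequality: the two families of partition products are not each summable over $\alpha,\beta,\xi$ separately, only their difference is. I would control it using that $V_{z}$ is an $s$-set, so for fixed $\xi$ (resp.\ $\eta$) there is at most one $\alpha$ (resp.\ $\beta$) with $\chi_{z}(\alpha)\neq0$ (resp.\ $\chi_{z}(\beta)\neq0$), that $s$-ness of each $V_{y}$ together with Lemma \ref{Ncounting} bounds the number of relevant $y$ and of relevant $\beta\eta,\alpha\xi\in V_{y}$ by the intersection number $N$, and that $\sum_{y}\chi_{y}^{2}=1$; so for each $\xi$ the inner difference is a sum of at most a fixed finite number of terms, each bounded by a product of partition weights times $\|\phi(\xi,v)\|$. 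Interchanging the order of summation (for fixed $\xi$ at most $N$ values of $\eta$ contribute), applying $(a_{1}+\dots+a_{M})^{2}\le M(a_{1}^{2}+\dots+a_{M}^{2})$ and $\sum_{y}\chi_{y}^{2}=1$ collapses the right-hand side to a constant multiple of $\sum_{v}\sum_{\xi\in s^{-1}(\tau_{+}(v))}\|\phi(\xi,v)\|^{2}=\|\phi\|^{2}$. Finally, $(1+X^{2})^{-\delta}[T,\chi_k^{*}]$ is handled by the same computation with the roles of the weights at $x(\xi)$ and $x(\eta)$ interchanged, which changes nothing since these are comparable on $J_{(z,v)}$; together with the preceding proposition and the fact that every $f\in C_{c}(\calG)$ is a finite combination of $\chi_{y}^{*}$'s and Lipschitz functions, this verifies all hypotheses of Lemma \ref{shortcut}.
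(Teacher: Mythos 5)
Your proposal follows the paper's own proof essentially step for step: you keep the norm identity \eqref{commnorm}, observe that the vanishing argument via \eqref{implication} is unaffected by the extra scalar factor on $\phi$, note that on $J_{(z,v)}$ (equivalently for $v\in\calV_{z}$) one has $\|x(\xi)\|>|v|R-\|z\|-r$ so that $\zeta_{|v|}(1+\|x(\xi)\|^{2})^{-\delta}$ is uniformly bounded because $\log$ is beaten by any positive power, and then close the $\ell^{2}$-estimate with Lemma \ref{Ncounting}, the $s$-set property of $V_{z}$, the inequality $(a_{1}+\cdots+a_{N})^{2}\le N(a_{1}^{2}+\cdots+a_{N}^{2})$, and $\sum_{y}\chi_{y}^{2}=1$. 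The only cosmetic divergence is that you make the constant $C_{z}$ explicit and you remark that $\|x(\xi)-x(\eta)\|<4\varepsilon<2r$ on the relevant support (via additivity of the cocycle), which justifies interchanging the weight between $x(\xi)$ and $x(\eta)$ for the adjoint-side operator $(1+X^{2})^{-\delta}[T,\chi_{k}^{*}]$; the paper leaves the second operator as ``follows in a similar manner.'' One small misstatement: you say only the difference of the two partition families is summable, but in fact the final inequality in the paper bounds each of the two terms $\chi_{z}(\alpha)\chi_{y}(\beta\eta)\chi_{y}(\alpha\xi)$ and $\chi_{z}(\beta)\chi_{y}(\eta)\chi_{y}(\xi)$ separately after the $(a+b)^{2}\le 2(a^{2}+b^{2})$ step; the cancellation is only needed for the vanishing on $J_{(z,v)}^{c}$, not for the estimate on $J_{(z,v)}$. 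This does not affect the validity of your argument, since you go on to apply exactly that Cauchy--Schwarz splitting.
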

\begin{proof} 
We can again ignore the finite dimensional space $\bigwedge^*\R^d$, where $X^2$ and $T$ act 
diagonally. Consider
\begin{align*}
&\langle [T,\chi_{k}^{*}](1+X^{2})^{-\delta}\phi, [T,\chi_{k}^{*}](1+X^{2})^{-\delta}\phi\rangle-\langle B\phi,B\phi\rangle\\  
&= \!\! \sum_{\substack{v\in\calV_z, \\ \eta\in s^{-1}(\tau_{-}(v))}} \! \zeta_{|v|}^{2}\left\|\sum_{\xi\in J_{(z,v)}}(1+\|x(\xi)\|^{2})^{-\delta}\sum_{y,\alpha,\beta}\left(\chi_{k}(\alpha)\chi_{y}(\beta\eta)\chi_{y}(\alpha\xi)-\chi_{k}(\beta)\chi_{y}(\eta)\chi_{y}(\xi)\right)\phi(\xi,v) \right\|^{2}\\
&\leq \! \sum_{\substack{v\in\calV_z, \\ \eta\in s^{-1}(\tau_{-}(v))}} \! \left(\sum_{\xi\in J_{(z,v)}}\left\|\sum_{y,\alpha,\beta}\left(\chi_{k}(\alpha)\chi_{y}(\beta\eta)\chi_{y}(\alpha\xi)-\chi_{z}(\beta)\chi_{y}(\eta)\chi_{y}(\xi)\right)\phi(\xi,v) \right\|\right)^{2}\\
&\leq \sum_{\substack{v\in\calV_z, \\ \eta\in s^{-1}(\tau_{-}(v))}}\sum_{\xi\in J_{(z,v)}}N\left\|\sum_{y,\alpha,\beta}\left(\chi_{z}(\alpha)\chi_{y}(\beta\eta)\chi_{y}(\alpha\xi)-\chi_{z}(\beta)\chi_{y}(\eta)\chi_{y}(\xi)\right)\right\|^{2}\|\phi(\xi,v)\|^{2},
\end{align*}
where the last inequality follows from Lemma \ref{Ncounting}.
We proceed
\begin{align*}
&\hspace{-1cm}\langle [T,\chi_{k}^{*}](1+X^{2})^{-\delta}\phi, [T,\chi_{k}^{*}](1+X^{2})^{-\delta}\phi\rangle-\langle B\phi,B\phi\rangle\\  
&\leq 2N\sum_{\substack{v\in\calV_z, \\ \eta\in s^{-1}(\tau_{-}(v))}} \sum_{\xi\in J_{(z,v)}}\sum_{y,\alpha,\beta}\left(\chi_{z}(\alpha)^{2}\chi_{y}(\alpha\xi)^{2}\chi_{y}(\beta\eta)^{2}+\chi_{z}(\beta)^{2}\chi_{y}(\xi)^{2}\chi_{y}(\eta)^{2}\right)\|\phi(\xi,v)\|^{2}\\
&\leq 2N\sum_{v\in\mathcal{V}_{z}}\sum_{\xi\in J_{(z,v)}}\sum_{\alpha,\beta}\left(\chi_{z}(\alpha)^{2}+\chi_{z}(\beta)^{2}\right)\|\phi(\xi,v)\|^{2}\\
&\leq 4N\sum_{v\in\mathcal{V}_{z}}\sum_{\xi\in J_{(z,v)}}\|\phi(\xi,v)\|^{2}\leq 4N\|\phi\|^{2},
\end{align*}
and we conclude that $[T,\chi_{k}^{*}](1+X^{2})^{-\delta}$ is bounded for all $\delta$. The statement for $(1+X^{2})^{-\delta}[T,\chi_{k}^{*}]$ follows in a similar manner.
\end{proof}
\begin{thm} \label{thm:PB_product}
The triple 
$$
\Big( C_c(\calG )\hat\otimes Cl_{0,d}, \, L^{2}(\mathcal{G}_{s}\times_{\tau_{+}}\calV)\hat\otimes 
    \bigwedge\nolimits^{\! *} \R^d\oplus L^{2}(\calG_{s}\times_{\tau_{-}}\calV)\hat\otimes 
    \bigwedge\nolimits^{\! *} \R^d, \, X+T\kappa \Big)
$$ 
is an $\varepsilon$-unbounded Fredholm module for all $0<\varepsilon <1$ in the sense of \cite[Definition A.1]{GM15}. 
It represents the Kasparov product of the class 
$[{}_{d}\lambda_{\Omega_0}] \in KK(C^*_r(\calG )\hat\otimes Cl_{0,d}, C(\Omega_0))$ 
of Equation \eqref{eq:bulk_K-cycle} on page \pageref{eq:bulk_K-cycle} 
and the quasi-homomorphism $[(\pi_{\tau_{+}}, \pi_{\tau_{-}})] \in KK(C(\Omega_0),\C)$.
\end{thm}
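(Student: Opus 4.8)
The plan is to verify the three defining properties of an $\varepsilon$-unbounded Fredholm module from \cite[Definition A.1]{GM15} and then invoke the Connes--Skandalis-type criterion to identify the bounded transform with the Kasparov product. The structural setup is already in place: by the lemma identifying $E_{C(\Omega_0)}\otimes_{\pi_\tau}L^2(\calV)$ with $L^2(\calG_s\times_\tau\calV)$, the Hilbert space is the correct balanced tensor product, the operator $X$ descends as stated, and the connection operator $T\kappa$ built from the Grassmann connection of the frame $\{\chi_y\}$ has the form computed above. Proposition \ref{prop: prodcandidate} already gives essential self-adjointness of $D=X+T\kappa$, compactness of its resolvent, and the Connes--Skandalis positivity and connection conditions at the level of bounded transforms. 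So the only remaining analytic input is the commutator behaviour with $C_c(\calG)$.

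First I would reduce the commutator estimates to generators: by Proposition \ref{prop:s-cover_frame} every $f\in C_c(\calG)$ can be written as a finite sum $\sum_y \chi_y f_y$ with $f_y\in C(\Omega_0)$ (in fact $\mathrm{Lip}(\Omega_0)$ after a further approximation, since $\mathrm{Lip}(\Omega_0)$ is dense), and since $C_c(\calG)$ is $*$-closed it suffices to control commutators with the $\chi_z^*$ and with $f\in\mathrm{Lip}(\Omega_0)$. The operator $X$ acts by Clifford multiplication by a locally constant $\R^d$-valued function, so $[X,f]$ and $[X,\chi_z^*]$ are bounded adjointable (they are again elements of $C_c(\calG)$-type operators), giving the first hypothesis of Lemma \ref{shortcut}. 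The crucial bounded-ness of $(1+X^2)^{-\delta}[T,f]$, $[T,f](1+X^2)^{-\delta}$, $(1+X^2)^{-\delta}[T,\chi_z^*]$ and $[T,\chi_z^*](1+X^2)^{-\delta}$ is exactly the content of the Propositions proved just above the theorem statement: these follow from the choice $\zeta_{|v|}=\log(1+|v|)$, the logarithmic Lipschitz estimate of Lemma \ref{loglemma}, the counting Lemma \ref{Ncounting} controlling the frame intersections, and the geometric Lemma \ref{rosetta} which forces $x(\xi)=x(\eta)$ inside the relevant region $J_v$ so that the Lipschitz difference is genuinely small. I would assemble these citations, noting that the reduction to the finite-dimensional Clifford space $\bigwedge^*\R^d$ (on which $X^2$ and $T$ act diagonally) is harmless.

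With these in hand, apply Lemma \ref{shortcut} with $S=X$, $T=T\kappa$, $a\in\{f,\chi_z^*:f\in\mathrm{Lip}(\Omega_0)\}$, $\delta=\varepsilon/2<1/2$, and any normalising function $b$: the hypotheses are met — $a(D\pm i)^{-1}$ is compact by Proposition \ref{prop: prodcandidate}, $[X,a]$ is bounded, and the mixed resolvent-weighted $T$-commutators are bounded — so $[b(D),a]$ is compact for all generators $a$, hence for all $f\in C_c(\calG)$ by the finite-sum decomposition and $*$-closedness. Together with compactness of $(D\pm i)^{-1}$ this gives that $(C_c(\calG)\hat\otimes Cl_{0,d},\calH_+\oplus\calH_-,X+T\kappa)$ is an $\varepsilon$-unbounded Fredholm module for every $0<\varepsilon<1$; note the grading/Clifford conventions match since $X\kappa=-\kappa X$, $T\kappa=\kappa T$ so $D=X+T\kappa$ is odd. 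Finally, for the identification with the Kasparov product: Proposition \ref{prop: prodcandidate} already established that the bounded transforms of $X$ and $D$ satisfy the Connes--Skandalis positivity and connection conditions, and the representation and grading data come from the tensor-product construction of Lemma \ref{rosetta}'s surrounding setup. Hence by the bounded-transform characterisation of the Kasparov product (as used in \cite{GM15} via \cite[Appendix A]{longitudinal}), $b(X+T\kappa)$ represents $[{}_d\lambda_{\Omega_0}]\hat\otimes_{C(\Omega_0)}[(\pi_{\tau_+},\pi_{\tau_-})]$.

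The main obstacle is purely the commutator analysis with the frame elements $\chi_z^*$: unlike $[T,f]$ for $f\in\mathrm{Lip}(\Omega_0)$, the commutator $[T,\chi_z^*]$ does not vanish on a naive ``diagonal'' region and instead produces the four-fold sum over $y,\alpha,\beta,\xi$ displayed above, whose control requires the delicate case split into $\calV_z$ vs $\calV\setminus\calV_z$ and then into $J_{(z,v)}$ vs its complement, with the vanishing on $J_{(z,v)}^c$ hinging on the chain of equalities $x(\xi)=x(\eta)$, $x(\alpha)=x(\beta)$, $x(\alpha\xi)=x(\beta\eta)$ extracted from repeated applications of Lemma \ref{rosetta}. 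That combinatorial-geometric bookkeeping — already carried out in the Propositions preceding the theorem — is where all the real work sits; the theorem itself is then a matter of collecting these inputs and feeding them into Lemma \ref{shortcut} and the results of \cite{MesLesch,MR}.
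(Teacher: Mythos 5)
Your proposal is correct and follows essentially the same route as the paper: reduce to the frame generators $\chi_z^*$ and to $\mathrm{Lip}(\Omega_0)$, feed the resolvent-weighted commutator bounds of the preceding propositions into Lemma~\ref{shortcut} (which channels through \cite[Theorem A.6]{GM15}), and then combine Proposition~\ref{prop: prodcandidate} with the Connes--Skandalis criterion \cite[Theorem A.3]{longitudinal} to identify the bounded transform with the Kasparov product. The only cosmetic slip is the attribution of the tensor-product identification $E_{C(\Omega_0)}\otimes_{\pi_\tau}L^2(\calV)\cong L^2(\calG_s\times_\tau\calV)$ to ``Lemma~\ref{rosetta}'s surrounding setup'': Lemma~\ref{rosetta} is a purely geometric ultrametric estimate, and the tensor-product isomorphism is supplied by the separate (unlabelled) lemma at the start of the ``product operator'' subsection.
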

\begin{proof} By \cite[Theorem A.6]{GM15} the bounded transform of $X+T\kappa$ is a Fredholm module. By Proposition \ref{prop: prodcandidate} and \cite[Theorem A.3]{longitudinal} this Fredholm module represents the Kasparov product of the indicated classes.
\end{proof}

As previously mentioned, it would be interesting to compare the $K$-homology class of the 
$\varepsilon$-unbounded Fredholm module from Theorem \ref{thm:PB_product} with similar constructions 
in the tiling literature~\cite{MW17, JKS15}. 

We have concretely represented a $K$-homology class containing information of both the 
transversal dynamics and internal structure of the unit space. In Section \ref{sec:PB_pairing} we will briefly consider 
its potential applications to topological phases of lattices or tilings with finite local complexity 
(e.g. quasicrystals) via the index pairing.

\section{Index pairings and topological phases} \label{sec:Applications}
Up to now we have largely been concerned with the $K$-homology and $KK$-theory of 
$C^*_r(\calG,\sigma)$. In this section, we use these constructions and properties 
to consider homomorphisms on $K$-theory. That is, we are interested in product pairings 
in real or complex $K$-theory
\begin{equation} \label{eq:bulk_pairing}
  K_n(C^*_r(\calG, \sigma)) \times KK^d(C^*_r(\calG,\sigma),C(\Omega_0)) 
    \to K_{n-d}(C(\Omega_0)).
\end{equation}

Our motivation for studying such pairings comes 
from applications to topological phases of Hamiltonians on aperiodic lattices, which we briefly 
introduce. A low energy quantum mechanical system with negligible interactions between 
particles is modelled via a self-adjoint Hamiltonian acting on a complex separable 
Hilbert space $\calH$. This Hamiltonian is often an element of or affiliated to 
a $C^*$-algebra of observables.
 One can then consider underlying symmetries of the Hamiltonian, 
where Wigner's theorem implies that such symmetries arise on $\calH$ as projective unitary 
or anti-unitary representations of a symmetry group~\cite{Wigner}. 
In the case of an anti-unitary representation of $\Z_2$ (e.g. a time-reversal symmetry), 
conjugation by the generator of 
this representation often gives a \emph{Real structure} $\mathfrak{r}$ on the 
$C^*$-algebra of observables. That is, an anti-linear order-$2$ automorphism that 
commutes with the $\ast$-operation.

While other symmetry groups can be considered, for free-fermionic 
topological phases, one generally considers a symmetry group 
$G \subset \Z_2 \times \Z_2$. The symmetries that generate this group 
are chiral symmetry (unitary, anti-commutes with Hamiltonian), time-reversal symmetry 
(anti-unitary, commutes with Hamiltonian) and particle-hole symmetry (anti-unitary, 
anti-commutes with Hamiltonian). A key reason for studying such 
symmetries comes from the following result.

\begin{prop}[\cite{Kellendonk15}] \label{prop:tenfoldway}
Let $h$ be a self-adjoint element in the complex $C^*$-algebra $A$ 
with a spectral gap at $0$ (taking a shift $h-\mu$ if necessary).
\begin{enumerate}
  \item The spectral projection $\chi_{(-\infty,0]}(h)$ gives a class in $K_0(A)$.
  \item If $h$ has a chiral symmetry, then $h$ determines an element in $K_1(A)$.
  \item If $h$ has a time-reversal symmetry and/or particle-hole symmetry, then $h$ determines 
  an element in $KO_n(A^{\mathfrak{r}})$.
  The degree $n$ of the $KO$-theory group and the Real structure $\mathfrak{r}$ is determined by the 
  symmetry of the Hamiltonian (cf. \cite[Section 6]{Kellendonk15}).
\end{enumerate}
\end{prop}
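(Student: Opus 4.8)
The plan is to reduce the statement to known facts about the $K$-theory and $KO$-theory of $C^*$-algebras together with a symmetry-by-symmetry bookkeeping, essentially following the exposition of Kellendonk cited in the statement. First I would treat part (1): given a self-adjoint $h\in A$ with a spectral gap at $0$, the spectral projection $p=\chi_{(-\infty,0]}(h)$ is a genuine projection in $A$ (or in its unitisation if $A$ is non-unital) because $\chi_{(-\infty,0]}$ is continuous on $\operatorname{spec}(h)$; hence $[p]\in K_0(A)$. For part (2), a chiral symmetry is a self-adjoint unitary $J\in M(A)$ with $JhJ=-h$; conjugating by the spectral decomposition of $J$ puts $h$ in off-diagonal form $h=\begin{pmatrix}0&u^*\\ u&0\end{pmatrix}$ with $u$ unitary in the gap, and $[u]\in K_1(A)$ is the desired class. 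These are the routine parts and I would state them quickly.

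The substance is part (3). Here I would recall the standard dictionary (as in \cite{Kellendonk15,GSB16}): an anti-unitary time-reversal symmetry $T$ with $ThT^{-1}=h$ and $T^2=\pm 1$, and/or an anti-unitary particle-hole symmetry $S$ with $ShS^{-1}=-h$ and $S^2=\pm 1$, combine (together with the possible chiral symmetry $TS$) to endow $A$ with a Real structure $\mathfrak{r}$ (the anti-linear order-two $\ast$-automorphism coming from conjugation by the generator) and to place the flattened Hamiltonian $\operatorname{sgn}(h)$ — or the projection $p$ — in a real or quaternionic matrix algebra over $A^{\mathfrak{r}}$ with prescribed Clifford-type relations. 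The key step is to match each of the ten symmetry classes to a class in $KO_n(A^{\mathfrak{r}})$ for the appropriate $n\in\{0,\ldots,7\}$; I would organise this by exhibiting, in each case, an explicit $Cl_{p,q}$-module structure so that the flattened Hamiltonian becomes an odd self-adjoint unitary anticommuting with the Clifford generators, hence represents a class in $KO_{p-q}(A^{\mathfrak{r}})$ by the usual identification of such "Clifford module pictures" with real $K$-theory (van Daele's picture, or the Karoubi/Atiyah–Bott–Shapiro description). The complex cases (class A, AIII) are handled by (1) and (2) with no Real structure.

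The main obstacle I anticipate is not any single deep theorem but the sheer bookkeeping of the ten cases — correctly tracking signs $T^2,S^2=\pm1$, the resulting commuting real/quaternionic structures, and the degree shift — and, more subtly, handling non-unitality of $A$ (so that $p$ or $\operatorname{sgn}(h)$ may only be a multiplier and one must pass to $A^+$ or use a picture of $K$-theory adapted to non-unital algebras) and the case where the symmetry operators are only affiliated to, not elements of, $A$ (they live in the multiplier algebra $M(A)$, and one must check the Real structure $\mathfrak{r}$ descends to $A$ itself). Since the statement explicitly defers the detailed case analysis to \cite[Section 6]{Kellendonk15}, I would present the argument at the level of "here is the mechanism in each case" and cite that reference for the exhaustive table, rather than reproving the tenfold way from scratch.
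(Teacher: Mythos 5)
The paper does not supply its own proof of this proposition: it is stated as an external result attributed entirely to \cite{Kellendonk15}, so the only reference point is Kellendonk's argument, which indeed proceeds via van Daele's Clifford-module/graded-unitary picture of real $K$-theory. Your sketch matches that route: parts (1) and (2) by the Fermi projection and off-diagonalisation under a chiral grading are the standard first two steps, and your part (3) — endow $A$ with the Real structure coming from the anti-unitary generator, place the flattened Hamiltonian $\operatorname{sgn}(h)$ in an explicit $Cl_{p,q}$-module, and read off $KO_{p-q}(A^{\mathfrak{r}})$ — is precisely the mechanism Kellendonk implements, with the tenfold bookkeeping deferred to his Section 6 exactly as you propose. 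You also correctly flag the two genuine subtleties one must resolve to make the sketch rigorous: (i) non-unitality, so that $p$ and $\operatorname{sgn}(h)$ live a priori only in a unitisation and one needs the appropriate relative or non-unital picture of $K$-theory; (ii) the symmetry operators typically belong only to $M(A)$, and one must verify that the induced anti-linear involution restricts to a Real structure on $A$ itself. So your proposal is correct and follows essentially the same route as the source the paper cites; there is no independent argument in the paper to compare against.
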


\begin{remarks}
\begin{enumerate}
  \item Proposition \ref{prop:tenfoldway} has appeared in 
numerous forms, see~\cite{FM13, Thiang14, BCR15, Kubota15a} for example.  
\item We wish to apply Proposition \ref{prop:tenfoldway} to the case $A = C^*_r(\calG,\sigma)$ 
(complex $C^*$-algebras)
so that we can then apply our $KK$-theoretic machinery to the case of invertible Hamiltonians 
$h \in C^*_r(\calG,\sigma)$. 
For systems with no anti-linear symmetries, this is no problem. 
For systems with  time-reversal symmetry or particle-hole symmetry, 
we require the corresponding real subalgebra $C^*_r(\calG,\sigma)^{\mathfrak{r}}$ to have a 
presentation as a twisted real groupoid $C^*$-algebra, $C^*_r(\calG, \sigma_\R)_\R$ 
with $\sigma_\R$ a $O(1)$-valued twist. 
This places a restriction on the $U(1)$-valued twist $\sigma$, but is immediate if the twist is trivial. Such assumptions are to 
be expected as, for example, the case of magnetic twists from Example \ref{ex:mag_twist} 
should in general not be compatible with a time-reversal symmetry.

Hence, we shall from now on assume that our algebra of observables is given by  
the real or complex transversal groupoid $C^*$-algebra and that we have a class in $K_n(C^*_r(\calG,\sigma))$ 
(real or complex) to take pairings with.
\item Because we have an unbounded Kasparov module 
$$
  {}_{d}\lambda_{\Omega_0} = \bigg( C_c(\calG, \sigma)\hat\otimes Cl_{0,d},\, E_{C(\Omega_0)} \hat\otimes 
   \bigwedge\nolimits^{\! *} \R^d, \, X=\sum_{j=1}^d X_j \hat\otimes \gamma^j \bigg),
$$
the completion $\calA = \ol{C_c(\calG,\sigma)}$ of $C_{c}(\mathcal{G},\sigma)$ in the norm $\|f\|+\|[X,f]\|$ is a Banach $*$-algebra that is stable under the 
holomorphic functional calculus~\cite{BC91}. 
Having fixed such an algebra $\calA$, the spectral gap assumption on $h \in C^*_r(\calG,\sigma)$, 
means that we can improve Proposition \ref{prop:tenfoldway} and obtain an element of the group 
$K_n(\calA)\cong K_n(C^*_r(\calG,\sigma))$.
\end{enumerate}
\end{remarks}

Given a $K$-theory class from Proposition \ref{prop:tenfoldway}, we can thus consider 
pairings such as Equation \eqref{eq:bulk_pairing} on page \pageref{eq:bulk_pairing}. 
In general, the pairing in Equation \eqref{eq:bulk_pairing} can be described using a 
Clifford index similar to Atiyah--Bott--Shapiro~\cite{ABS}. This index then serves as an explicit 
phase label of the $K$-theory class from Proposition \ref{prop:tenfoldway}.

Also of importance  are numerical pairings, which can be defined in a few ways. 
One is by point evaluation $C(\Omega_0) \to \C$ or $\R$, which leads to 
$\Z$ or $\Z_2$-valued invariants. Alternatively, we fix a faithful and invariant 
measure on $\Omega_\calL$, which gives an invariant measure on $\Omega_0$. This defines a trace on $C(\Omega_0)$ 
and a homomorphism $K_0(C(\Omega_0)) \to \R$. 
In particular, for complex algebras, the composition
\begin{equation} \label{eq:semifinite_KK_pairing}
  K_\ast (C^*_r(\calG,\sigma)) \times KK^\ast (C^*_r(\calG,\sigma),C(\Omega_0)) 
  \to K_0(C(\Omega_0)) \xrightarrow{\int} \R
\end{equation}
can be computed using the semifinite local index formula. The cyclic formula that 
we obtain from the local index formula is then more amenable to physical 
interpretation and numerical approximation.

\subsection{Complex pairings} \label{subsec:Complex_pairings}

For complex algebras, 
we use the semifinite local index formula to pair complex $K$-theory classes in 
$K_\ast(C^*_r(\calG,\sigma))$ with the
spin$^c$ semifinite spectral triple 
from Equation \eqref{eq:spin_semifinite_spec_trip} on page \pageref{eq:spin_semifinite_spec_trip}. 
Algebraic manipulation of the Dirac operator means that only the top degree 
term survives as in~\cite[Appendix]{BCPRSW}. Then we can evaluate the 
resolvent cocycle, which uses the residue trace computation from Proposition \ref{prop:semifinite_smooth_spec_trip}.
We will simply state the result as the proof follows the same argument 
as analogous results in~\cite{BRCont,BSBWeak}.

\begin{prop} \label{prop:complex_bulk_pairing}
Let $u$ be a complex unitary in $M_q(\calA)$ 
and ${}_{d}\lambda_{\tau}^{S_\C}$ the complex semifinite
spectral triple from Equation \eqref{eq:spin_semifinite_spec_trip} on page \pageref{eq:spin_semifinite_spec_trip} with $d$ odd. 
Then the semifinite index pairing is given by the formula
$$  
\langle [u], [{}_{d}\lambda_{\tau}^{S_\C}] \rangle = \tilde{C}_d \sum_{\rho \in S_d}(-1)^\rho\, 
 (\Tr_{\C^q}\otimes \Tr_\tau) \bigg(\prod_{j=1}^d u^* \partial_{\rho(j)}u \bigg), 
 \qquad \tilde{C}_{2n+1} = \frac{ 2(2\pi i)^n n!}{(2n+1)!},
 $$
where $\Tr_{\C^q}$ is the matrix trace on $\C^q$ and $S_d$ is the permutation group on $d$ letters.

If $p$ is a  projection in $M_q(\calA)$, then 
the pairing with ${}_{d}\lambda_{\tau}^{S_\C}$ 
 with $d$ even is given by 
\begin{equation*} 
  \langle [p], [{}_{d}\lambda_{\tau}^{S_\C}]\rangle =  C_d \sum_{\rho \in S_d} 
  (-1)^\rho\, (\Tr_{\C^q}\otimes \Tr_\tau) \bigg(p \prod_{j=1}^d \partial_{\rho(j)}p \bigg), 
  \qquad C_{2n} = \frac{(-2\pi i)^{n}}{n!}.
\end{equation*}
\end{prop}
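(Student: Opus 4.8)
The statement is the semifinite local index formula specialized to the spin$^c$ semifinite spectral triple ${}_{d}\lambda_{\tau}^{S_\C}$ built from the cocycle $c = \hat{c}_d:\calG\to\R^d$. The strategy follows the now-standard pattern for crossed-product and Dirac-type examples (as in the cited references \cite{BRCont, BSBWeak, BCPRSW}), so I would only sketch the structure and highlight where the groupoid-specific input enters. First I would invoke that ${}_{d}\lambda_{\tau}^{S_\C}$ is a $QC^\infty$ and $d$-summable semifinite spectral triple relative to $(\calN,\Tr_\tau)$; this is exactly Proposition \ref{prop:semifinite_smooth_spec_trip} together with the remark after Equation \eqref{eq:spin_semifinite_spec_trip} that the spin$^c$ and oriented triples agree at the level of $KK$ up to a universal normalisation. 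Hence the pairing $\langle[u],[{}_{d}\lambda_{\tau}^{S_\C}]\rangle$ (resp. $\langle[p],\cdot\rangle$) is computed by the semifinite resolvent cocycle $(\phi_m)$ of Carey--Phillips--Rennie--Sukochev \cite{CPRS2, CPRS3}, and only the top-degree component $\phi_d$ contributes a residue.

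\textbf{Key steps.} The core computation is to show that after algebraic manipulation only the top-degree term survives and to evaluate its residue. The operator $D = \sum_{j=1}^d X_j\hat\otimes\gamma^j$ has $D^2 = |X|^2\hat\otimes 1$ with $|X|^2 = \sum_j X_j^2$ scalar, and $[D,\pi(f)] = \sum_j \pi(\partial_j f)\hat\otimes\gamma^j$ where $\partial_j$ is the derivation from the cocycle component $c_j$. The first step is the Clifford-algebraic reduction: expanding the products of commutators $[D,a]$ appearing in $\phi_d(a_0,\dots,a_d)$ and using the trace property of $\Tr_{\C^q}\otimes\Tr_\tau$ on the Clifford factor $\C^{2^{\lfloor d/2\rfloor}}$, all terms vanish except the fully antisymmetrized one, producing the sum over $\rho\in S_d$ with sign $(-1)^\rho$ and the single Clifford trace $\Tr(\gamma^{\rho(1)}\cdots\gamma^{\rho(d)})$, which is a fixed constant absorbed into $\tilde C_d$ or $C_d$; this is the step carried out in \cite[Appendix]{BCPRSW}. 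The second step evaluates the remaining scalar residue: one is left with $\res_{z=d}\Tr_\tau\big(\pi(b)(1+|X|^2)^{-z/2}\big)$ for $b$ a product of derivatives of $u$ (or of $p$), and Proposition \ref{prop:semifinite_smooth_spec_trip} gives this residue as $\mathrm{Vol}_{d-1}(S^{d-1})\,\Tr_\tau(b)$; combined with the universal constants in the resolvent cocycle (the $\Gamma$-factors, the $\eta$-term for odd $d$, the $2^{\lfloor d/2\rfloor}$ spinor dimension), one obtains precisely $\tilde C_{2n+1} = 2(2\pi i)^n n!/(2n+1)!$ in the odd case and $C_{2n} = (-2\pi i)^n/n!$ in the even case. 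The matrix trace $\Tr_{\C^q}$ enters by tensoring with $M_q(\C)$ throughout and is routine.

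\textbf{Main obstacle.} The genuinely delicate point — and the reason the paper chooses to "simply state the result" — is verifying that the hypotheses of the semifinite local index formula actually hold in this setting, in particular that the spectral dimension is exactly $d$ with the relevant zeta functions having at worst a simple pole at $z=d$ and being holomorphic to the right of it, uniformly over the (possibly non-compact, disorder-parametrized) family $\{\calL^{(\omega)}\}$. This is where the Delone geometry is essential: one needs that Delone subsets of $\R^d$ have the same counting asymptotics as $\Z^d$, so that $\sum_{x\in\calL^{(\omega)}}(1+|x|^2)^{-z/2}$ extends meromorphically with $\res_{z=d} = \mathrm{Vol}_{d-1}(S^{d-1})$ independently of $\omega$ — precisely the content of the final computation in the proof of Proposition \ref{prop:semifinite_smooth_spec_trip}. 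Granting that proposition, the rest is the standard residue-cocycle bookkeeping, and I would present it by citing the parallel arguments in \cite{BRCont, BSBWeak} rather than reproducing the full calculation. One should also record that for $d$ odd one uses the odd resolvent cocycle (with the half-integer shift and the $\eta$-invariant term, which does not contribute a residue here because $D$ has no kernel obstruction after the usual double- up), and for $d$ even the even cocycle with the grading $\kappa = \gamma^1\cdots\gamma^d$ up to normalisation; these parity distinctions are exactly what produce the two separate constants $\tilde C_d$ and $C_d$.
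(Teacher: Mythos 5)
Your proposal is correct and takes essentially the same approach as the paper, which in fact gives no detailed proof of this proposition at all: the paragraph preceding it records exactly the three ingredients you identify (only the top-degree term of the resolvent cocycle survives, citing the Appendix of \cite{BCPRSW}; the residue is evaluated via Proposition \ref{prop:semifinite_smooth_spec_trip}; the remaining bookkeeping is delegated to \cite{BRCont,BSBWeak}) and then ``simply states the result''. Your write-up fleshes out that strategy faithfully, including the correct observation that the $\omega$-uniform meromorphic continuation of the zeta function over the Delone family is the one genuinely geometry-dependent input and is precisely what Proposition \ref{prop:semifinite_smooth_spec_trip} supplies.
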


If the measure on the unit space is ergodic, then we can almost surely 
describe the semifinite index pairing 
via the usual $\Z$-valued index pairing with the evaluation spectral triple from
Proposition \ref{prop:evaluation_spec_trip}.
Namely, setting $F_X = X(1+X^2)^{-1/2}$ and 
$\Pi_q =  \frac{1}{2}(1+ F_X) \otimes 1_q$, we have for almost all $\omega\in\Omega_0$, 
\begin{align*}
\Index\big( \Pi_q \pi_\omega(u) \Pi_q + (1-\Pi_q) \big) &= \tilde{C}_d \sum_{\rho \in S_d}(-1)^\rho \, 
 (\Tr_{\C^q}\otimes \Tr_\mathrm{Vol}) \bigg(\prod_{j=1}^d \pi_\omega(u)^* [X_{\rho(j)},\pi_\omega(u)]  \bigg),   \\
   \Index\big( \pi_\omega(p) (F_X \otimes 1_q)_+ \pi_\omega(p) \big)  &=  C_d \sum_{\rho \in S_d} 
  (-1)^\rho\, (\Tr_{\C^q}\otimes \Tr_\mathrm{Vol}) \bigg(\pi_\omega(p) \prod_{j=1}^d [X_{\rho(j)},\pi_\omega(p)] \bigg), 
\end{align*}
which was proved by slightly different means in~\cite{BP17}.

\subsubsection{Weak Chern numbers}

Analogous to the construction in Section \ref{sec:semifinite_construction}, 
we can construct a semifinite spectral triple ${}_{d}\lambda_{\tau_k}^{S_\C}$ from 
the Kasparov module ${}_{d}\lambda_{k}$ via the dual trace $\Tr_{\tau_k}$ 
constructed from the trace $\tau_k$ on $C^*_r(\Upsilon_k,\sigma)$. 
This semifinite spectral triple is $QC^\infty$ and $(d-k)$-summable with 
a residue trace evaluation analogous to Proposition \ref{prop:semifinite_smooth_spec_trip}.
The semifinite pairing with ${}_{d}\lambda_{\tau_k}^{S_\C}$ represents 
the composition
\begin{equation} \label{eq:weak_chern}
  K_{d-k}(C^*_r(\calG,\sigma)) \times KK^{d-k}(C^*_r(\calG,\sigma),C^*_r(\Upsilon_k,\sigma)) 
   \to K_0(C^*_r(\Upsilon_k,\sigma)) \xrightarrow{\tau_k} \R.
\end{equation}
We again use the local index formula to compute this pairing; the 
interested reader can consult~\cite{BSBWeak}, where the proof transfers 
to this setting without issue.
\begin{prop}
The composition from Equation \eqref{eq:weak_chern} is computed by, 
for $d-k$ even and $p\in M_q(\calA)$ a projection,
$$
  \langle [p], [{}_{d}\lambda_{\tau_k}^{S_\C}] \rangle = 
      C_{d-k} \sum_{\rho \in S_{d-k}}(-1)^\rho (\Tr_\tau \otimes \Tr_{\C^q}) \bigg( p \prod_{j=k+1}^d \partial_{\rho(j)}(p) \bigg), 
       \qquad C_{2n} = \frac{(-2\pi i)^{n}}{n!}.
$$
If $d-k$ is odd and $u\in M_q(\calA)$ is unitary, then
$$
 \langle [u], [{}_{d}\lambda_{\tau_k}^{S_\C}] \rangle = 
    \tilde{C}_{d-k} \sum_{\rho\in S_{d-k}}(-1)^\rho (\Tr_\tau\otimes \Tr_{\C^q}) \bigg( \prod_{j=k+1}^d u^* \partial_{\rho(j)}(u) \bigg), 
    \qquad \tilde{C}_{2n+1} = \frac{ 2(2\pi i)^n n!}{(2n+1)!}.
$$
\end{prop}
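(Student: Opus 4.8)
The statement asserts local-index-formula expressions for the composition in Equation \eqref{eq:weak_chern}, which is the semifinite index pairing of $K_{d-k}(C^*_r(\calG,\sigma))$ with the semifinite spectral triple ${}_{d}\lambda_{\tau_k}^{S_\C}$ obtained by localising ${}_{d}\lambda_{k}$ over the trace $\tau_k$ on $C^*_r(\Upsilon_k,\sigma)$. The plan is to verify that ${}_{d}\lambda_{\tau_k}^{S_\C}$ satisfies the hypotheses of the semifinite local index formula (it must be $QC^\infty$, finitely summable, and the resolvent cocycle must be evaluated), and then to reduce the general resolvent-cocycle expression to the single top-degree term, exactly as in the references~\cite{BCPRSW,BRCont,BSBWeak}. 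Since the statement itself points to~\cite{BSBWeak} as the source whose proof transfers without change, the role of this proof is essentially to confirm that the three structural ingredients needed there are in place in our groupoid setting.

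First I would record that ${}_{d}\lambda_{\tau_k}^{S_\C}$ is $QC^\infty$ and $(d-k)$-summable with the required residue-trace evaluation. This is the direct analogue of Proposition \ref{prop:semifinite_smooth_spec_trip}: one runs the same Hilbert--Schmidt estimate for the integral kernel of $\pi(f)(1+|X|^2)^{-s/4}$, now with $X = \sum_{j=k+1}^d X_j \hat\otimes \gamma^{j-k}$ and the fibrewise summability governed by the $(d-k)$-dimensional lattice $\check c_k(\calL^{(\omega)})$, which still has $\Z^{d-k}$ summability asymptotics because $\calL$ is Delone. The residue computation gives $\res_{z=d-k}\Tr_{\tau_k}(\pi(f)(1+|X|^2)^{-z/2}) = \mathrm{Vol}_{d-k-1}(S^{d-k-1})\,\Tr_{\tau_k}(f)$, and the smoothness $a,[X,a]\in\Dom(\delta^m)$ follows from $[X_j,\pi(f)] = \pi(\partial_j f)$ and invariance of $C_c(\calG,\sigma)$ under the derivations $\partial_j$. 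With these in hand the hypotheses of the local index formula~\cite{CPRS2,CPRS3} are met.

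Next I would invoke the Dirac-operator algebra of~\cite[Appendix]{BCPRSW}: because the Clifford generators $\gamma^1,\dots,\gamma^{d-k}$ appear multiplying commuting operators $X_{k+1},\dots,X_d$ in ${}_{d}\lambda_{\tau_k}^{S_\C}$, the contractions of the resolvent cocycle against lower-degree Chern characters of a projection or unitary vanish, and only the $(d-k)$-form term contributes. Combining this with the residue evaluation and the standard normalisation constants $C_{2n} = (-2\pi i)^n/n!$ and $\tilde C_{2n+1} = 2(2\pi i)^n n!/(2n+1)!$ yields precisely the two displayed formulas, depending on the parity of $d-k$ (even case: projection $p\in M_q(\calA)$ paired via $K_0$; odd case: unitary $u\in M_q(\calA)$ paired via $K_1$). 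Here $\calA = \ol{C_c(\calG,\sigma)}$ is the smooth subalgebra stable under holomorphic functional calculus from the Remarks following Proposition \ref{prop:tenfoldway}, ensuring the representatives can be taken in $M_q(\calA)$; the derivations $\partial_j$ extend to $\calA$ by construction.

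The main obstacle, such as it is, is purely bookkeeping: one must be careful that the dual trace $\Tr_{\tau_k}$ on $\calN_k = \mathrm{Fin}(E_{C^*_r(\Upsilon_k,\sigma)})''$ behaves on the relevant trace-class operators exactly like the usual trace on a direct integral, so that the residue-trace manipulation in Proposition \ref{prop:semifinite_smooth_spec_trip} goes through verbatim with $\Omega_0$ replaced by $\Omega_0$ and the $d$-fibre by the $(d-k)$-fibre of $\check c_k$; and that the $2$-cocycle $\sigma$ cancels in the kernel computation via the normalisation identities $\sigma(\xi,\xi^{-1}) = 1 = \sigma(r(\eta),\eta)$ and the cocycle relation, just as in that proof. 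No genuinely new analytic difficulty arises; the argument of~\cite{BSBWeak} transfers line by line once these identifications are made, which is why the statement is recorded as a proposition with a reference rather than a full proof.
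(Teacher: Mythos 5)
Your plan follows the same route the paper itself gestures at: verify that the localised triple ${}_{d}\lambda_{\tau_k}^{S_\C}$ is $QC^\infty$ and $(d-k)$-summable with the appropriate residue-trace evaluation, then apply the resolvent-cocycle/local-index-formula machinery of~\cite{CPRS2,CPRS3,BCPRSW} and reduce to the top-degree term as in~\cite{BSBWeak}. Since the paper does not write out a detailed argument but simply asserts the summability and cites~\cite{BSBWeak} as transferring ``without issue,'' your proposal is essentially the intended proof in spirit, and the steps you list (smoothness from $[X_j,\pi(f)]=\pi(\partial_j f)$, residue computation, Clifford-algebra reduction to the single $(d-k)$-form term, use of the holomorphically closed subalgebra $\calA$, and the cocycle cancellations $\sigma(\xi,\xi^{-1})=\sigma(r(\eta),\eta)=1$) are the right ones.

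There is, however, a genuine imprecision in your justification of the $(d-k)$-summability that would cause the argument as literally stated to fail. You say the fibrewise summability is ``governed by the $(d-k)$-dimensional lattice $\check c_k(\calL^{(\omega)})$, which still has $\Z^{d-k}$ summability asymptotics because $\calL$ is Delone.'' This is not correct for a generic aperiodic Delone set: the image of a Delone set in $\R^d$ under the coordinate projection $\check c_k$ need not be uniformly discrete in $\R^{d-k}$, and can in fact be dense (think of the projection of a $2$-dimensional quasicrystal onto a line), so a sum of the form $\sum_{y\in\check c_k(\calL^{(\omega)})}(1+|y|^2)^{-s/2}$ need not converge for any $s$. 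Relatedly, if one runs the Hilbert--Schmidt estimate of Proposition~\ref{prop:semifinite_smooth_spec_trip} verbatim on $\calH_\tau$ with $X$ replaced by $\sum_{j>k}X_j$, the resulting sum $\sum_{x\in\calL^{(\omega)}}(1+|\check c_k(x)|^2)^{-s/2}$ diverges outright. The resolution is that the dual trace $\Tr_{\tau_k}$ is taken over the module $E^{d-k}_{C^*_r(\Upsilon_k,\sigma)}$, whose compact operators form $C^*_r(\calG\ltimes\calG/\Upsilon_k,\sigma)$, so the relevant frame is indexed not by points of $\calL^{(\omega)}$ (nor by $\check c_k(\calL^{(\omega)})$) but by a locally finite cover of the quotient space $\calG/\Upsilon_k$; it is this cover that has $(d-k)$-dimensional growth, since $\calG/\Upsilon_k$ fibres over $\Omega_0$ with fibre homeomorphic to a subset of $\R^{d-k}$ carrying the induced Delone geometry. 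Equivalently, after unravelling, $\Tr_{\tau_k}$ behaves like a trace-per-unit-volume in the $\Upsilon_k$-directions, which cancels the apparent divergence. To make your proof tight you would need to produce a frame for $E^{d-k}$ adapted to an $s$-cover of $\calG\ltimes\calG/\Upsilon_k$ (analogous to Proposition~\ref{Deloneframe} but with base $\calG/\Upsilon_k$) and run the trace estimate over that frame; the $\{\chi_y\}_{y\in Y}$ with $Y\subset\R^d$ used in the bulk case do \emph{not} form a frame for $E^{d-k}$.
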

Hence we recover and extend results from~\cite{PSBKK,BSBWeak}.

\subsection{Real pairings and analytic indices} \label{sec:real_pairings}
Our aim for this section is to define an analytic index representing the map
$$
  KO_{n}(C^*_r(\calG,\sigma)) \times KKO^d(C^*_r(\calG,\sigma), C(\Omega_0)) \to KO_{n-d}(C(\Omega_0))
$$
Suppose we are given a 
gapped Hamiltonian $h=h^*$ in a $C^*$-algebra $A$ such that $h$ is compatible with the $CT$-symmetry group 
$G \subset \{1,T,C,CT\}$. Then, following the construction in~\cite[Section 3.3]{BCR15}, one is able to 
construct, up to Morita equivalence, a finitely generated and projective module $pA^{\oplus N}_A$ with a 
representation $Cl_{n,0}\to \End^*(pA^{\oplus N}_A)$ constructed from the symmetry group $G$. 
Note that if the Hamiltonian is particle hole symmetric, then the projection $p \in M_N(A)$ is closely related, but not equal, to 
the Fermi projection $\chi_{(-\infty,E_F]}(h)$.

When we apply this construction to the transversal groupoid, we obtain the projective 
module $p C^*_r(\calG,\sigma)^{\oplus N}_{C^*_r(\calG,\sigma)}$ which, with its left $Cl_{n,0}$ action, is 
a representative of the class $[h]\in KO_n(C^*_r(\calG,\sigma))$ from Proposition \ref{prop:tenfoldway}. 
The fact that the Hamiltonian is gapped implies that this class can be represented by a projective 
module over a smooth dense subalgebra $\calA \subset C^*_r(\calG,\sigma)$.

The perspective outlined in~\cite{BCR15, GSB16} is that topological phases are measured via 
a pairing of this $K$-theory class $[h]\in KO_n(C_r^*(\calG,\sigma))$ with a dual 
element. In our case, this element is precisely the bulk $KK$-cycle from Equation \eqref{eq:bulk_K-cycle} 
on page \pageref{eq:bulk_K-cycle}.
Hence we compute the product
\begin{align*}
  &\left( Cl_{n,0}, p C^*_r(\calG,\sigma)^{\oplus N}_{C^*_r(\calG,\sigma)}, 0 \right) \hat\otimes_{C^*_r(\calG,\sigma)} 
   \,\, {}_{d}\lambda_{\Omega_0}   = 
   \left( Cl_{n,d},  p E^{\oplus N}_{C(\Omega_0)} \hat\otimes \R^{2^d}, \, p(X\otimes 1_N)p \right)
\end{align*}

Making small adjustments (that do not change the $KK$-class) if necessary, 
 we can ensure that the product $pXp$ graded-commutes with the left 
 $Cl_{n,d}$-action.  We denote by  $F_{pXp}$  the bounded 
transform of $pXp$. 
If the operator $F_{pXp}$ is a regular Fredholm operator (as characterised in~\cite[Section 4.3]{ElementsNCG}), 
then $\Ker(F_{pXp})_{C(\Omega_0)}$ is a complemented $C^*$-submodule 
of $p E^{\oplus N}_{C(\Omega_0)} \hat\otimes \bigwedge\nolimits^{\! *} \R^d$ with a 
graded left-action of $Cl_{n,d}$. Furthermore, all index-theoretic information 
of the Kasparov product 
is contained in the Clifford module $\Ker(F_{pXp})_{C(\Omega_0)}$, see~\cite[Appendix B]{BCR15}.
If $F_{pXp}$ is not regular, then we can amplify $F_{pXp}$ to a regular Fredholm operator 
at the expense that this changes the supporting model $p E^{\oplus N}\oplus C(\Omega_0)^{K}$ for some $K$. 
The physical significance of this amplification is not always clear 
and, as such, needs to be considered on a case by case basis.

We briefly summarise our argument.
\begin{prop} \label{prop:Product_as_Clifford_index}
If $F_{pXp}$ is regular, then the $C^*$-module $\Ker(F_{pXp})_{C(\Omega_0)}$ with left $Cl_{n,d}$-action 
represents the Kasparov product of the class $[h]\in KO_n(C^*_r(\calG,\sigma))$ 
with the bulk $KK$-cycle from Equation \eqref{eq:bulk_K-cycle} on page \pageref{eq:bulk_K-cycle}.
\end{prop}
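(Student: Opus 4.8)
The plan is to reduce Proposition \ref{prop:Product_as_Clifford_index} to the general principle that the Kasparov product of a finitely generated projective module class with an unbounded Kasparov module is represented by the obvious ``compressed'' operator, together with the description of Kasparov products in the degenerate (operator-free) setting. First I would recall that the class $[h]\in KO_n(C^*_r(\calG,\sigma))$ is represented, in the picture of \cite{BCR15}, by the even/odd degenerate Kasparov module $\left(Cl_{n,0}, pC^*_r(\calG,\sigma)^{\oplus N}_{C^*_r(\calG,\sigma)}, 0\right)$ with a graded left $Cl_{n,0}$-action. Since the right module here is finitely generated projective over $C^*_r(\calG,\sigma)$, taking its internal product with ${}_{d}\lambda_{\Omega_0} = \left(C_c(\calG,\sigma)\hat\otimes Cl_{0,d}, E_{C(\Omega_0)}\hat\otimes\bigwedge\nolimits^{\!*}\R^d, X\right)$ is computationally straightforward: the balanced tensor product $pC^*_r(\calG,\sigma)^{\oplus N}\hat\otimes_{C^*_r(\calG,\sigma)}\big(E_{C(\Omega_0)}\hat\otimes\bigwedge\nolimits^{\!*}\R^d\big)$ is canonically isomorphic to $pE^{\oplus N}_{C(\Omega_0)}\hat\otimes\bigwedge\nolimits^{\!*}\R^d$, and because the $0$ operator on the left contributes nothing, the product operator is just $p(X\otimes 1_N)p$ with Clifford actions combined to give $Cl_{n,d}$. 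I would verify that $p(X\otimes 1_N)p$ is self-adjoint and regular on $pE^{\oplus N}_{C(\Omega_0)}\hat\otimes\bigwedge\nolimits^{\!*}\R^d$ (this follows since $p$ is adjointable and $X$ is self-adjoint regular with $p$ in its multiplier-smooth domain), that $(1+(pXp)^2)^{-1}$ is $C(\Omega_0)$-compact on $pE^{\oplus N}$ (inherited from compactness of $(1+X^2)^{-1}$ in ${}_{d}\lambda_{\Omega_0}$), and that the small adjustment making $pXp$ graded-commute with the $Cl_{n,d}$-action does not alter the $KK$-class — this is a routine homotopy/Kasparov-stabilisation argument as used throughout \cite{BCR15}.

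Next I would pass to the bounded picture. The bounded transform $F_{pXp} = pXp(1+(pXp)^2)^{-1/2}$ gives a bounded Kasparov module representing the same class $[h]\hat\otimes_{C^*_r(\calG,\sigma)}[{}_{d}\lambda_{\Omega_0}]\in KKO^{n-d}(\C, C(\Omega_0)) = KO_{n-d}(C(\Omega_0))$. Now the hypothesis that $F_{pXp}$ is a \emph{regular} Fredholm operator in the sense of \cite[Section 4.3]{ElementsNCG} means precisely that $\Ker(F_{pXp})_{C(\Omega_0)}$ is a complemented (finitely generated projective) $C^*$-submodule of $pE^{\oplus N}_{C(\Omega_0)}\hat\otimes\bigwedge\nolimits^{\!*}\R^d$ and that $F_{pXp}$ is invertible on the orthogonal complement. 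I would then invoke the standard fact (see \cite[Appendix B]{BCR15}) that a bounded Kasparov module $(A, M_B, F)$ with $F$ a regular Fredholm operator is operator-homotopic to the degenerate module supported on $\Ker(F)_B\oplus\Ker(F^*)_B$ with the zero operator; in the present odd/even graded Clifford setting this collapses to the single Clifford module $\Ker(F_{pXp})_{C(\Omega_0)}$ with its induced graded $Cl_{n,d}$-action (the complementary part, being acyclic, contributes a trivial class). This identifies the product class with $[\Ker(F_{pXp})_{C(\Omega_0)}, Cl_{n,d}\text{-action}]$, which is exactly the statement.

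The main obstacle, and the one deserving the most care, is the non-regular case and the precise bookkeeping of the Clifford/grading structures so that the identification $\Ker(F_{pXp})\oplus(\text{acyclic})\simeq pE^{\oplus N}\hat\otimes\bigwedge\nolimits^{\!*}\R^d$ respects the $\Z_2$-grading and the left $Cl_{n,d}$-action simultaneously. One must check that the complemented submodule $\Ker(F_{pXp})$ is $Cl_{n,d}$-invariant — this uses that $F_{pXp}$ graded-commutes with the Clifford action, so the Clifford generators preserve $\Ker(F_{pXp})$ — and that the acyclic complement genuinely carries a trivial $KKO$-class, which in turn relies on $F_{pXp}$ restricting there to an invertible odd self-adjoint operator graded-commuting with $Cl_{n,d}$, hence giving a degenerate module. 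When $F_{pXp}$ fails to be regular, the amplification $pE^{\oplus N}\rightsquigarrow pE^{\oplus N}\oplus C(\Omega_0)^K$ has to be chosen compatibly with all this structure; since the proposition is stated only under the regularity hypothesis, I would confine the argument to that case and merely remark that the amplified situation is handled identically after enlarging the module. The computation of the balanced tensor product and of $p(X\otimes 1_N)p$ itself is entirely routine and I would not belabour it.
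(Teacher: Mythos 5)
Your proposal follows essentially the same route as the paper: compute the balanced tensor product of the fgp module $(Cl_{n,0}, pC^*_r(\calG,\sigma)^{\oplus N}, 0)$ with ${}_d\lambda_{\Omega_0}$ to get $(Cl_{n,d}, pE^{\oplus N}_{C(\Omega_0)}\hat\otimes\bigwedge^*\R^d, p(X\otimes 1_N)p)$, pass to the bounded transform, and use the regularity hypothesis together with the characterisation in \cite[Appendix B]{BCR15} to reduce the product class to the Clifford module $\Ker(F_{pXp})_{C(\Omega_0)}$. The paper presents this argument in the paragraphs directly preceding the proposition (which is labelled as a summary) and is if anything terser than your write-up, so your extra care about $Cl_{n,d}$-invariance of the kernel and operator-homotopy to a degenerate module is welcome but matches the intended argument.
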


Let us now associate an analytic index to the Kasparov product.

\begin{defn}
We let ${}_{r,s}{\mathfrak{M}}_{C(\Omega_0)}$ be the Grothendieck group of equivalence 
classes of real $\Z_2$-graded right-$C(\Omega_0)$ $C^*$-modules carrying a 
graded left-representation of $C\ell_{r,s}$.
\end{defn}

Provided $F_{pXp}$ is regular, the product  $\Ker(F_{pXp})$ determines a class 
in the quotient group ${}_{n,d}{\mathfrak{M}}_{C(\Omega_0)}/i^*({}_{n+1,d}{\mathfrak{M}}_{C(\Omega_0)})$, 
where $i^*$ comes from restricting a Clifford action of 
$C\ell_{n+1,d}$ to $C\ell_{n,d}$. Next, we use an 
extension of the 
Atiyah--Bott--Shapiro isomorphism, see \cite[{\S}2.3]{SchroderKTheory}, 
to make the identification
$$  
{}_{n,d}{\mathfrak{M}}_{C(\Omega_0)}/i^*{}_{n+1,d}{\mathfrak{M}}_{C(\Omega_0)} 
  \cong  KO_{n-d}(C(\Omega_0)). 
$$

\begin{defn}
If $F_{pXp}$ is regular, 
the Clifford index of $F_{pXp}$ is given by 
the class
$$ 
\Index_{n-d}(F_{pXp})= [\Ker(F_{pXp})] \in 
{}_{n,d}{\mathfrak{M}}_{C(\Omega_0)}/i^*{}_{n+1,d}{\mathfrak{M}}_{C(\Omega_0)} 
  \cong  KO_{n-d}(C(\Omega_0)). $$
\end{defn}

\begin{remark}[Range of the pairing]
In general it is a difficult task to compute $KO_{n-d}(C(\Omega_0))$ for a transversal 
set $\Omega_0$ that comes from a generic Delone set. However, if our original Delone 
lattice has finite local complexity, then $\Omega_0$ is totally disconnected (Proposition \ref{prop:tranversal_properties}),
so by the continuity of the $K$-functor,
\begin{align*}
   KO_j(C(\Omega_0)) \cong C(\Omega_0, KO_j(\R)) 
   = \begin{cases} C(\Omega_0,\Z), & j=0\,\,\mathrm{mod}\,4, \\  C(\Omega_0,\Z_2), & j=1,2\,\,\mathrm{mod}\,8, \\ 0, & \text{otherwise}. \end{cases}
\end{align*}
\end{remark}

\begin{example}[Spectral triple pairings] \label{ex:spec_trip_indexpairing}
By the evaluation map $\mathrm{ev}_\omega:C(\Omega_0)\to \R$, 
we can also pair our $K$-theory classes with the evaluation spectral triple 
${}_{d}\lambda_{\omega}$ from Proposition \ref{prop:evaluation_spec_trip},
$$
  KO_n(C^*_r(\calG)) \times KO^d(C^*_r(\calG)) \to KO_{n-d}(\R).
$$
The $\Z$ or $\Z_2$-valued indices can be measured using results 
from~\cite{ASskewadj, GSB16, KK16}.
Writing these pairings explicitly, 
$$
  [h]\hat\otimes [{}_{d}\lambda_{\omega}] = \begin{cases}  \mathrm{dim}_\R  \Ker\big( (F_{p_\omega X p_\omega})_+ \big)  - \mathrm{dim}_\R \Ker\big( (F_{p_\omega X p_\omega})_- \big),  & n-d = 0 \,\mathrm{mod}\, 8 \\
     \mathrm{dim}_\R \Ker\big( (F_{p_\omega X p_\omega})_+ \big) \,\mathrm{mod}\, 2, &  n-d = 1 \,\mathrm{mod}\, 8 \\
     \mathrm{dim}_\C \Ker\big( (F_{p_\omega X p_\omega})_+ \big) \, \,\mathrm{mod}\, 2, & n-d = 2 \,\mathrm{mod}\, 8 \\
      \mathrm{dim}_{\mathbb{H}} \Ker\big( (F_{p_\omega X p_\omega})_+ \big) - \mathrm{dim}_{\mathbb{H}} \Ker\big( (F_{p_\omega X p_\omega})_- \big),  & n-d = 4 \,\mathrm{mod}\, 8 \\
      0, & \text{otherwise}   \end{cases}
$$
under the decomposition of $F = \begin{pmatrix} 0 & F_- \\ F_+ & 0 \end{pmatrix}$ by the grading. 
By considering $\mathbb{H}$ as 
an even-dimensional complex space, the quaternionic index naturally takes values in $2\Z$. 
\end{example}

Let us also briefly remark that complex topological phase labels can also be defined via a 
Clifford index, though generally indices defined via cyclic cocycles can be more easily 
related to measurable physical phenomena.

\subsubsection{Extending the pairings} \label{subsec:localisation}
In~\cite{BP17}, complex bulk indices are extended to a larger algebra constructed 
from the noncommutative Sobolev spaces $\calW_{r,p}$, obtained as the closure of $C_c(\calG,\sigma)$ in the norms 
$$
  \| f\|_{r,p} = \sum_{|\alpha| \leq r} \Tr_\tau \Big( |\partial^\alpha f|^p \Big)^{1/p}, \quad r\in \N, \,\, p\in [1,\infty), 
  \,\, \alpha\in \N^d, \,\, \partial^\alpha = \partial_1^{\alpha_1} \cdots \partial_d^{\alpha_d}, \,\, 
  |\alpha| = \sum_{j=1}^d \alpha_j.
$$
We also consider the von Neumann algebra generated by the GNS representation of 
$C^*_r(\calG,\sigma)$ with respect to the dual trace $\Tr_\tau$, denoted by $L^\infty(\calG,\Tr_\tau)$. 
Following~\cite{BP17}, we define $\calA_\mathrm{Sob}$ as the intersection 
of $\calW_{r,p}$ for $r,p\in\N$ with $L^\infty(\calG,\Tr_\tau)$, but emphasise that 
the topology of $\calA_\mathrm{Sob}$ comes only from the Sobolev norms 
$\| \cdot \|_{r,p}$ and not the von Neumann norm (see also~\cite[Section 5]{BRCont}).

If the  measure on the continuous hull $\Omega_\calL$ is 
ergodic under the translation action, 
then $\Z$ and $\Z_2$-valued bulk topological phases can be defined over $\calA_\mathrm{Sob}$. 
For complex pairings, the Hochschild cocycle from 
the semifinite spectral triple is also well-defined for the Sobolev algebra and, as this 
cocycle represents the Chern character (because the lower-order terms vanish), 
the cyclic formulas for the index also extend to the Sobolev algebra. 
For real pairings with an ergodic measure, the analytic indices considered in 
Example \ref{ex:spec_trip_indexpairing} 
are almost surely well defined and constant over $\Omega_0$ in the 
 Sobolev setting. See~\cite{GSB16,KK16,BRCont} for a more comprehensive treatment.

For Hamiltonians acting 
on $\ell^2(\Z^d)\otimes\C^n$ without a spectral gap but instead a region 
$\Delta \subset \sigma(h)$ of dynamical localisation (which implies, amongst other properties, 
that $\Delta \cap \sigma_\text{pp}(h)$ is dense in $\Delta$), the pairings with $\calA_\mathrm{Sob}$ are  
connected to these localised regions via the Aizenman--Molchanov bound~\cite{AM93,PSBbook}. 
For the case of a general Delone set, the Hamiltonian $h\in \calA_\mathrm{Sob}$ acts on the 
family $\{\ell^2(\calL^{(\omega)})\}_{\omega\in\Omega_0}$. In the general Delone setting, 
spectral properties of the Hamiltonian are more difficult to determine. 
See~\cite{LPV07, GMRM15, RMreview} for more information.

\subsubsection{Weak indices}

Our $KK$-theoretic pairings can also be used to define analytic indices for the pairing 
with the higher codimension Kasparov modules constructed in Section \ref{sec:weak}. 
Namely, using the $KK$-cycles ${}_{d}\lambda_{k}$ from Section \ref{sec:weak} 
 we have a well-defined map, 
$$
  KO_n(C^*_r(\calG,\sigma)) \times KKO^{d-k}(C^*_r(\calG,\sigma),C^*_r(\Upsilon_k,\sigma)) 
   \to KO_{n-(d-k)}(C^*_r(\Upsilon_k,\sigma)).
$$
Once again this index can be described using Clifford modules.

\subsection{Pairings for lattices with finite local complexity} \label{sec:PB_pairing}
The complex and real pairings from the previous section can be defined for general Delone sets. 
When the underlying lattice $\calL$ used to construct the continuous hull $\Omega_\calL$ has 
finite local complexity, we can define new numerical phase labels via the $\varepsilon$-unbounded 
Fredholm module from Theorem \ref{thm:PB_product}.

Recall the unbounded operator $X+T\kappa$ on 
$\big( L^2(\calG {}_{s}\times_{\tau_+} \calV) \oplus L^2(\calG {}_{s}\times_{\tau_-} \calV) \big) \hat\otimes 
\bigwedge^*\R^d$ whose bounded transform $b(X+T\kappa)$ is Fredholm and has compact 
commutators with the representation of $C_c(\calG)$ (cf. Lemma \ref{shortcut}). 

There are well-defined index pairings for the $K$-theoretic phase of the Hamiltonian 
$[h]\in K_n(C^*_r(\calG))$ with the $K$-homology class $[b(X+T\kappa)] \in K^d(C^*_r(\calG))$ 
via a Fredholm index for complex phases and a skew-adjoint Fredholm index for real 
phases,
$$
  K_n(C^*_r(\calG)) \times K^d( C^*_r(\calG) ) \to K_{n-d}(\mathbb{F}), \qquad 
  \mathbb{F} = \R,\,\C.
$$

We emphasise that unlike the cases of $\Z$ or $\Z_2$-valued indices that can be 
defined by the evaluation map $\mathrm{ev}_\omega : \Omega_0\to \mathbb{F}$, these 
indices depend on the ultra-metric structure of the transversal. To more explicitly 
show this, we note the following result, which 
is an immediate consequence of the associativity of the Kasparov product.
\begin{prop}
The index pairing of the $K$-theoretic Hamiltonian phase $[h] \in K_n(C^*_r(\calG))$ with 
the class $[b(X+T\kappa)] \in K^d(C^*_r(\calG))$ is the same as the pairing of the 
class of the Clifford module $[\Ker(F_{pXp})] \in K_{n-d}(C(\Omega_0))$ 
from Proposition \ref{prop:Product_as_Clifford_index} with the 
Pearson--Bellissard spectral triple $[(\pi_{\tau_{+}}, \pi_{\tau_{-}})] \in K^0(C(\Omega_0))$ 
from Proposition \ref{prop: BPlog}.
\end{prop}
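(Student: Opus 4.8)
The plan is to leverage the associativity of the (internal) Kasparov product together with the factorisation established in Theorem \ref{thm:PB_product}. Recall that Theorem \ref{thm:PB_product} identifies the $\varepsilon$-unbounded Fredholm module $\big(C_c(\calG)\hat\otimes Cl_{0,d}, \calH_+\oplus\calH_-, X+T\kappa\big)$, via its bounded transform $b(X+T\kappa)$, with the Kasparov product $[{}_{d}\lambda_{\Omega_0}]\hat\otimes_{C(\Omega_0)}[(\pi_{\tau_+},\pi_{\tau_-})]$, where $[{}_{d}\lambda_{\Omega_0}]\in KK(C^*_r(\calG)\hat\otimes Cl_{0,d}, C(\Omega_0))$ and $[(\pi_{\tau_+},\pi_{\tau_-})]\in KK(C(\Omega_0),\C)$. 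So I would begin by writing out both sides of the desired equality as iterated products in $KK$-groups, keeping careful track of the $Cl_{n,0}$-grading carried by the class $[h]$.

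First I would express the left-hand pairing: $[h]\in KO_n(C^*_r(\calG))$ (equivalently a class in $KK(Cl_{n,0}, C^*_r(\calG))$ represented by the projective module $pC^*_r(\calG)^{\oplus N}$ with its $Cl_{n,0}$-action) is paired with $[b(X+T\kappa)]\in K^d(C^*_r(\calG))$, so the pairing is $[h]\hat\otimes_{C^*_r(\calG)} [b(X+T\kappa)]$, landing (after the usual Clifford/$KK$-identifications) in $K_{n-d}(\mathbb{F})$. Now substitute the factorisation from Theorem \ref{thm:PB_product}:
\begin{align*}
  [h]\hat\otimes_{C^*_r(\calG)}[b(X+T\kappa)]
  &= [h]\hat\otimes_{C^*_r(\calG)} \big( [{}_{d}\lambda_{\Omega_0}]\hat\otimes_{C(\Omega_0)} [(\pi_{\tau_+},\pi_{\tau_-})]\big) \\
  &= \big( [h]\hat\otimes_{C^*_r(\calG)} [{}_{d}\lambda_{\Omega_0}] \big)\hat\otimes_{C(\Omega_0)} [(\pi_{\tau_+},\pi_{\tau_-})],
\end{align*}
where the second equality is precisely associativity of the Kasparov product (see \cite[Section 18]{Blackadar}, \cite{Kasparov80}). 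The inner product $[h]\hat\otimes_{C^*_r(\calG)} [{}_{d}\lambda_{\Omega_0}]$ is, by Proposition \ref{prop:Product_as_Clifford_index}, represented by the Clifford module $[\Ker(F_{pXp})]\in KO_{n-d}(C(\Omega_0))$ (assuming $F_{pXp}$ regular, which I would carry along as a hypothesis as elsewhere in the paper, or pass to the amplification). Substituting this in yields exactly $[\Ker(F_{pXp})]\hat\otimes_{C(\Omega_0)} [(\pi_{\tau_+},\pi_{\tau_-})]$, which is the stated right-hand side: the pairing of the Clifford module class from Proposition \ref{prop:Product_as_Clifford_index} with the Pearson--Bellissard $K$-homology class from Proposition \ref{prop: BPlog}. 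This establishes the claim.

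The only genuine points requiring care, rather than routine bookkeeping, are (i) checking that the $Cl_{n,0}$-grading on $[h]$ commutes past the factorisation—i.e. that the equivalence in Theorem \ref{thm:PB_product} is compatible with the external product by $(Cl_{n,0}, {Cl_{n,0}}_{Cl_{n,0}}, 0)$, which follows because that equivalence is a statement in $KK(C^*_r(\calG)\hat\otimes Cl_{0,d}, \C)$ and tensoring with the identity class respects products; and (ii) verifying that the Clifford/$Cl_{0,d}$ index identifications used to turn the abstract $KK$-products into the honest (skew-adjoint) Fredholm index pairing $K_{n-d}(\mathbb{F})$ are the same on both sides—this is the Atiyah--Bott--Shapiro-type identification already invoked in Section \ref{sec:real_pairings} and Example \ref{ex:spec_trip_indexpairing}, so it suffices to note consistency. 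I expect the main (though still mild) obstacle to be phrasing associativity cleanly in the presence of the $\varepsilon$-unbounded representative: strictly speaking associativity is a statement about $KK$-classes, so I would emphasise that we work at the level of classes, invoking Theorem \ref{thm:PB_product} to know that $b(X+T\kappa)$ represents the product class, rather than attempting an unbounded associativity argument directly.
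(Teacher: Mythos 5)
Your proof is correct and is precisely the argument the paper has in mind: the paper introduces this proposition with the phrase that it ``is an immediate consequence of the associativity of the Kasparov product'' and provides no further detail, and your proposal is exactly that observation spelled out --- substitute the factorisation from Theorem \ref{thm:PB_product}, reassociate, and identify the inner product with the Clifford module via Proposition \ref{prop:Product_as_Clifford_index}.
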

It is worth noting that the index paring of any fixed class $\alpha\in K_{n-d}(C(\Omega_0))$ with $[(\pi_{\tau_{+}},\pi_{\tau_{-}})]$ 
depends on only finitely many of the values $\tau_{\pm}(v)$, viewed as a pair of point evaluations. 
This follows from the fact that $K^0(C(\Omega_0))$ is generated by the classes of indicator functions 
$\chi_{p}$ of the cylinder sets $\calC_{p}$. For $|v|>|p|$, it holds that $\tau_{+}(v)\in \calC_{p}$ if and only 
if $\tau_{-}(v)\in\calC_p$, and thus 
\[[\chi_{p}]\otimes [(\pi_{\tau_{+}},\pi_{\tau_{-}})]=\sum_{|v|\leq |p|}[\chi_{p}]\otimes[(\pi_{\tau_{+}}(v),\pi_{\tau_{-}}(v))].\]

This generic observation was used in \cite[Theorem 6.3.1]{GM15} to determine 
the rational $K$-homology class of an analogous operator. This does not seem to be possible in the present context.

The physical distinction 
between the indices defined via $b(X+T\kappa)$ and the more standard bulk index pairings 
in Section \ref{subsec:Complex_pairings} and \ref{sec:real_pairings} is currently unclear
to us as well.
Another question is whether the class of the $\varepsilon$-unbounded 
Fredholm module has a finitely summable representative and, if so, whether the corresponding Chern character gives 
 additional physical information.

\subsection{The bulk-boundary correspondence}

Because our topological phases arise as pairings with 
the bulk $KK$-cycle, the 
results from Section \ref{sec:factorisation} can be used to relate pairings of differing 
dimension. Recall that we have the extension,
$$
  0 \to C^*_r(\calG\ltimes \calG/\Upsilon, \sigma) \to \calT \to C^*_r(\calG,\sigma) \to 0,
$$
with $\calG\ltimes \calG/\Upsilon$ the edge groupoid and  
$\calT$ acting on a half-infinite space. Suppose that $[h]\in K_n(C^*_r(\calG,\sigma))$ 
(real or complex) and consider the product with ${}_{d}\lambda_{\Omega_0}$. Then by 
Theorem \ref{thm:bulk-edge_main},
\begin{align*}
  [h] \hat\otimes_{C^*_r(\calG,\sigma)} [{}_{d}\lambda_{\Omega_0} ] 
   &= (-1)^{d-1} [h]\hat\otimes_{C^*_r(\calG,\sigma)} \big( [{}_{d}\lambda_{d-1}] \hat\otimes_{C^*_r(\Upsilon,\sigma)} [{}_{d-1}\lambda_{\Omega_0}] \big) \\  
    & = (-1)^{d-1} \big([h]\hat\otimes_{C^*_r(\calG,\sigma)} [{}_{d}\lambda_{d-1}] \big) \hat\otimes_{C^*_r(\Upsilon,\sigma)} [{}_{d-1}\lambda_{\Omega_0}] \\
   &= (-1)^{d-1} \partial[h]   \hat\otimes_{C^*_r(\Upsilon,\sigma)} [{}_{d-1}\lambda_{\Omega_0}]
\end{align*}
with $\partial[h] \in KO_{n-1}(C^*_r(\calG\ltimes \calG/\Upsilon, \sigma))$ the image under 
the boundary map in $K$-theory.
That is,  the pairing with respect to the bulk algebra $C^*_r(\calG,\sigma)$ is non-trivial 
if and only if the pairing $\partial[h]   \hat\otimes_{C^*_r(\Upsilon,\sigma)} [{}_{d-1}\lambda_{\Omega_0}]$ 
over the edge algebra $C^*_r(\Upsilon,\sigma)$ (or $C^*_r(\calG\ltimes \calG/\Upsilon,\sigma)$) 
is non-trivial.

Furthermore, because the semifinite pairings involve the Kasparov product and then the trace, 
the bulk-edge correspondence also holds for the Chern number formulas (using the 
Morita equivalence between spin$^c$ and oriented structures). Using the 
notation from Proposition \ref{prop:complex_bulk_pairing},
\begin{align*}
   &\langle [p], [{}_{d}\lambda_{\tau}^{S_\C}] \rangle = - \langle \partial[p], [{}_{d-1}\lambda_{\tau}^{S_\C}] \rangle,  
   &&\langle [u], [{}_{d}\lambda_{\tau}^{S_\C}] \rangle = \langle \partial[u], [{}_{d-1}\lambda_{\tau}^{S_\C}] \rangle.
\end{align*}

Similarly our weak or higher codimension pairings also factorise by 
Theorem \ref{thm:weak_factorisation2}. Namely, via the short exact sequence, 
\begin{equation} \label{eq:weak_extension}
  0 \to C^*_r(\Upsilon_k \ltimes \Upsilon_k/\Upsilon_{k-1}, \sigma) \to 
   \calT_k
    \to C^*_r(\Upsilon_k,\sigma) \to 0,
\end{equation}
we have the equality of pairings,
\begin{align*}
  [h] \hat\otimes_{C^*_r(\calG,\sigma)}[{}_{d}\lambda_{k-1}] &= 
    (-1)^{d-k} [h] \hat\otimes_{C^*_r(\calG,\sigma)} \big([{}_{d}\lambda_{k}] \hat\otimes_{C^*_r(\Upsilon_k,\sigma)} [{}_k\lambda_{k-1}] \big) \\
    &= (-1)^{d-k} \big( [h] \hat\otimes_{C^*_r(\calG,\sigma)}[{}_{d}\lambda_{k}]\big) \hat\otimes_{C^*_r(\Upsilon_k,\sigma)} [{}_k\lambda_{k-1}]  \\
    &= (-1)^{d-k} \partial \big( [h] \hat\otimes_{C^*_r(\calG,\sigma)}[{}_{d}\lambda_{k}]\big).
\end{align*}
That is, our weak pairing $[h] \hat\otimes_{C^*_r(\calG,\sigma)}[{}_{d}\lambda_{k}]$ takes values in 
$K_{n-d+k}(C^*_r(\Upsilon_k,\sigma))$ and if we apply the boundary map coming from the short 
exact sequence in Equation \eqref{eq:weak_extension} on page \pageref{eq:weak_extension}, then up to a sign we obtain the 
weak pairing $[h] \hat\otimes_{C^*_r(\calG,\sigma)}[{}_{d}\lambda_{k-1}] \in K_{n-d+k-1}(C^*_r(\Upsilon_{k-1},\sigma))$. 
Of course this equality is not necessarily related to the presence of a boundary and is more 
a property of the Kasparov modules that we use to define the weak topological phases.

For lattices with finite local complexity, we also obtain a factorisation 
of index pairings of the class 
$[h] \in K_n(C^*_r(\calG,\sigma))$ with the $\varepsilon$-unbounded Fredholm module 
from Theorem \ref{thm:PB_product}, 
\begin{align*}
   [h] \hat\otimes_{C^*_r(\calG,\sigma)} \big( [{}_{d} \lambda_{\Omega_0} ] \hat\otimes_{C(\Omega_0)} [(\pi_{\tau_{+}}, \pi_{\tau_{-}})] \big)
    &= (-1)^{d-1}  \partial[h] \hat\otimes_{C^*_r(\Upsilon,\sigma)} 
       \big( [{}_{d-1} \lambda_{\Omega_0} ] \hat\otimes_{C(\Omega_0)} [(\pi_{\tau_{+}}, \pi_{\tau_{-}})] \big),
\end{align*}
where the right-hand side is a pairing 
$K_{n-1}(C^*_r(\Upsilon))\times K^{d-1}(C^*_r(\Upsilon))\to K_{n-d}(\R)$ (or complex).

\subsection{Examples from materials science and meta-materials}

Constructing model Hamiltonians for generic Delone sets is in general a difficult 
task, particularly if the underlying lattice is amorphous. However, given 
$\omega\in\Omega_0$,
we can write down a basic Hamiltonian by coupling lattice sites with exponential 
decay  and twisting by a magnetic flux,
$$
  (H_\omega \psi)(x) = \sum_{y\in\calL^{(\omega)}} e^{-i \Gamma_{\calL^{(\omega)}}\langle 0, x,y \rangle} 
     e^{-\beta|x-y|} \psi(y), \qquad \beta>0,\,\, \psi \in \ell^2(\calL^{(\omega)}).
$$
There is some element $h \in C^*_r(\calG,\sigma)$ such that $\pi_\omega(h) = H_\omega$ 
using the point-wise representation. 
If $\Delta\subset\R$ is a spectral gap of $h$, then the spectral projection 
$P_E= \chi_{(-\infty,E]}(h)$ is in the smooth $\ast$-subalgebra 
$\calA\subset C^*_r(\calG,\sigma)$  for any $E\in \Delta$. 
One of the advantages of introducing a magnetic flux into the Hamiltonian is that it 
can potentially open gaps in the spectrum of $h$, as is required by our assumptions. Let us also 
remark that our choice of Hamiltonian can also be used to model mechanical or 
gyroscopic meta-materials provided the energies are low, see~\cite{GyroInsulator} 
for example. 

In the setting of a spectral gap, our results give that for $d$ even,
\begin{align} \label{eq:cyclicbb_even}
   C_{d} \sum_{\mu\in S_d}(-1)^{\mu} \Tr_\tau\Big( P_E \prod_{j=1}^d \partial_{\mu(j)}(P_E) \Big) 
     &= -C_{d-1} \sum_{\nu \in S_{d-1}}(-1)^\nu (\Tr_\tau^\Upsilon \otimes \Tr_\calH)
          \Big( \prod_{j=1}^{d-1} \hat{u}_h^* \partial_{\nu(j)}(\hat{u}_h) \Big)
\end{align}
where $\hat{u}_h = e^{2\pi i f(\Pi_d h\Pi_d)}$ 
with $\Pi_d$ the projection onto a half-space and 
$f$ a function that smoothly goes from $0$ to $1$ inside the spectral gap $\Delta$. We also 
use that $C^*_r(\calG \ltimes \calG/\Upsilon,\sigma) \cong C^*_r(\Upsilon,\sigma)\otimes \mathbb{K}(\calH)$ and 
so our boundary semifinite pairing can be written using the semifinite spectral triple 
over $C^*_r(\Upsilon,\sigma)\otimes \mathbb{K}(\calH)$ relative to $\Tr_\tau^\Upsilon \otimes \Tr_\calH$. 

If $0\notin\sigma(h)$ (shifting by a constant term if necessary) and 
$R_ChR_C = -h$ for some self-adjoint unitary $R_C\in \calA$, then we can define 
the Fermi unitary $U_F = \frac{1}{2}(1-R_C)(1-2P_F)\frac{1}{2}(1+R_C)$ with $[U_F] \in K_1(\calA)$. 
Then, for $d$ odd
\begin{align} \label{eq:cyclicbb_odd}
  C_{d} \sum_{\mu \in S_{d}}(-1)^\mu \Tr_\tau\Big( \prod_{j=1}^d U_F^* \partial_{\mu(j)}U_F \Big) 
    &= C_{d-1} \sum_{\nu\in S_{d-1}} (-1)^\nu (\Tr_\tau^\Upsilon \otimes \Tr_\calH) 
          \Big( \mathrm{Ind}(U_F) \prod_{j=1}^{d-1} \partial_{\nu(j)}\mathrm{Ind}(U_F) \Big)
\end{align}
with $\mathrm{Ind}(U_F)$ the index map in complex $K$-theory.

If the measure on $\Omega_\calL$ is ergodic under the translation action, we can 
replace the dual trace in the left-hand side Equation \eqref{eq:cyclicbb_even} and \eqref{eq:cyclicbb_odd} 
with the trace per unit volume on $\ell^2(\calL^{(\omega)})$ for almost all $\omega\in\Omega_0$. 
In this setting the bulk cyclic formulas continue to be well-defined and integer-valued if the 
assumption on $\Delta$ is relaxed to a mobility gap (as characterised in~\cite[Section 6.2]{BP17}).

We can implement other $CT$-symmetries on $h$ by a choice of Real structure on 
$C^*_r(\calG,\sigma)$. 
Because the equation for $h$ is quite generic, such symmetries and invariants can 
be implemented by passing to matrices over $C^*_r(\calG,\sigma)$. 
The corresponding bulk and boundary pairings are described 
in Section \ref{sec:real_pairings}, though let us note that if there is no magnetic flux 
(such as in time-reversal 
symmetric Hamiltonians), then a model Hamiltonian with spectral gap may be difficult 
to construct for a generic Delone lattice. However, gaps in the spectrum without 
a magnetic field may be possible by considering more ordered (but still aperiodic) 
lattices coming from quasicrystals or substitution tilings.


\begin{thebibliography}{9}

\bibitem{AM93}
M. Aizenman and S. Molchanov. \emph{Localization at large disorder and at extreme energies: An elementary derivation}. 
Comm. Math. Phys. \textbf{157} (1993), no. 2, 245--278.

\bibitem{AndersonPutnam}
J. E. Anderson and I. F. Putnam. \emph{Topological invariants for substitution tilings and their
  associated {$C^*$}-algebras}. Ergodic Theory Dynam. Systems, \textbf{18} (1998), no. 3, 509--537.

\bibitem{ADL16}
F. Arici, F. D'Andrea and G. Landi. \emph{Pimsner algebras and circle bundles}. In 
D. Alpay, F. Cipriani, F. Colombo, D. Guido, I. Sabadini and J.-L. Sauvageot, editors, 
Noncommutative Analysis, Operator Theory and Applications, pages 1--25, 
Springer International Publishing (2016).

\bibitem{AriciKaadLandi}
F. Arici, J. Kaad and G. Landi. \emph{Pimsner algebras and Gysin sequences from principal 
circle actions} J. Noncommut. Geom., \textbf{10} (2016), no. 1, 29--64.

\bibitem{ABS} 
  M. F. Atiyah, R. Bott, and A. Shapiro. \emph{Clifford modules}. {Topology}, 
  \textbf{3} (1964), suppl. 1, 3--38.

\bibitem{ASskewadj}
  M. F. Atiyah and I. M. Singer. \emph{Index theory for skew-adjoint Fredholm operators}. {Inst. Hautes \'{E}tudes Sci. Publ. Math.}, 
  \textbf{37} (1969), 5--26.
  
\bibitem{BJ83}
S. Baaj and P. Julg. \emph{Th\'eorie bivariante de {K}asparov et op\'erateurs non born\'es dans les {$C^{\ast} $}-modules hilbertiens}.
C. R. Acad. Sci. Paris S\'er. I Math., \textbf{296} (1983), no. 21, 875--878.

\bibitem{quasiphase}
M. A. Bandres, M. C. Rechtsman and M. Segev. 
\emph{Topological photonic quasicrystals: fractal topological spectrum and protected transport}. 
Phys. Rev. X, \textbf{6} (2016), 011016.

\bibitem{BBD18}
S. Beckus, J. Bellissard and G. De Nittis. \emph{Spectral continuity for aperiodic quantum 
systems I. General theory}. J. Funct. Anal., \textbf{275} (2018), no. 11, 2917--2977.

\bibitem{Bel86}
J. Bellissard. \emph{{$K$}-theory of {$C^\ast$}-algebras in solid state physics}. In 
Statistical mechanics and field theory: mathematical aspects ({G}roningen, 1985), volume 257 
of Lecture Notes in Phys., Springer, Berlin (1986), 99--156.

\bibitem{BellissardDelone}
J. Bellissard. \emph{Delone sets and materials science: a program}. In {Mathematics of aperiodic order}, volume 309 
of Progr. Math., Birkh\"auser/Springer, Basel (2015), 405--428.

\bibitem{BBG06}
J. Bellissard, R. Benedetti and J.-M. Gambaudo. \emph{Spaces of tilings, finite telescopic approximations and gap-labeling.} 
Comm. Math. Phys., \textbf{261} (2006), no. 1, 1--41.

\bibitem{Bellissard94}
J.~Bellissard, A.~van~Elst and H.~Schulz-Baldes. 
\emph{The non-commutative geometry of the quantum Hall-effect}.
J. Math. Phys. {\bf 35} (1994), 5373--5451.

\bibitem{BHZ00} 
J. Bellissard, D. J. L. Herrmann and M. Zarrouati. \emph{Hulls of 
aperiodic solids and gap labelling theorems}. Directions in Mathematical Quasicrystals.
 Volume 13 of CIRM Monograph Series (2000), pp207--259.


\bibitem{BLM13}
F. Belmonte, M. Lein, and M. M{\u{a}}ntoiu. \emph{Magnetic twisted actions on general 
abelian $C^*$-algebras}. {J. Operator Theory}, \textbf{69} (2013), no. 1, 33--58.

\bibitem{BCPRSW}
M. Benameur, A. L. Carey, J. Phillips, A. Rennie, F. A. Sukochev, and K. P. Wojciechowski. 
\emph{An analytic approach to spectral flow in von Neumann algebras.}
In B. Boo{\ss}-Bavnbek, S. Klimek, M. Lesch, and W. Zhang, editors, 
{Analysis, Geometry and Topology of Elliptic Operators}, pages 297--352. World Scientific Publishing (2006).

\bibitem{Blackadar}
B. Blackadar. \emph{$K$-Theory for Operator Algebras}. Volume 5 of {Mathematical Sciences Research Institute Publications}, 
Cambridge Univ. Press, Cambridge (1998). 

\bibitem{BC91}
B. Blackadar and J. Cuntz. \emph{Differential Banach algebra norms and smooth 
subalgebras of $C^*$-algebras}. J. Operator Theory, \textbf{26} (1991), 255--282.

\bibitem{BCR15}
C. Bourne, A. L. Carey and A. Rennie. \emph{A non-commutative framework for topological insulators}. 
Rev. Math. Phys., \textbf{28} (2016), no. 2, 1650004.

\bibitem{BKR1}
C. Bourne, J. Kellendonk and A. Rennie. \emph{The $K$-theoretic bulk-edge correspondence for topological insulators}. 
Ann. Henri Poincar\'{e}, \textbf{18} (2017), no. 5, 1833--1866.

\bibitem{BP17}
C. Bourne and E. Prodan. \emph{Non-commutative Chern numbers for generic aperiodic discrete systems}. 
J. Phys. A, \textbf{51} (2018), no. 23, 235202.

\bibitem{BRCont}
C. Bourne and A. Rennie. \emph{Chern numbers, localisation and the bulk-edge correspondence
for continuous models of topological phases}. Math. Phys. Anal. Geom., \textbf{21} (2018), no. 3, 16.  

\bibitem{BSBWeak}
C. Bourne and H. Schulz-Baldes. \emph{Applications of semifinite index theory to weak
topological phases}. In D. Wood, J. de Gier, C. Praeger, T. Tao, editors, 
{2016 Matrix Annals}, Springer (2018).


\bibitem{CNNR}
A. L. Carey, S. Neshveyev, R. Nest and A. Rennie. \emph{Twisted cyclic theory, equivariant {$KK$}-theory and {KMS} states}. 
J. Reine Angew. Math., \textbf{650} (2011), 161--191.

\bibitem{CPRS2}
A. L. Carey, J. Phillips, A. Rennie and F. A. Sukochev. \emph{The local index formula in semifinite von 
Neumann algebras i: Spectral flow}. {Adv. Math.}, \textbf{202} (2006), no. 2, 451--516.

\bibitem{CPRS3}
A. L. Carey, J. Phillips, A. Rennie and F. A. Sukochev. \emph{The local index formula in semifinite von 
Neumann algebras ii: The even case}. {Adv. Math.}, \textbf{202} (2006), no. 2, 517--554.

\bibitem{ConnesThom}
A. Connes. \emph{An analogue of the {T}hom isomorphism for crossed products of a {$C^{\ast} $}-algebra by an action of {${\bf R}$}}.
Adv. in Math. \textbf{39} (1981), no. 1, 31--55.

\bibitem{longitudinal} 
A. Connes and G. Skandalis, \emph{The longitudinal index theorem for foliations}. 
Publ. RIMS, Kyoto Univ., \textbf{20} (1984), 1139--1183.

\bibitem{Cuntznewlook} J. Cuntz, \emph{A new look at $KK$-theory}, 
$K$-theory, \textbf{1} (1987), 31--51.

\bibitem{Daenzer09}
C. Daenzer. \emph{A groupoid approach to noncommutative {$T$}-duality}. Comm. Math. Phys. 
\textbf{288} (2009), no. 1, 55--96.

\bibitem{EM18}
E. E. Ewert and R. Meyer. \emph{Coarse geometry and topological phases}. arXiv:1802.05579 (2018).

\bibitem{FK}
T. Fack, H. Kosaki. \emph{Generalised $s$-numbers of 
$\tau$-measurable operators}. {Pac. J. Math.},
{\bf 123} (1986), no. 2, 269--300.

\bibitem{FHK}
A. Forrest, J. Hunton and J. Kellendonk. \emph{Topological invariants for projection method patterns}. 
Mem. Amer. Math. Soc., \textbf{159} (2002), no. 758, x+120.


\bibitem{FM13}
D. S. Freed and G. W. Moore. \emph{Twisted equivariant matter}. 
{Ann. Henri Poincar\'{e}}, \textbf{14} (2013), no. 8, 1927--2023.

\bibitem{GMRM15}
F. Germinet, P. M\"{u}ller and C. Rojas-Molina. \emph{Ergodicity and dynamical localization for {D}elone-{A}nderson operators}. 
Rev. Math Phys. \textbf{27} (2015), no. 9, 1550020.

\bibitem{GM15}
M. Goffeng and B. Mesland. \emph{Spectral triples and finite summability on {C}untz-{K}rieger algebras}. 
Doc. Math., \textbf{20} (2015), 89--170.

\bibitem{GMR}
M. Goffeng, B. Mesland and A. Rennie. \emph{Shift-tail equivalence and an unbounded representative of the 
{C}untz-{P}imsner extension}. Ergodic Theory Dynam. Systems, \textbf{38} (2018), no. 4, 1389--1421.

\bibitem{GT18}
K. Gomi and G. C. Thiang. \emph{Crystallographic bulk-edge correspondence: glide reflections and twisted mod 2 indices}. 
Lett. Math. Phys., online first (2018), \url{https://doi.org/10.1007/s11005-018-1129-1}.

\bibitem{TilingKTheory}
D. Gon\c{c}alves and M. Ramirez-Solano. \emph{On the $K$-theory of $C^*$-algebras for substitution tilings (a pedestrian version)}.
arXiv:1712.09551 (2017).

\bibitem{GSB16}
J. Grossmann and H. Schulz-Baldes. \emph{Index pairings in presence of 
symmetries with applications to topological insulators}. 
{Comm. Math. Phys.}, \textbf{343} (2016), no. 2, 477--513.

\bibitem{ElementsNCG}
J. M. Gracia-Bond\'{i}a, J. C. V\'{a}rilly and H. Figueroa. \emph{Elements of Noncommutative Geometry}. 
Birkh\"{a}user Advanced Texts Basler Lehrb\"{u}cher. Birkh\"{a}user, Boston, (2001).

\bibitem{HMT} 
K. Hannabuss, V. Mathai and G. C. Thiang. \emph{T-duality simplifies bulk-boundary correspondence: the noncommutative case.} 
Lett. Math. Phys., \textbf{108} (2018), no. 5, 1163--1201.

\bibitem{HuntonTiling} 
J. Hunton. \emph{Spaces of projection method patterns and their cohomology}. 
In {Mathematics of aperiodic order}, volume 309 
of Progr. Math., Birkh\"auser/Springer, Basel (2015),  105--135.

\bibitem{Kasparov80}
  G. G. Kasparov. \emph{The operator $K$-functor and extensions of $C^*$-algebras}. 
  {Math. USSR Izv.}, \textbf{16} (1981), 513--572.
  
  
\bibitem{KasparovNovikov}
G. G. Kasparov. \emph{Equivariant $KK$-theory and the Novikov conjecture}. 
{Invent. Math.}, \textbf{91} (1988), no. 1, 147--201.

\bibitem{KK16}
H. Katsura and T. Koma. \emph{The noncommutative index theorem and the periodic table for disordered 
topological insulators and superconductors}. J. Math. Phys., \textbf{58} (2018), no. 3, 031903.

\bibitem{Kellendonk95}
J. Kellendonk \emph{Noncommutative geometry of tilings and gap labelling}.
Rev. Math. Phys., \textbf{7} (1995), 1133--1180.

\bibitem{Kellendonk97}
J. Kellendonk. \emph{The local structure of tilings and their integer group of coinvariants}.
Comm. Math. Phys. \textbf{187} (1997), 115--157.

\bibitem{Kellendonk15}
J. Kellendonk. \emph{On the $C^*$-algebraic approach to topological 
phases for insulators}. {Ann. Henri Poincar\'e}, 
\textbf{18} (2017), no. 7, 2251--2300.

\bibitem{KellendonkPutnam}
J. Kellendonk and I. Putnam. \emph{Tilings, {$C^*$}-algebras, and {$K$}-theory}. 
In {Directions in mathematical quasicrystals}, Volume 13 of CRM Monogr. Ser., 
pages 177--206. Amer. Math. Soc., Providence, RI (2000).

\bibitem{KR08}
 J. Kellendonk and S. Richard. \emph{Topological boundary maps in physics}. 
 In F. Boca, R. Purice and \c{S}. Str\u{a}til\u{a}, editors, {Perspectives in operator algebras and mathematical physics}. 
 Theta Ser. Adv. Math., volume 8, pages 105--121 Theta, Bucharest (2008). arXiv:math-ph/0605048.

\bibitem{KS02}
M. Khoshkam and G. Skandalis. \emph{Regular representation of groupoid $C^*$-algebras
and applications to inverse semigroups}. {J. Reine Angew. Math.}, \textbf{546} (2002), 47--72.

\bibitem{Kubota15a}
Y. Kubota. \emph{Notes on twisted equivariant $K$-theory 
for $C^*$-algebras}. {Int. J. Math.}, \textbf{27} (2016), 1650058.

 \bibitem{Kubota15b}
Y. Kubota. \emph{Controlled topological phases and bulk-edge correspondence}. 
{Comm. Math. Phys.}, \textbf{349} (2017), no. 2, 493--525.

\bibitem{Kucerovsky97}
  D. Kucerovsky. \emph{The $KK$-product of unbounded modules}. 
  {$K$-Theory}, \textbf{11} (1997), 17--34.

\bibitem{KumjianDiagonal}
A. Kumjian, \emph{On $C^*$-diagonals}. Canad. J. Math. \textbf{38} (1986), 969--1008.

\bibitem{JKS15}
A. Julien, J. Kellendonk and J. Savinien. \emph{On the noncommutative geometry of tilings}. 
In {Mathematics of aperiodic order}, volume 309 
of Progr. Math., Birkh\"auser/Springer, Basel (2015), 259--306.

\bibitem{LN04}
M. Laca and S. Neshveyev. \emph{KMS states of quasi-free dynamics on Pimsner algebras.} 
{J. Funct. Anal.}, \textbf{211} (2004), no. 2, 457--482.


\bibitem{Lagragias03}
J. C. Lagragias and P. A. B. Pleasants. \emph{Repetitive Delone sets and quasicrystals}, 
Ergodic Theory Dynam. Systems, \textbf{23} (2003), no. 3, 831--867.

\bibitem{Lance}
E. C. Lance. \emph{Hilbert $C^*$-modules: A toolkit for operator algebraists}. Volume 210 of 
London Mathematical Society Lecgture Note Series, Cambridge Univ. Press, Cambridge (1995).

\bibitem{Spingeo}
H. B. Lawson and M.-L. Michelsohn. \emph{Spin geometry}. Volume 38 of 
Princeton Mathematical Series, Princeton University Press, Princeton, NJ (1989).

\bibitem{LPV07}
D. Lenz, N. Peyerimhoff and I. Veseli\'{c}. \emph{Groupoids, von {N}eumann algebras and the integrated density of states}. 
Math. Phys. Anal. Geom. \textbf{10} (2007), no. 1, 1--41.

\bibitem{MesLesch}
M. Lesch and B. Mesland. \emph{Sums of regular selfadjoint operators in Hilbert $C^*$-modules}. 
J. Math. Anal. Appl., to appear. arXiv:1803.08295 (2018).

\bibitem{MW17}
M. Mampusti and M. Whittaker. \emph{Fractal spectral triples on {K}ellendonk's {$C^*$}-algebra of a substitution tiling}.
J. Geom. Phys., \textbf{112} (2017), 224--239.

\bibitem{MeslandGpoid}
B. Mesland. \emph{Groupoid cocycles and $K$-theory}. M\"{u}nster J. of Math. 
\textbf{4} (2011), 227--250.

\bibitem{MR}
B. Mesland and A. Rennie, \emph{Nonunital spectral triples and metric completeness in unbounded $KK$-theory}, 
J. Funct. Anal., \textbf{271} (2016), no. 9, 2460--2538.

\bibitem{SOU} M. Macho Stadler and M. O'Uchi, \emph{Correspondence of groupoid $C^{*}$-algebras}, J. Operator Theory \textbf{42} (1999), 103--119.

\bibitem{GyroInsulator}
N. P. Mitchell, L. M. Nash, D. Hexner, A. M. Turner and W. T. M. Irvine. 
\emph{Amorphous topological insulators constructed from random point sets}. 
Nature Physics, \textbf{14} (2018), 380--385.

\bibitem{MoutuouThesis}
E. M. Moutuou. \emph{Twisted groupoid $KR$-theory}. PhD thesis, Universit\'{e} de Lorraine, Universit\"{a}t Paderborn (2012).

\bibitem{MT11}
E. M. Moutuou and J.-L. Tu. \emph{Equivalence of fell systems and their reduced groupoid $C^*$-algebras}. 
arXiv:1101.1235 (2011).

\bibitem{MRW}
P. S. Muhly, J. Renault and D. P. Williams. \emph{Equivalence and isomorphism for groupoid 
 $C^{\ast}$-algebras}. J. Operator Theory, \textbf{17} (1987), 3--22.
 
\bibitem{MuhlyWilliams}
P. S. Muhly and D. P. Williams. \emph{Renault's equivalence theorem for groupoid crossed products}. 
Volume 3 of NYJM Monographs, State University of New York, University at Albany, Albany (2008).

\bibitem{PR89}
J. A. Packer and I. Raeburn. \emph{Twisted crossed products of $C^*$-algebras}. 
{Math. Proc. Cambridge Philos. Soc.}, \textbf{106} (1989), 293--311.

\bibitem{PearsonBellissard}
J. Pearson and J. Bellissard. \emph{Noncommutative {R}iemannian geometry and diffusion on ultrametric {C}antor sets}. 
J. Noncommut. Geom., \textbf{3} (2009), no. 3, 447--480.

\bibitem{PutnamSpielberg}
I. F.  Putnam and J. Spielberg. \emph{The structure of {$C^\ast$}-algebras associated with hyperbolic dynamical systems}. 
J. Funct. Anal. \textbf{163} (1999), no. 2, 279--299.

\bibitem{Prodanbook}
E. Prodan. \emph{A computational non-commutative geometry program for disordered topological insulators}. 
Springer, Cham (2017).

\bibitem{PSBbook}
E. Prodan and H. Schulz-Baldes. \emph{Bulk and Boundary Invariants for Complex Topological Insulators: From $K$-Theory to Physics}. 
Springer, Berlin (2016).

\bibitem{PSBKK}
E. Prodan and H. Schulz-Baldes. \emph{Generalized Connes-Chern characters in $KK$-theory with an 
application to weak invariants of topological insulators.} {Rev. Math. Phys.}, \textbf{28} (2016), 1650024.

\bibitem{Renault80}
J. Renault. \emph{A groupoid approach to $C^*$-algebras}. 
Lecture Notes in Mathematics, vol. 793, Springer-Verlag (1980).


\bibitem{RRS}
A. Rennie, D. Robertson and A. Sims. \emph{The extension class and {KMS} states for {C}untz-{P}imsner algebras 
of some bi-{H}ilbertian bimodules}. J. Topol. Anal., \textbf{9} (2017), no. 2, 297--327.

\bibitem{RRSgpoid}
A. Rennie, D. Robertson and A. Sims. \emph{Groupoid algebras as {C}untz-{P}imsner algebras}. 
Math. Scand., \textbf{120} (2017), no. 1, 115--123.

\bibitem{Rieffel82}
M. Rieffel. \emph{Connes' analogue for  crossed products of the Thom isomorphism}. 
{Contemp. Math.}, \textbf{10} (1982), 143--154.

\bibitem{RMreview}
C. Rojas-Molina. \emph{Random Schr\"{o}dinger Operators on discrete structures}. 
arXiv:1710.02293 (2017).

\bibitem{SadunBook}
L. Sadun. \emph{Topology of tiling spaces}. Volume 46 of University Lecture Series, 
American Mathematical Society, Providence (2008).

\bibitem{SW03}
L. Sadun and R. W. Williams. \emph{Tiling spaces are {C}antor set fiber bundles}. 
Ergodic Theory Dynam. Systems., \textbf{23} (2003), no. 1, 307--316.

\bibitem{SavThesis}
J. Savinien. \emph{Cohomology and $K$-theory of aperiodic tilings}. PhD thesis, Georgia Institute of Technology (2008).

\bibitem{BelSav}
J. Savinien and J. Bellissard. \emph{A spectral sequence for the {$K$}-theory of tiling spaces}. 
{Ergodic Theory Dynam. Systems}, \textbf{29} (2009), no. 3, 997--1031.

\bibitem{SchroderKTheory}
H. Schr\"{o}der. \emph{$K$-theory for real $C^*$-algebras and applications}. Longman Scientific \& Technical, Harlow; 
copublished in the United States with John Wiley \& Sons Inc., New York, (1993).

\bibitem{SimsNotes}
A. Sims. \emph{\'{E}tale groupoids and their $C^*$-algebras}. 
To appear in G. Szabo, D. Williams and A. Sims, editors, 
{Operator algebras and dynamics: groupoids, crossed products and Rokhlin dimension}, 
Birkh\"{a}user. arXiv:1710.10897 (2017).





\bibitem{SimsWilliams}
A. Sims and D. P. Williams. \emph{Renault equivalence Theorem for reduced groupoid 
$C^*$-algebras}. J. Operator Theory \textbf{68} (2012), no. 1, 223--239.

\bibitem{SimsWilliamsFell}
A. Sims and D. P. Williams. \emph{An equivalence theorem for reduced Fell bundle $C^*$-algebras}. 
New York J. Math., \textbf{19} (2013), 159--178.

\bibitem{SimsYeend}
A. Sims and T. Yeend. 
\emph{$C^*$-algebras associated to product systems of Hilbert bimodules}. 
J. Operator Theory \textbf{64} (2010), no. 2, 349--376.


\bibitem{Thiang14}
G. C. Thiang. \emph{On the K-theoretic classification of topological phases of matter}. {Ann. Henri Poincar\'{e}}, 
\textbf{17} (2016), no. 4, 757--794.

\bibitem{Wigner}
E. P. Wigner. \emph{Group theory: and its application to the quantum mechanics of atomic spectra}. 
Expanded and improved ed. Translated from the German by J. J. Griffin. 
Volume 5 of Pure and Applied Physics, Academic Press, New York (1959).


\bibitem{WilliamsonThesis}
P. Williamson. \emph{Cuntz--Pimsner algebras associated with substitutuion tilings}. PhD thesis, Unviersity of Victoria (2016).

\end{thebibliography}
\end{document}